\newtheorem{theorem}{Theorem}
\theoremstyle{plain}
\newtheorem{acknowledgement}[theorem]{Acknowledgement}
\newtheorem{corollary}[theorem]{Corollary}
\newtheorem{definition}[theorem]{Definition}
\newtheorem{lemma}[theorem]{Lemma}
\newtheorem{notation}[theorem]{Notation}
\newtheorem{problem}[theorem]{Problem}
\newtheorem{proposition}[theorem]{Proposition}
\newtheorem{remark}[theorem]{Remark}
\numberwithin{equation}{section}
\begin{document}
\title[Two weight Sobolev norm inequalities]{A two weight Sobolev $T1$
theorem for fractional vector Riesz transforms and doubling weights}
\author[E. T. Sawyer]{Eric T. Sawyer$^\dagger$}
\address{Eric T. Sawyer, Department of Mathematics and Statistics\\
McMaster University\\
1280 Main Street West\\
Hamilton, Ontario L8S 4K1 Canada}
\thanks{$\dagger $ Research supported in part by a grant from the National
Science and Engineering Research Council of Canada.}
\email{sawyer@mcmaster.ca}
\author[B. D. Wick]{Brett D. Wick$^\ddagger$}
\address{Brett D. Wick, Department of Mathematics \& Statistics, Washington
University -- St. Louis, One Brookings Drive, St. Louis, MO USA 63130-4899.}
\email{wick@math.wustl.edu}
\thanks{$\ddagger $ B. D. Wick's research is supported in part by National
Science Foundation Grants DMS \# 1800057, \# 2054863, and \# 2000510 and
Australian Research Council -- DP 220100285.}
\date{\today }

\begin{abstract}
Let $\mu $ be a positive locally finite Borel measure on $\mathbb{R}^{n}$
that is doubling, and define the homogeneous $W^{s}\left( \mu \right) $%
-Sobolev norm squared $\left\Vert f\right\Vert _{W^{s}\left( \mu \right)
}^{2}$ of a function $f\in L_{\func{loc}}^{2}\left( \mu \right) $ by%
\begin{equation*}
\int_{\mathbb{R}^{n}}\int_{\mathbb{R}^{n}}\left( \frac{f\left( x\right)
-f\left( y\right) }{\left\vert x-y\right\vert ^{s}}\right) ^{2}\frac{d\mu
\left( x\right) d\mu \left( y\right) }{\left\vert B\left( \frac{x+y}{2},%
\frac{\left\vert x-y\right\vert }{2}\right) \right\vert _{\mu }},
\end{equation*}%
and denote by $W^{s}\left( \mu \right) $ the corresponding Hilbert space
completion (when $\mu $ is Lebesgue measure, this is the familiar Sobolev
space on $\mathbb{R}^{n}$).

We prove in particular that for $0\leq \alpha <n$, and $\sigma $ and $\omega 
$ doubling measures on $\mathbb{R}^{n}$, there is a positive constant $%
\theta $ such that for $0<s<\theta $, the $\alpha $-fractional vector Riesz
transform $\mathbf{R}^{\alpha }$ is bounded from\thinspace $W^{s}\left(
\sigma \right) $ to $W^{s}\left( \omega \right) $ \emph{if and only if} the
Sobolev $\mathbf{1}$-testing and $\mathbf{1}^{\ast }$-testing conditions
hold for the operator $\mathbf{R}^{\alpha }$, i.e.%
\begin{eqnarray*}
\left\Vert \mathbf{R}_{\sigma }^{\alpha }\mathbf{1}_{I}\right\Vert
_{W^{s}\left( \omega \right) } &\leq &\mathfrak{T}_{\mathbf{R}^{\alpha
}}\left( \sigma ,\omega \right) \sqrt{\left\vert I\right\vert _{\sigma }}%
\ell \left( I\right) ^{-s},\ \ \ \ \ I\in \mathcal{Q}^{n}, \\
\left\Vert \mathbf{R}_{\omega }^{\alpha ,\ast }\mathbf{1}_{I}\right\Vert
_{W^{s}\left( \sigma \right) ^{\ast }} &\leq &\mathfrak{T}_{\mathbf{R}%
^{\alpha ,\ast }}\left( \omega ,\sigma \right) \sqrt{\left\vert I\right\vert
_{\omega }}\ell \left( I\right) ^{s},\ \ \ \ \ I\in \mathcal{Q}^{n},
\end{eqnarray*}%
taken over the family of indicator test functions $\left\{ \mathbf{1}%
_{I}\right\} _{I\in \mathcal{Q}^{n}}$. Here $\mathcal{Q}^{n}$ is the
collection of all cubes with sides parallel to the coordinate axes, and $%
W^{s}\left( \mu \right) ^{\ast }$ denotes the dual of $W^{s}\left( \mu
\right) $ determined by the usual $L^{2}\left( \mu \right) $ bilinear
pairing, which we identify with a dyadic Sobolev space $W_{\limfunc{dyad}%
}^{-s}\left( \mu \right) $ of negative order.

Under the side assumption that the classical pivotal conditions hold, the
vector Riesz transform $\mathbf{R}^{\alpha }$ can be replaced by any $\alpha 
$-fractional smooth Calder\'{o}n-Zygmund operator $T^{\alpha }$.
\end{abstract}

\maketitle
\tableofcontents

\section{Introduction}

The Nazarov-Treil-Volberg $T1$ conjecture on the boundedness of the Hilbert
transform from one weighted space $L^{2}\left( \sigma \right) $ to another $%
L^{2}\left( \omega \right) $, was settled affirmatively in the two part
paper \cite{LaSaShUr3},\cite{Lac}. Since then there have been a number of
generalizations of boundedness of Calder\'{o}n-Zygmund operators from one
weighted $L^{2}$ space to another, both to higher dimensional Euclidean
spaces (see e.g. \cite{SaShUr7}, \cite{LaWi} and \cite{LaSaShUrWi}), and
also to spaces of homogeneous type (see e.g. \cite{DuLiSaVeWiYa}). In
addition there have been some generalizations to Sobolev spaces in place
of\thinspace $L^{2}$ spaces, but only in the setting of a single weight (see
e.g. \cite{DilWiWi} and \cite{KaLiPeWa}).

The purpose of this paper is to prove a \emph{two weight} $T1$ theorem on 
\emph{weighted Sobolev} spaces for vector Riesz transforms, but with \emph{%
doubling} measures\footnote{%
Weighted Sobolev spaces are not canonically defined for general weights, and
doubling is a convenient hypothesis that gives equivalence of the various
definitions.}. The proof also shows that this theorem extends to smooth $%
\alpha $-fractional singular integrals on $\mathbb{R}^{n}$, in the presence
of classical pivotal side conditions. In order to state our theorem, we need
a number of definitions, some of which are recalled and explained in detail
further below. Let $\mu $ be a positive locally finite Borel measure on $%
\mathbb{R}^{n}$ that is doubling, let $\mathcal{D}$ be a dyadic grid on $%
\mathbb{R}^{n}$, let $\kappa \in \mathbb{N}$ and let $\left\{ \bigtriangleup
_{Q;\kappa }^{\mu }\right\} _{Q\in \mathcal{D}}$ be the set of weighted
Alpert projections on $L^{2}\left( \mu \right) $ (see \cite{RaSaWi}). When $%
\kappa =1$, these are the familiar weighted Haar projections $\bigtriangleup
_{Q}^{\mu }=\bigtriangleup _{Q;1}^{\mu }$.

\begin{definition}
Let $\mu $ be a doubling measure on $\mathbb{R}^{n}$. Given $s\in \mathbb{R}$%
, we define the $\mathcal{D}$-dyadic homogeneous $W_{\mathcal{D};\kappa
}^{s}\left( \mu \right) $-Sobolev norm of a function $f\in L_{\func{loc}%
}^{2}\left( \mu \right) $ by%
\begin{equation*}
\left\Vert f\right\Vert _{W_{\mathcal{D};\kappa }^{s}\left( \mu \right)
}^{2}\equiv \sum_{Q\in \mathcal{D}}\ell \left( Q\right) ^{-2s}\left\Vert
\bigtriangleup _{Q;\kappa }^{\mu }f\right\Vert _{L^{2}\left( \mu \right)
}^{2}\ ,
\end{equation*}%
and we denote by $W_{\mathcal{D};\kappa }^{s}\left( \mu \right) $ the
corresponding Hilbert space completion\footnote{%
For general measures, the functional $\left\Vert {}\right\Vert _{W_{\mathcal{%
D};\kappa }^{s}\left( \mu \right) }$ may only be a seminorm, but this is
avoided for doubling measures.} of $f\in L_{\func{loc}}^{2}\left( \mu
\right) $ with 
\begin{equation*}
\left\Vert f\right\Vert _{W_{\mathcal{D};\kappa }^{s}\left( \mu \right)
}<\infty .
\end{equation*}
\end{definition}

Note that $W_{\mathcal{D};\kappa }^{0}\left( \mu \right) =L^{2}\left( \mu
\right) $. We will show below that $W_{\mathcal{D};\kappa }^{s}\left( \mu
\right) =W_{\mathcal{D}^{\prime };\kappa ^{\prime }}^{s}\left( \mu \right) $
for all $s\in \mathbb{R}$ with $\left\vert s\right\vert $ sufficiently
small, for all $\kappa ,\kappa ^{\prime }\geq 1$, and for all dyadic grids $%
\mathcal{D}$ and $\mathcal{D}^{\prime }$. Thus for a sufficiently small real 
$s$ depending on the doubling measure $\mu $, there is essentially just one
weighted `dyadic' Sobolev space of order $s$, which we will denote by $W_{%
\limfunc{dyad}}^{s}\left( \mu \right) $. Moreover, for $s>0$ and small
enough and $\mu $ doubling, there is a more familiar equivalent `continuous'
norm,%
\begin{equation}
\left\Vert f\right\Vert _{W^{s}\left( \mu \right) }=\sqrt{\int_{\mathbb{R}%
^{n}}\int_{\mathbb{R}^{n}}\left( \frac{f\left( x\right) -f\left( y\right) }{%
\left\vert x-y\right\vert ^{s}}\right) ^{2}\frac{d\mu \left( x\right) d\mu
\left( y\right) }{\left\vert B\left( \frac{x+y}{2},\frac{\left\vert
x-y\right\vert }{2}\right) \right\vert _{\mu }}}.  \label{cont norm}
\end{equation}%
We also show that the dual spaces $W_{\mathcal{D};\kappa }^{s}\left( \mu
\right) ^{\ast }$ under the $L^{2}\left( \mu \right) $ pairing are given by $%
W_{\mathcal{D};\kappa }^{-s}\left( \mu \right) $ for all grids $\mathcal{D}$
and integers $\kappa $. Thus when $\mu $ is doubling, we can identify $%
W^{s}\left( \mu \right) ^{\ast }$ with any of the spaces $W_{\mathcal{D}%
;\kappa }^{-s}\left( \mu \right) $ for $\left\vert s\right\vert $
sufficiently small, and it will be convenient to denote $W^{s}\left( \mu
\right) ^{\ast }$ by $W^{-s}\left( \mu \right) $, even though the above
formula for $\left\Vert f\right\Vert _{W^{-s}\left( \mu \right) }$ diverges
when $s\geq 0$.

Finally, we note that \emph{without} the doubling hypothesis on $\mu $, we
in general need to include additional Haar projections $\left\{ \mathbb{E}%
_{T}^{\mu }\right\} _{T\in \mathcal{T}}$ onto one-dimensional spaces of
constant functions on certain `tops' $T$ of the grid $\mathcal{D}$, where a
top is the union of a maximal tower in $\mathcal{D}$ (see \cite{AlSaUr2}).
Without these additional projections, we may not recover all of $L^{2}\left(
\mu \right) $ in general, and moreover, the spaces $W_{\mathcal{D};\kappa
}^{s}\left( \mu \right) $ defined above may actually depend on the dyadic
grid $\mathcal{D}$. For example, if $d\mu \left( x\right) =\mathbf{1}_{\left[
-1,1\right) }\left( x\right) dx$ and $f\left( x\right) =\mathbf{1}_{\left[
0,1\right) }\left( x\right) $, the reader can easily check that the
functional $\left\Vert f\right\Vert _{W_{\mathcal{D};1}^{s}\left( \mu
\right) }$ vanishes when $\mathcal{D}$ is the standard dyadic grid, but is
positive when $\mathcal{D}$ is any grid containing $\left[ -1,1\right) $%
\footnote{%
Moreover, even taking into account the behaviour at infinity, one can show
that $\left\Vert f\right\Vert _{W_{\mathcal{D};1}^{s}\left( \mu \right) }=0$
when $\mathcal{D}$ is the standard grid and $s>0$.}.

Note that we will not use the Hilbert space duality (that identifies the
dual of a Hilbert space with itself) to analyze the two weight boundedness
of $T_{\sigma }^{\alpha }:W_{\func{dyad}}^{s}\left( \sigma \right)
\rightarrow W_{\func{dyad}}^{s}\left( \omega \right) $, but rather we will
use the $L^{2}\left( \omega \right) $ and $L^{2}\left( \sigma \right) $
pairings in which case the dual of $W_{\func{dyad}}^{s}\left( \mu \right) $
is identified with $W_{\func{dyad}}^{-s}\left( \mu \right) $ as above. The
reason for this is that the weighted Alpert projections $\left\{
\bigtriangleup _{Q;\kappa }^{\mu }\right\} _{Q\in \mathcal{D}}$ satisfy
telescoping identities, while the orthogonal projections $\left\{ \ell
\left( Q\right) ^{s}\bigtriangleup _{Q;\kappa }^{\mu }\right\} _{Q\in 
\mathcal{D}}$ do not.

Denote by $\Omega _{\limfunc{dyad}}$ the collection of all dyadic grids in $%
\mathbb{R}^{n}$, and let $\mathcal{Q}^{n}$ denote the collection of all
cubes in $\mathbb{R}^{n}$ having sides parallel to the coordinate axes. A
positive locally finite Borel measure $\mu $ on $\mathbb{R}^{n}$ is said to
be doubling if there is a constant $C_{\limfunc{doub}}$, called the doubling
constant, such that%
\begin{equation*}
\left\vert 2Q\right\vert _{\mu }\leq C_{\limfunc{doub}}\left\vert
Q\right\vert _{\mu }\ ,\ \ \ \ \ \text{for all cubes }Q\in \mathcal{Q}^{n}.
\end{equation*}

For $0\leq \alpha <n$ we define a smooth $\alpha $-fractional Calder\'{o}%
n-Zygmund kernel $K^{\alpha }(x,y)$ to be a function $K^{\alpha }:\mathbb{R}%
^{n}\times \mathbb{R}^{n}\rightarrow \mathbb{R}$ satisfying the following
fractional size and smoothness conditions%
\begin{equation}
\left\vert \nabla _{x}^{j}K^{\alpha }\left( x,y\right) \right\vert
+\left\vert \nabla _{y}^{j}K^{\alpha }\left( x,y\right) \right\vert \leq
C_{\alpha ,j}\left\vert x-y\right\vert ^{\alpha -j-n},\ \ \ \ \ 0\leq
j<\infty ,  \label{sizeandsmoothness'}
\end{equation}%
and we denote by $T^{\alpha }$ the associated $\alpha $-fractional singular
integral on $\mathbb{R}^{n}$. We say that $T_{\sigma }^{\alpha }$, where $%
T_{\sigma }^{\alpha }f\equiv T^{\alpha }\left( f\sigma \right) $, is bounded
from\thinspace $W_{\func{dyad}}^{s}\left( \sigma \right) $ to $W_{\func{dyad}%
}^{s}\left( \omega \right) $ if for all admissible truncations $\widetilde{%
T^{\alpha }}$ we have%
\begin{equation*}
\left\Vert \widetilde{T_{\sigma }^{\alpha }}f\right\Vert _{W_{\func{dyad}%
}^{s}\left( \omega \right) }\leq \mathfrak{N}_{T^{\alpha }}\left( \sigma
,\omega \right) \left\Vert f\right\Vert _{W_{\func{dyad}}^{s}\left( \sigma
\right) },\ \ \ \ \ \text{for all }f\in W_{\func{dyad}}^{s}\left( \sigma
\right) .
\end{equation*}%
Here $\mathfrak{N}_{T^{\alpha }}\left( \sigma ,\omega \right) $ denotes the
best constant in these inequalities uniformly over all admissible
truncations of $T^{\alpha }$. See below for a precise definition of
admissible truncations, as well as the interpretation of the testing
conditions appearing in the next theorem. The case $s=0$ of Theorem \ref%
{pivotal theorem} is in \cite{LaWi} and later in \cite{SaShUr9}.

Finally, for $0\leq \alpha <n$ we define the $\alpha $-fractional vector
Riesz transform kernel $\mathbf{K}^{\alpha }(x,y)$ to be the standard
function $\mathbf{K}^{\alpha }:\mathbb{R}^{n}\times \mathbb{R}%
^{n}\rightarrow \mathbb{R}^{n}$ given by $c_{n,\alpha }\frac{z}{\left\vert
z\right\vert ^{n-\alpha }}$, and we denote by $\mathbf{R}^{\alpha }$ the
associated $\alpha $-fractional singular integral on $\mathbb{R}^{n}$.

\begin{theorem}[$T1$ for doubling measures]
\label{pivotal theorem}Let $0\leq \alpha <n$, and let $\mathbf{R}^{\alpha }$
be the $\alpha $-fractional vector Riesz transform on $\mathbb{R}^{n}$. Let $%
\sigma $ and $\omega $ be doubling Borel measures on $\mathbb{R}^{n}$. Then
there is a positive constant $\theta $, depending only on the doubling
constants of $\sigma $ and $\omega $, such that if $0<s<\theta $, then $%
\mathbf{R}_{\sigma }^{\alpha }$, where $\mathbf{R}_{\sigma }^{\alpha
}f\equiv \mathbf{R}^{\alpha }\left( f\sigma \right) $, is bounded
from\thinspace $W^{s}\left( \sigma \right) $ to $W^{s}\left( \omega \right) $%
, i.e.%
\begin{equation}
\left\Vert \mathbf{R}_{\sigma }^{\alpha }f\right\Vert _{W^{s}\left( \omega
\right) }\leq \mathfrak{N}_{\mathbf{R}^{\alpha }}\left\Vert f\right\Vert
_{W^{s}\left( \sigma \right) },  \label{boundedness}
\end{equation}%
provided the Sobolev $\mathbf{1}$-testing and $\mathbf{1}^{\ast }$-testing
conditions for the operator $T^{\alpha }$,%
\begin{eqnarray*}
\left\Vert \mathbf{R}_{\sigma }^{\alpha }\mathbf{1}_{I}\right\Vert
_{W^{s}\left( \omega \right) } &\leq &\mathfrak{T}_{\mathbf{R}^{\alpha
,s}}\left( \sigma ,\omega \right) \sqrt{\left\vert I\right\vert _{\sigma }}%
\ell \left( I\right) ^{-s},\ \ \ \ \ I\in \mathcal{Q}^{n}, \\
\left\Vert \mathbf{R}_{\omega }^{\alpha ,\ast }\mathbf{1}_{I}\right\Vert
_{W^{-s}\left( \sigma \right) } &\leq &\mathfrak{T}_{\mathbf{R}^{\alpha
,-s,\ast }}\left( \omega ,\sigma \right) \sqrt{\left\vert I\right\vert
_{\omega }}\ell \left( I\right) ^{s},\ \ \ \ \ I\in \mathcal{Q}^{n},
\end{eqnarray*}%
taken over the family of indicator test functions $\left\{ \mathbf{1}%
_{I}\right\} _{I\in \mathcal{Q}^{n}}$.\newline
If we assume in addition that the classical pivotal constants $\mathcal{V}%
_{2}^{\alpha ,1}\left( \sigma ,\omega \right) $ and $\mathcal{V}_{2}^{\alpha
,1,\ast }\left( \omega ,\sigma \right) $ hold, then for any smooth $\alpha $%
-fractional singular integral $T^{\alpha }$ on $\mathbb{R}^{n}$, then $%
T_{\sigma }^{\alpha }$, where $T_{\sigma }^{\alpha }f\equiv T^{\alpha
}\left( f\sigma \right) $, is bounded from\thinspace $W^{s}\left( \sigma
\right) $ to $W^{s}\left( \omega \right) $, i.e.%
\begin{equation}
\left\Vert T_{\sigma }^{\alpha }f\right\Vert _{W^{s}\left( \omega \right)
}\leq \mathfrak{N}_{T^{\alpha }}\left\Vert f\right\Vert _{W^{s}\left( \sigma
\right) },  \label{boundedness'}
\end{equation}%
provided the classical fractional Muckenhoupt condition on the measure pair
holds,%
\begin{equation*}
A_{2}^{\alpha }\equiv \sup_{Q\in \mathcal{Q}^{n}}\frac{\left\vert
Q\right\vert _{\omega }\left\vert Q\right\vert _{\sigma }}{\left\vert
Q\right\vert ^{2\left( 1-\frac{\alpha }{n}\right) }}<\infty ,
\end{equation*}%
as well as the Sobolev $\mathbf{1}$-testing and $\mathbf{1}^{\ast }$-testing
conditions for the operator $T^{\alpha }$,%
\begin{eqnarray*}
\left\Vert T_{\sigma }^{\alpha }\mathbf{1}_{I}\right\Vert _{W^{s}\left(
\omega \right) } &\leq &\mathfrak{T}_{T^{\alpha ,s}}\left( \sigma ,\omega
\right) \sqrt{\left\vert I\right\vert _{\sigma }}\ell \left( I\right)
^{-s},\ \ \ \ \ I\in \mathcal{Q}^{n}, \\
\left\Vert T_{\omega }^{\alpha ,\ast }\mathbf{1}_{I}\right\Vert
_{W^{-s}\left( \sigma \right) } &\leq &\mathfrak{T}_{T^{\alpha ,-s,\ast
}}\left( \omega ,\sigma \right) \sqrt{\left\vert I\right\vert _{\omega }}%
\ell \left( I\right) ^{s},\ \ \ \ \ I\in \mathcal{Q}^{n},
\end{eqnarray*}%
taken over the family of indicator test functions $\left\{ \mathbf{1}%
_{I}\right\} _{I\in \mathcal{Q}^{n}}$.\newline
Conversely, the testing conditions are necessary for (\ref{boundedness}),
and if in addition $T^{\alpha }$ is a smooth convolution operator with
homogeneous kernel that is nonvanishing in some coordinate direction, then $%
A_{2}^{\alpha }<\infty $ whenever the two weight norm inequality (\ref%
{boundedness'}) holds for some $s>0$.
\end{theorem}

\begin{remark}
One can weaken the smoothness assumption on the kernel $K$ depending on the
doubling constants of the measures $\sigma $ and $\omega $, but we will not
pursue this here. See \cite{Saw6} for sharper assumptions in the $L^{2}$
case.
\end{remark}

\begin{problem}
$T1$ theorems for Sobolev norms involving general measures, even for the
Hilbert transform on the line, remain open at this time.
\end{problem}

The proof of Theorem \ref{pivotal theorem} expands on that for $L^{2}$
spaces with doubling measures using weighted Alpert wavelets (\cite{AlSaUr}%
), but with a number of differences. For example:

\begin{enumerate}
\item The map $f\rightarrow \left\vert f\right\vert $ fails to be bounded on 
$W_{\func{dyad}}^{-s}\left( \omega \right) $ for $s>0$\footnote{%
For example, if $d\mu =dx$ and $f=\sum_{k=1}^{2N}\left( -1\right) ^{k}1_{%
\left[ k-1,k\right) }$, then $\left\Vert f\right\Vert _{W_{\limfunc{dyad}%
}^{-s}}^{2}\approx N$, while $\left\Vert \left\vert f\right\vert \right\Vert
_{W_{\limfunc{dyad}}^{-s}}^{2}\approx N^{1+2s}$.}, which gives rise to
significant obstacles in dealing with bilinear inequalities (using the $%
L^{2} $ inner product as a duality pairing) that require control of both $%
T:W_{\func{dyad}}^{s}\left( \sigma \right) \rightarrow W_{\func{dyad}%
}^{s}\left( \omega \right) $ and $T^{\ast }:W_{\func{dyad}}^{-s}\left(
\omega \right) \rightarrow W_{\func{dyad}}^{-s}\left( \sigma \right) $.

\item As a consequence, we are no longer able to use Calder\'{o}n-Zygmund
decompositions and Carleson embedding theorems that require use of the
modulus $\left\vert f\right\vert $ of a Sobolev function $f$. Instead, we
derive a new form of the pivotal condition, that permits a new Carleson
condition to circumvent these hurdles. These new conditions yield stronger
inequalities, but only over grids that are \emph{full}.

\item The estimation of Sobolev norms in the paraproduct form requires the
use both of Alpert and Haar wavelets, in connection with the new Carleson
condition. This in turn requires the identification of different wavelet
spaces.

\item In Proposition \ref{Int Prop}, we extend the Intertwining Proposition
from \cite{Saw6} to Sobolev spaces using a new stronger form of the $\kappa $%
-pivotal condition, which also results in an overall simplification of this
proof.

\item A power decay of doubling measures near zero sets of polynomials is
needed to estimate Sobolev norms of moduli of Alpert wavelets in Lemma \ref%
{mod Alpert} when $s<0$, where the logarithmic decay obtained in \cite{Saw6}
is insufficient.

\item Finally, we prove the comparability of the various Sobolev space norms
for a fixed $s$ and doubling measure (the case $s=0$ being trivial),
including the familiar continuous norm in (\ref{cont norm}) when $s>0$. This
equivalence is needed in particular to implement the $\func{good}/\func{bad}$
technology of Nazarov, Treil and Volberg.
\end{enumerate}

\begin{acknowledgement}
We thank a referee for pointing out a gap in our treatment of the stopping
form in the original version of this paper, whose fix here requires

\begin{enumerate}
\item weakening the $T1$ theorem to hold only for vector fractional Riesz
transforms $\mathbf{R}^{\alpha ,n}$, and

\item weakening the two weight norm inequalities for general fractional
Calder\'{o}n-Zygmund operators $T^{\alpha }$ by requiring a pivotal side
condition.
\end{enumerate}
\end{acknowledgement}

\section{Preliminaries: Sobolev spaces and doubling measures}

Denote by $\mathcal{Q}^{n}$ the collection of cubes in $\mathbb{R}^{n}$
having sides parallel to the coordinate axes. A positive locally finite
Borel measure $\mu $ on $\mathbb{R}^{n}$ is said to satisfy the\emph{\
doubling condition} if there is a pair of constants $\left( \beta ,\gamma
\right) \in \left( 0,1\right) ^{2}$, called doubling parameters, such that%
\begin{equation}
\left\vert \beta Q\right\vert _{\mu }\geq \gamma \left\vert Q\right\vert
_{\mu }\ ,\ \ \ \ \ \text{for all cubes }Q\in \mathcal{Q}^{n},
\label{def rev doub}
\end{equation}%
and the \emph{reverse doubling condition} if there is a pair of constants $%
\left( \beta ,\gamma \right) \in \left( 0,1\right) ^{2}$, called reverse
doubling parameters, such that%
\begin{equation}
\left\vert \beta Q\right\vert _{\mu }\leq \gamma \left\vert Q\right\vert
_{\mu }\ ,\ \ \ \ \ \text{for all cubes }Q\in \mathcal{Q}^{n}.
\label{def rev doub'}
\end{equation}%
Note that the inequality in (\ref{def rev doub'}) has been reversed from
that in the definition of the doubling condition in (\ref{def rev doub}). A
familiar equivalent reformulation of (\ref{def rev doub}) is that there is a
positive constant $C_{\limfunc{doub}}$, called the doubling constant, such
that $\left\vert 2Q\right\vert _{\mu }\leq C_{\limfunc{doub}}\left\vert
Q\right\vert _{\mu }$ for all cubes $Q\in \mathcal{Q}^{n}$. There is also a
positive constant $\theta _{\mu }^{\limfunc{doub}}$, called a\emph{\
doubling exponent}, such that%
\begin{equation*}
\sup_{Q\in \mathcal{Q}^{n}}\frac{\left\vert sQ\right\vert _{\mu }}{%
\left\vert Q\right\vert _{\mu }}\leq s^{\theta _{\mu }^{\limfunc{doub}}},\ \
\ \ \ \text{for all sufficiently large }s>0.
\end{equation*}

It is well known (see e.g. the introduction in \cite{SaUr}) that doubling
implies reverse doubling, and that $\mu $ is reverse doubling if and only if
there exists a positive constant $\theta _{\mu }^{\limfunc{rev}}$, called a 
\emph{reverse doubling exponent}, such that%
\begin{equation*}
\sup_{Q\in \mathcal{Q}^{n}}\frac{\left\vert sQ\right\vert _{\mu }}{%
\left\vert Q\right\vert _{\mu }}\leq s^{\theta _{\mu }^{\limfunc{rev}}},\ \
\ \ \ \text{for all sufficiently small }s>0.
\end{equation*}%
In particular, for $I\subset Q$, we have%
\begin{equation}
\frac{\left\vert I\right\vert _{\mu }}{\left\vert Q\right\vert _{\mu }}=%
\frac{\left\vert \frac{\ell \left( Q\right) }{\ell \left( I\right) }%
I\right\vert _{\mu }}{\left\vert Q\right\vert _{\mu }}\frac{\left\vert
I\right\vert _{\mu }}{\left\vert \frac{\ell \left( Q\right) }{\ell \left(
I\right) }I\right\vert _{\mu }}\leq \frac{\left\vert 3Q\right\vert _{\mu }}{%
\left\vert Q\right\vert _{\mu }}\left( \frac{\ell \left( I\right) }{\ell
\left( Q\right) }\right) ^{\theta _{\mu }^{\limfunc{rev}}}\leq C_{\limfunc{%
doub}}^{2}\left( \frac{\ell \left( I\right) }{\ell \left( Q\right) }\right)
^{\theta _{\mu }^{\limfunc{rev}}}.  \label{combine}
\end{equation}

\subsection{Decay of doubling measures near zero sets of polynomials}

In order to deal with Sobolev norms and doubling measures, we will need the
following estimate on doubling measures of `halos' of zero sets of
normalized polynomials, which follows the same plan of proof as in the case
of boundaries of cubes proved in \cite[Lemma 24]{Saw6}. We first recall a
slight variant of a remark from \cite{Saw6}.

For any polynomial $P$ and cube $Q$, we say that $P$ is $Q$\emph{-normalized}
if $\left\Vert P\right\Vert _{L^{\infty }\left( Q\right) }=1$.

\begin{remark}
\label{normalized}Since all norms on a finite dimensional vector space are
equivalent, we have upon rescaling the cube $Q$ to the unit cube, 
\begin{equation}
\left\Vert P\right\Vert _{L^{\infty }\left( Q\right) }\approx \left\vert
P\left( 0\right) \right\vert +\sqrt{n}\ell \left( Q\right) \left\Vert \nabla
P\right\Vert _{L^{\infty }\left( Q\right) },\ \ \ \ \ \deg P<\kappa ,
\label{fin equiv}
\end{equation}%
with implicit constants depending only on $n$ and $\kappa $. In particular
there is a positive constant $K_{n,\kappa }$ such that $\sqrt{n}\ell \left(
Q\right) \left\Vert \nabla P\right\Vert _{L^{\infty }\left( Q\right) }\leq
K_{n,\kappa }$ for all $Q$-normalized polynomials $P$. Then for every $Q$%
-normalized polynomial $P$ of degree less than $\kappa $, there is a ball $%
B\left( y,\frac{\sqrt{n}}{2K_{n,\kappa }}\ell \left( Q\right) \right)
\subset Q\ $on which $P$ is nonvanishing. Indeed, if there is no such ball,
then 
\begin{equation*}
1=\left\Vert P\right\Vert _{L^{\infty }\left( Q\right) }\leq \frac{\sqrt{n}}{%
2K_{n,\kappa }}\ell \left( Q\right) \left\Vert \nabla P\right\Vert
_{L^{\infty }\left( Q\right) }\leq \frac{1}{2}
\end{equation*}%
is a contradiction.
\end{remark}

Here is the result proved in \cite[Lemma 24]{Saw6}.

\begin{lemma}
Suppose $\mu $ is a doubling measure on $\mathbb{R}^{n}$ and that $Q\in 
\mathcal{Q}^{n}$. Then for $0<\delta <1$ we have%
\begin{equation*}
\left\vert Q\setminus \left( 1-\delta \right) Q\right\vert _{\mu }\leq \frac{%
C}{\ln \frac{1}{\delta }}\left\vert Q\right\vert _{\mu }\ .
\end{equation*}
\end{lemma}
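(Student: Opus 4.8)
The plan is to reduce the boundary shell $Q\setminus\left( 1-\delta \right) Q$ to boundary slabs near the $2n$ faces of $Q$, and then, inside $Q$, to pack $\sim \ln \frac{1}{\delta }$ disjoint slabs of geometrically increasing depth, each carrying at least a fixed fraction of the mass of the thin outer slab; comparing this packing against $\left\vert Q\right\vert _{\mu }$ produces the logarithmic gain.

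First I would normalize. Writing $\ell =\ell \left( Q\right) $ and $Q=c_{Q}+\left[ -\frac{\ell }{2},\frac{\ell }{2}\right] ^{n}$, a point of $Q$ fails to lie in $\left( 1-\delta \right) Q$ exactly when some coordinate satisfies $\left\vert x_{i}-\left( c_{Q}\right) _{i}\right\vert >\left( 1-\delta \right) \frac{\ell }{2}$, so $Q\setminus \left( 1-\delta \right) Q$ is covered by $2n$ closed slabs, each of which — after translating the relevant face of $Q$ to $\left\{ x_{1}=0\right\} $ and the remaining coordinates to $\left[ 0,\ell \right] $ — has the form $\Sigma =\left[ 0,h\right] \times \left[ 0,\ell \right] ^{n-1}\subset Q$ with $h=\frac{\delta \ell }{2}$. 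It therefore suffices to show $\left\vert \Sigma \right\vert _{\mu }\leq \frac{C}{\ln \frac{1}{\delta }}\left\vert Q\right\vert _{\mu }$, and the case in which $\ln \frac{1}{\delta }$ is bounded is trivial since $\left\vert \Sigma \right\vert _{\mu }\leq \left\vert Q\right\vert _{\mu }$ always, so I may assume $\delta $ small.

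Next, set $K=\left\lfloor \log _{2}\frac{\ell }{h}\right\rfloor =\left\lfloor \log _{2}\frac{2}{\delta }\right\rfloor $, so that $K\geq \frac{1}{\ln 2}\ln \frac{1}{\delta }\geq 1$ and $2^{K}h\leq \ell $, and for $1\leq j\leq K$ put $S_{j}=\left[ 2^{j-1}h,2^{j}h\right) \times \left[ 0,\ell \right] ^{n-1}$. These are pairwise disjoint subsets of $Q$, hence $\sum_{j=1}^{K}\left\vert S_{j}\right\vert _{\mu }\leq \left\vert Q\right\vert _{\mu }$. The crux is the comparison $\left\vert \Sigma \right\vert _{\mu }\leq C_{0}\left\vert S_{j}\right\vert _{\mu }$ for every $j$, with $C_{0}$ depending only on the doubling constant of $\mu $: since $\Sigma \subset \left[ 0,2^{j-1}h\right) \times \left[ 0,\ell \right] ^{n-1}$, and the two rectangular boxes $\left[ 0,2^{j-1}h\right) \times \left[ 0,\ell \right] ^{n-1}$ and $S_{j}=\left[ 2^{j-1}h,2^{j}h\right) \times \left[ 0,\ell \right] ^{n-1}$ are congruent and share the face $\left\{ x_{1}=2^{j-1}h\right\} $, they have comparable $\mu $-measure. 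Granting this, $K\left\vert \Sigma \right\vert _{\mu }\leq C_{0}\sum_{j=1}^{K}\left\vert S_{j}\right\vert _{\mu }\leq C_{0}\left\vert Q\right\vert _{\mu }$, so $\left\vert \Sigma \right\vert _{\mu }\leq \frac{C_{0}}{K}\left\vert Q\right\vert _{\mu }\leq \frac{C_{0}\ln 2}{\ln \frac{1}{\delta }}\left\vert Q\right\vert _{\mu }$, and summing over the $2n$ slabs finishes the proof.

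Finally, the congruent-adjacent-boxes comparison is the only place doubling is used, and the place where I expect the main care to be needed. I would tile each box by a bounded-overlap family of cubes of side equal to the common short side-length $2^{j-1}h$ (an exact subdivision along the $x_{1}$-direction together with a bounded-overlap cover in the remaining $n-1$ directions), and pair each cube $q$ of the first box with the cube $q^{\prime }$ of the second obtained by the translation $x_{1}\mapsto x_{1}+2^{j-1}h$. Then $q$ and $q^{\prime }$ are adjacent cubes of equal side-length, so $q\subset 3q^{\prime }$ and $q^{\prime }\subset 3q$, giving $\left\vert q\right\vert _{\mu }\leq \left\vert 3q^{\prime }\right\vert _{\mu }\leq C_{\mathrm{doub}}^{2}\left\vert q^{\prime }\right\vert _{\mu }$ and symmetrically; summing over the matched tilings yields the claim with $C_{0}\approx C_{\mathrm{doub}}^{2}$. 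The essential — and delicate — point is that this comparison must route doubling only through \emph{adjacent, equal-sized} cubes and never through a single dilation of order $\frac{\ell }{h}$, since such a dilation would cost a factor $\left( \ell /h\right) ^{\theta }$ with $\theta $ a doubling exponent and would destroy the estimate, whereas the adjacent-cube route keeps $C_{0}$ independent of the large aspect ratio $\ell /h$ of the boxes.
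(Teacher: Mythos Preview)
Your argument is correct and follows the same overall strategy that the paper uses in its proof of the more general zero-set lemma (the paper quotes this particular boundary-shell lemma from \cite{Saw6} without reproving it here): one packs into $Q$ roughly $\log_2\frac{1}{\delta}$ pairwise disjoint regions, shows by a single bounded-factor application of doubling that each carries at least a fixed fraction of the boundary-shell mass, and compares the sum to $|Q|_\mu$. The difference is in the decomposition. You work face by face with parallel slabs $S_j=[2^{j-1}h,2^jh)\times[0,\ell]^{n-1}$ and compare the adjacent congruent box $[0,2^{j-1}h)\times[0,\ell]^{n-1}\supset\Sigma$ to $S_j$ via a matched tiling by cubes of side $2^{j-1}h$; the paper instead uses the dyadic Whitney-type layers $\mathfrak{H}^{(k)}(Q)$ of level-$k$ children whose triples touch the zero set, and compares each layer to the boundary layer $\mathfrak{G}^{(m)}(Q)$ by dilating each child by a fixed factor $R$. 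The paper's decomposition works verbatim when $\partial Q$ is replaced by the zero set of any normalized polynomial, which is exactly the generalization it needs; your slab argument is a bit more explicit for the special case $Z=\partial Q$ but does not extend directly.

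One small care point in your last paragraph, since you flagged it yourself: arrange the transverse bounded-overlap cover so the covering intervals stay inside $[0,\ell]$ (for example the grid $[0,a),[a,2a),\ldots$ together with one shifted interval $[\ell-a,\ell)$ at the end, where $a=2^{j-1}h\le\ell$). Then every cube $q$ lies in $[0,2^{j-1}h)\times[0,\ell]^{n-1}$ and its translate $q'$ lies in $S_j$, so the summation $\sum|q'|_\mu\le 2^{n-1}|S_j|_\mu$ is immediate and the constant $C_0$ stays independent of $j$ and of the aspect ratio, exactly as you emphasized.
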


We will need to improve significantly on this as follows. Without loss of
generality, suppose that $Q=\left[ 0,1\right] \times \left[ 0,1\right] $ in
the plane and $d\mu \left( x,y\right) =w\left( x,y\right) dxdy$. Define $W$
to be even on $\left[ -1,1\right] $ by%
\begin{equation*}
W\left( y\right) \equiv \int_{0}^{1}w\left( x,y\right) dx,\ \ \ \ \ 0\leq
y\leq 1.
\end{equation*}%
and note that $W\left( y\right) dy$ is a doubling measure on $\left[ 0,1%
\right] $, hence also reverse doubling with exponent $\theta ^{\func{rev}}$.
Thus from the reverse doubling property applied to the subinterval $\left[
0,t\right] $ of $\left[ -1,1\right] $ we have that%
\begin{equation*}
\int_{0}^{t}W\left( y\right) dy\leq Ct^{\theta ^{\func{rev}%
}}\int_{-1}^{1}W\left( y\right) dy\leq C^{\prime }t^{\theta ^{\func{rev}%
}}\int_{0}^{1}W\left( y\right) dy,
\end{equation*}%
which says that%
\begin{equation}
\left\vert \left[ 0,1\right] \times \left[ 0,t\right] \right\vert _{\mu
}=\int_{0}^{t}W\left( y\right) dy\leq C^{\prime }t^{\theta ^{\func{rev}%
}}\int_{0}^{1}W\left( y\right) dy=C^{\prime }t^{\theta ^{\func{rev}%
}}\left\vert \left[ 0,1\right] \times \left[ 0,1\right] \right\vert _{\mu }.
\label{improvement}
\end{equation}%
This gives power decay instead of logarithmic decay, which will prove
crucial below. The next lemma is a generalization of \cite[Lemma 24]{Saw6}.

\begin{lemma}
Let $\kappa \in \mathbb{N}$. Suppose $\mu $ is a doubling measure on $%
\mathbb{R}^{n}$ and that $Q\in \mathcal{Q}^{n}$. Let $Z$ denote the zero set
of a $Q$-normalized polynomial $P$ of degree less than $\kappa $, and for $%
0<\delta <1$, let 
\begin{equation*}
Z_{\delta }=\left\{ y\in \mathbb{R}^{n}:\left\vert y-z\right\vert <\delta 
\text{ for some }z\in Z\right\}
\end{equation*}%
denote the $\delta $-halo of $Z$. Then for a positive constant $C_{n,\kappa
} $ depending only on $n$ and $\kappa $, and not on $P$ itself, we have%
\begin{equation*}
\left\vert Q\cap Z_{\delta }\right\vert _{\mu }\leq \frac{C_{n,\kappa }}{\ln 
\frac{1}{\delta }}\left\vert Q\right\vert _{\mu }\ .
\end{equation*}
\end{lemma}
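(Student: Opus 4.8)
The plan is to adapt the proof of \cite[Lemma 24]{Saw6} for boundaries of cubes, using the uniform nonvanishing statement of Remark \ref{normalized} in place of the explicit geometry of the faces. After rescaling we may take $Q=[0,1]^{n}$ and $\left\Vert P\right\Vert _{L^{\infty }(Q)}=1$, and we may assume $\delta <c_{0}$ for a small $c_{0}=c_{0}(n,\kappa )$, since the estimate is trivial for $\delta \geq c_{0}$ once $C_{n,\kappa }$ is large. Fix an integer $K_{0}=K_{0}(n,\kappa )$ (chosen below) and the integer $J$ with $2^{K_{0}J}\delta \asymp c_{0}$ (so $J\asymp \ln \frac{1}{\delta }$), and set $H_{j}\equiv Z_{2^{K_{0}j}\delta }\cap Q$ for $0\leq j\leq J$, so that $H_{0}=Z_{\delta }\cap Q$, the $H_{j}$ increase, and $H_{J}\subseteq Q$. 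The lemma --- in fact the stronger power bound $\left\vert Z_{\delta }\cap Q\right\vert _{\mu }\lesssim \delta ^{\eta }\left\vert Q\right\vert _{\mu }$ for some $\eta >0$ --- then follows from the \emph{geometric growth estimate}
\begin{equation*}
\left\vert H_{j+1}\right\vert _{\mu }\geq \left( 1+c\right) \left\vert H_{j}\right\vert _{\mu },\qquad 0\leq j\leq J-1,
\end{equation*}
with $c=c(n,\kappa ,C_{\limfunc{doub}})>0$ independent of $P$ and of $j$; for iterating gives $\left\vert H_{0}\right\vert _{\mu }\leq (1+c)^{-J}\left\vert H_{J}\right\vert _{\mu }\leq (1+c)^{-J}\left\vert Q\right\vert _{\mu }$ and $(1+c)^{-J}\lesssim \delta ^{\eta }\lesssim \frac{1}{\ln (1/\delta )}$ for $\delta $ small.

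To prove the growth estimate, set $\rho \equiv 2^{K_{0}j}\delta $ and cover the part of $Z$ within distance $2^{K_{0}}\rho $ of $Q$ by a bounded-overlap family $\mathcal{G}$ of cubes of side $\rho $. On each $q\in \mathcal{G}$, normalizing $P$ on $q$ and applying Remark \ref{normalized} gives a ball $B_{q}\subseteq q$ of radius $r_{q}=a_{n,\kappa }\rho $ on which $P$, hence $Z$, does not vanish; since $q$ meets $Z$ but omits $B_{q}$, the zero set $Z$ has only $O_{n,\kappa }(1)$ sheets in $q$ and cannot curve back within $q$ on the scale $\rho $. Using this together with $\left\vert \nabla P\right\vert \lesssim 1$ on $Q$, one carries a fixed fraction --- in $\mu $-measure --- of $H_{j}\cap q$, by displacements of size $\asymp \rho $ directed away from $Z$ and into $B_{q}$, onto a subset of $\left( H_{j+1}\setminus H_{j}\right) \cap 3q$; here $K_{0}$ is fixed (via $a_{n,\kappa }$) so large that a displacement of size $\asymp \rho $ carries a point of $H_{j}$ to distance between $2^{K_{0}j}\delta $ and $2^{K_{0}(j+1)}\delta $ from $Z$. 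Because $q'\cup (q'+v)\subseteq 3q'$ whenever $\left\vert v\right\vert \leq \ell (q')$, and $\left\vert 3q'\right\vert _{\mu }\leq C_{\limfunc{doub}}^{2}\left\vert q'\right\vert _{\mu }$, these displacements change $\mu $-mass by at most the factor $C_{\limfunc{doub}}^{2}$; hence $\left\vert \left( H_{j+1}\setminus H_{j}\right) \cap 3q\right\vert _{\mu }\gtrsim \left\vert H_{j}\cap q\right\vert _{\mu }$, and summing over the bounded-overlap family $\mathcal{G}$ yields $\left\vert H_{j+1}\right\vert _{\mu }-\left\vert H_{j}\right\vert _{\mu }\gtrsim \left\vert H_{j}\right\vert _{\mu }$.

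I expect the main obstacle to be exactly this geometric push: establishing, \emph{uniformly over all $Q$-normalized polynomials $P$ of degree less than $\kappa $ and all scales $\rho $}, that the halo $Z_{\rho }$ can be displaced a distance $\asymp \rho $ into the annulus $Z_{2^{K_{0}}\rho }\setminus Z_{\rho }$ without $Z$ obstructing the motion. The crucial uniform input --- that $Z$ omits a ball of fixed relative radius from every cube, hence locally looks like a bounded number of nearly flat sheets on the relevant scale --- is supplied by Remark \ref{normalized}, which here plays the role that flatness of the faces plays in \cite[Lemma 24]{Saw6}; converting it into a quantitative tube estimate for $Z$ valid uniformly in $P$ is the technical heart. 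A more measure-theoretic alternative avoids the covering by inducting on the dimension $n$: over a suitable coordinate direction the fibre $\{t:\left\vert P(x',t)\right\vert \leq \varepsilon \}$ is a union of at most $\kappa $ intervals of total length $\lesssim \varepsilon ^{1/\kappa }$, controlled by reverse doubling, while the ``degenerate'' set of $x'$, where $P(x',\cdot )$ is uniformly small, lies in a sublevel set of the $Q'$-normalized polynomial $\sum_{k}c_{k}(x')^{2}$ of degree less than $2\kappa $ in $n-1$ variables, to which the inductive hypothesis applies once one knows that the coordinate marginal of $\mu $ is again doubling --- the $n$-dimensional analogue of the computation producing (\ref{improvement}); this route meets the same measure-transfer difficulty in disguise.
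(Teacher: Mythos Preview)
Your geometric-growth outline is in the right spirit, but the displacement step is not justified as written. When you argue that ``these displacements change $\mu$-mass by at most the factor $C_{\limfunc{doub}}^{2}$'' from $q'\cup(q'+v)\subseteq 3q'$, you are invoking doubling for \emph{cubes}; it gives no comparison between $\left\vert A\right\vert_\mu$ and $\left\vert A+v\right\vert_\mu$ for an arbitrary subset $A\subset q$. For instance with $d\mu=\left\vert x_1\right\vert^a\,dx$ and $A=\{\left\vert x_1\right\vert<\varepsilon\}\cap q$, a translation in the $x_1$-direction changes $\left\vert A\right\vert_\mu$ by a factor that is unbounded as $\varepsilon\to 0$. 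So the claimed inequality $\left\vert(H_{j+1}\setminus H_j)\cap 3q\right\vert_\mu\gtrsim\left\vert H_j\cap q\right\vert_\mu$ does not follow from pushing the set $H_j\cap q$ toward $B_q$, and you rightly identify this step as the ``main obstacle'' that the proposal leaves open.

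The paper's proof bypasses the difficulty by never moving sets: every comparison is between a dyadic cube $I$ and a fixed dilate $RI$. With $\delta=2^{-m}$, it introduces for each scale $k$ a layer $\mathfrak{H}^{(k)}(Q)$ of dyadic subcubes of $Q$ of side $2^{-k}$ lying in $Q\setminus Z_\delta$ and close to $Z$, the layers being pairwise disjoint over $k$. The single geometric input from Remark~\ref{normalized} is that for each $k$ in a range $cm\le k\le m$ (for some $c=c_{n,\kappa}\in(0,1)$), the dilates $\{RI:I\in\mathfrak{H}^{(k)}(Q)\}$ already cover $Q\cap Z_\delta$; doubling applied to the cubes $I$ and $RI$ then gives $\sum_{I\in\mathfrak{H}^{(k)}(Q)}\left\vert I\right\vert_\mu\ge D_R^{-1}\left\vert Q\cap Z_\delta\right\vert_\mu$ for each such $k$, and summing the $\asymp(1-c)m$ disjoint layers yields $\left\vert Q\right\vert_\mu\gtrsim m\left\vert Q\cap Z_\delta\right\vert_\mu$. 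Your iteration can be repaired by the same device: rather than displacing $H_j\cap q$, apply Remark~\ref{normalized} to a fixed dilate of each $q\in\mathcal{G}$ to locate a \emph{cube} $q''$ of side $\asymp\rho$ lying inside $H_{j+1}\setminus H_j$, compare $\left\vert q\right\vert_\mu\lesssim\left\vert q''\right\vert_\mu$ directly by doubling of cubes, and sum over the bounded-overlap family to get $\left\vert H_{j+1}\setminus H_j\right\vert_\mu\gtrsim\left\vert H_j\right\vert_\mu$. That indeed yields the power decay you announce --- which the paper proves separately, as Corollary~\ref{zero set doubling}, by yet another route.
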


\begin{proof}
Let $\delta =2^{-m}$. Denote by $\mathfrak{C}^{\left( m\right) }\left(
Q\right) $ the set of $m^{th}$ generation dyadic children of $Q$, so that
each $I\in \mathfrak{C}^{\left( m\right) }\left( Q\right) $ has side length $%
\ell \left( I\right) =2^{-m}\ell \left( Q\right) $, and define the
collections%
\begin{eqnarray*}
\mathfrak{G}^{\left( m\right) }\left( Q\right) &\equiv &\left\{ I\in 
\mathfrak{C}^{\left( m\right) }\left( Q\right) :I\subset Q\text{ and }%
\partial I\cap Z\neq \emptyset \right\} , \\
\mathfrak{H}^{\left( m\right) }\left( Q\right) &\equiv &\left\{ I\in 
\mathfrak{C}^{\left( m\right) }\left( Q\right) :3I\subset Q\text{ and }%
\partial \left( 3I\right) \cap Z\neq \emptyset \right\} .
\end{eqnarray*}%
Then 
\begin{equation*}
Q\cap Z_{\delta }=\bigcup_{I\in \mathfrak{G}^{\left( m\right) }\left(
Q\right) }I\text{ and }Q\setminus Z_{\delta }=\overset{\cdot }{\dbigcup }%
_{k=2}^{m}\bigcup_{I\in \mathfrak{H}^{\left( k\right) }\left( Q\right) }I.
\end{equation*}%
From Remark \ref{normalized}, we obtain that the the union $\bigcup_{I\in 
\mathfrak{H}^{\left( k\right) }\left( Q\right) }RI$ contains $Q\cap
Z_{\delta }$ for $k\geq cm$ for some $c=c_{n,\kappa }\in \left( 0,1\right) $
depending only on $n$ and $\kappa $, and in particular independent of $m$.
Then from the doubling condition we have $\left\vert RI\right\vert _{\mu
}\leq D_{R}\left\vert I\right\vert _{\mu }$ for all cubes $I$ and some
constant $D_{R}$, and so for $k\geq cm$, 
\begin{eqnarray*}
\left\vert \mathfrak{H}^{\left( k\right) }\left( Q\right) \right\vert _{\mu
} &=&\sum_{I\in \mathfrak{H}^{\left( k\right) }\left( Q\right) }\left\vert
I\right\vert _{\mu }\geq \sum_{I\in \mathfrak{H}^{\left( k\right) }\left(
Q\right) }\frac{1}{D_{R}}\left\vert RI\right\vert _{\mu }=\frac{1}{D_{R}}%
\int \left( \sum_{I\in \mathfrak{H}^{\left( k\right) }\left( Q\right) }%
\mathbf{1}_{RI}\right) d\mu \\
&\geq &\frac{1}{D_{R}}\int \left( \sum_{I\in \mathfrak{G}^{\left( k\right)
}\left( Q\right) }\mathbf{1}_{I}\right) d\mu =\frac{1}{D_{R}}\left\vert 
\mathfrak{G}^{\left( k\right) }\left( Q\right) \right\vert _{\mu }\geq \frac{%
1}{D_{R}}\left\vert \mathfrak{G}^{\left( m\right) }\left( Q\right)
\right\vert _{\mu }=\frac{1}{D_{R}}\left\vert Q\cap Z_{\delta }\right\vert
_{\mu }\ .
\end{eqnarray*}%
Thus we have%
\begin{equation*}
\left\vert Q\right\vert _{\mu }\geq \sum_{k=cm}^{m}\left\vert \mathfrak{H}%
^{\left( k\right) }\left( Q\right) \right\vert _{\mu }\geq \frac{m\left(
1-c\right) }{D_{R}}\left\vert Q\cap Z_{\delta }\right\vert _{\mu }\ ,
\end{equation*}%
which proves the lemma.
\end{proof}

We can apply the method used in (\ref{improvement}) to obtain a power decay
instead of a logarithmic decay.

\begin{corollary}
\label{zero set doubling}Let $\kappa \in \mathbb{N}$. Suppose $\mu $ is a
doubling measure on $\mathbb{R}^{n}$ and that $Q\in \mathcal{Q}^{n}$. Let $Z$
denote the zero set of a $Q$-normalized polynomial $P$ of degree less than $%
\kappa $, and for $0<\delta <1$, let $Z_{\delta }$ denote the $\delta $-halo
of $Z$. Then for a positive constant $C_{n,\kappa }$ depending only on $n$
and $\kappa $, and not on $P$ itself, and for some $\theta >0$, we have%
\begin{equation*}
\left\vert Q\cap Z_{\delta }\right\vert _{\mu }\leq C_{n,\kappa }\delta
^{\theta }\left\vert Q\right\vert _{\mu }\ .
\end{equation*}%
In particular this holds for $Z=\partial Q$, which is a finite union of zero
sets of linear functions.
\end{corollary}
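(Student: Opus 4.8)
The plan is to upgrade the logarithmic bound of the preceding Lemma to a power bound by iterating it across dyadic scales, exactly as the passage from the logarithmic halo estimate to the power estimate (\ref{improvement}) was carried out above for the boundary of a cube. First I would observe that it suffices to prove the corollary for $\delta$ of the form $2^{-m}$, since for general $\delta\in(0,1)$ one chooses $m$ with $2^{-m-1}\le\delta<2^{-m}$ and absorbs the loss into the constant. So fix $m$ and write $\delta=2^{-m}$.

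The key step is a self-improving inequality. Partition $Q$ into the dyadic subcubes $I\in\mathfrak{C}^{(k)}(Q)$ for a fixed intermediate scale $k$ with, say, $1\le k\le m$. On each such $I$ the polynomial $P$ restricts to a polynomial of degree less than $\kappa$; rescaling $I$ to the unit cube and invoking Remark \ref{normalized}, the $I$-normalized version $P_I=P/\|P\|_{L^\infty(I)}$ of $P$ has zero set $Z\cap (\text{a fixed dilate of }I)$, and the $\delta$-halo $Z_\delta$ meets $I$ only inside the $(2^{k}\delta)$-halo of $Z$ measured at the scale of $I$. Applying the Lemma to $P_I$ on $I$ with the rescaled parameter $2^{k}\delta=2^{k-m}$ gives
\begin{equation*}
\left\vert I\cap Z_{\delta}\right\vert_{\mu}\le \frac{C_{n,\kappa}}{\ln\frac{1}{2^{k-m}}}\left\vert I\right\vert_{\mu}=\frac{C_{n,\kappa}}{(m-k)\ln 2}\left\vert I\right\vert_{\mu},
\end{equation*}
for those $I$ meeting $Z_\delta$ (and the rescaling is legitimate only when $P$ is not identically small on $I$; the cubes on which $P$ is tiny relative to $\|P\|_{L^\infty(Q)}$ are themselves confined to a halo of $Z$ and are handled by the same Lemma at a coarser scale, so they do not cause trouble). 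Summing over $I\in\mathfrak{C}^{(k)}(Q)$ and using that the $I\cap Z_\delta$ are disjoint yields
\begin{equation*}
\left\vert Q\cap Z_{\delta}\right\vert_{\mu}\le \frac{C_{n,\kappa}}{(m-k)\ln 2}\,\left\vert \bigcup_{I\cap Z_\delta\ne\emptyset}I\right\vert_{\mu}.
\end{equation*}
The union on the right is contained in the $(C2^{-k})$-halo $Z_{C2^{-k}}$ of $Z$ at the coarse scale, so the Lemma (at scale $k$) bounds its measure by $\frac{C_{n,\kappa}}{k\ln 2}\,|Q|_\mu$. Choosing $k=\lfloor m/2\rfloor$ gives $|Q\cap Z_\delta|_\mu\le \frac{C'}{m^2}|Q|_\mu$; iterating this doubling-of-the-exponent step $j$ times gives $|Q\cap Z_\delta|_\mu\le C_j m^{-2^j}|Q|_\mu$, and since $m\asymp\log\frac1\delta$, taking $j$ large enough that $2^j$ beats any fixed power of the logarithm produces $|Q\cap Z_\delta|_\mu\lesssim \delta^{\theta}|Q|_\mu$ for some $\theta=\theta(n,\kappa,C_{\mathrm{doub}})>0$. (Alternatively, and more cleanly, one runs the argument in the one-variable reduction of (\ref{improvement}): slicing in a direction transverse to $Z$ at the scale of each subcube reduces matters to reverse doubling of the projected measure, which is exactly what gave the power $t^{\theta^{\mathrm{rev}}}$ there; the self-improvement is then immediate rather than iterative.)

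The final assertion, the case $Z=\partial Q$, follows since $\partial Q$ is the union of $2n$ hyperplanes, each the zero set of a coordinate linear function, and such a linear function is $Q$-normalized up to a harmless constant; applying the bound to each face and summing gives the claim. The main obstacle I anticipate is bookkeeping the rescaling: one must verify uniformly in $P$ that on a dyadic subcube $I$ the renormalized polynomial $P_I$ still has degree less than $\kappa$ (clear) and that the effective halo parameter is comparable to $2^{k}\delta$ with constants depending only on $n,\kappa$ — this is where Remark \ref{normalized} does the real work, by controlling $\|\nabla P_I\|_{L^\infty(I)}$ and hence preventing the zero set from behaving pathologically after rescaling — together with the care needed to treat subcubes on which $P$ is small relative to its global sup, which must be swept into a coarser halo rather than rescaled directly.
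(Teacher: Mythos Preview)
Your iteration scheme has a genuine gap in the final step. You correctly establish that after $j$ iterations one has $|Q\cap Z_{2^{-m}}|_\mu\le C_j\,m^{-2^j}|Q|_\mu$, but the sentence ``taking $j$ large enough that $2^j$ beats any fixed power of the logarithm produces $\delta^{\theta}$'' is false as stated. For any \emph{fixed} $j$, the bound $m^{-2^j}$ is polynomial in $m=\log_2(1/\delta)$, i.e.\ of order $(\log(1/\delta))^{-N}$, which is strictly weaker than $\delta^{\theta}=2^{-\theta m}$ for every $\theta>0$. To reach power decay you must let $j$ depend on $m$; but then the constants $C_j$ (which you do not track) grow super-exponentially in $j$, and one must check that they are beaten by $m^{-2^j}$. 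In fact a careful accounting gives $\phi_j(m)=(a_0\,2^{j}/m)^{2^j}$ for some $a_0=a_0(n,\kappa,C_{\mathrm{doub}})$, so choosing $2^j\approx m/(e a_0)$ does yield $\phi_j(m)\le e^{-m/(e a_0)}=\delta^{\theta}$ --- but this calculation is absent from your proposal, and without it the argument does not close. The parenthetical about subcubes on which $P$ is small is also a red herring: $P_I=P/\|P\|_{L^\infty(I)}$ is always well-defined for a nonzero polynomial and the Lemma applies to it with the uniform constant $C_{n,\kappa}$, so no separate treatment is needed.

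The paper's proof is entirely different and avoids iteration. It sets $w(t)=|Q\cap Z_t|_\mu$ and shows directly that $w$ is doubling in $t$: one covers $Z_t$ by a bounded-overlap family of cubes $Q_i^t$ of side $\approx t$ whose fixed dilates cover $Z_{2t}$, and then the doubling of $\mu$ gives $w(2t)\le\sum_i|C_0Q_i^t|_\mu\lesssim\sum_i|Q_i^t|_\mu\lesssim w(t)$. Doubling of $w$ implies reverse doubling, i.e.\ $w(\delta)\le C\delta^{\theta}w(1)\le C\delta^{\theta}|Q|_\mu$, which is exactly the assertion. This route is cleaner because it isolates the single geometric input (the covering property of polynomial halos, uniform in $P$) and then invokes the standard doubling~$\Rightarrow$~reverse-doubling mechanism once, rather than bootstrapping a weak estimate through scales.
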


\begin{proof}
Without loss of generality $Q$ is the unit cube $\left[ 0,1\right] ^{n}$.
Define an even function $w\left( t\right) $ on $\left[ -1,1\right] $, that
is increasing on $\left[ 0,1\right] $, by the formula%
\begin{equation*}
w\left( t\right) \equiv \left\vert Z_{t}\right\vert _{\mu }\ ,\ \ \ \ \
0\leq t\leq 1.
\end{equation*}%
Since $P$ is a $Q$-normalized polynomial of degree less than $\kappa $,
there are positive constants $t_{0},\,c_{0},C_{0},A$ such that for every $%
0<t<t_{0}$, there is a collection of cubes $\left\{ Q_{i}^{t}\right\} _{i}$
with 
\begin{eqnarray*}
\ell \left( Q_{i}^{t}\right) &=&c_{0}t, \\
\mathbf{1}_{Z_{t}}\left( x\right) &\leq &\sum_{i}\mathbf{1}%
_{Q_{i}^{t}}\left( x\right) \leq A\mathbf{1}_{Z_{t}}\left( x\right) \\
\mathbf{1}_{Z_{2t}}\left( x\right) &\leq &\sum_{i}\mathbf{1}%
_{C_{0}Q_{i}^{t}}\left( x\right) \leq A.
\end{eqnarray*}%
Thus we have%
\begin{equation*}
w\left( 2t\right) \leq \sum_{i}\left\vert C_{0}Q_{i}^{t}\right\vert _{\mu
}\leq \sum_{i}C_{\limfunc{doub}}\left\vert Q_{i}^{t}\right\vert _{\mu }\leq
C_{\limfunc{doub}}M\left\vert Z_{t}\right\vert _{\mu }=C_{\limfunc{doub}%
}Aw\left( t\right) ,\ \ \ \ \ 0<t<t_{0},
\end{equation*}%
and hence there is a doubling exponent $\theta _{w}^{\limfunc{doub}}$ such
that%
\begin{equation*}
\frac{w\left( st\right) }{w\left( t\right) }\leq s^{\theta _{w}^{\limfunc{%
doub}}},\ \ \ \ \ \text{for all sufficiently large }s.
\end{equation*}%
We claim $w\left( t\right) $ also satisfies the reverse doubling condition%
\begin{equation*}
w\left( \delta \right) \leq C\delta ^{\varepsilon }w\left( 1\right) ,\ \ \ \
\ 0<\delta <t_{0}.
\end{equation*}%
Indeed, let $d\mu \equiv \frac{dw}{dt}$. Then assuming $s\geq 5$ in the
definition of $\theta _{w}^{\limfunc{doub}}$, we obtain for $Q=\left[ 0,t%
\right] $ that%
\begin{eqnarray*}
\left\vert 3Q\setminus Q\right\vert _{\mu } &=&\sum_{I\in \mathcal{D}:\
I\subset 3Q\setminus Q,\ell \left( I\right) =\ell \left( Q\right)
}\left\vert I\right\vert _{\mu }\geq \sum_{I\in \mathcal{D}:\ I\subset
3Q\setminus Q,\ell \left( I\right) =\ell \left( Q\right) }5^{-\theta _{\mu
}^{\limfunc{doub}}}\left\vert 5I\right\vert _{\mu }\geq \left(
3^{n}-1\right) 5^{-\theta _{\mu }^{\limfunc{doub}}}\left\vert Q\right\vert
_{\mu } \\
&\Longrightarrow &\left\vert Q\right\vert _{\mu }=\left\vert 3Q\right\vert
_{\mu }-\left\vert 3Q\setminus Q\right\vert _{\mu }\leq \left( 1-\frac{%
3^{n}-1}{5^{\theta _{\mu }^{\limfunc{doub}}}}\right) \left\vert
3Q\right\vert _{\mu }\ ,
\end{eqnarray*}%
which gives reverse doubling, and hence%
\begin{equation*}
\left\vert Q\cap Z_{\delta }\right\vert _{\mu }=\int_{0}^{\delta }w\left(
t\right) dt\leq C\delta ^{\varepsilon }\left\vert Q\right\vert _{\mu },\ \ \
\ \ \text{for }0<\delta <t_{0},
\end{equation*}%
and trivially this is extended to $\delta <1$ by possibly increasing the
constant $C$.
\end{proof}

\subsection{Weighted Alpert bases for $L^{2}\left( \protect\mu \right) $ and 
$L^{\infty }$ control of projections\label{Subsection Haar}}

The following theorem was proved in \cite{RaSaWi}, which establishes the
existence of Alpert wavelets, for $L^{2}\left( \mu \right) $ in all
dimensions, having the three important properties of orthogonality,
telescoping and moment vanishing. Since the statement is simplified for
doubling measures, and this is the only case considered in our main theorem,
we restrict ourselves to this case here.

We first recall the basic construction of weighted Alpert wavelets in \cite%
{RaSaWi} restricted to doubling measures. Let $\mu $ be a doubling measure
on $\mathbb{R}^{n}$, and fix $\kappa \in \mathbb{N}$. For $Q\in \mathcal{Q}%
^{n}$, the collection of cubes with sides parallel to the coordinate axes,
denote by $L_{Q;\kappa }^{2}\left( \mu \right) $ the finite dimensional
subspace of $L^{2}\left( \mu \right) $ that consists of linear combinations
of the indicators of\ the children $\mathfrak{C}\left( Q\right) $ of $Q$
multiplied by polynomials of degree less than $\kappa $, and such that the
linear combinations have vanishing $\mu $-moments on the cube $Q$ up to
order $\kappa -1$:%
\begin{equation*}
L_{Q;\kappa }^{2}\left( \mu \right) \equiv \left\{ f=\dsum\limits_{Q^{\prime
}\in \mathfrak{C}\left( Q\right) }\mathbf{1}_{Q^{\prime }}p_{Q^{\prime
};\kappa }\left( x\right) :\int_{Q}f\left( x\right) x^{\beta }d\mu \left(
x\right) =0,\ \ \ \text{for }0\leq \left\vert \beta \right\vert <\kappa
\right\} ,
\end{equation*}%
where $p_{Q^{\prime };\kappa }\left( x\right) =\sum_{\beta \in \mathbb{Z}%
_{+}^{n}:\left\vert \beta \right\vert \leq \kappa -1\ }a_{Q^{\prime };\beta
}x^{\beta }$ is a polynomial in $\mathbb{R}^{n}$ of degree less than $\kappa 
$. Here $x^{\beta }=x_{1}^{\beta _{1}}x_{2}^{\beta _{2}}...x_{n}^{\beta
_{n}} $. Let $d_{Q;\kappa }\equiv \dim L_{Q;\kappa }^{2}\left( \mu \right) $
be the dimension of the finite dimensional linear space $L_{Q;\kappa
}^{2}\left( \mu \right) $.

Let $\mathcal{D}$ denote a dyadic grid on $\mathbb{R}^{n}$ and for $Q\in 
\mathcal{D}$, let $\bigtriangleup _{Q;\kappa }^{\mu }$ denote orthogonal
projection onto the finite dimensional subspace $L_{Q;\kappa }^{2}\left( \mu
\right) $, and let $\mathbb{E}_{Q;\kappa }^{\mu }$ denote orthogonal
projection onto the finite dimensional subspace%
\begin{equation*}
\mathcal{P}_{Q;\kappa }^{n}\left( \sigma \right) \equiv \mathnormal{\limfunc{%
Span}}\{\mathbf{1}_{Q}x^{\beta }:0\leq \left\vert \beta \right\vert <\kappa
\}.
\end{equation*}

\begin{theorem}[Weighted Alpert Bases]
\label{main1}Let $\mu $ be a doubling measure on $\mathbb{R}^{n}$, fix $%
\kappa \in \mathbb{N}$, and fix a dyadic grid $\mathcal{D}$ in $\mathbb{R}%
^{n}$.

\begin{enumerate}
\item Then $\left\{ \bigtriangleup _{Q;\kappa }^{\mu }\right\} _{Q\in 
\mathcal{D}}$ is a complete set of orthogonal projections in $L^{2}\left(
\mu \right) $ and%
\begin{eqnarray}
f &=&\sum_{Q\in \mathcal{D}}\bigtriangleup _{Q;\kappa }^{\mu }f,\ \ \ \ \
f\in L^{2}\left( \mu \right) ,  \label{Alpert expan} \\
&&\left\langle \bigtriangleup _{P;\kappa }^{\mu }f,\bigtriangleup _{Q;\kappa
}^{\mu }f\right\rangle _{L^{2}\left( \mu \right) }=0\text{ for }P\neq Q, 
\notag
\end{eqnarray}%
where convergence in the first line holds both in $L^{2}\left( \mu \right) $
norm and pointwise $\mu $-almost everywhere.

\item Moreover we have the telescoping identities%
\begin{equation}
\mathbf{1}_{Q}\sum_{I:\ Q\subsetneqq I\subset P}\bigtriangleup _{I;\kappa
}^{\mu }=\mathbb{E}_{Q;\kappa }^{\mu }-\mathbf{1}_{Q}\mathbb{E}_{P;\kappa
}^{\mu }\ \text{ \ for }P,Q\in \mathcal{D}\text{ with }Q\subsetneqq P,
\label{telescoping}
\end{equation}

\item and the moment vanishing conditions%
\begin{equation}
\int_{\mathbb{R}^{n}}\bigtriangleup _{Q;\kappa }^{\mu }f\left( x\right) \
x^{\beta }d\mu \left( x\right) =0,\ \ \ \text{for }Q\in \mathcal{D},\text{ }%
\beta \in \mathbb{Z}_{+}^{n},\ 0\leq \left\vert \beta \right\vert <\kappa \ .
\label{mom con}
\end{equation}
\end{enumerate}
\end{theorem}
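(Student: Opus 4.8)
The plan is to recall the construction of weighted Alpert projections from \cite{RaSaWi}, verifying the three listed properties in the order: set-up of the expectation operators $\mathbb{E}_{Q;\kappa}^{\mu}$, then moment vanishing (\ref{mom con}), then orthogonality (\ref{Alpert expan}), then the telescoping identities (\ref{telescoping}), and finally completeness with $L^{2}$ and pointwise convergence. First I would record the single algebraic input coming from doubling: a doubling measure on $\mathbb{R}^{n}$ has full support, so for every cube $Q$ no nonzero polynomial of degree $<\kappa$ can vanish $\mu$-a.e. on $Q$; hence the bilinear form $(p,q)\mapsto \int_{Q}pq\,d\mu$ is positive definite on $\mathcal{P}_{Q;\kappa}^{n}$, and the orthogonal projection $\mathbb{E}_{Q;\kappa}^{\mu}$ onto $\mathcal{P}_{Q;\kappa}^{n}\left( \mu \right) $ is well defined, self-adjoint, idempotent, and reproduces every polynomial of degree $<\kappa$ on $Q$. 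The same holds on each child $Q^{\prime }\in \mathfrak{C}\left( Q\right) $, which has positive $\mu$-measure by doubling. Moreover, as in Remark \ref{normalized}, the gradient bound $\sqrt{n}\,\ell \left( Q\right) \left\Vert \nabla P\right\Vert _{L^{\infty }\left( Q\right) }\leq K_{n,\kappa }$ forces any $Q$-normalized $P$ of degree $<\kappa$ to satisfy $\left\vert P\right\vert \geq \tfrac{1}{2}$ on a ball of radius comparable to $\ell \left( Q\right) $ inside $Q$, whence $\int_{Q}\left\vert P\right\vert ^{2}d\mu \gtrsim \left\vert Q\right\vert _{\mu }$; after rescaling $Q$ to the unit cube this gives \emph{uniform} (in $Q$) control of the inverse Gram matrix of the monomial basis, a fact I will need for convergence.

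Next I would \emph{define} $\bigtriangleup _{Q;\kappa }^{\mu }f\equiv \sum_{Q^{\prime }\in \mathfrak{C}\left( Q\right) }\mathbf{1}_{Q^{\prime }}\mathbb{E}_{Q^{\prime };\kappa }^{\mu }f-\mathbb{E}_{Q;\kappa }^{\mu }f$ and check the structural claims. Since each summand is an indicator of a child times a polynomial of degree $<\kappa$, we have $\bigtriangleup _{Q;\kappa }^{\mu }f$ of the required form; and using self-adjointness of the $\mathbb{E}$'s together with the fact that $x^{\beta }$ with $\left\vert \beta \right\vert <\kappa$ lies in the range of $\mathbb{E}_{Q;\kappa }^{\mu }$ and of each $\mathbb{E}_{Q^{\prime };\kappa }^{\mu }$, one gets $\int_{Q}\bigtriangleup _{Q;\kappa }^{\mu }f\cdot x^{\beta }d\mu =\sum_{Q^{\prime }}\int_{Q^{\prime }}fx^{\beta }d\mu -\int_{Q}fx^{\beta }d\mu =0$, which is (\ref{mom con}) and shows $\bigtriangleup _{Q;\kappa }^{\mu }f\in L_{Q;\kappa }^{2}\left( \mu \right) $. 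Conversely any $g\in L_{Q;\kappa }^{2}\left( \mu \right) $ has $\mathbb{E}_{Q;\kappa }^{\mu }g=0$ (its $Q$-moments vanish) and, being already a polynomial on each child, satisfies $\mathbb{E}_{Q^{\prime };\kappa }^{\mu }\left( \mathbf{1}_{Q^{\prime }}g\right) =\mathbf{1}_{Q^{\prime }}g$, so $\bigtriangleup _{Q;\kappa }^{\mu }g=g$. Hence $\bigtriangleup _{Q;\kappa }^{\mu }$ is idempotent with range exactly $L_{Q;\kappa }^{2}\left( \mu \right) $, and self-adjointness is inherited from the $\mathbb{E}$'s, so it is the orthogonal projection onto $L_{Q;\kappa }^{2}\left( \mu \right) $.

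For orthogonality in (\ref{Alpert expan}) with $P\neq Q$: if $P$ and $Q$ are disjoint the supports are disjoint; otherwise, say $Q\subsetneqq P$, so $Q$ lies in a single child $P^{\prime }$ of $P$, on which $\bigtriangleup _{P;\kappa }^{\mu }f$ equals a polynomial $p$ of degree $<\kappa$, and then $\left\langle \bigtriangleup _{P;\kappa }^{\mu }f,\bigtriangleup _{Q;\kappa }^{\mu }g\right\rangle _{L^{2}\left( \mu \right) }=\int_{Q}p\cdot \bigtriangleup _{Q;\kappa }^{\mu }g\,d\mu =0$ by the moment vanishing just established. For the telescoping identity (\ref{telescoping}), note that for $Q\subsetneqq I\subset P$, writing $I^{Q}\in \mathfrak{C}\left( I\right) $ for the child containing $Q$, one has $\mathbf{1}_{Q}\bigtriangleup _{I;\kappa }^{\mu }f=\mathbf{1}_{Q}\left( \mathbb{E}_{I^{Q};\kappa }^{\mu }f-\mathbb{E}_{I;\kappa }^{\mu }f\right) $; summing over the chain from the dyadic parent of $Q$ up to $P$, the intermediate terms cancel and what remains is $\mathbf{1}_{Q}\mathbb{E}_{Q;\kappa }^{\mu }f-\mathbf{1}_{Q}\mathbb{E}_{P;\kappa }^{\mu }f=\mathbb{E}_{Q;\kappa }^{\mu }f-\mathbf{1}_{Q}\mathbb{E}_{P;\kappa }^{\mu }f$, using that $\mathbb{E}_{Q;\kappa }^{\mu }f$ is supported in $Q$.

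The \emph{main obstacle} is completeness, i.e. $f=\sum_{Q\in \mathcal{D}}\bigtriangleup _{Q;\kappa }^{\mu }f$ in $L^{2}\left( \mu \right) $ and $\mu$-a.e. For this I would apply the telescoping identity to $\mathbf{1}_{Q}f$ between a cube $Q$ shrinking to a point and a cube $P$ expanding to $\mathbb{R}^{n}$, and control the two endpoint expectations. At the bottom I would prove a Lebesgue-type differentiation theorem: expressing the coefficients of $\mathbb{E}_{Q;\kappa }^{\mu }f$ through the rescaled inverse Gram matrix on $Q$ (uniformly bounded, by the first paragraph) as normalized moments $\left\vert Q\right\vert _{\mu }^{-1}\int_{Q}f(y)\big(\tfrac{y-c_{Q}}{\ell \left( Q\right) }\big)^{\beta }d\mu \left( y\right) $, density of continuous functions together with the doubling maximal function bound yields $\mathbb{E}_{Q;\kappa }^{\mu }f\rightarrow f$ in $L^{2}\left( \mu \right) $ and $\mu$-a.e. as $Q\downarrow x$. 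At the top I would use that doubling implies reverse doubling with exponent $\theta _{\mu }^{\func{rev}}$, so $\left\vert Q\right\vert _{\mu }\gtrsim (\ell \left( Q\right) /\ell \left( Q_{0}\right) )^{\theta _{\mu }^{\func{rev}}}\left\vert Q_{0}\right\vert _{\mu }\rightarrow \infty $; hence for bounded compactly supported $f$ the normalized moments above are $O(\left\vert Q\right\vert _{\mu }^{-1}\left\Vert f\right\Vert _{L^{1}\left( \mu \right) })\rightarrow 0$, giving $\mathbb{E}_{P;\kappa }^{\mu }f\rightarrow 0$ as $P\uparrow \mathbb{R}^{n}$, and the general case follows by density and the maximal bound. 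Feeding these two limits into (\ref{telescoping}) gives the expansion with the stated modes of convergence, and orthogonality (already proved) together with Bessel's inequality upgrades it to the full statement. The delicate points are precisely the uniform control of the rescaled inverse Gram matrices and the $\mu$-a.e. (as opposed to merely $L^{2}$) convergence at the bottom; both are where doubling, via full support and the maximal theorem, is essential, and it is also why the extra "top" projections of \cite{AlSaUr2} are unnecessary here. Full details are as in \cite{RaSaWi}.
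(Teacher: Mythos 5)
Your proposal is correct in all essentials, but note that the paper offers no proof of this theorem at all: it is stated as a recollection of a result from \cite{RaSaWi} ("The following theorem was proved in \cite{RaSaWi}\ldots"), so there is nothing in the present paper to compare your argument against line by line. What you have produced is a plausible reconstruction of the argument from \cite{RaSaWi}, restricted to the doubling case. Your central observations are the right ones: doubling gives full support, hence positive definiteness of $\int_{Q}pq\,d\mu$ on $\mathcal{P}_{Q;\kappa}^{n}$ and a well-defined self-adjoint idempotent $\mathbb{E}_{Q;\kappa}^{\mu}$; the gradient bound of Remark \ref{normalized} together with doubling gives the uniform control of the rescaled Gram matrices (i.e., $\int_{Q}\left\vert P\right\vert ^{2}d\mu \gtrsim \left\vert Q\right\vert _{\mu }$ for $Q$-normalized $P$) needed for density and differentiation arguments; and reverse doubling forces $\left\vert P\right\vert _{\mu }\rightarrow \infty $, which is exactly what kills the top term $\mathbb{E}_{P;\kappa }^{\mu }f$ and thereby dispenses with the extra tower projections of \cite{AlSaUr2}. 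One small point of precision: Remark \ref{normalized} asserts only that a $Q$-normalized polynomial is \emph{nonvanishing} on a ball of radius $\sim \ell \left( Q\right) $ in $Q$, not that $\left\vert P\right\vert \geq \tfrac{1}{2}$ there; the quantitative lower bound $\left\vert P\right\vert \geq c_{n,\kappa }$ on such a ball (hence $\int_{Q}\left\vert P\right\vert ^{2}d\mu \gtrsim \left\vert Q\right\vert _{\mu }$) still holds by compactness of the unit $L^{\infty }$ sphere in the space of polynomials of degree $<\kappa $, but is a slightly stronger statement that deserves a sentence of its own. Finally, your sign convention $\bigtriangleup _{Q;\kappa }^{\mu }=\sum_{Q^{\prime }\in \mathfrak{C}\left( Q\right) }\mathbf{1}_{Q^{\prime }}\mathbb{E}_{Q^{\prime };\kappa }^{\mu }-\mathbb{E}_{Q;\kappa }^{\mu }$ is the correct one --- it is the one that makes (\ref{telescoping}) hold and makes $\bigtriangleup_{Q;\kappa}^{\mu}$ idempotent --- even though a later display in the paper writes the difference with the opposite sign, which is evidently a typo.
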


We can fix\ an orthonormal basis $\left\{ h_{Q;\kappa }^{\mu ,a}\right\}
_{a\in \Gamma _{Q,n,\kappa }}$ of $L_{Q;\kappa }^{2}\left( \mu \right) $
where $\Gamma _{Q,n,\kappa }$ is a convenient finite index set. Then 
\begin{equation*}
\left\{ h_{Q;\kappa }^{\mu ,a}\right\} _{a\in \Gamma _{Q,n,\kappa }\text{
and }Q\in \mathcal{D}}
\end{equation*}%
is an orthonormal basis for $L^{2}\left( \mu \right) $. In particular we
have 
\begin{eqnarray*}
\left\Vert f\right\Vert _{L^{2}\left( \mu \right) }^{2} &=&\sum_{Q\in 
\mathcal{D}}\left\Vert \bigtriangleup _{Q;\kappa }^{\mu }f\right\Vert
_{L^{2}\left( \mu \right) }^{2}=\sum_{Q\in \mathcal{D}}\left\vert \widehat{f}%
\left( Q\right) \right\vert ^{2}, \\
\text{where }\widehat{f}\left( Q\right) &=&\left\{ \left\langle
f,h_{Q;\kappa }^{\mu ,a}\right\rangle _{\mu }\right\} _{a\in \Gamma
_{Q,n,\kappa }}\text{ and }\left\vert \widehat{f}\left( Q\right) \right\vert
^{2}\equiv \sum_{a\in \Gamma _{Q,n,\kappa }\text{ }}\left\vert \left\langle
f,h_{Q;\kappa }^{\mu ,a}\right\rangle _{\mu }\right\vert ^{2}.
\end{eqnarray*}%
In terms of the Alpert coefficient vectors $\widehat{f_{\kappa }}\left(
I\right) \equiv \left\{ \left\langle f,a_{I;\kappa ,j}^{\mu }\right\rangle
\right\} _{j=1}^{\kappa }$, we have for the special case of a doubling
measure $\mu $ (see \cite[(4.7) on page 14]{Saw6}),%
\begin{equation}
\left\vert \widehat{f_{\kappa }}\left( I\right) \right\vert =\left\Vert
\bigtriangleup _{I;\kappa }^{\mu }f\right\Vert _{L^{2}\left( \mu \right)
}\leq \left\Vert \bigtriangleup _{I;\kappa }^{\mu }f\right\Vert _{L^{\infty
}\left( \mu \right) }\sqrt{\left\vert I\right\vert _{\mu }}\leq C\left\Vert
\bigtriangleup _{I;\kappa }^{\mu }f\right\Vert _{L^{2}\left( \mu \right)
}=C\left\vert \widehat{f_{\kappa }}\left( I\right) \right\vert ,
\label{analogue'}
\end{equation}%
and in particular,%
\begin{equation}
\left\Vert h_{Q;\kappa }^{\mu ,a}\right\Vert _{L^{\infty }\left( \mu \right)
}\approx \frac{1}{\sqrt{\left\vert I\right\vert _{\mu }}},  \label{in part}
\end{equation}%
since $\bigtriangleup _{I;\kappa }^{\mu }h_{Q;\kappa }^{\mu ,a}=h_{Q;\kappa
}^{\mu ,a}$.

\begin{notation}
For doubling measures $\mu $, the cardinality of $\Gamma _{Q,n,\kappa }$
depends only on $n$ and $\kappa $, which are usually known from context, and
so we will simply write $\Gamma $ when $\mu $ is doubling.
\end{notation}

From now on, all measures considered will be assumed to be doubling, and
often without explicit mention. We now inroduce Sobolev spaces defined by
weighted Alpert projections instead of weighted Haar projections. We show
below that these spaces are actually equivalent. For convenience we repeat
the definition of weighted Sobolev space used in this paper.

\begin{definition}
Given $\kappa \in \mathbb{N}$ and $s\in \mathbb{R}$ and $\mathcal{D}\in
\Omega _{\limfunc{dyad}}$ a dyadic grid, we define the $\kappa $-dyadic
homogeneous $W_{\mathcal{D};\kappa }^{s}\left( \mu \right) $-Sobolev norm of
a function $f\in L^{2}\left( \mu \right) $ by%
\begin{equation*}
\left\Vert f\right\Vert _{W_{\mathcal{D};\kappa }^{s}\left( \mu \right)
}^{2}\equiv \sum_{Q\in \mathcal{D}}\ell \left( Q\right) ^{-2s}\left\Vert
\bigtriangleup _{Q;\kappa }^{\mu }f\right\Vert _{L^{2}\left( \mu \right)
}^{2}\ ,
\end{equation*}%
and we denote by $W_{\mathcal{D};\kappa }^{s}\left( \mu \right) $ the
corresponding Hilbert space completion.
\end{definition}

\begin{lemma}
\label{complete set}The set $\left\{ \ell \left( Q\right) ^{s}h_{Q;\kappa
}^{\mu ,a}\right\} _{\left( Q,a\right) \in \mathcal{D}\times \Gamma }$ is an
orthonormal basis for $W_{\mathcal{D};\kappa }^{s}\left( \mu \right) $, and
thus for any subset $\mathcal{H}$ of the dyadic grid $\mathcal{D}$, we have,%
\begin{equation}
\left\Vert \sum_{\left( I,a\right) \in \mathcal{H}\times \Gamma }c_{I,a}\ell
\left( I\right) ^{s}h_{I;\kappa }^{\mu ,a}\right\Vert _{W_{\mathcal{D}%
;\kappa }^{s}\left( \mu \right) }^{2}=\sum_{\left( I,a\right) \in \mathcal{H}%
\times \Gamma }\left\vert c_{I,a}\right\vert ^{2}.  \label{ortho}
\end{equation}
\end{lemma}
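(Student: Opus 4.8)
The claim is that rescaled Alpert wavelets form an orthonormal basis of the Sobolev space, and consequently the norm of a wavelet expansion is the $\ell^2$ norm of its coefficients. The plan is to verify two things: (i) the rescaled system $\{\ell(Q)^s h_{Q;\kappa}^{\mu,a}\}$ is orthonormal in the inner product that polarizes the $W^s_{\mathcal{D};\kappa}(\mu)$-norm, and (ii) it is complete. First I would identify the inner product on $W^s_{\mathcal{D};\kappa}(\mu)$ that gives rise to the stated norm: since $\|f\|^2_{W^s_{\mathcal{D};\kappa}(\mu)} = \sum_{Q\in\mathcal{D}}\ell(Q)^{-2s}\|\bigtriangleup^\mu_{Q;\kappa}f\|^2_{L^2(\mu)}$ and each $\bigtriangleup^\mu_{Q;\kappa}$ is an $L^2(\mu)$-orthogonal projection with $\bigtriangleup^\mu_{Q;\kappa}h^{\mu,a}_{Q';\kappa} = \delta_{Q,Q'}h^{\mu,a}_{Q;\kappa}$ (by Theorem \ref{main1}, the $\bigtriangleup$'s are a complete orthogonal family), the polarized inner product is $\langle f,g\rangle_{W^s_{\mathcal{D};\kappa}(\mu)} = \sum_{Q\in\mathcal{D}}\ell(Q)^{-2s}\langle\bigtriangleup^\mu_{Q;\kappa}f,\bigtriangleup^\mu_{Q;\kappa}g\rangle_{L^2(\mu)}$.

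For orthonormality: plugging $f = \ell(Q)^s h^{\mu,a}_{Q;\kappa}$ and $g = \ell(Q')^s h^{\mu,b}_{Q';\kappa}$ into this inner product, only the single term $I=Q=Q'$ survives because $\bigtriangleup^\mu_{I;\kappa}$ annihilates $h^{\mu,a}_{Q;\kappa}$ unless $I=Q$; that term equals $\ell(Q)^s\ell(Q)^s\,\ell(Q)^{-2s}\langle h^{\mu,a}_{Q;\kappa},h^{\mu,b}_{Q;\kappa}\rangle_{L^2(\mu)} = \delta_{a,b}$, using that $\{h^{\mu,a}_{Q;\kappa}\}_{a\in\Gamma}$ is an $L^2(\mu)$-orthonormal basis of $L^2_{Q;\kappa}(\mu)$. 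This proves the system is orthonormal in $W^s_{\mathcal{D};\kappa}(\mu)$, and the identity \eqref{ortho} for \emph{finite} subsets $\mathcal{H}$ follows immediately by expanding the squared norm and using orthonormality; the general case follows by continuity once completeness is known, or directly since both sides define the same (possibly infinite) quantity and the partial sums form a Cauchy net exactly when the coefficient sum is finite.

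For completeness: the Hilbert space $W^s_{\mathcal{D};\kappa}(\mu)$ was \emph{defined} as the completion of $\{f\in L^2_{\mathrm{loc}}(\mu): \|f\|_{W^s_{\mathcal{D};\kappa}(\mu)}<\infty\}$, so it suffices to show that the closed linear span of $\{\ell(Q)^s h^{\mu,a}_{Q;\kappa}\}$ contains a dense subset of this pre-Hilbert space — and any $f$ with finite Sobolev norm is, by Theorem \ref{main1}(1), equal $\mu$-a.e. to $\sum_{Q\in\mathcal{D}}\bigtriangleup^\mu_{Q;\kappa}f = \sum_{Q,a}\langle f,h^{\mu,a}_{Q;\kappa}\rangle_\mu h^{\mu,a}_{Q;\kappa}$, and the tails of this sum go to zero in $W^s_{\mathcal{D};\kappa}(\mu)$-norm precisely because $\sum_{Q,a}\ell(Q)^{-2s}|\langle f,h^{\mu,a}_{Q;\kappa}\rangle_\mu|^2 = \|f\|^2_{W^s_{\mathcal{D};\kappa}(\mu)}<\infty$. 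Hence the system spans a dense subspace, and being orthonormal it is an orthonormal basis. The main point requiring care — and the only genuine obstacle — is the bookkeeping at the level of the abstract completion: one must check that the coefficient functional $f\mapsto \langle f,h^{\mu,a}_{Q;\kappa}\rangle_\mu$, a priori defined only on honest functions, extends continuously to all of $W^s_{\mathcal{D};\kappa}(\mu)$, which it does because $|\langle f,h^{\mu,a}_{Q;\kappa}\rangle_\mu| = \|\bigtriangleup^\mu_{Q;\kappa}f\|_{L^2(\mu)}\cdot(\text{unit factor}) \le \ell(Q)^s\|f\|_{W^s_{\mathcal{D};\kappa}(\mu)}$; once this is in hand, the standard Hilbert-space argument identifies $W^s_{\mathcal{D};\kappa}(\mu)$ with $\ell^2(\mathcal{D}\times\Gamma)$ via the coefficient map, giving \eqref{ortho} in full generality.
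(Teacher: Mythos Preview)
Your proof is correct and follows essentially the same approach as the paper: you identify the polarized inner product $\langle f,g\rangle_{W^s_{\mathcal{D};\kappa}(\mu)} = \sum_{Q}\ell(Q)^{-2s}\langle\bigtriangleup^\mu_{Q;\kappa}f,\bigtriangleup^\mu_{Q;\kappa}g\rangle_{L^2(\mu)}$ and compute directly that $\langle \ell(I)^s h^{\mu,a}_{I;\kappa}, \ell(J)^s h^{\mu,b}_{J;\kappa}\rangle_{W^s_{\mathcal{D};\kappa}(\mu)} = \delta_{I,J}\delta_{a,b}$ using $\bigtriangleup^\mu_{Q;\kappa}h^{\mu,a}_{K;\kappa} = \delta_{Q,K}h^{\mu,a}_{K;\kappa}$, exactly as the paper does. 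In fact your argument is more thorough than the paper's own proof, which simply asserts ``Thus $\{\ell(Q)^s h^{\mu,a}_{Q;\kappa}\}$ is an orthonormal basis'' after verifying orthonormality, whereas you carefully address completeness via the density of wavelet expansions in the pre-completion and the continuous extension of the coefficient functionals.
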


\begin{proof}
We have%
\begin{eqnarray*}
\left\langle \ell \left( I\right) ^{s}h_{I;\kappa }^{\mu ,a},\ell \left(
J\right) ^{s}h_{J;\kappa }^{\mu ,b}\right\rangle _{W_{\mathcal{D};\kappa
}^{s}\left( \mu \right) } &=&\sum_{Q\in \mathcal{D}}\ell \left( Q\right)
^{-2s}\left\langle \bigtriangleup _{Q;\kappa }^{\mu }\ell \left( I\right)
^{s}h_{I;\kappa }^{\mu ,a},\bigtriangleup _{Q;\kappa }^{\mu }\ell \left(
J\right) ^{s}h_{J;\kappa }^{\mu ,b}\right\rangle _{L^{2}\left( \mu \right) }
\\
&=&\sum_{Q\in \mathcal{D}}\ell \left( Q\right) ^{-2s}\ell \left( I\right)
^{s}\ell \left( J\right) ^{s}\left\langle \bigtriangleup _{Q;\kappa }^{\mu
}h_{I;\kappa }^{\mu ,a},\bigtriangleup _{Q;\kappa }^{\mu }h_{J;\kappa }^{\mu
,b}\right\rangle _{L^{2}\left( \mu \right) } \\
&=&\left\{ 
\begin{array}{ccc}
1 & \text{ if } & I=J\text{ and }a=b \\ 
0 & \text{ if } & I\not=J\text{ or }a\not=b%
\end{array}%
\right. ,
\end{eqnarray*}%
since $\bigtriangleup _{Q;\kappa }^{\mu }h_{K;\kappa }^{\mu ,a}$ vanishes if 
$Q\not=K$, and equals $h_{K;\kappa }^{\mu ,a}$ if $Q=K$. Thus $\left\{ \ell
\left( Q\right) ^{s}h_{Q;\kappa }^{\mu ,a}\right\} _{\left( Q,a\right) \in 
\mathcal{D}\times \Gamma }$ is an orthonormal basis and we conclude that (%
\ref{ortho}) holds.
\end{proof}

\subsection{Equivalence of Sobolev spaces}

In \cite{Tri}, Triebel defines the usual homogeneous unweighted Sobolev
space $W^{s}$ with norm $\left\Vert f\right\Vert _{W^{s}}$ given by%
\begin{equation*}
\left\Vert f\right\Vert _{W^{s}}^{2}\equiv \int_{\mathbb{R}^{n}}\left\vert
\left( -\bigtriangleup \right) ^{\frac{s}{2}}f\left( x\right) \right\vert
^{2}dx=\int_{\mathbb{R}^{n}}\left\vert \left( \left\vert \xi \right\vert
^{2}\right) ^{\frac{s}{2}}\widehat{f}\left( \xi \right) \right\vert ^{2}d\xi
,
\end{equation*}%
and the corresponding inhomogeneous version with norm squared $\int_{\mathbb{%
R}^{n}}\left\vert \left( I-\bigtriangleup \right) ^{\frac{s}{2}}f\left(
x\right) \right\vert ^{2}dx$, which we will not consider here. Combining
results of Triebel \cite{Tri} with those of Seeger and Ullrich \cite{SeUl}
shows that

\begin{lemma}[\protect\cite{Tri},\protect\cite{SeUl}]
The following three statements are equivalent for$\ \mu $ equal to Lebesgue
measure and $s\in \mathbb{R}$.

\begin{enumerate}
\item $W_{\func{dyad}}^{s}=W^{s}$,

\item $\left\{ \ell \left( I\right) ^{s}h_{I}\right\} _{I\in \mathcal{D}}$
is an orthonormal basis for $W^{s}$,

\item $-\frac{1}{2}<s<\frac{1}{2}$.
\end{enumerate}
\end{lemma}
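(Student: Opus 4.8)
The plan is to deduce the equivalence from two classical ingredients that are already in the literature: Triebel's identification of the homogeneous Besov–Triebel–Lizorkin scale with its Littlewood–Paley characterization, and the Seeger–Ullrich result that (smooth, local, compactly supported) Haar-type wavelets form an unconditional basis of $F^s_{p,q}$ precisely in a range of smoothness determined by the number of vanishing moments and the Hölder regularity of the wavelet. Since $W^s = F^s_{2,2}$ (this is the Triebel identification $\left\Vert f\right\Vert_{W^s}^2 = \int_{\mathbb{R}^n}|(-\Delta)^{s/2}f|^2\,dx \approx \sum_{I\in\mathcal{D}}\ell(I)^{-2s}\left\Vert\triangle_I f\right\Vert_{L^2}^2$ when the Haar system is admissible), the three listed statements should collapse to the single question of when the Haar system $\{\ell(I)^s h_I\}_{I\in\mathcal{D}}$ is a normalized unconditional (indeed orthonormal) basis of $F^s_{2,2}$.

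First I would record the implication $(2)\Rightarrow(1)$, which is essentially tautological: if $\{\ell(I)^s h_I\}$ is an orthonormal basis of $W^s$, then by Lemma~\ref{complete set} it is also an orthonormal basis of $W^s_{\func{dyad}}$ for the same norm expression, so the two Hilbert spaces coincide isometrically. Next, $(3)\Rightarrow(2)$: here I would invoke Seeger–Ullrich, observing that the one-dimensional Haar function has exactly one vanishing moment and lies in $C^{\varepsilon}$ only for $\varepsilon = 0^-$ in the appropriate (Besov) sense; the sharp range in which the Haar system is an unconditional basis of $F^s_{2,2}(\mathbb{R}^n)$ is $\max\{-1/2,\,n(1/p-1)\}=-1/2 < s < 1/p = 1/2$ when $p=q=2$. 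Combined with the Parseval relation for $W^s_{\func{dyad}}$ built into its very definition, unconditionality plus the frame bounds upgrade to the orthonormal basis statement for $W^s$ itself. Finally, for the reverse implications $(1)\Rightarrow(3)$ and $(2)\Rightarrow(3)$, I would exhibit explicit counterexamples at the endpoints $s=\pm 1/2$ and beyond: for $s\geq 1/2$ one tests against a function like $f=\mathbf{1}_{[0,1)}$ (or a smooth bump), for which $\sum_I \ell(I)^{-2s}\left\Vert\triangle_I f\right\Vert_{L^2}^2$ behaves differently from $\left\Vert f\right\Vert_{W^s}^2$ — indeed $\mathbf{1}_{[0,1)}\notin W^{1/2}$ but has convergent dyadic sum for $s<1/2$ only, and symmetrically for $s\leq -1/2$ one dualizes. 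This forces $|s|<1/2$.

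The main obstacle, and the place where genuine care is needed rather than citation, is the sharp negative-smoothness endpoint $s\to -1/2^+$: for $s<0$ the space $W^s_{\func{dyad}}$ is defined by completion and its elements are distributions, so one must check that the Haar series still represents them unconditionally and that no "ghost" mass (analogous to the top-level projections $\mathbb{E}^\mu_T$ discussed in the excerpt for non-doubling measures, but here the issue is behavior at spatial infinity) is lost. The cleanest route is duality: $W^{-s}_{\func{dyad}}$ is the $L^2$-dual of $W^s_{\func{dyad}}$ under the pairing making $\{\ell(I)^s h_I\}$ and $\{\ell(I)^{-s} h_I\}$ biorthogonal, and $W^{-s} = (W^s)^*$ by Triebel; so the statement for $s\in(0,1/2)$ automatically gives the statement for $-s\in(-1/2,0)$, and the case $s=0$ is $L^2$ itself by Theorem~\ref{main1}. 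This reduces everything to $0\le s<1/2$, where Seeger–Ullrich applies directly. I expect the write-up to be short, essentially a dictionary between the two cited theorems plus the endpoint examples.
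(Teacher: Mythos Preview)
The paper does not give its own proof of this lemma: it is stated with attribution to \cite{Tri} and \cite{SeUl} and prefaced by ``Combining results of Triebel with those of Seeger and Ullrich shows that\ldots'', with no further argument. Your proposal is therefore not competing with a proof in the paper but rather supplying the details the paper omits, and your strategy---identify $W^s$ with $F^s_{2,2}$ via Triebel, invoke Seeger--Ullrich for the sharp range in which the Haar system is an unconditional basis of $F^s_{2,2}$, use the $L^2$-duality $W^{-s}_{\func{dyad}}=(W^s_{\func{dyad}})^\ast$ to transfer the positive-$s$ result to negative $s$, and exhibit endpoint counterexamples---is exactly the route the citations point to.
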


Here is the first step toward proving the equivalence of the different
dyadic Sobolev spaces over all grids $\mathcal{D}$ and integers $\kappa \in 
\mathbb{N}$, which in particular is used to implement the $\func{good}/\func{%
bad}$ cube technology of Nazarov, Treil and Volberg. The reader can notice
that the doubling property of the measure $\mu $ is not explicitly used in
this argument, rather only in the definition of the Sobolev spaces.

\begin{lemma}
\label{coin}Let $\mu $ be a doubling measure on $\mathbb{R}^{n}$ and let $%
\mathcal{D}$ be a dyadic grid on $\mathbb{R}^{n}$. Then for $\kappa
_{1},\kappa _{2}\in \mathbb{N}$ and $s\in \mathbb{R}$, we have,%
\begin{equation*}
W_{\mathcal{D};\kappa _{1}}^{s}\left( \mu \right) =W_{\mathcal{D};\kappa
_{2}}^{s}\left( \mu \right) ,
\end{equation*}%
with equivalence of norms.
\end{lemma}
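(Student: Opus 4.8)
The plan is to show that the two Sobolev norms $\left\Vert f\right\Vert_{W_{\mathcal{D};\kappa_1}^{s}\left(\mu\right)}$ and $\left\Vert f\right\Vert_{W_{\mathcal{D};\kappa_2}^{s}\left(\mu\right)}$ are comparable by a two-sided Schur-type estimate on the change-of-basis matrix between the Alpert systems of order $\kappa_1$ and $\kappa_2$ on the common grid $\mathcal{D}$. By symmetry it suffices to bound one direction, say $\left\Vert f\right\Vert_{W_{\mathcal{D};\kappa_1}^{s}\left(\mu\right)}\lesssim\left\Vert f\right\Vert_{W_{\mathcal{D};\kappa_2}^{s}\left(\mu\right)}$, when $\kappa_1<\kappa_2$ (the case of general $\kappa_1,\kappa_2$ following by comparing each to, say, $\max(\kappa_1,\kappa_2)$, or directly since the argument is symmetric in the roles once the key kernel bound is in hand). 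First I would expand $\bigtriangleup_{Q;\kappa_1}^{\mu}f$ in terms of the order-$\kappa_2$ Alpert projections: since $L_{Q;\kappa_1}^{2}(\mu)$ is spanned by polynomials of degree $<\kappa_1<\kappa_2$ supported on the children of $Q$ with vanishing moments up to order $\kappa_1-1$ on $Q$, each $h_{Q;\kappa_1}^{\mu,a}$ can be written, using the telescoping identity \eqref{telescoping}, as a sum $\sum_{I\supseteq Q,\,I\in\mathcal{D}}\bigtriangleup_{I;\kappa_2}^{\mu}h_{Q;\kappa_1}^{\mu,a}$ (the components with $I\subsetneqq Q$ vanish because $h_{Q;\kappa_1}^{\mu,a}$ is a polynomial of degree $<\kappa_2$ on each child of $Q$, hence is annihilated by $\bigtriangleup_{I;\kappa_2}^{\mu}$ for $I\subsetneqq Q$, and $I$ not comparable to $Q$ also gives zero by support).

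The heart of the matter is then the decay estimate for the coefficients $\left\langle h_{Q;\kappa_1}^{\mu,a},h_{I;\kappa_2}^{\mu,b}\right\rangle_{L^2(\mu)}$ when $I\supsetneqq Q$. On such $I$, the function $h_{Q;\kappa_1}^{\mu,a}$ is supported in the child $I'$ of $I$ containing $Q$, and there it is a polynomial of degree $<\kappa_2$ (indeed $<\kappa_1$) — but it has vanishing $\mu$-moments on $Q$ up to order $\kappa_1-1$, whereas $h_{I;\kappa_2}^{\mu,b}$ is a fixed bounded-in-$L^\infty$ function (by \eqref{in part}, $\left\Vert h_{I;\kappa_2}^{\mu,b}\right\Vert_{L^\infty(\mu)}\approx|I|_\mu^{-1/2}$). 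Pairing a mean-zero-to-high-order object localized on $Q$ against a smooth (polynomial on $I'$) test function, and using the doubling of $\mu$ together with the decay of doubling measures near the boundary $\partial Q$ (Corollary \ref{zero set doubling}), I expect a bound of the form
\begin{equation*}
\left|\left\langle h_{Q;\kappa_1}^{\mu,a},h_{I;\kappa_2}^{\mu,b}\right\rangle_{L^2(\mu)}\right|\lesssim\left(\frac{\ell(Q)}{\ell(I)}\right)^{\eta}\sqrt{\frac{|Q|_\mu}{|I|_\mu}}
\end{equation*}
for some $\eta>0$ depending only on $n,\kappa_2$ and the doubling constant of $\mu$; in fact the cheap bound $\sqrt{|Q|_\mu/|I|_\mu}$ alone, combined with reverse doubling $|Q|_\mu\lesssim(\ell(Q)/\ell(I))^{\theta_\mu^{\mathrm{rev}}}|I|_\mu$, already supplies the needed geometric gain $(\ell(Q)/\ell(I))^{\theta_\mu^{\mathrm{rev}}/2}$ when $Q\subsetneqq I$.

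With this kernel bound in hand, I would write the change-of-basis matrix $M_{(I,b),(Q,a)}=\ell(I)^{s}\left\langle h_{Q;\kappa_1}^{\mu,a},h_{I;\kappa_2}^{\mu,b}\right\rangle_{L^2(\mu)}\ell(Q)^{-s}$ relating the orthonormal bases $\{\ell(Q)^{s}h_{Q;\kappa_1}^{\mu,a}\}$ of $W_{\mathcal{D};\kappa_1}^{s}(\mu)$ and $\{\ell(I)^{s}h_{I;\kappa_2}^{\mu,b}\}$ of $W_{\mathcal{D};\kappa_2}^{s}(\mu)$ (Lemma \ref{complete set}), and verify the Schur test: the row sums $\sum_{(Q,a):Q\subseteq I}|M_{(I,b),(Q,a)}|$ and column sums $\sum_{(I,b):I\supseteq Q}|M_{(I,b),(Q,a)}|$ are both uniformly bounded. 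For the column sum, summing over dyadic $I\supseteq Q$ gives a geometric series $\sum_{k\geq0}2^{-ks}\cdot 2^{-k\theta_\mu^{\mathrm{rev}}/2}$ (writing $\ell(I)=2^{k}\ell(Q)$), which converges provided $s>-\theta_\mu^{\mathrm{rev}}/2$; for the row sum one similarly needs $s<$ something, and this is exactly where the smallness threshold $\theta$ on $s$ enters. The main obstacle I anticipate is establishing the quantitative kernel bound with a genuine power gain $(\ell(Q)/\ell(I))^{\eta}$ uniformly over all $\mu$-normalized Alpert functions — this is where Corollary \ref{zero set doubling} (power rather than logarithmic decay near polynomial zero sets / cube boundaries) does the real work, controlling the part of the integral near $\partial Q$ where the moment-cancellation gain is weakest; once that is secured, the Schur argument and the resulting constraint on $|s|$ are routine.
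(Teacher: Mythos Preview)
Your outline targets a weaker statement than the lemma asserts and has a real gap in the Schur step. The lemma holds for \emph{every} $s\in\mathbb{R}$; the smallness threshold on $|s|$ that you build in belongs to Theorem~\ref{dyad equiv} (independence of the grid), which is where doubling and Corollary~\ref{zero set doubling} are actually used --- the paper remarks explicitly that doubling plays no role in the present proof. Concretely, with $\kappa_1<\kappa_2$ your kernel is supported on $I\supseteq Q$; the tower sum $\sum_{I\supseteq Q}|M|$ is harmless, but the other Schur sum $\sum_{Q\subseteq I}|M|$ runs over $\sim 2^{nk}$ cubes at each depth $k$, and even your moment-cancellation bound $(\ell(Q)/\ell(I))^{\kappa_1}\sqrt{|Q|_\mu/|I|_\mu}$ together with Cauchy--Schwarz over the $Q$'s at level $k$ yields only $\sum_k 2^{-k(\kappa_1-s-n/2)}$, which diverges once $s\geq\kappa_1-n/2$. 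So an entrywise Schur test cannot reach all $s$.

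The idea you are missing is to trade entrywise kernel bounds for $L^2(\mu)$ Parseval. Because $\{\bigtriangleup_{Q;\kappa_1}^\mu\}_{Q\in\mathcal{D}}$ are orthogonal projections summing to the identity in $L^2(\mu)$, and because the nonzero projections of $h_{I;\kappa_2}^{\mu,a}$ all lie on one side of $I$ (at $Q\subseteq I$ when $\kappa_1\leq\kappa_2$, at $Q\supseteq I$ when $\kappa_1\geq\kappa_2$), one gets for the matching sign of $s$ the exact bound
\[
\|h_{I;\kappa_2}^{\mu,a}\|_{W_{\mathcal{D};\kappa_1}^s}^2=\sum_{Q}\ell(Q)^{-2s}\|\bigtriangleup_{Q;\kappa_1}^\mu h_{I;\kappa_2}^{\mu,a}\|_{L^2(\mu)}^2\leq\ell(I)^{-2s}\sum_{Q}\|\bigtriangleup_{Q;\kappa_1}^\mu h_{I;\kappa_2}^{\mu,a}\|_{L^2(\mu)}^2=\ell(I)^{-2s},
\]
with no cube-counting and no appeal to doubling. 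This disposes of the diagonal of $\|\sum_{I,a}c_{I,a}h_{I;\kappa_2}^{\mu,a}\|_{W_{\mathcal{D};\kappa_1}^s}^2$; the cross terms are handled by the same one-sided support plus Cauchy--Schwarz in $L^2(\mu)$. The two one-sided inclusions (for $s<0$ with $\kappa_1\leq\kappa_2$, and for $s>0$ with $\kappa_1\geq\kappa_2$) are then flipped to the opposite sign of $s$ by the $L^2(\mu)$ duality $W_{\mathcal{D};\kappa}^s\leftrightarrow W_{\mathcal{D};\kappa}^{-s}$, which gives the full range $s\in\mathbb{R}$.
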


\begin{proof}
We first claim that for $s<0$,%
\begin{equation}
W_{\mathcal{D};\kappa _{2}}^{s}\left( \mu \right) \subset W_{\mathcal{D}%
;\kappa _{1}}^{s}\left( \mu \right) \text{ for }1\leq \kappa _{1}\leq \kappa
_{2},  \label{now claim}
\end{equation}%
which by duality gives for $s>0$, 
\begin{equation}
W_{\mathcal{D};\kappa _{1}}^{s}\left( \mu \right) \subset W_{\mathcal{D}%
;\kappa _{2}}^{s}\left( \mu \right) \text{ for }s>0\text{ and }1\leq \kappa
_{1}\leq \kappa _{2}.  \label{now claim''}
\end{equation}%
Indeed, for any subset $\mathcal{H}\subset \mathcal{D}$, we have 
\begin{eqnarray*}
&&\left\Vert \sum_{I\in \mathcal{H};\ a\in \Gamma _{I,n,\kappa
}}c_{I,a}h_{I;\kappa _{2}}^{\mu ,a}\right\Vert _{W_{\mathcal{D};\kappa
_{1}}^{s}\left( \mu \right) }^{2}=\sum_{Q\in \mathcal{D}}\ell \left(
Q\right) ^{-2s}\left\Vert \sum_{I\in \mathcal{H};\ a\in \Gamma _{I,n,\kappa
}:\ Q\subset I}c_{I,a}\bigtriangleup _{Q;\kappa _{1}}^{\mu }h_{I;\kappa
_{2}}^{\mu ,a}\right\Vert _{L^{2}\left( \mu \right) }^{2} \\
&=&\sum_{Q\in \mathcal{D}}\ell \left( Q\right) ^{-2s}\sum_{I\in \mathcal{H}%
;\ a\in \Gamma _{I,n,\kappa }}\left( c_{I,a}\right) ^{2}\left\Vert
\bigtriangleup _{Q;\kappa _{1}}^{\mu }h_{I;\kappa _{2}}^{\mu ,a}\right\Vert
_{L^{2}\left( \mu \right) }^{2} \\
&&+\sum_{Q\in \mathcal{D}}\ell \left( Q\right) ^{-2s}\sum_{I,I^{\prime }\in 
\mathcal{H};\ a\in \Gamma _{I,n,\kappa },a^{\prime }\in \Gamma _{I^{\prime
},n,\kappa }:\ Q\subset I\cap I^{\prime }}\int_{\mathbb{R}^{n}}c_{I,a}\left(
\bigtriangleup _{Q;\kappa _{1}}^{\mu }h_{I;\kappa _{2}}^{\mu ,a}\right)
c_{I^{\prime },a^{\prime }}\left( \bigtriangleup _{Q;\kappa _{1}}^{\mu
}h_{I^{\prime };\kappa _{2}}^{\mu ,a^{\prime }}\right) d\mu \\
&\equiv &A+B,
\end{eqnarray*}%
where the first term satisfies%
\begin{eqnarray*}
&&A=\sum_{Q\in \mathcal{D}}\ell \left( Q\right) ^{-2s}\sum_{I\in \mathcal{H}%
;\ a\in \Gamma _{I,n,\kappa }}\left( c_{I,a}\right) ^{2}\left\Vert
\bigtriangleup _{Q;\kappa _{1}}^{\mu }h_{I;\kappa _{2}}^{\mu ,a}\right\Vert
_{L^{2}\left( \mu \right) }^{2}=\sum_{I\in \mathcal{H};\ a\in \Gamma
_{I,n,\kappa }}\left( c_{I,a}\right) ^{2}\sum_{Q\in \mathcal{D}}\ell \left(
Q\right) ^{-2s}\left\Vert \bigtriangleup _{Q;\kappa _{1}}^{\mu }h_{I;\kappa
_{2}}^{\mu ,a}\right\Vert _{L^{2}\left( \mu \right) }^{2} \\
&=&\sum_{I\in \mathcal{H};\ a\in \Gamma _{I,n,\kappa }}\left( c_{I,a}\right)
^{2}\left\Vert h_{I;\kappa _{2}}^{\mu ,a}\right\Vert _{W_{\mathcal{D};\kappa
_{1}}^{s}\left( \mu \right) }^{2}\leq C\sum_{I\in \mathcal{H}}\left(
c_{I,a}\right) ^{2}\left\Vert h_{I;\kappa _{2}}^{\mu ,a}\right\Vert _{W_{%
\mathcal{D};\kappa _{2}}^{s}\left( \mu \right) }^{2} \\
&=&C\sum_{I\in \mathcal{H};\ a\in \Gamma _{I,n,\kappa }}\left(
c_{I,a}\right) ^{2}\ell \left( I\right) ^{-2s}=C\left\Vert \sum_{I\in 
\mathcal{H};\ a\in \Gamma _{I,n,\kappa }}c_{I,a}h_{I;\kappa _{2}}^{\mu
,a}\right\Vert _{W_{\mathcal{D};\kappa _{2}}^{s}\left( \mu \right) }^{2},
\end{eqnarray*}%
and where the final equality follows from Lemma \ref{complete set}.

To handle the second term, we write 
\begin{eqnarray*}
B &=&\sum_{I\neq I^{\prime }\in \mathcal{H};\ a\in \Gamma _{I,n,\kappa
},a^{\prime }\in \Gamma _{I^{\prime },n,\kappa }}c_{I,a}c_{I^{\prime
},a^{\prime }}\int_{\mathbb{R}^{n}}\left\{ \sum_{Q\in \mathcal{D}:\ Q\subset
I\cap I^{\prime }}\ell \left( Q\right) ^{-2s}\bigtriangleup _{Q;1}^{\mu
}h_{I;\kappa _{2}}^{\mu ,a}\right\} h_{I^{\prime };\kappa _{2}}^{\mu
,a^{\prime }}d\mu \\
&=&\left\{ \sum_{I\subsetneqq I^{\prime }\in \mathcal{H};\ a\in \Gamma
_{I,n,\kappa },a^{\prime }\in \Gamma _{I^{\prime },n,\kappa
}}+\sum_{I^{\prime }\subsetneqq I\in \mathcal{H};\ a,a^{\prime }\in \Gamma
}\right\} c_{I,a}c_{I^{\prime },a^{\prime }}\int_{\mathbb{R}^{n}}\left\{
\sum_{Q\in \mathcal{D}:\ Q\subset I}\ell \left( Q\right)
^{-2s}\bigtriangleup _{Q;1}^{\mu }h_{I;\kappa _{2}}^{\mu ,a}\right\}
h_{I^{\prime };\kappa _{2}}^{\mu ,a^{\prime }}d\mu \\
&\equiv &B_{1}+B_{2}\ ,
\end{eqnarray*}%
where $B_{1}$ and $B_{2}$ are symmetric. So it suffices to estimate%
\begin{equation*}
B_{1}=\sum_{I\in \mathcal{H};\ a\in \Gamma _{I,n,\kappa }}c_{I,a}\int_{%
\mathbb{R}^{n}}\left\{ \sum_{Q\in \mathcal{D}:\ Q\subset I}\ell \left(
Q\right) ^{-2s}\bigtriangleup _{Q;1}^{\mu }h_{I;\kappa _{2}}^{\mu
,a}\right\} \left\{ \sum_{I^{\prime }\in \mathcal{H};\ a^{\prime }\in \Gamma
_{I^{\prime },n,\kappa }:\ I\subsetneqq I^{\prime }}c_{I^{\prime },a^{\prime
}}h_{I^{\prime };\kappa _{2}}^{\mu ,a^{\prime }}\right\} d\mu ,
\end{equation*}%
where 
\begin{eqnarray*}
\left\Vert \sum_{I^{\prime }\in \mathcal{H};\ a^{\prime }\in \Gamma
_{I^{\prime },n,\kappa }:\ I\subsetneqq I^{\prime }}c_{I^{\prime },a^{\prime
}}h_{I^{\prime };\kappa _{2}}^{\mu ,a}\right\Vert _{L^{2}\left( \mu \right)
}^{2} &=&\sum_{I^{\prime }\in \mathcal{H};\ a^{\prime }\in \Gamma
_{I^{\prime },n,\kappa }:\ I\subsetneqq I^{\prime }}\left\vert c_{I^{\prime
},a^{\prime }}\right\vert ^{2}, \\
\left\Vert \sum_{Q\in \mathcal{D}:\ Q\subset I}\ell \left( Q\right)
^{-2s}\bigtriangleup _{Q;1}^{\mu }h_{I;\kappa _{2}}^{\mu ,a}\right\Vert
_{L^{2}\left( \mu \right) }^{2} &=&\sum_{Q\in \mathcal{D}:\ Q\subset I}\ell
\left( Q\right) ^{-4s}\left\Vert \bigtriangleup _{Q;1}^{\mu }h_{I;\kappa
_{2}}^{\mu ,a}\right\Vert _{L^{2}\left( \mu \right) }^{2}.
\end{eqnarray*}%
So for $s<0$ we have the estimate,%
\begin{eqnarray*}
B_{1} &\leq &\sum_{I\in \mathcal{H};\ a\in \Gamma _{I,n,\kappa }}\left\vert
c_{I,a}\right\vert \ell \left( I\right) ^{-2s}\left( \sum_{I^{\prime }\in 
\mathcal{H};\ a^{\prime }\in \Gamma _{I^{\prime },n,\kappa }:\ I\subsetneqq
I^{\prime }}\left\vert c_{I^{\prime },a^{\prime }}\right\vert ^{2}\right) ^{%
\frac{1}{2}} \\
&\leq &\sqrt{\sum_{I\in \mathcal{H};\ a\in \Gamma _{I,n,\kappa }}\left\vert
c_{I,a}\right\vert ^{2}\ell \left( I\right) ^{-2s}}\sqrt{\sum_{I\in \mathcal{%
H};\ a\in \Gamma _{I,n,\kappa }}\ell \left( I\right) ^{-2s}\left(
\sum_{I^{\prime }\in \mathcal{H};\ a^{\prime }\in \Gamma _{I^{\prime
},n,\kappa }:\ I\subsetneqq I^{\prime }}\left\vert c_{I^{\prime },a^{\prime
}}\right\vert ^{2}\right) },
\end{eqnarray*}%
where the second factor squared is%
\begin{eqnarray*}
\sum_{I\in \mathcal{H};\ a\in \Gamma _{I,n,\kappa }}\ell \left( I\right)
^{-2s}\left( \sum_{I^{\prime }\in \mathcal{H};\ a^{\prime }\in \Gamma :\
I\subsetneqq I^{\prime }}\left\vert c_{I^{\prime },a^{\prime }}\right\vert
^{2}\right) &=&\sum_{I^{\prime }\in \mathcal{H};\ a^{\prime }\in \Gamma
_{I^{\prime },n,\kappa }:\ I\subsetneqq I^{\prime }}\left\vert c_{I^{\prime
},a^{\prime }}\right\vert ^{2}\sum_{I\in \mathcal{H};\ a\in \Gamma
_{I,n,\kappa }:\ I\subsetneqq I^{\prime }}\ell \left( I\right) ^{-2s} \\
&\approx &\sum_{I^{\prime }\in \mathcal{H};\ a^{\prime }\in \Gamma
_{I^{\prime },n,\kappa }:\ I\subsetneqq I^{\prime }}\left\vert c_{I^{\prime
},a^{\prime }}\right\vert ^{2}\ell \left( I^{\prime }\right) ^{-2s}
\end{eqnarray*}%
since $s<0$. Altogether we have%
\begin{equation*}
B_{1}\lesssim \sum_{I\in \mathcal{H};\ a\in \Gamma _{I,n,\kappa }}\left\vert
c_{I,a}\right\vert ^{2}\ell \left( I\right) ^{-2s}=C\left\Vert \sum_{I\in 
\mathcal{H};\ a\in \Gamma _{I,n,\kappa }}c_{I,a}h_{I;\kappa _{2}}^{\mu
}\right\Vert _{W_{\mathcal{D};\kappa _{2}}^{s}\left( \mu \right) }^{2},
\end{equation*}%
which together with the estimate for term $I$ proves our claim (\ref{now
claim}).

Now we claim that for $s>0$ we have%
\begin{equation}
W_{\mathcal{D};\kappa _{1}}^{s}\left( \mu \right) \subset W_{\mathcal{D}%
;\kappa _{2}}^{s}\left( \mu \right) \text{ for all }\kappa _{1}\geq \kappa
_{2}\geq 1.  \label{now claim'}
\end{equation}%
Indeed, for any subset $\mathcal{H}\subset \mathcal{D}$, we have 
\begin{eqnarray*}
&&\left\Vert \sum_{I\in \mathcal{H};\ a\in \Gamma _{I,n,\kappa
}}c_{I,a}h_{I;\kappa _{2}}^{\mu ,a}\right\Vert _{W_{\mathcal{D};\kappa
_{1}}^{s}\left( \mu \right) }^{2}=\sum_{Q\in \mathcal{D}}\ell \left(
Q\right) ^{-2s}\left\Vert \sum_{I\in \mathcal{H};\ a\in \Gamma _{I,n,\kappa
}:\ Q\subset I}c_{I,a}\bigtriangleup _{Q;\kappa _{1}}^{\mu }h_{I;\kappa
_{2}}^{\mu ,a}\right\Vert _{L^{2}\left( \mu \right) }^{2} \\
&=&\sum_{Q\in \mathcal{D}}\ell \left( Q\right) ^{-2s}\sum_{I\in \mathcal{H}%
;\ a\in \Gamma _{I,n,\kappa }}\left( c_{I,a}\right) ^{2}\left\Vert
\bigtriangleup _{Q;\kappa _{1}}^{\mu }h_{I;\kappa _{2}}^{\mu ,a}\right\Vert
_{L^{2}\left( \mu \right) }^{2} \\
&&+\sum_{Q\in \mathcal{D}}\ell \left( Q\right) ^{-2s}\sum_{I,I^{\prime }\in 
\mathcal{H};\ a\in \Gamma _{I,n,\kappa },a^{\prime }\in \Gamma _{I^{\prime
},n,\kappa }:\ Q\supset I\vee I^{\prime }}\int_{\mathbb{R}^{n}}c_{I,a}\left(
\bigtriangleup _{Q;\kappa _{1}}^{\mu }h_{I;\kappa _{2}}^{\mu ,a}\right)
c_{I^{\prime },a^{\prime }}\left( \bigtriangleup _{Q;\kappa _{1}}^{\mu
}h_{I^{\prime };\kappa _{2}}^{\mu ,a^{\prime }}\right) d\mu \\
&\equiv &A+B,
\end{eqnarray*}%
where $I\vee I^{\prime }$ denotes the smallest dyadic cube containing both $%
I $ and $I^{\prime }$ if it exists; otherwise the sum over $Q\supset I\vee
I^{\prime }$ is empty. Just as before, the first term satisfies%
\begin{eqnarray*}
&&A=\sum_{Q\in \mathcal{D}}\ell \left( Q\right) ^{-2s}\sum_{I\in \mathcal{H}%
;\ a\in \Gamma _{I,n,\kappa }}\left( c_{I,a}\right) ^{2}\left\Vert
\bigtriangleup _{Q;\kappa _{1}}^{\mu }h_{I;\kappa _{2}}^{\mu ,a}\right\Vert
_{L^{2}\left( \mu \right) }^{2}=\sum_{I\in \mathcal{H};\ a\in \Gamma
_{I,n,\kappa }}\left( c_{I,a}\right) ^{2}\sum_{Q\in \mathcal{D}}\ell \left(
Q\right) ^{-2s}\left\Vert \bigtriangleup _{Q;\kappa _{1}}^{\mu }h_{I;\kappa
_{2}}^{\mu ,a}\right\Vert _{L^{2}\left( \mu \right) }^{2} \\
&=&\sum_{I\in \mathcal{H};\ a\in \Gamma _{I,n,\kappa }}\left( c_{I,a}\right)
^{2}\left\Vert h_{I;\kappa _{2}}^{\mu ,a}\right\Vert _{W_{\mathcal{D};\kappa
_{1}}^{s}\left( \mu \right) }^{2}\leq C\sum_{I\in \mathcal{H};\ a\in \Gamma
_{I,n,\kappa }}\left( c_{I,a}\right) ^{2}\left\Vert h_{I;\kappa _{2}}^{\mu
,a}\right\Vert _{W_{\mathcal{D};\kappa _{2}}^{s}\left( \mu \right) }^{2} \\
&=&C\sum_{I\in \mathcal{H};\ a\in \Gamma _{I,n,\kappa }}\left(
c_{I,a}\right) ^{2}\ell \left( I\right) ^{-2s}=C\left\Vert \sum_{I\in 
\mathcal{H};\ a\in \Gamma _{I,n,\kappa }}c_{I,a}h_{I;\kappa _{2}}^{\mu
}\right\Vert _{W_{\mathcal{D};\kappa _{2}}^{s}\left( \mu \right) }^{2},
\end{eqnarray*}%
and where the final equality follows from Lemma \ref{complete set}.

To handle the second term $B$, we only need to consider the two cases $%
I^{\prime }\subset I$ and $I^{\prime }\cap I=\emptyset ,\ell \left(
I^{\prime }\right) \leq \ell \left( I\right) $. For the first case, we have
the estimate, 
\begin{eqnarray*}
B_{\func{case}1} &\equiv &\left\vert \sum_{I\in \mathcal{H};\ a\in \Gamma
_{I,n,\kappa }}c_{I,a}\int_{\mathbb{R}^{n}}\left\{ \sum_{Q\in \mathcal{D}:\
Q\supset I}\ell \left( Q\right) ^{-2s}\bigtriangleup _{Q;\kappa _{1}}^{\mu
}h_{I;\kappa _{2}}^{\mu ,a}\right\} \left\{ \sum_{I^{\prime }\in \mathcal{H}%
;\ a^{\prime }\in \Gamma _{I^{\prime },n,\kappa }:\ I^{\prime }\subsetneqq
I}c_{I^{\prime },a^{\prime }}h_{I^{\prime };\kappa _{2}}^{\mu ,a^{\prime
}}\right\} d\mu \right\vert \\
&\leq &\sum_{I\in \mathcal{H};\ a\in \Gamma _{I,n,\kappa }}\left\vert
c_{I,a}\right\vert \left\Vert \sum_{Q\in \mathcal{D}:\ Q\supset I}\ell
\left( Q\right) ^{-2s}\bigtriangleup _{Q;\kappa _{1}}^{\mu }h_{I;\kappa
_{2}}^{\mu ,a}\right\Vert _{L^{2}\left( \mu \right) }\left\Vert
\sum_{I^{\prime }\in \mathcal{H};\ a^{\prime }\in \Gamma _{I^{\prime
},n,\kappa }:\ I^{\prime }\subsetneqq I}c_{I^{\prime },a^{\prime
}}h_{I^{\prime };\kappa _{2}}^{\mu ,a^{\prime }}\right\Vert _{L^{2}\left(
\mu \right) },
\end{eqnarray*}%
where for $s>0$,%
\begin{equation*}
\left\Vert \sum_{Q\in \mathcal{D}:\ Q\supset I}\ell \left( Q\right)
^{-2s}\bigtriangleup _{Q;\kappa _{1}}^{\mu }h_{I;\kappa _{2}}^{\mu
,a}\right\Vert _{L^{2}\left( \mu \right) }\leq \ell \left( I\right)
^{-2s}\left\Vert \sum_{Q\in \mathcal{D}:\ Q\supset I}\bigtriangleup
_{Q;\kappa _{1}}^{\mu }h_{I;\kappa _{2}}^{\mu ,a}\right\Vert _{L^{2}\left(
\mu \right) }=\ell \left( I\right) ^{-2s},
\end{equation*}%
and%
\begin{equation*}
\left\Vert \sum_{I^{\prime }\in \mathcal{H};\ a^{\prime }\in \Gamma
_{I^{\prime },n,\kappa }:\ I^{\prime }\subsetneqq I}c_{I^{\prime },a^{\prime
}}h_{I^{\prime };\kappa _{2}}^{\mu ,a}\right\Vert _{L^{2}\left( \mu \right)
}=\sqrt{\sum_{I^{\prime }\in \mathcal{H};\ a^{\prime }\in \Gamma _{I^{\prime
},n,\kappa }:\ I^{\prime }\subsetneqq I}\left\vert c_{I^{\prime },a^{\prime
}}\right\vert ^{2}}.
\end{equation*}%
Thus we obtain the estimate%
\begin{eqnarray*}
B_{\func{case}1} &\leq &\sum_{I\in \mathcal{H};\ a\in \Gamma _{I,n,\kappa
}}\left\vert c_{I,a}\right\vert \ell \left( I\right) ^{-2s}\sqrt{%
\sum_{I^{\prime }\in \mathcal{H};\ a^{\prime }\in \Gamma _{I^{\prime
},n,\kappa }:\ I^{\prime }\subsetneqq I}\left\vert c_{I^{\prime },a^{\prime
}}\right\vert ^{2}} \\
&\leq &\sqrt{\sum_{I\in \mathcal{H};\ a\in \Gamma _{I,n,\kappa }}\left\vert
c_{I,a}\right\vert ^{2}\ell \left( I\right) ^{-2s}}\sqrt{\sum_{I\in \mathcal{%
H};\ a\in \Gamma _{I,n,\kappa }}\ell \left( I\right) ^{-2s}\sum_{I^{\prime
}\in \mathcal{H};\ a^{\prime }\in \Gamma _{I^{\prime },n,\kappa }:\
I^{\prime }\subsetneqq I}\left\vert c_{I^{\prime },a^{\prime }}\right\vert
^{2}}
\end{eqnarray*}%
where%
\begin{eqnarray*}
\sum_{I\in \mathcal{H};\ a\in \Gamma _{I,n,\kappa }}\ell \left( I\right)
^{-2s}\sum_{I^{\prime }\in \mathcal{H};\ a^{\prime }\in \Gamma _{I^{\prime
},n,\kappa }:\ I^{\prime }\subsetneqq I}\left\vert c_{I^{\prime },a^{\prime
}}\right\vert ^{2} &=&\sum_{I^{\prime }\in \mathcal{H};\ a^{\prime }\in
\Gamma _{I^{\prime },n,\kappa }}\left\vert c_{I^{\prime },a^{\prime
}}\right\vert ^{2}\sum_{I\in \mathcal{H};\ a\in \Gamma _{I,n,\kappa }:\
I^{\prime }\subsetneqq I}\ell \left( I\right) ^{-2s} \\
&\approx &\sum_{I^{\prime }\in \mathcal{H};\ a^{\prime }\in \Gamma
_{I^{\prime },n,\kappa }}\left\vert c_{I^{\prime },a^{\prime }}\right\vert
^{2}\ell \left( I^{\prime }\right) ^{-2s}.
\end{eqnarray*}%
Altogether we have%
\begin{equation*}
B_{\func{case}1}\lesssim \sum_{I\in \mathcal{H};\ a\in \Gamma _{I,n,\kappa
}}\left\vert c_{I,a}\right\vert ^{2}\ell \left( I\right) ^{-2s}\approx
\left\Vert \sum_{I\in \mathcal{H};\ a\in \Gamma _{I,n,\kappa
}}c_{I,a}h_{I;\kappa _{2}}^{\mu ,a}\right\Vert _{W_{\mathcal{D};\kappa
_{2}}^{s}\left( \mu \right) }^{2},
\end{equation*}%
which is the desired estimate for $B_{\func{case}1}$.

Turning finally to the second case $I^{\prime }\cap I=\emptyset ,\ell \left(
I\right) \leq \ell \left( I^{\prime }\right) $, we have%
\begin{eqnarray*}
&&B_{\func{case}2}\equiv \sum_{Q\in \mathcal{D}}\ell \left( Q\right)
^{-2s}\sum_{\substack{ I,I^{\prime }\in \mathcal{H};\ a\in \Gamma
_{I,n,\kappa },a^{\prime }\in \Gamma _{I^{\prime },n,\kappa }:\ Q\supset
I\vee I^{\prime }  \\ I^{\prime }\cap I=\emptyset ,\ell \left( I^{\prime
}\right) \leq \ell \left( I\right) }}\int_{\mathbb{R}^{n}}c_{I,a}\left(
\bigtriangleup _{Q;\kappa _{1}}^{\mu }h_{I;\kappa _{2}}^{\mu ,a}\right)
c_{I^{\prime },a^{\prime }}\left( \bigtriangleup _{Q;\kappa _{1}}^{\mu
}h_{I^{\prime };\kappa _{2}}^{\mu ,a^{\prime }}\right) d\mu \\
&=&\sum_{K\in \mathcal{D}}\sum_{Q\in \mathcal{D}:\ K\subset Q}\ell \left(
Q\right) ^{-2s}\sum_{m=1}^{\infty }\sum_{I\in \mathcal{H}\cap \mathfrak{C}%
^{\left( m\right) }\left( K\right) ;\ a\in \Gamma _{I,n,\kappa }}\sum 
_{\substack{ I^{\prime }\in \mathcal{H};\ a^{\prime }\in \Gamma _{I^{\prime
},n,\kappa }:\ \pi ^{\left( m\right) }\left( I^{\prime }\right) \subset K 
\\ \ell \left( I^{\prime }\right) \leq \ell \left( I\right) }} \\
&&\ \ \ \ \ \ \ \ \ \ \ \ \ \ \ \ \ \ \ \ \ \ \ \ \ \ \ \ \ \ \ \ \ \ \ \ \
\ \ \ \ \ \ \ \ \ \ \ \ \ \ \ \ \ \ \int_{\mathbb{R}^{n}}c_{I,a}\left(
\bigtriangleup _{Q;\kappa _{1}}^{\mu }h_{I;\kappa _{2}}^{\mu ,a}\right)
c_{I^{\prime },a^{\prime }}\left( \bigtriangleup _{Q;\kappa _{1}}^{\mu
}h_{I^{\prime };\kappa _{2}}^{\mu ,a^{\prime }}\right) d\mu \\
&=&\sum_{K\in \mathcal{D}}\sum_{m=1}^{\infty }\int_{\mathbb{R}%
^{n}}\sum_{Q\in \mathcal{D}:\ K\subset Q}\ell \left( Q\right)
^{-2s}\bigtriangleup _{Q;\kappa _{1}}^{\mu }\left( \sum_{I\in \mathcal{H}%
\cap \mathfrak{C}^{\left( m\right) }\left( K\right) ;\ a\in \Gamma
_{I,n,\kappa }}c_{I,a}h_{I;\kappa _{2}}^{\mu ,a}\right) \\
&&\ \ \ \ \ \ \ \ \ \ \ \ \ \ \ \ \ \ \ \ \ \ \ \ \ \ \ \ \ \ \ \ \ \ \ \ \
\ \ \ \ \ \ \ \ \ \ \ \ \ \ \ \ \ \ \bigtriangleup _{Q;\kappa _{1}}^{\mu
}\left( \sum_{\substack{ I^{\prime }\in \mathcal{H};\ a^{\prime }\in \Gamma
_{I^{\prime },n,\kappa }:\ \pi ^{\left( m\right) }\left( I^{\prime }\right)
\subset K  \\ \ell \left( I^{\prime }\right) \leq \ell \left( I\right) }}%
c_{I^{\prime },a^{\prime }}h_{I^{\prime };\kappa _{2}}^{\mu ,a^{\prime
}}\right) d\mu .
\end{eqnarray*}%
Now we compute that for each $K\in \mathcal{D}$,%
\begin{equation*}
\left\Vert \sum_{\substack{ I^{\prime }\in \mathcal{H};\ a^{\prime }\in
\Gamma _{I^{\prime },n,\kappa }:\ \pi ^{\left( m\right) }\left( I^{\prime
}\right) \subset K  \\ \ell \left( I^{\prime }\right) \leq \ell \left(
I\right) }}c_{I^{\prime },a^{\prime }}h_{I^{\prime };\kappa _{2}}^{\mu
,a}\right\Vert _{L^{2}\left( \mu \right) }^{2}=\sum_{\substack{ I^{\prime
}\in \mathcal{H};\ a^{\prime }\in \Gamma _{I^{\prime },n,\kappa }:\ \pi
^{\left( m\right) }\left( I^{\prime }\right) \subset K  \\ \ell \left(
I^{\prime }\right) \leq \ell \left( I\right) }}\left\vert c_{I^{\prime
},a^{\prime }}\right\vert ^{2},
\end{equation*}%
and%
\begin{eqnarray*}
&&\left\Vert \sum_{Q\in \mathcal{D}:\ K\subset Q}\ell \left( Q\right)
^{-2s}\bigtriangleup _{Q;\kappa _{1}}^{\mu }\left[ \sum_{I\in \mathcal{H}%
\cap \mathfrak{C}^{\left( m\right) }\left( K\right) ;\ a\in \Gamma
_{I,n,\kappa }}c_{I,a}h_{I;\kappa _{2}}^{\mu ,a}\right] \right\Vert
_{L^{2}\left( \mu \right) }^{2} \\
&=&\sum_{Q\in \mathcal{D}:\ K\subset Q}\ell \left( Q\right) ^{-4s}\left\Vert
\bigtriangleup _{Q;\kappa _{1}}^{\mu }\left[ \sum_{I\in \mathcal{H}\cap 
\mathfrak{C}^{\left( m\right) }\left( K\right) ;\ a\in \Gamma _{I,n,\kappa
}}c_{I,a}h_{I;\kappa _{2}}^{\mu ,a}\right] \right\Vert _{L^{2}\left( \mu
\right) }^{2} \\
&\leq &\sum_{Q\in \mathcal{D}:\ K\subset Q}\ell \left( Q\right)
^{-4s}\left\Vert \sum_{I\in \mathcal{H}\cap \mathfrak{C}^{\left( m\right)
}\left( K\right) ;\ a\in \Gamma _{I,n,\kappa }}c_{I,a}h_{I;\kappa _{2}}^{\mu
,a}\right\Vert _{L^{2}\left( \mu \right) }^{2} \\
&=&C\ell \left( K\right) ^{-4s}\sum_{I\in \mathcal{H}\cap \mathfrak{C}%
^{\left( m\right) }\left( K\right) ;\ a\in \Gamma _{I,n,\kappa }}\left\vert
c_{I,a}\right\vert ^{2}=C2^{-4sm}\sum_{I\in \mathcal{H}\cap \mathfrak{C}%
^{\left( m\right) }\left( K\right) ;\ a\in \Gamma _{I,n,\kappa }}\ell \left(
I\right) ^{-4s}\left\vert c_{I,a}\right\vert ^{2},
\end{eqnarray*}%
and hence%
\begin{eqnarray*}
B_{\func{case}2} &=&\sum_{K\in \mathcal{D}}\sum_{m=1}^{\infty }\int_{\mathbb{%
R}^{n}}\sum_{Q\in \mathcal{D}:\ K\subset Q}\ell \left( Q\right)
^{-2s}\bigtriangleup _{Q;\kappa _{1}}^{\mu }\left( \sum_{I\in \mathcal{H}%
\cap \mathfrak{C}^{\left( m\right) }\left( K\right) ;\ a\in \Gamma
_{I,n,\kappa }}c_{I,a}h_{I;\kappa _{2}}^{\mu ,a}\right) \\
&&\ \ \ \ \ \ \ \ \ \ \ \ \ \ \ \ \ \ \ \ \ \ \ \ \ \ \ \ \ \ \ \ \ \ \ \ \
\ \ \ \times \bigtriangleup _{Q;\kappa _{1}}^{\mu }\left( \sum_{\substack{ %
I^{\prime }\in \mathcal{H};\ a^{\prime }\in \Gamma _{I^{\prime },n,\kappa
}:\ \pi ^{\left( m\right) }\left( I^{\prime }\right) \subset K  \\ \ell
\left( I^{\prime }\right) \leq \ell \left( I\right) }}c_{I^{\prime
},a^{\prime }}h_{I^{\prime };\kappa _{2}}^{\mu ,a}\right) d\mu \\
&\leq &\sum_{K\in \mathcal{D}}\sum_{m=1}^{\infty }C2^{-sm}\sqrt{\sum_{I\in 
\mathcal{H}\cap \mathfrak{C}^{\left( m\right) }\left( K\right) ;\ a\in
\Gamma }\ell \left( I\right) ^{-2s}\left\vert c_{I,a}\right\vert ^{2}}\sqrt{%
\ell \left( K\right) ^{-2s}\sum_{\substack{ I^{\prime }\in \mathcal{H};\
a^{\prime }\in \Gamma _{I^{\prime },n,\kappa }:\ \pi ^{\left( m\right)
}\left( I^{\prime }\right) \subset K  \\ \ell \left( I^{\prime }\right) \leq
\ell \left( I\right) }}\left\vert c_{I^{\prime },a^{\prime }}\right\vert ^{2}%
} \\
&\leq &\sum_{m=1}^{\infty }C2^{-sm}\sqrt{\sum_{K\in \mathcal{D}}\sum_{I\in 
\mathcal{H}\cap \mathfrak{C}^{\left( m\right) }\left( K\right) ;\ a\in
\Gamma _{I,n,\kappa }}\ell \left( I\right) ^{-2s}\left\vert
c_{I,a}\right\vert ^{2}}\sqrt{\sum_{K\in \mathcal{D}}\ell \left( K\right)
^{-2s}\sum_{\substack{ I^{\prime }\in \mathcal{H};\ a^{\prime }\in \Gamma
_{I^{\prime },n,\kappa }:\ \pi ^{\left( m\right) }\left( I^{\prime }\right)
\subset K  \\ \ell \left( I^{\prime }\right) \leq \ell \left( I\right) }}%
\left\vert c_{I^{\prime },a^{\prime }}\right\vert ^{2}},
\end{eqnarray*}%
and regrouping we obtain%
\begin{eqnarray*}
B_{\func{case}2} &\leq &\sum_{m=1}^{\infty }C2^{-sm}\sqrt{\sum_{I\in 
\mathcal{H};\ a\in \Gamma _{I,n,\kappa }}\ell \left( I\right)
^{-2s}\left\vert c_{I,a}\right\vert ^{2}}\sqrt{\sum_{I^{\prime }\in \mathcal{%
H};\ a^{\prime }\in \Gamma _{I^{\prime },n,\kappa }}\left\vert c_{I^{\prime
},a^{\prime }}\right\vert ^{2}\sum_{K\in \mathcal{D}:\ \pi ^{\left( m\right)
}\left( I^{\prime }\right) \subset K}\ell \left( K\right) ^{-2s}} \\
&\leq &\sum_{m=1}^{\infty }C2^{-2sm}\sqrt{\sum_{I\in \mathcal{H};\ a\in
\Gamma _{I,n,\kappa }}\ell \left( I\right) ^{-2s}\left\vert
c_{I,a}\right\vert ^{2}}\sqrt{\sum_{I^{\prime }\in \mathcal{H};\ a^{\prime
}\in \Gamma _{I^{\prime },n,\kappa }}\ell \left( I^{\prime }\right)
^{-2s}\left\vert c_{I^{\prime },a^{\prime }}\right\vert ^{2}} \\
&\lesssim &\sum_{I\in \mathcal{H};\ a\in \Gamma _{I,n,\kappa }}\ell \left(
I\right) ^{-2s}\left\vert c_{I,a}\right\vert ^{2}\approx \left\Vert
\sum_{I\in \mathcal{H};\ a\in \Gamma _{I,n,\kappa }}c_{I,a}h_{I;\kappa
_{2}}^{\mu }\right\Vert _{W_{\mathcal{D};\kappa _{2}}^{s}\left( \mu \right)
}^{2},
\end{eqnarray*}%
which is the desired estimate for $B_{\func{case}2}$.

Thus from (\ref{now claim''}) and (\ref{now claim}) we obtain $W_{\mathcal{D}%
;\kappa _{1}}^{s}\left( \mu \right) =W_{\mathcal{D};\kappa _{2}}^{s}\left(
\mu \right) $ for all $s>0$, all grids $\mathcal{D}$ and all integers $%
\kappa _{1},\kappa _{2}\in \mathbb{N}$. Duality now establishes these
equalities for $s<0$ as well, and the case $s=0$ is automatic.
\end{proof}

Following Peetre \cite{Pee} and Stein \cite{Ste}, we define the homogeneous 
\emph{difference} Sobolev space $W_{\func{diff};\kappa }^{s}\left( \mu
\right) $ by%
\begin{equation*}
W_{\mathcal{D}_{\func{diff}};\kappa }^{s}\left( \mu \right) \equiv \left\{
f\in L^{2}\left( \mu \right) :\left\Vert f\right\Vert _{W_{\mathcal{D}_{%
\func{diff}};\kappa }^{s}}\left( \mu \right) <\infty \right\} ,\ \ \ \ \
s\in \mathbb{R}\text{ and }\kappa \in \mathbb{N},
\end{equation*}%
where%
\begin{eqnarray*}
\left\Vert f\right\Vert _{W_{\mathcal{D}_{\func{diff}};\kappa }^{s}\left(
\mu \right) }^{2} &\equiv &\sum_{Q\in \mathcal{D}}\int_{Q}\left\vert \frac{%
f\left( x\right) -\mathbb{E}_{Q;\kappa }^{\mu }f\left( x\right) }{\ell
\left( Q\right) ^{s}}\right\vert ^{2}d\mu \left( x\right) , \\
\text{and }\mathbb{E}_{Q;\kappa }^{\mu }f\left( x\right) &\equiv &\left(
E_{Q}^{\mu }f\right) \mathbf{1}_{Q}\left( x\right) =\left( \frac{1}{%
\left\vert Q\right\vert _{\mu }}\int_{Q}fd\mu \right) \mathbf{1}_{Q}\left(
x\right) .
\end{eqnarray*}%
The proof of the next lemma does not explicitly use the doubling property of 
$\mu $ either.

\begin{lemma}
Suppose $\mu $ is a doubling measure on $\mathbb{R}^{n}$ and $\mathcal{D}$
is a dyadic grid on $\mathbb{R}^{n}$. Then for $s>0$ and $\kappa \in \mathbb{%
N}$, we have 
\begin{equation*}
W_{\mathcal{D}_{\func{diff}};\kappa }^{s}\left( \mu \right) =W_{\mathcal{D}%
;\kappa }^{s}\left( \mu \right) ,
\end{equation*}%
with equivalence of norms.
\end{lemma}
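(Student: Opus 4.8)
The plan is to reduce the whole statement to the \textbf{pointwise identity}
\[
\mathbf{1}_{Q}f-\mathbb{E}_{Q;\kappa}^{\mu}f=\sum_{I\in\mathcal{D}:\ I\subseteq Q}\bigtriangleup_{I;\kappa}^{\mu}f,\qquad Q\in\mathcal{D},
\]
valid for every $f$ with a finite Alpert expansion, which then upgrades by orthogonality and a single geometric series to a \emph{proportionality} of the two norms; the general case follows by density. Here one should note that the difference norm only involves the $\mu$-average $\mathbb{E}_{Q}^{\mu}f=\left( E_{Q}^{\mu}f\right) \mathbf{1}_{Q}$, which together with Lemma \ref{coin} (giving $W_{\mathcal{D};\kappa}^{s}(\mu)=W_{\mathcal{D};1}^{s}(\mu)$) lets us take $\kappa=1$; the argument below is anyway insensitive to $\kappa$ provided $\mathbb{E}_{Q;\kappa}^{\mu}$ is read as the projection onto $\mathcal{P}_{Q;\kappa}^{n}(\mu)$. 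To prove the identity, expand $f=\sum_{I\in\mathcal{D}}\bigtriangleup_{I;\kappa}^{\mu}f$ via (\ref{Alpert expan}) and split $\mathcal{D}$ into the cubes $I\subseteq Q$, the cubes $I\supsetneqq Q$, and the cubes disjoint from $Q$: the disjoint cubes contribute nothing on $Q$ because $\bigtriangleup_{I;\kappa}^{\mu}f$ is supported in $I$; the cubes $I\subseteq Q$ contribute precisely $\sum_{I\subseteq Q}\bigtriangleup_{I;\kappa}^{\mu}f$, again by support; and for the cubes $I\supsetneqq Q$ the telescoping identity (\ref{telescoping}) gives $\mathbf{1}_{Q}\sum_{Q\subsetneqq I\subseteq P}\bigtriangleup_{I;\kappa}^{\mu}f=\mathbb{E}_{Q;\kappa}^{\mu}f-\mathbf{1}_{Q}\mathbb{E}_{P;\kappa}^{\mu}f$, where for $P$ a large enough common ancestor of the (finitely many) cubes carrying the expansion of $f$ the moment vanishing conditions (\ref{mom con}) force $\mathbb{E}_{P;\kappa}^{\mu}f=0$, leaving exactly $\mathbb{E}_{Q;\kappa}^{\mu}f$. (For $f$ lying only in the completion, one replaces this truncation by the usual limiting argument, which is where doubling -- via reverse doubling, so $\left\vert P\right\vert _{\mu}\to\infty$, and the $L^{\infty}$ control of projections in Subsection \ref{Subsection Haar} -- enters.)

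Granting the identity, orthogonality of the Alpert projections (Theorem \ref{main1}) yields
\[
\left\Vert \mathbf{1}_{Q}f-\mathbb{E}_{Q;\kappa}^{\mu}f\right\Vert _{L^{2}(\mu)}^{2}=\sum_{I\in\mathcal{D}:\ I\subseteq Q}\left\Vert \bigtriangleup_{I;\kappa}^{\mu}f\right\Vert _{L^{2}(\mu)}^{2},
\]
and, since $\mathbb{E}_{Q;\kappa}^{\mu}f$ is supported in $Q$, the restriction $\mathbf{1}_{Q}\bigl(f-\mathbb{E}_{Q;\kappa}^{\mu}f\bigr)$ equals the left-hand side there, so the $Q$-summand of $\left\Vert f\right\Vert _{W_{\mathcal{D}_{\func{diff}};\kappa}^{s}(\mu)}^{2}$ is $\ell(Q)^{-2s}\sum_{I\subseteq Q}\left\Vert \bigtriangleup_{I;\kappa}^{\mu}f\right\Vert _{L^{2}(\mu)}^{2}$. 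Summing over $Q\in\mathcal{D}$ and interchanging the order of summation (every term is nonnegative),
\[
\left\Vert f\right\Vert _{W_{\mathcal{D}_{\func{diff}};\kappa}^{s}(\mu)}^{2}=\sum_{I\in\mathcal{D}}\left\Vert \bigtriangleup_{I;\kappa}^{\mu}f\right\Vert _{L^{2}(\mu)}^{2}\sum_{Q\in\mathcal{D}:\ Q\supseteq I}\ell(Q)^{-2s}=\frac{1}{1-2^{-2s}}\sum_{I\in\mathcal{D}}\ell(I)^{-2s}\left\Vert \bigtriangleup_{I;\kappa}^{\mu}f\right\Vert _{L^{2}(\mu)}^{2},
\]
because the dyadic ancestors of a given $I$ have side lengths $2^{k}\ell(I)$, $k\geq0$, and $\sum_{k\geq0}2^{-2sk}=(1-2^{-2s})^{-1}$, which converges precisely because $s>0$. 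The right side is $(1-2^{-2s})^{-1}\left\Vert f\right\Vert _{W_{\mathcal{D};\kappa}^{s}(\mu)}^{2}$, so the two norms are proportional on the dense class of functions with finite Alpert expansion, and passing to completions gives $W_{\mathcal{D}_{\func{diff}};\kappa}^{s}(\mu)=W_{\mathcal{D};\kappa}^{s}(\mu)$ with equality of norms up to the constant $(1-2^{-2s})^{-1}$.

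The only step requiring real care is the first one -- verifying that the ``large-scale'' part $\mathbf{1}_{Q}\sum_{I\supsetneqq Q}\bigtriangleup_{I;\kappa}^{\mu}f$ of the Alpert expansion reproduces $\mathbb{E}_{Q;\kappa}^{\mu}f$ exactly and with no remainder. I would establish it on the dense subclass of finite Alpert expansions, where it is immediate from (\ref{telescoping}) and (\ref{mom con}), and only then pass to the completion; this way the doubling hypothesis enters only through the mild ingredients of reverse doubling and the $L^{\infty}$ control of projections, and no Calder\'{o}n--Zygmund input is needed. Everything after the identity -- orthogonality, the interchange of sums, the geometric series -- is routine, and is exactly what makes the two norms not merely equivalent but proportional.
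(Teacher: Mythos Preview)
Your proof is correct and follows essentially the same route as the paper: both start from the identity $\mathbf{1}_{Q}f-\mathbb{E}_{Q;\kappa}^{\mu}f=\sum_{I\subseteq Q}\bigtriangleup_{I;\kappa}^{\mu}f$, apply orthogonality, interchange the sums, and evaluate the geometric series $\sum_{Q\supseteq I}\ell(Q)^{-2s}$ for $s>0$. You supply more justification for the identity (via telescoping, moment vanishing on finite Alpert expansions, and density) and compute the explicit proportionality constant $(1-2^{-2s})^{-1}$, whereas the paper simply asserts the expansion and writes $\approx$; you also flag the $\kappa=1$ versus general $\kappa$ issue in the definition of $\mathbb{E}_{Q;\kappa}^{\mu}$, which the paper glosses over.
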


\begin{proof}
We expand the function%
\begin{equation*}
f\left( x\right) -\mathbb{E}_{Q;\kappa }^{\mu }f\left( x\right)
=\sum_{I\subset Q}\bigtriangleup _{I;\kappa }^{\mu }f\left( x\right) ,
\end{equation*}%
and so obtain for $s>0$ that%
\begin{eqnarray*}
\left\Vert f\right\Vert _{W_{\mathcal{D}_{\func{diff}};\kappa }^{s}}^{2}
&=&\sum_{Q\in \mathcal{D}}\ell \left( Q\right) ^{-2s}\sum_{I\subset
Q}\left\Vert \bigtriangleup _{I;\kappa }^{\mu }f\right\Vert _{L^{2}\left(
\mu \right) }^{2} \\
&=&\sum_{I\in \mathcal{D}}\ell \left( I\right) ^{-2s}\left\Vert
\bigtriangleup _{I;\kappa }^{\mu }f\right\Vert _{L^{2}\left( \mu \right)
}^{2}\sum_{\substack{ Q\in \mathcal{D}  \\ Q\supset I}}\left( \frac{\ell
\left( Q\right) }{\ell \left( I\right) }\right) ^{-2s} \\
&\approx &\sum_{I\in \mathcal{D}}\ell \left( I\right) ^{-2s}\left\Vert
\bigtriangleup _{I;\kappa }^{\mu }f\right\Vert _{L^{2}\left( \mu \right)
}^{2}=\left\Vert f\right\Vert _{W_{\mathcal{D};\kappa }^{s}\left( \mu
\right) }^{2}\ .
\end{eqnarray*}
\end{proof}

We just showed in Lemma \ref{coin} above that the weighted Alpert Sobolev
spaces $W_{\mathcal{D};\kappa }^{s}\left( \mu \right) $ coincide for $s\in 
\mathbb{R}$ and $\kappa \geq 1$, and so we simply write $W_{\mathcal{D}%
}^{s}\left( \mu \right) $ for these spaces, and for specificity we use the
norm of $W_{\mathcal{D};1}^{s}\left( \mu \right) $. Now we will show that
the spaces $W_{\mathcal{D};\kappa }^{s}\left( \mu \right) $ are independent
of the dyadic grid $\mathcal{D}$ for $\left\vert s\right\vert $ sufficiently
small provided the measure $\mu $ is \emph{doubling}, and extend this to
include the difference spaces $W_{\mathcal{D}_{\func{diff}};\kappa
}^{s}\left( \mu \right) $ as well.

\begin{theorem}
\label{dyad equiv}Suppose $\mu $ is a doubling measure on $\mathbb{R}^{n}$
and $\mathcal{D}$ and $\mathcal{E}$ are dyadic grids on $\mathbb{R}^{n}$.
Then for $\kappa \in \mathbb{N}$, and $\left\vert s\right\vert $
sufficiently small, we have%
\begin{equation*}
W_{\mathcal{D};\kappa }^{s}\left( \mu \right) =W_{\mathcal{E};\kappa
}^{s}\left( \mu \right) .
\end{equation*}
\end{theorem}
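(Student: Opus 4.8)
The plan is to reduce the equality $W^{s}_{\mathcal{D};\kappa}(\mu) = W^{s}_{\mathcal{E};\kappa}(\mu)$ to an estimate comparing the Alpert coefficients of a fixed function across the two grids, and then to exploit the almost-orthogonality that comes from the moment-vanishing and size properties of Alpert wavelets together with the doubling hypothesis. By Lemma \ref{coin} we may fix $\kappa=1$ and work with Haar projections $\bigtriangleup_{Q}^{\mu}=\bigtriangleup_{Q;1}^{\mu}$ on both grids; by symmetry (interchanging $\mathcal{D}$ and $\mathcal{E}$) it suffices to prove the one-sided bound $\|f\|_{W^{s}_{\mathcal{E};\kappa}(\mu)} \lesssim \|f\|_{W^{s}_{\mathcal{D};\kappa}(\mu)}$. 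Writing $f = \sum_{Q\in\mathcal{D}} \bigtriangleup^{\mu}_{Q}f$ and expanding each $\bigtriangleup^{\mu}_{Q}f$ in the $\mathcal{E}$-basis, the task becomes bounding $\sum_{R\in\mathcal{E}} \ell(R)^{-2s}\|\bigtriangleup^{\mu}_{R}f\|_{L^{2}(\mu)}^{2}$ in terms of $\sum_{Q\in\mathcal{D}}\ell(Q)^{-2s}\|\bigtriangleup^{\mu}_{Q}f\|_{L^{2}(\mu)}^{2}$, i.e. controlling the matrix $\langle \bigtriangleup^{\mu}_{R}\bigtriangleup^{\mu}_{Q}\rangle$ acting between weighted $\ell^{2}$ spaces with weights $\ell(\cdot)^{-2s}$.

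\textbf{Key steps.} First I would establish the pointwise/energy decay of the cross terms: for $R\in\mathcal{E}$ and $Q\in\mathcal{D}$, one has $\|\bigtriangleup^{\mu}_{R}\bigtriangleup^{\mu}_{Q}g\|_{L^{2}(\mu)} \lesssim \big(\tfrac{\ell(R\wedge Q)}{\ell(R\vee Q)}\big)^{\delta}\|\bigtriangleup^{\mu}_{Q}g\|_{L^2(\mu)}$ for some $\delta>0$ depending on the doubling constant, where the ``gain'' comes from the fact that a Haar function on one grid, restricted to a child of a cube from the other grid, is either constant (so killed by moment vanishing up to a boundary term) or supported near the boundary of that child; the boundary set has small $\mu$-measure by Corollary \ref{zero set doubling}, which supplies precisely the power decay $\delta = \theta/2$ or similar. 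This is where the doubling hypothesis genuinely enters — without the power decay of Corollary \ref{zero set doubling} (as opposed to merely logarithmic decay) one would not get a summable geometric series. Second, with this decay in hand, I would run a Schur test / Cotlar–Stein type argument on the weighted sequence spaces: the relevant kernel $K(R,Q) = \ell(R)^{-s}\ell(Q)^{s}\|\bigtriangleup^{\mu}_{R}\bigtriangleup^{\mu}_{Q}\|$ satisfies $\sum_{R} K(R,Q) \lesssim 1$ and $\sum_{Q} K(R,Q)\lesssim 1$ uniformly, \emph{provided} $|s|<\delta$, since the extra factor $(\ell(R)/\ell(Q))^{\pm s}$ can be absorbed into the geometric decay $(\ell(R\wedge Q)/\ell(R\vee Q))^{\delta}$ as long as $|s|$ is strictly smaller than the decay exponent. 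The Schur test then yields boundedness of the grid-change map on $W^{s}(\mu)$, giving the desired norm equivalence.

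\textbf{Main obstacle.} The crux is the first step: proving the geometric decay estimate $\|\bigtriangleup^{\mu}_{R}\bigtriangleup^{\mu}_{Q}g\|_{L^{2}(\mu)} \lesssim \big(\tfrac{\ell(R\wedge Q)}{\ell(R\vee Q)}\big)^{\delta}\|\bigtriangleup^{\mu}_{Q}g\|_{L^{2}(\mu)}$ uniformly in the measure $\mu$ (with $\delta$ depending only on its doubling constant). When $R \subsetneq Q$ (or their children straddle), $\bigtriangleup^{\mu}_{Q}g$ is a polynomial of degree $<\kappa$ on each child of $Q$; on the child of $Q$ containing $R$ it is a fixed polynomial, and applying $\bigtriangleup^{\mu}_{R}$ to a polynomial gives zero by \eqref{mom con} — \emph{except} that $R$ may not lie inside a single child of $Q$, in which case $\bigtriangleup^{\mu}_{Q}g$ fails to be polynomial precisely on $R\cap\partial(\text{child of }Q)$, a piece of a hyperplane; estimating the contribution there via the $L^{\infty}$ bounds \eqref{analogue'}–\eqref{in part} on Alpert functions and the halo estimate of Corollary \ref{zero set doubling} is exactly where the power decay is consumed. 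The bookkeeping of which cubes in $\mathcal{E}$ straddle which cubes in $\mathcal{D}$, and summing the resulting decay over all such pairs while tracking the $\ell(\cdot)^{-2s}$ weights, is technically the heaviest part, but it is structurally the standard Alpert almost-orthogonality argument adapted to carry the Sobolev weight and to be uniform in $\mu$.
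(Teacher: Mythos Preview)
Your overall strategy matches the paper's: both hinge on the almost-orthogonality of Alpert/Haar projections across grids, with the crucial power decay supplied by Corollary~\ref{zero set doubling} for small cubes straddling a skeleton, and reverse doubling handling the large-cube direction. The reduction to $\kappa=1$ via Lemma~\ref{coin} is fine, and your identification of where doubling enters is exactly right.

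There is, however, a genuine gap in the summation step. The Schur test as you state it---with $\ell^{1}$ row and column bounds on the kernel $K(R,Q)=\ell(R)^{-s}\ell(Q)^{s}\|\bigtriangleup^{\mu}_{R}\bigtriangleup^{\mu}_{Q}\|$---fails in dimensions $n\geq 2$. The obstruction is the \emph{cardinality} of straddling cubes: at scale $\ell(R)=2^{-m}\ell(Q)$ there are on the order of $2^{m(n-1)}$ cubes $R\in\mathcal{E}$ meeting the $(n-1)$-dimensional skeleton of $Q$, and the individual-pair decay $\|\bigtriangleup^{\mu}_{R}\bigtriangleup^{\mu}_{Q}\|\lesssim(|R|_{\mu}/|Q|_{\mu})^{1/2}$ is not enough to make $\sum_{R}K(R,Q)$ converge for any $s\geq 0$; already for Lebesgue measure in the plane one computes $\sum_{R:\ell(R)=2^{-m}\ell(Q)}K(R,Q)\gtrsim 2^{m(s+n/2-1)}$, which diverges. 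The paper sidesteps this by using $\ell^{2}$ orthogonality at each fixed scale rather than $\ell^{1}$ summability: since the $\{\bigtriangleup^{\mu}_{I}\}_{\ell(I)=\delta\ell(J)}$ are pairwise orthogonal projections, Bessel gives
\[
\sum_{I:\ell(I)=\delta\ell(J)}\|\bigtriangleup^{\mu}_{I}\bigtriangleup^{\mu}_{J}F\|_{L^{2}(\mu)}^{2}\leq\|\mathbf{1}_{H_{\delta}(J)}\bigtriangleup^{\mu}_{J}F\|_{L^{2}(\mu)}^{2}\lesssim\delta^{\theta}\|\bigtriangleup^{\mu}_{J}F\|_{L^{2}(\mu)}^{2},
\]
the last step being precisely the halo estimate of Corollary~\ref{zero set doubling}. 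This bound depends only on the \emph{measure} of the halo, not on the number of straddlers, and that is what makes the argument dimension-free. The paper then runs a weighted Cauchy--Schwarz across scales (essentially your Schur idea, but at the level of scales rather than individual cubes), together with an explicit reverse-doubling argument for the regime $\ell(I)>\ell(J)$ that you only allude to. Your mention of Cotlar--Stein points in the right direction, but the concrete repair is the scale-wise $\ell^{2}$ estimate above.
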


\begin{proof}
Note that for $J\in \mathcal{E}$ and $0<\delta =2^{-m}\leq 1$, and $F$ is
any finite linear combination of Alpert wavelets,%
\begin{eqnarray*}
&&\sum_{I\in \mathcal{D}:\ \ell \left( I\right) =\delta \ell \left( J\right)
}\int_{\mathbb{R}^{n}}\left\vert \bigtriangleup _{I;\kappa }^{\mu
}\bigtriangleup _{J;\kappa }^{\mu }F\right\vert ^{2}d\mu =\sum_{I\in 
\mathcal{D}:\ \ell \left( I\right) =\delta \ell \left( J\right) }\int_{%
\mathbb{R}^{n}}\left\vert \bigtriangleup _{I;\kappa }^{\mu }\left( \mathbf{1}%
_{H_{\delta }\left( J\right) }\bigtriangleup _{J;\kappa }^{\mu }F\right)
\right\vert ^{2}d\mu \leq \int_{H_{\delta }\left( J\right) }\left\vert
\bigtriangleup _{J;\kappa }^{\mu }F\right\vert ^{2}d\mu \\
&\leq &\int_{H_{\delta }\left( J\right) }\left\Vert \bigtriangleup
_{J;\kappa }^{\mu }F\right\Vert _{\infty }^{2}d\mu =\left\Vert
\bigtriangleup _{J;\kappa }^{\mu }F\right\Vert _{\infty }^{2}\left\vert
H_{\delta }\left( J\right) \right\vert _{\mu }\leq \left\Vert \bigtriangleup
_{J;\kappa }^{\mu }F\right\Vert _{\infty }^{2}C\delta ^{\varepsilon
}\left\vert J\right\vert _{\mu }\leq C\delta ^{\varepsilon }\int_{\mathbb{R}%
^{n}}\left\vert \bigtriangleup _{J;\kappa }^{\mu }F\right\vert ^{2}d\mu ,
\end{eqnarray*}%
which gives for $\eta >0$%
\begin{eqnarray*}
&&\sum_{I\in \mathcal{D}:\ \ell \left( I\right) \leq \ell \left( J\right)
}\ell \left( I\right) ^{-\left( 2s+\eta \right) }\int_{\mathbb{R}%
^{n}}\left\vert \bigtriangleup _{I;\kappa }^{\mu }\bigtriangleup _{J;\kappa
}^{\mu }F\right\vert ^{2}d\mu =\sum_{m=1}^{\infty }\sum_{I\in \mathcal{D}:\
\ell \left( I\right) =2^{-m}\ell \left( J\right) }\ell \left( I\right)
^{-\left( 2s+\eta \right) }\int_{\mathbb{R}^{n}}\left\vert \bigtriangleup
_{I;\kappa }^{\mu }\bigtriangleup _{J;\kappa }^{\mu }F\right\vert ^{2}d\mu \\
&\leq &\sum_{m=1}^{\infty }C2^{m\left( 2s+\eta -\varepsilon \right) }\ell
\left( J\right) ^{-\left( 2s+\eta \right) }\int_{\mathbb{R}^{n}}\left\vert
\bigtriangleup _{J;\kappa }^{\mu }F\right\vert ^{2}d\mu \leq C_{\varepsilon
,\eta ,s}\ell \left( J\right) ^{-\left( 2s+\eta \right) }\int_{\mathbb{R}%
^{n}}\left\vert \bigtriangleup _{J;\kappa }^{\mu }F\right\vert ^{2}d\mu ,
\end{eqnarray*}%
provided $s<\frac{\varepsilon -\eta }{2}$.

On the other hand, for $J\in \mathcal{E}$ and $\delta =2^{m}>1$, there are
at most $2^{n}$ cubes $I$ such that $\ell \left( I\right) =\delta \ell
\left( J\right) $ and $I\cap J\neq \emptyset $, and then following the line
of reasoning in (\ref{ind}) we have,

\begin{eqnarray*}
&&\sum_{I\in \mathcal{D}:\ \ell \left( I\right) >\ell \left( J\right) }\ell
\left( I\right) ^{-\left( 2s+\eta \right) }\int_{\mathbb{R}^{n}}\left\vert
\bigtriangleup _{I;\kappa }^{\mu }\bigtriangleup _{J;\kappa }^{\mu
}F\right\vert ^{2}d\mu =\sum_{I\in \mathcal{D}:\ \ell \left( I\right) >\ell
\left( J\right) \text{ and }I\cap J\neq \emptyset }\ell \left( I\right)
^{-\left( 2s+\eta \right) }\int_{\mathbb{R}^{n}}\left\vert \bigtriangleup
_{I;\kappa }^{\mu }\bigtriangleup _{J;\kappa }^{\mu }F\right\vert ^{2}d\mu \\
&\approx &\sum_{I\in \mathcal{D}:\ \ell \left( I\right) >\ell \left(
J\right) \text{ and }I\cap J\neq \emptyset }\ell \left( I\right) ^{-\left(
2s+\eta \right) }\left\vert I\right\vert _{\mu }\left\Vert \bigtriangleup
_{I;\kappa }^{\mu }\bigtriangleup _{J;\kappa }^{\mu }F\right\Vert _{\infty
}^{2}\approx \left\vert J\right\vert _{\mu }\sum_{I\in \mathcal{D}:\ \ell
\left( I\right) >\ell \left( J\right) \text{ and }I\cap J\neq \emptyset
}\ell \left( I\right) ^{-\left( 2s+\eta \right) }\frac{\left\vert
J\right\vert _{\mu }}{\left\vert I\right\vert _{\mu }}\left\Vert
\bigtriangleup _{J;\kappa }^{\mu }F\right\Vert _{\infty }^{2} \\
&\lesssim &\left\vert J\right\vert _{\mu }\left\Vert \bigtriangleup
_{J;\kappa }^{\mu }F\right\Vert _{\infty }^{2}C_{\eta }\ell \left( J\right)
^{\eta }\sum_{m=1}^{\infty }2^{-m\left( 2s+\eta \right) }\ell \left(
J\right) ^{-\left( 2s+\eta \right) }\frac{\left\vert J\right\vert _{\mu }}{%
\left\vert \pi ^{n}J\right\vert _{\mu }} \\
&\leq &\ell \left( J\right) ^{-2s}\int_{\mathbb{R}^{n}}\left\vert
\bigtriangleup _{J;\kappa }^{\mu }F\right\vert ^{2}d\mu \sum_{m=1}^{\infty
}2^{-m\left( 2s+\eta \right) }c2^{-m\theta _{\mu }^{\func{rev}}}\leq C\ell
\left( J\right) ^{-2s}\int_{\mathbb{R}^{n}}\left\vert \bigtriangleup
_{J;\kappa }^{\mu }F\right\vert ^{2},
\end{eqnarray*}%
provided $s>\frac{\eta -\theta _{\mu }^{\func{rev}}}{2}$ where $\theta _{\mu
}^{\func{rev}}>0$ is the reverse doubling exponent of $\mu $, i.e. 
\begin{equation*}
\left\vert \pi ^{n}J\right\vert _{\mu }\geq c2^{n\theta _{\mu }^{\func{rev}%
}}\left\vert J\right\vert _{\mu }.
\end{equation*}

Now we compute

\begin{eqnarray*}
\left\Vert f\right\Vert _{W_{\mathcal{D};\kappa }^{s}\left( \mu \right)
}^{2} &=&\sum_{I\in \mathcal{D}}\ell \left( I\right) ^{-2s}\left\Vert
\bigtriangleup _{I;\kappa }^{\mu }f\right\Vert _{L^{2}\left( \mu \right)
}^{2}=\sum_{I\in \mathcal{D}}\ell \left( I\right) ^{-2s}\left\Vert
\bigtriangleup _{I;\kappa }^{\mu }\left( \sum_{J\in \mathcal{E}%
}\bigtriangleup _{J;\kappa }^{\mu }\right) f\right\Vert _{L^{2}\left( \mu
\right) }^{2} \\
&\leq &2\sum_{I\in \mathcal{D}}\ell \left( I\right) ^{-2s}\left\Vert
\sum_{J\in \mathcal{E}:\ \ell \left( I\right) \leq \ell \left( J\right)
}\bigtriangleup _{I;\kappa }^{\mu }\bigtriangleup _{J;\kappa }^{\mu
}f\right\Vert _{L^{2}\left( \mu \right) }^{2} \\
&&+2\sum_{I\in \mathcal{D}}\ell \left( I\right) ^{-2s}\left\Vert \sum_{J\in 
\mathcal{E}:\ \ell \left( I\right) >\ell \left( J\right) }\bigtriangleup
_{I;\kappa }^{\mu }\bigtriangleup _{J;\kappa }^{\mu }f\right\Vert
_{L^{2}\left( \mu \right) }^{2}.
\end{eqnarray*}%
We bound the first sum by%
\begin{eqnarray*}
&&C_{\eta }\sum_{I\in \mathcal{D}}\ell \left( I\right) ^{-2s}\sum_{J\in 
\mathcal{E}:\ \ell \left( I\right) \leq \ell \left( J\right) }\left( \frac{%
\ell \left( J\right) }{\ell \left( I\right) }\right) ^{\eta }\left\Vert
\bigtriangleup _{I;\kappa }^{\mu }\bigtriangleup _{J;\kappa }^{\mu
}f\right\Vert _{L^{2}\left( \mu \right) }^{2} \\
&\leq &C_{\eta }\sum_{J\in \mathcal{E}}\ell \left( J\right) ^{\eta
}\sum_{I\in \mathcal{D}:\ \ell \left( I\right) \leq \ell \left( J\right)
}\ell \left( I\right) ^{-\left( 2s+\eta \right) }\left\Vert \bigtriangleup
_{I;\kappa }^{\mu }\bigtriangleup _{J;\kappa }^{\mu }f\right\Vert
_{L^{2}\left( \mu \right) }^{2} \\
&\leq &C_{\eta }\sum_{J\in \mathcal{E}}\ell \left( J\right) ^{\eta
}C_{\varepsilon ,\eta ,s}\ell \left( J\right) ^{-\left( 2s+\eta \right)
}\int_{\mathbb{R}^{n}}\left\vert \bigtriangleup _{J;\kappa }^{\mu
}f\right\vert ^{2}d\mu \\
&\leq &C_{\varepsilon ,\eta ,s}\sum_{J\in \mathcal{E}}\ell \left( J\right)
^{-2s}\int_{\mathbb{R}^{n}}\left\vert \bigtriangleup _{J;\kappa }^{\mu
}f\right\vert ^{2}d\mu =C_{\varepsilon ,\eta ,s}\left\Vert f\right\Vert _{W_{%
\mathcal{E};\kappa }^{s}\left( \mu \right) }^{2},
\end{eqnarray*}%
and the second sum by%
\begin{equation*}
\sum_{I\in \mathcal{D}}\ell \left( I\right) ^{-2s}\sum_{J\in \mathcal{E}:\
\ell \left( I\right) >\ell \left( J\right) }\left( \frac{\ell \left(
I\right) }{\ell \left( J\right) }\right) ^{\eta }\left\Vert \bigtriangleup
_{I;\kappa }^{\mu }\bigtriangleup _{J;\kappa }^{\mu }f\right\Vert
_{L^{2}\left( \mu \right) }^{2}\leq C_{\eta }\left\Vert f\right\Vert _{W_{%
\mathcal{E};\kappa }^{s}\left( \mu \right) }^{2}
\end{equation*}%
provided $s>\frac{\eta -\theta _{\mu }^{\func{rev}}}{2}$. Altogether we
obtain%
\begin{equation*}
\left\Vert f\right\Vert _{W_{\mathcal{D};\kappa }^{s}\left( \mu \right)
}\leq C_{\eta }\left\Vert f\right\Vert _{W_{\mathcal{E};\kappa }^{s}\left(
\mu \right) }\ ,
\end{equation*}%
provided%
\begin{equation*}
\frac{\eta -\theta _{\mu }^{\func{rev}}}{2}<s<\frac{\varepsilon -\eta }{2},
\end{equation*}%
and interchanging the roles of the dyadic grids $\mathcal{D}$ and $\mathcal{E%
}$ completes the proof.
\end{proof}

Finally, we will explicitly compute the norm $\left\Vert f\right\Vert _{W_{%
\mathcal{D}_{\func{diff}};1}^{s}\left( \mu \right) }$ by starting with%
\begin{eqnarray*}
&&\frac{1}{\left\vert Q\right\vert _{\mu }}\int_{Q}\int_{Q}\left( f\left(
x\right) -f\left( y\right) \right) ^{2}d\mu \left( x\right) d\mu \left(
y\right) \\
&=&\frac{1}{\left\vert Q\right\vert _{\mu }}\int_{Q}\int_{Q}\left\{ f\left(
x\right) ^{2}-2f\left( x\right) f\left( y\right) +f\left( y\right)
^{2}\right\} d\mu \left( x\right) d\mu \left( y\right) \\
&=&2\int_{Q}f^{2}d\mu -2\left\vert Q\right\vert _{\mu }\left( E_{Q}^{\mu
}f\right) ^{2}=2\int_{Q}\left\{ f\left( x\right) ^{2}-\left( E_{Q}^{\mu
}f\right) ^{2}\right\} d\mu ,
\end{eqnarray*}%
to obtain the representation%
\begin{eqnarray*}
\left\Vert f\right\Vert _{W_{\mathcal{D}_{\func{diff}};1}^{s}\left( \mu
\right) }^{2} &=&\sum_{Q\in \mathcal{D}}\int_{Q}\left\vert \frac{f\left(
x\right) -E_{Q}^{\mu }f}{\ell \left( Q\right) ^{s}}\right\vert ^{2}d\mu
\left( x\right) \\
&=&\sum_{Q\in \mathcal{D}}\ell \left( Q\right) ^{-2s}\int_{Q}\left\{ f\left(
x\right) ^{2}-2\left( E_{Q}^{\mu }f\right) f\left( x\right) +\left(
E_{Q}^{\mu }f\right) ^{2}\right\} d\mu \left( x\right) \\
&=&\sum_{Q\in \mathcal{D}}\ell \left( Q\right) ^{-2s}\int_{Q}\left\{ f\left(
x\right) ^{2}-\left( E_{Q}^{\mu }f\right) ^{2}\right\} d\mu \left( x\right)
\\
&=&\frac{1}{2}\sum_{Q\in \mathcal{D}}\ell \left( Q\right) ^{-2s}\frac{1}{%
\left\vert Q\right\vert _{\mu }}\int_{Q}\int_{Q}\left( f\left( x\right)
-f\left( y\right) \right) ^{2}d\mu \left( x\right) d\mu \left( y\right) .
\end{eqnarray*}

We will next show that this last expression is comparable to the expression%
\begin{equation*}
\left\Vert f\right\Vert _{W^{s}\left( \mu \right) }^{2}\equiv \int_{\mathbb{R%
}^{n}}\int_{\mathbb{R}^{n}}\left( \frac{f\left( x\right) -f\left( y\right) }{%
\left\vert x-y\right\vert ^{s}}\right) ^{2}\frac{d\mu \left( x\right) d\mu
\left( y\right) }{\left\vert B\left( \frac{x+y}{2},\frac{\left\vert
x-y\right\vert }{2}\right) \right\vert _{\mu }}.
\end{equation*}

\begin{theorem}
\label{final equiv}Suppose that $\mu $ is doubling on $\mathbb{R}^{n}$. For $%
s>0$ sufficiently small, we have%
\begin{equation*}
\left\Vert f\right\Vert _{W_{\mathcal{D}_{\func{diff}};1}^{s}\left( \mu
\right) }^{2}\approx \left\Vert f\right\Vert _{W^{s}\left( \mu \right) }^{2}.
\end{equation*}
\end{theorem}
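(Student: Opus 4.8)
The plan is to reduce the statement to a comparison of the Haar dyadic norm with the continuous norm, and to deal with the straddling of dyadic faces by averaging over a bounded family of shifted grids. First I would invoke the representation derived just above the theorem,
\[
\left\Vert f\right\Vert _{W_{\mathcal{D}_{\func{diff}};1}^{s}\left( \mu \right) }^{2}=\tfrac{1}{2}\sum_{Q\in \mathcal{D}}\ell \left( Q\right) ^{-2s}\frac{1}{\left\vert Q\right\vert _{\mu }}\int_{Q}\int_{Q}\left( f\left( x\right) -f\left( y\right) \right) ^{2}d\mu \left( x\right) d\mu \left( y\right) ,
\]
and interchange summation and integration (Tonelli, nonnegative integrand), rewriting the right side as $\tfrac12\int\int\left( f(x)-f(y)\right)^{2}\Lambda _{\mathcal D}(x,y)\,d\mu\,d\mu$ with $\Lambda _{\mathcal D}(x,y)=\sum_{Q\in\mathcal D:\,x,y\in Q}\ell(Q)^{-2s}\left\vert Q\right\vert_\mu^{-1}$. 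Writing $Q_{0}^{\mathcal D}(x,y)$ for the smallest cube of $\mathcal D$ containing both $x$ and $y$, the cubes through $x,y$ are its ancestors $\pi^{k}Q_{0}^{\mathcal D}(x,y)$, $k\ge 0$, with $\ell(\pi^{k}Q_{0}^{\mathcal D})=2^{k}\ell(Q_{0}^{\mathcal D})$ and $\left\vert\pi^{k}Q_{0}^{\mathcal D}\right\vert_\mu\ge\left\vert Q_{0}^{\mathcal D}\right\vert_\mu$, so summing the geometric series $\sum_{k\ge0}2^{-2sk}$ (here $s>0$ is used) gives $\Lambda_{\mathcal D}(x,y)\approx\ell(Q_{0}^{\mathcal D}(x,y))^{-2s}\left\vert Q_{0}^{\mathcal D}(x,y)\right\vert_\mu^{-1}$, the lower bound coming from the $k=0$ term alone. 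Thus for every dyadic grid $\mathcal D$,
\[
\left\Vert f\right\Vert _{W_{\mathcal{D}_{\func{diff}};1}^{s}\left( \mu \right) }^{2}\approx \int\int \left( f\left( x\right) -f\left( y\right) \right) ^{2}\frac{\ell \left( Q_{0}^{\mathcal D}\left( x,y\right) \right) ^{-2s}}{\left\vert Q_{0}^{\mathcal D}\left( x,y\right) \right\vert _{\mu }}\,d\mu \left( x\right) d\mu \left( y\right) .
\]

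The bound $\left\Vert f\right\Vert _{W_{\mathcal{D}_{\func{diff}};1}^{s}\left( \mu \right)}^{2}\lesssim \left\Vert f\right\Vert _{W^{s}\left( \mu \right)}^{2}$ is then immediate. Since $Q_{0}^{\mathcal D}(x,y)$ contains both $x$ and $y$, one has $\ell(Q_{0}^{\mathcal D}(x,y))\ge |x-y|/\sqrt{n}$, hence $\ell(Q_{0}^{\mathcal D}(x,y))^{-2s}\lesssim |x-y|^{-2s}$; and the ball $B\left(\tfrac{x+y}{2},\tfrac{|x-y|}{2}\right)$ is centered at a point of $Q_{0}^{\mathcal D}(x,y)$ with radius at most $\tfrac{\sqrt n}{2}\ell(Q_{0}^{\mathcal D}(x,y))$, so it sits in $C_{n}Q_{0}^{\mathcal D}(x,y)$ and the doubling property yields $\left\vert B\left(\tfrac{x+y}{2},\tfrac{|x-y|}{2}\right)\right\vert_\mu\lesssim\left\vert Q_{0}^{\mathcal D}(x,y)\right\vert_\mu$. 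Multiplying these two estimates and integrating against $(f(x)-f(y))^{2}$ gives the claim; only $s>0$ and doubling enter.

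The reverse inequality is the heart of the matter, and the obstruction — the step I expect to be hardest — is that $\ell(Q_{0}^{\mathcal D}(x,y))$ can be far larger than $|x-y|$ when $x$ and $y$ straddle a high-level face of $\mathcal D$, so no pointwise comparison is available in the fixed grid $\mathcal D$. To get around this I would introduce a finite family $\mathcal D^{(1)},\dots,\mathcal D^{(N)}$, $N=N_{n}$, of dyadic grids forming an adjacent system (the one-third shift, or Hyt\"onen--Kairema): for every cube $P$ there are $k$ and $Q\in\mathcal D^{(k)}$ with $P\subset Q$ and $\ell(Q)\le C_{n}\ell(P)$. Applying this to the smallest axis-parallel cube containing $\{x,y\}$ furnishes, for each pair $x\neq y$, an index $k=k(x,y)$ with $x,y\in Q_{0}^{(k)}(x,y)$ and $\ell(Q_{0}^{(k)}(x,y))\approx|x-y|$; since $Q_{0}^{(k)}(x,y)$ then lies inside a bounded dilate of $B\left(\tfrac{x+y}{2},\tfrac{|x-y|}{2}\right)$, doubling gives $\left\vert Q_{0}^{(k)}(x,y)\right\vert_\mu\lesssim\left\vert B\left(\tfrac{x+y}{2},\tfrac{|x-y|}{2}\right)\right\vert_\mu$, whence
\[
\frac{\ell \left( Q_{0}^{(k(x,y))}\left( x,y\right) \right) ^{-2s}}{\left\vert Q_{0}^{(k(x,y))}\left( x,y\right) \right\vert _{\mu }}\gtrsim \frac{\left\vert x-y\right\vert ^{-2s}}{\left\vert B\left( \tfrac{x+y}{2},\tfrac{\left\vert x-y\right\vert }{2}\right) \right\vert _{\mu }}.
\]
Estimating the single relevant $k$ by the sum over all $k$, and then using the first-paragraph identity for each grid, one gets $\left\Vert f\right\Vert _{W^{s}(\mu)}^{2}\lesssim\sum_{k=1}^{N}\left\Vert f\right\Vert _{W_{\mathcal D^{(k)}_{\func{diff}};1}^{s}(\mu)}^{2}$; finally, the preceding Lemma identifying the difference and Haar norms together with the grid-invariance Theorem \ref{dyad equiv} — both valid for $|s|$ small — give $\left\Vert f\right\Vert _{W_{\mathcal D^{(k)}_{\func{diff}};1}^{s}(\mu)}\approx\left\Vert f\right\Vert _{W_{\mathcal D_{\func{diff}};1}^{s}(\mu)}$ for each $k$, completing the argument. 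I would run this for $f$ a finite linear combination of Haar wavelets, so that every sum is finite and Tonelli is unproblematic, and then pass to the Hilbert-space completions; the remaining points needing care are only that the shifted grids $\mathcal D^{(k)}$ are themselves dyadic grids covered by Theorem \ref{dyad equiv} (they are, being shifts of a standard grid, and that theorem's proof used only reverse doubling and the halo bound of Corollary \ref{zero set doubling}), and that the admissible range of $s$ here is exactly the small range in which Theorem \ref{dyad equiv} holds, so that all comparison constants remain finite.
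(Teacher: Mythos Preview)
Your proposal is correct and follows essentially the same route as the paper: both directions proceed via the double-integral representation of the difference norm, the easy inequality uses doubling to compare the kernel pointwise, and the hard inequality invokes the one-third trick (a finite family of shifted grids) together with Theorem~\ref{dyad equiv} to pass back to a single grid. Your execution is in fact slightly more streamlined---you compare the kernels $\Lambda_{\mathcal D^{(k)}}(x,y)$ pointwise and sum over $k$, whereas the paper interposes a bounded-overlap covering of $\mathbb{R}^n\times\mathbb{R}^n$ by product balls before invoking the shifted grids---but the underlying idea is identical.
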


\begin{proof}
From the formula above we have%
\begin{eqnarray*}
&&\left\Vert f\right\Vert _{W_{\mathcal{D}_{\func{diff}};1}^{s}\left( \mu
\right) }^{2}=\frac{1}{2}\int_{\mathbb{R}^{n}}\int_{\mathbb{R}^{n}}\left\{
\sum_{Q\in \mathcal{D}}\frac{\mathbf{1}_{Q\times Q}\left( x,y\right) }{\ell
\left( Q\right) ^{2s}\left\vert Q\right\vert _{\mu }}\right\} \left( f\left(
x\right) -f\left( y\right) \right) ^{2}d\mu \left( x\right) d\mu \left(
y\right) \\
&\leq &\frac{1}{2}\int_{\mathbb{R}^{n}}\int_{\mathbb{R}^{n}}\frac{C}{%
\left\vert x-y\right\vert ^{2s}\left\vert B\left( \frac{x+y}{2},\frac{%
\left\vert x-y\right\vert }{2}\right) \right\vert _{\mu }}\left( f\left(
x\right) -f\left( y\right) \right) ^{2}d\mu \left( x\right) d\mu \left(
y\right) =\frac{C}{2}\left\Vert f\right\Vert _{W^{s}\left( \mu \right) }^{2},
\end{eqnarray*}%
since $\left\vert Q\right\vert _{\mu }\gtrsim \left\vert B\left( \frac{x+y}{2%
},\frac{\left\vert x-y\right\vert }{2}\right) \right\vert _{\mu }$ whenever $%
\left( x,y\right) \in Q\times Q$. Conversely we use the one third trick for
dyadic grids. Namely that there is a finite collection of dyadic grids $%
\left\{ \mathcal{D}_{m}\right\} _{m=1}^{3^{n}}$ so that for every $\left(
x,y\right) \in \mathbb{R}^{n}\times \mathbb{R}^{n}$, there is some $m$ and
some $Q\in \mathcal{D}_{m}$ such that 
\begin{equation*}
B\left( x,C\left\vert x-y\right\vert \right) ,B\left( y,C\left\vert
x-y\right\vert \right) \subset Q\text{ and }\ell \left( Q\right) \leq
C\left\vert x-y\right\vert ,
\end{equation*}%
where $C$ is a large constant that will be fixed below. In particular this
gives 
\begin{equation*}
\left\vert Q\right\vert _{\mu }\approx \left\vert B\left( x,c\left\vert
x-y\right\vert \right) \right\vert _{\mu }\approx \left\vert B\left(
y,c\left\vert x-y\right\vert \right) \right\vert _{\mu },\ \ \ \ \ \text{for 
}\frac{1}{100}<c<\frac{1}{2}.
\end{equation*}%
Then we cover the product space $\mathbb{R}^{n}\times \mathbb{R}^{n}$ with a
collection of product balls 
\begin{equation*}
\left\{ B\left( x_{k},c\left\vert x_{k}-y_{k}\right\vert \right) \times
B\left( y_{k},c\left\vert x_{k}-y_{k}\right\vert \right) \right\}
_{k=1}^{\infty }
\end{equation*}%
where $E\equiv \left\{ \left( x_{k},y_{k}\right) \right\} _{k=1}^{\infty }$
is a discrete subset of $\mathbb{R}^{n}\times \mathbb{R}^{n}$, and provided $%
c$ is chosen sufficiently small, this collection of product balls has
bounded overlap. Now denote by $Q_{k}$ the cube chosen above by the point $%
\left( x_{k},y_{k}\right) \in \mathbb{R}^{n}\times \mathbb{R}^{n}$. Then we
have%
\begin{eqnarray*}
&&\left\vert Q_{k}\right\vert _{\mu }\approx \left\vert B\left(
x,c\left\vert x-y\right\vert \right) \right\vert _{\mu }\approx \left\vert
B\left( y,c\left\vert x-y\right\vert \right) \right\vert _{\mu } \\
&&\text{for all }\left( x,y\right) \text{ in the product ball }B\left(
x_{k},c\left\vert x_{k}-y_{k}\right\vert \right) \times B\left(
y_{k},c\left\vert x_{k}-y_{k}\right\vert \right) ,
\end{eqnarray*}%
and so%
\begin{eqnarray*}
\left\Vert f\right\Vert _{W^{s}\left( \mu \right) }^{2} &\leq
&\sum_{k=1}^{\infty }\int_{B\left( x_{k},c\left\vert x_{k}-y_{k}\right\vert
\right) }\int_{B\left( y_{k},c\left\vert x_{k}-y_{k}\right\vert \right)
}\left( \frac{f\left( x\right) -f\left( y\right) }{\left\vert x-y\right\vert
^{s}}\right) ^{2}\frac{d\mu \left( x\right) d\mu \left( y\right) }{%
\left\vert B\left( \frac{x+y}{2},\frac{\left\vert x-y\right\vert }{2}\right)
\right\vert _{\mu }} \\
&\lesssim &\sum_{k=1}^{\infty }\int_{Q_{k}}\int_{Q_{k}}\left( \frac{f\left(
x\right) -f\left( y\right) }{\ell \left( Q_{k}\right) ^{s}}\right) ^{2}\frac{%
d\mu \left( x\right) d\mu \left( y\right) }{\left\vert Q_{k}\right\vert
_{\mu }} \\
&\leq &\sum_{m=1}^{3^{n}}\sum_{Q\in \mathcal{D}_{m}}\ell \left( Q\right)
^{-2s}\frac{1}{\left\vert Q\right\vert _{\mu }}\int_{Q}\int_{Q}\left(
f\left( x\right) -f\left( y\right) \right) ^{2}d\mu \left( x\right) d\mu
\left( y\right) \\
&=&2\sum_{m=1}^{3^{n}}\left\Vert f\right\Vert _{W_{\mathcal{D}_{m,\func{diff}%
};1}^{s}\left( \mu \right) }^{2}\leq C\left\Vert f\right\Vert _{W_{\mathcal{D%
}_{\func{diff}};1}^{s}\left( \mu \right) }^{2}
\end{eqnarray*}%
since $\left\Vert f\right\Vert _{W_{\mathcal{D}_{m,\func{diff}};1}^{s}\left(
\mu \right) }^{2}$ is independent of the dyadic grid $\mathcal{D}_{m}$ by
Theorem \ref{dyad equiv}.
\end{proof}

In particular, we have thus obtained one of the main results of this
subsection.

\begin{theorem}
\label{W spaces equal}For all grids $\mathcal{D}$ on $\mathbb{R}^{n}\,$, all
positive integers $\kappa $, and all sufficiently small $s>0$ depending\
only on the doubling constant of $\mu $, we have 
\begin{equation*}
W^{s}\left( \mu \right) =W_{\mathcal{D};\kappa }^{s}\left( \mu \right) =W_{%
\mathcal{D}_{\func{diff}};\kappa }^{s}\left( \mu \right) ,
\end{equation*}%
with equivalence of norms.
\end{theorem}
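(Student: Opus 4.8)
The plan is to obtain Theorem \ref{W spaces equal} by concatenating the equivalences already established in this subsection, and then checking that the admissible range of $s$ is dictated only by the doubling constant of $\mu$. Fix a grid $\mathcal{D}$, a positive integer $\kappa$, and $s>0$. Lemma \ref{coin}, applied with $\kappa_{1}=1$ and $\kappa_{2}=\kappa$, gives $W_{\mathcal{D};1}^{s}(\mu)=W_{\mathcal{D};\kappa}^{s}(\mu)$ with equivalence of norms, for every $s\in\mathbb{R}$ and with no smallness restriction. The lemma above identifying $W_{\mathcal{D}_{\func{diff}};\kappa}^{s}(\mu)$ with $W_{\mathcal{D};\kappa}^{s}(\mu)$ gives $W_{\mathcal{D}_{\func{diff}};\kappa}^{s}(\mu)=W_{\mathcal{D};\kappa}^{s}(\mu)$ for all $s>0$ and all $\kappa$, again with no smallness restriction. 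Combining these, for all $s>0$ and all $\kappa$ we already have the string of equalities
$$W_{\mathcal{D}_{\func{diff}};\kappa}^{s}(\mu)=W_{\mathcal{D}_{\func{diff}};1}^{s}(\mu)=W_{\mathcal{D};1}^{s}(\mu)=W_{\mathcal{D};\kappa}^{s}(\mu),$$
so the only remaining tasks are to splice in the continuous space $W^{s}(\mu)$ and to record independence of the grid.

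For the first task I would invoke Theorem \ref{final equiv}: for $s>0$ sufficiently small, $\left\Vert f\right\Vert_{W_{\mathcal{D}_{\func{diff}};1}^{s}(\mu)}\approx\left\Vert f\right\Vert_{W^{s}(\mu)}$, with implied constants depending only on the doubling constant of $\mu$ (inspecting that proof, the smallness of $s$ is used only through the one-third trick and through the appeal to Theorem \ref{dyad equiv}). Feeding this into the displayed chain yields $W^{s}(\mu)=W_{\mathcal{D};\kappa}^{s}(\mu)=W_{\mathcal{D}_{\func{diff}};\kappa}^{s}(\mu)$ for every $\kappa$ and every sufficiently small $s>0$. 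For the second task, grid-independence of $W_{\mathcal{D};\kappa}^{s}(\mu)$ is precisely Theorem \ref{dyad equiv}, valid for $\left\vert s\right\vert$ sufficiently small; tracing its hypotheses, the requirement is that $\frac{\eta-\theta_{\mu}^{\func{rev}}}{2}<s<\frac{\varepsilon-\eta}{2}$ be solvable for some auxiliary $\eta>0$, where $\theta_{\mu}^{\func{rev}}$ is the reverse doubling exponent of $\mu$ and $\varepsilon>0$ is the power-decay exponent furnished by Corollary \ref{zero set doubling}. Choosing $\eta=\frac{1}{2}\min\{\theta_{\mu}^{\func{rev}},\varepsilon\}$ makes this window nonempty and exhibits a threshold depending only on $\theta_{\mu}^{\func{rev}}$ and $\varepsilon$. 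One then takes $\theta$ to be the smaller of the thresholds coming from Theorems \ref{final equiv} and \ref{dyad equiv}, which completes the proof.

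There is no genuine obstacle here: the content has already been extracted in the preceding results, and what remains is purely a matter of ordering the equalities and verifying consistency of the ranges of $s$ and $\kappa$. The one point deserving a little care is the claim that the threshold $\theta$ depends only on the doubling constant; this rests on the standard facts recalled at the start of this section, namely that a doubling measure has doubling and reverse-doubling exponents quantitatively controlled by its doubling constant, together with the fact that the power-decay exponent $\varepsilon$ of Corollary \ref{zero set doubling} is likewise controlled by the doubling constant.
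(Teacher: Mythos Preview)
Your proposal is correct and matches the paper's approach: the paper presents Theorem \ref{W spaces equal} as an immediate consequence of the preceding results (Lemma \ref{coin}, the lemma equating $W_{\mathcal{D}_{\func{diff}};\kappa}^{s}$ and $W_{\mathcal{D};\kappa}^{s}$, Theorem \ref{dyad equiv}, and Theorem \ref{final equiv}), without writing out a separate argument. Your tracking of where the smallness threshold on $s$ enters, and why it depends only on the doubling constant, is a reasonable elaboration of what the paper leaves implicit.
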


As a consequence of this theorem, there is essentially just one notion of a
weighted Sobolev space for a doubling measure $\mu $ provided $\left\vert
s\right\vert $ is sufficiently small, namely $W^{s}\left( \mu \right) $ when 
$s>0$, and any of the dyadic spaces $W_{\mathcal{D};\kappa }^{s}\left( \mu
\right) $ when $s<0$. For specificity we will use the norm of $W_{\mathcal{D}%
_{0};1}^{s}\left( \mu \right) $ on these spaces, where $\mathcal{D}_{0}$ is
the standard dyadic grid on $\mathbb{R}^{n}$.

\begin{definition}
Define $W_{\limfunc{dyad}}^{s}\left( \mu \right) =W_{\mathcal{D};\kappa
}^{s}\left( \mu \right) =W_{\mathcal{D}_{\func{diff}};\kappa }^{s}\left( \mu
\right) $ for $\left\vert s\right\vert $ sufficiently small, and norm $W_{%
\limfunc{dyad}}^{s}\left( \mu \right) $ with the norm of $W_{\mathcal{D}%
;1}^{s}\left( \mu \right) $.
\end{definition}

Note that $W_{\limfunc{dyad}}^{s}\left( \mu \right) =W^{s}\left( \mu \right) 
$ for $s>0$ sufficiently small.

\begin{remark}
The Sobolev space $W^{s}\left( \mu \right) $ used here is different from the
Sobolev space introduced on a space of homogeneous type in \cite{HaSa},
since one can show that the norm squared used in \cite{HaSa} is comparable\
to%
\begin{equation*}
\int_{\mathbb{R}^{n}}\int_{\mathbb{R}^{n}}\left( \frac{f\left( x\right)
-f\left( y\right) }{\left\vert B\left( \frac{x+y}{2},\frac{\left\vert
x-y\right\vert }{2}\right) \right\vert _{\mu }^{\alpha }}\right) ^{2}\frac{%
d\mu \left( x\right) d\mu \left( y\right) }{\left\vert B\left( \frac{x+y}{2},%
\frac{\left\vert x-y\right\vert }{2}\right) \right\vert _{\mu }}.
\end{equation*}%
It seems likely that our proof extends to the analogous $T1$ theorem for
these weighted Sobolev spaces using the doubling measure inequalities $\frac{%
\left\vert Q\right\vert _{\mu }}{\left\vert 2^{m}Q\right\vert _{\mu }}%
\lesssim \left( \frac{\ell \left( Q\right) }{\ell \left( 2^{m}Q\right) }%
\right) ^{\theta _{\mu }^{\func{rev}}}$ and $\frac{\left\vert
2^{m}Q\right\vert _{\mu }}{\left\vert Q\right\vert _{\mu }}\lesssim \left( 
\frac{\ell \left( 2^{m}Q\right) }{\ell \left( Q\right) }\right) ^{\theta
_{\mu }^{\limfunc{doub}}}$.
\end{remark}

\begin{problem}
Does a $T1$ theorem hold in the context of weighted Sobolev spaces with
doubling measures and norm squared given by%
\begin{equation*}
\int_{\mathbb{R}^{n}}\int_{\mathbb{R}^{n}}\left( \frac{f\left( x\right)
-f\left( y\right) }{\varphi \left( \left\vert x-y\right\vert \right) }%
\right) ^{2}\frac{d\mu \left( x\right) d\mu \left( y\right) }{\left\vert
B\left( \frac{x+y}{2},\frac{\left\vert x-y\right\vert }{2}\right)
\right\vert _{\mu }},
\end{equation*}%
where $\varphi :\left( 0,\infty \right) \rightarrow \left( 0,\infty \right) $
satisfies 
\begin{equation*}
\left( \frac{s}{t}\right) ^{\theta _{1}}\lesssim \frac{\varphi \left(
s\right) }{\varphi \left( t\right) }\lesssim \left( \frac{s}{t}\right)
^{\theta _{2}}\text{ for }0<s\leq t<\infty \ ?
\end{equation*}
\end{problem}

\begin{remark}
The inner product for the Hilbert space $W_{\func{dyad}}^{s}\left( \mu
\right) $ is given by%
\begin{equation*}
\left\langle f,g\right\rangle _{W_{\func{dyad}}^{s}\left( \mu \right)
}\equiv \sum_{Q\in \mathcal{D}}\ell \left( Q\right) ^{-2s}\left\langle
\bigtriangleup _{Q;\kappa }^{\mu }f,\bigtriangleup _{Q;\kappa }^{\mu
}g\right\rangle _{L^{2}\left( \mu \right) }
\end{equation*}%
where the inner product for $L^{2}\left( \mu \right) $ is given by%
\begin{equation*}
\left\langle f,g\right\rangle _{L^{2}\left( \mu \right) }\equiv \int_{%
\mathbb{R}^{n}}f\left( x\right) g\left( x\right) d\mu \left( x\right) .
\end{equation*}%
In Lemma \ref{complete set}\ we showed that $\left\{ \ell \left( Q\right)
^{s}\bigtriangleup _{Q;\kappa }^{\mu }\right\} _{Q\in \mathcal{D}}$ is a
complete set of orthogonal projections on $W_{\func{dyad}}^{s}\left( \mu
\right) $, nevertheless we will not use the Hilbert space duality that
identifies the dual of a Hilbert space with the conjugate of itself under
the inner product $\left\langle f,g\right\rangle _{W_{\func{dyad}}^{s}\left(
\mu \right) }$, but rather the $L^{2}\left( \mu \right) $ inner product
which identifies the dual of $W_{\func{dyad}}^{s}\left( \mu \right) $ with $%
W_{\func{dyad}}^{-s}\left( \mu \right) $. As mentioned in the introduction,
the reason for this is that the weighted Alpert projections $\left\{
\bigtriangleup _{Q;\kappa }^{\mu }\right\} _{Q\in \mathcal{D}}$ satisfy
telescoping identities, while the orthogonal projections $\left\{ \ell
\left( Q\right) ^{s}\bigtriangleup _{Q;\kappa }^{\mu }\right\} _{Q\in 
\mathcal{D}}$ do not.
\end{remark}

\subsection{Haar, Alpert and indicator functions}

The Alpert projections $\left\{ \bigtriangleup _{I;\kappa }^{\mu }\right\}
_{I\in \mathcal{D}}$ form a complete family of orthogonal projections on $%
L^{2}\left( \mu \right) $, where%
\begin{equation*}
\bigtriangleup _{I;\kappa }^{\mu }f\equiv \mathbb{E}_{I;\kappa }^{\mu
}f-\sum_{I^{\prime }\in \mathfrak{C}_{\mathcal{D}}\left( I\right) }\mathbb{E}%
_{I^{\prime };\kappa }^{\mu }f=\sum_{a\in \Gamma }\left\langle f,h_{I;\kappa
}^{\mu ,a}\right\rangle _{L^{2}\left( \mu \right) }h_{I;\kappa }^{\mu ,a}\ .
\end{equation*}%
Thus we have%
\begin{eqnarray*}
&&\sum_{a\in \Gamma }\left\vert \left\langle f,h_{I;\kappa }^{\mu
,a}\right\rangle _{L^{2}\left( \mu \right) }\right\vert ^{2}\left\Vert
h_{I;\kappa }^{\mu ,a}\right\Vert _{W_{\func{dyad}}^{s}\left( \mu \right)
}^{2}=\sum_{a\in \Gamma }\left\Vert \left\langle f,h_{I;\kappa }^{\mu
,a}\right\rangle _{L^{2}\left( \mu \right) }h_{I;\kappa }^{\mu
,a}\right\Vert _{W_{\func{dyad}}^{s}\left( \mu \right) }^{2}=\left\Vert
\bigtriangleup _{I;\kappa }^{\mu }f\right\Vert _{W_{\func{dyad}}^{s}\left(
\mu \right) }^{2} \\
&=&\sum_{Q\in \mathcal{D}}\ell \left( Q\right) ^{-2s}\left\Vert
\bigtriangleup _{Q;\kappa }^{\mu }\bigtriangleup _{I;\kappa }^{\mu
}f\right\Vert _{L^{2}\left( \mu \right) }^{2}=\ell \left( I\right)
^{-2s}\left\Vert \bigtriangleup _{I;\kappa }^{\mu }f\right\Vert
_{L^{2}\left( \mu \right) }^{2}=\ell \left( I\right) ^{-2s}\sum_{a\in \Gamma
}\left\vert \left\langle f,h_{I;\kappa }^{\mu ,a}\right\rangle _{L^{2}\left(
\mu \right) }\right\vert ^{2}\ ,
\end{eqnarray*}%
for all $f\in W_{\func{dyad}}^{s}\left( \mu \right) $, which implies upon
taking $f=h_{I;\kappa }^{\mu ,a}$, that%
\begin{equation*}
\left\Vert h_{I;\kappa }^{\mu ,a}\right\Vert _{W_{\func{dyad}}^{s}\left( \mu
\right) }^{2}=\ell \left( I\right) ^{-2s},\ \ \ \ \ I\in \mathcal{D}.
\end{equation*}

Now we compute the weighted Alpert Sobolev norms of indicators. By
independence of $\kappa $, we may assume $\kappa =1$. Using (\ref{analogue'}%
) and (\ref{in part}), we then note that%
\begin{equation*}
\left\Vert \bigtriangleup _{Q}^{\mu }\mathbf{1}_{I}\right\Vert _{\infty
}^{2}\approx \frac{\left\Vert \bigtriangleup _{Q}^{\mu }\mathbf{1}%
_{I}\right\Vert _{L^{2}\left( \mu \right) }^{2}}{\left\vert Q\right\vert
_{\mu }}\approx \frac{\frac{\left\vert I\right\vert _{\mu }^{2}}{\left\vert
Q\right\vert _{\mu }}}{\left\vert Q\right\vert _{\mu }}=\left( \frac{%
\left\vert I\right\vert _{\mu }}{\left\vert Q\right\vert _{\mu }}\right)
^{2}\ \ \ \ \ \text{for }I\subset Q^{\prime }\in \mathfrak{C}_{\mathcal{D}%
}\left( Q\right) ,
\end{equation*}%
and hence from (\ref{analogue'}) we obtain 
\begin{eqnarray}
\left\Vert \mathbf{1}_{I}\right\Vert _{W_{\func{dyad}}^{s}\left( \mu \right)
}^{2} &=&\sum_{Q\in \mathcal{D}}\ell \left( Q\right) ^{-2s}\left\Vert
\bigtriangleup _{Q;1}^{\mu }\mathbf{1}_{I}\right\Vert _{L^{2}\left( \mu
\right) }^{2}=\sum_{Q\in \mathcal{D}:\ Q\varsupsetneqq I}\ell \left(
Q\right) ^{-2s}\left\Vert \bigtriangleup _{Q;1}^{\mu }\mathbf{1}%
_{I}\right\Vert _{L^{2}\left( \mu \right) }^{2}  \label{ind} \\
&\approx &\sum_{Q\in \mathcal{D}:\ Q\varsupsetneqq I}\ell \left( Q\right)
^{-2s}\left\vert Q\right\vert _{\mu }\left\Vert \bigtriangleup _{Q;1}^{\mu }%
\mathbf{1}_{I}\right\Vert _{\infty }^{2}\approx \left\vert I\right\vert
_{\mu }\sum_{Q\in \mathcal{D}:\ Q\varsupsetneqq I}\ell \left( Q\right) ^{-2s}%
\frac{\left\vert I\right\vert _{\mu }}{\left\vert Q\right\vert _{\mu }} 
\notag \\
&=&\left\vert I\right\vert _{\mu }\sum_{n=1}^{\infty }2^{-2ns}\ell \left(
I\right) ^{-2s}\frac{\left\vert I\right\vert _{\mu }}{\left\vert \pi
^{n}I\right\vert _{\mu }}=\ell \left( I\right) ^{-2s}\left\vert I\right\vert
_{\mu }\sum_{n=1}^{\infty }2^{-2ns}\frac{\left\vert I\right\vert _{\mu }}{%
\left\vert \pi ^{n}I\right\vert _{\mu }}.  \notag
\end{eqnarray}%
Since $\mu $ is doubling, it also satisfies a \emph{dyadic reverse doubling}
condition with reverse doubling exponent $\theta _{\mu }^{\func{rev}}>0$
depending on the doubling constant, i.e. 
\begin{equation*}
\left\vert \pi ^{n}I\right\vert _{\mu }\geq c2^{n\theta _{\mu }^{\func{rev}%
}}\left\vert I\right\vert _{\mu }.
\end{equation*}%
Then for $s>-\theta _{\mu }^{\func{rev}}$ we have $\sum_{n=1}^{\infty
}2^{-2ns}\frac{\left\vert I\right\vert _{\mu }}{\left\vert \pi
^{n}I\right\vert _{\mu }}\leq \sum_{n=1}^{\infty }2^{-2n\left( s+\theta
_{\mu }^{\func{rev}}\right) }<\infty $, and so%
\begin{equation*}
\left\Vert \mathbf{1}_{I}\right\Vert _{W_{\func{dyad}}^{s}\left( \mu \right)
}^{2}\approx \ell \left( I\right) ^{-2s}\left\vert I\right\vert _{\mu }\ .
\end{equation*}%
Altogether we have proved the following lemma.

\begin{lemma}
Suppose $\mu $ is a locally finite positive Borel measure on $\mathbb{R}^{n}$%
. Then%
\begin{equation*}
\left\Vert h_{I;\kappa }^{\mu ,a}\right\Vert _{W_{\func{dyad}}^{s}\left( \mu
\right) }=\ell \left( I\right) ^{-s},\ \ \ \ \ \text{for all }I\in \mathcal{D%
},a\in \Gamma _{I,n,\kappa },\kappa \geq 1\text{ and }s\in \mathbb{R},
\end{equation*}%
and if $\mu $ is a doubling measure,%
\begin{equation*}
\left\Vert \mathbf{1}_{I}\right\Vert _{W_{\func{dyad}}^{s}\left( \mu \right)
}\approx \ell \left( I\right) ^{-s}\sqrt{\left\vert I\right\vert _{\mu }},\
\ \ \ \ \text{for all }I\in \mathcal{D},\kappa \geq 1\text{ and }s>-\theta
_{\mu }^{\func{rev}}.
\end{equation*}
\end{lemma}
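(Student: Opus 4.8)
The plan is to read both statements directly off the Alpert orthogonality and telescoping structure set up earlier, with the doubling hypothesis entering only in the second part and only at two points.

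\emph{First identity.} This is purely a matter of orthogonality and normalization and requires no hypothesis on $\mu$ beyond local finiteness, nor any restriction on $s$. Since $h_{I;\kappa}^{\mu,a}$ lies in the range of $\bigtriangleup_{I;\kappa}^{\mu}$, Theorem~\ref{main1} gives $\bigtriangleup_{Q;\kappa}^{\mu}h_{I;\kappa}^{\mu,a}=h_{I;\kappa}^{\mu,a}$ for $Q=I$ and $=0$ otherwise. Hence in $\left\Vert h_{I;\kappa}^{\mu,a}\right\Vert_{W_{\func{dyad}}^{s}\left(\mu\right)}^{2}=\sum_{Q\in\mathcal{D}}\ell\left(Q\right)^{-2s}\left\Vert\bigtriangleup_{Q;\kappa}^{\mu}h_{I;\kappa}^{\mu,a}\right\Vert_{L^{2}\left(\mu\right)}^{2}$ only $Q=I$ survives, leaving $\ell\left(I\right)^{-2s}\left\Vert h_{I;\kappa}^{\mu,a}\right\Vert_{L^{2}\left(\mu\right)}^{2}=\ell\left(I\right)^{-2s}$ by $L^{2}\left(\mu\right)$-orthonormality of the Alpert functions; take square roots.

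\emph{Second identity.} By Lemma~\ref{coin} the $W_{\mathcal{D};\kappa}^{s}$ norms are comparable across $\kappa$, so I take $\kappa=1$. Expand $\mathbf{1}_{I}=\sum_{Q\in\mathcal{D}}\bigtriangleup_{Q;1}^{\mu}\mathbf{1}_{I}$ and observe that $\bigtriangleup_{Q;1}^{\mu}\mathbf{1}_{I}=0$ unless $Q\varsupsetneqq I$: it vanishes trivially when $Q\cap I=\emptyset$, and when $Q\subset I$ (including $Q=I$) the function $\mathbf{1}_{I}$ is constant on $Q$ and on each child of $Q$, so its Haar projection is zero. Thus only the ancestors $Q=\pi^{n}I$, $n\ge1$, contribute. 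For such a $Q$, with $Q'$ the child containing $I$, a direct computation of the single Haar coefficient gives $\left\Vert\bigtriangleup_{Q;1}^{\mu}\mathbf{1}_{I}\right\Vert_{L^{2}\left(\mu\right)}^{2}=\left\vert I\right\vert_{\mu}^{2}\left(\tfrac{1}{\left\vert Q'\right\vert_{\mu}}-\tfrac{1}{\left\vert Q\right\vert_{\mu}}\right)$; together with the $L^{\infty}$ control~(\ref{analogue'})--(\ref{in part}) and doubling of $\mu$ (which gives $\left\vert Q'\right\vert_{\mu}\approx\left\vert Q\right\vert_{\mu}$, and via the resulting reverse doubling $\left\vert Q\right\vert_{\mu}-\left\vert Q'\right\vert_{\mu}\approx\left\vert Q\right\vert_{\mu}$) this is $\approx\left\vert I\right\vert_{\mu}^{2}/\left\vert Q\right\vert_{\mu}$. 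Summing over $n$ gives $\left\Vert\mathbf{1}_{I}\right\Vert_{W_{\func{dyad}}^{s}\left(\mu\right)}^{2}\approx\ell\left(I\right)^{-2s}\left\vert I\right\vert_{\mu}\sum_{n\ge1}2^{-2ns}\tfrac{\left\vert I\right\vert_{\mu}}{\left\vert\pi^{n}I\right\vert_{\mu}}$, exactly as in~(\ref{ind}). Dyadic reverse doubling, $\left\vert\pi^{n}I\right\vert_{\mu}\ge c\,2^{n\theta_{\mu}^{\func{rev}}}\left\vert I\right\vert_{\mu}$, makes this series converge on the asserted range $s>-\theta_{\mu}^{\func{rev}}$, while its $n=1$ term alone is $\gtrsim1$, so the series is $\approx1$ and $\left\Vert\mathbf{1}_{I}\right\Vert_{W_{\func{dyad}}^{s}\left(\mu\right)}^{2}\approx\ell\left(I\right)^{-2s}\left\vert I\right\vert_{\mu}$.

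\emph{Where the work is.} All of the above is routine bookkeeping except the lower bound $\left\Vert\bigtriangleup_{\pi I;1}^{\mu}\mathbf{1}_{I}\right\Vert_{L^{2}\left(\mu\right)}^{2}\gtrsim\left\vert I\right\vert_{\mu}$ needed for the ``$\gtrsim$'' half: it amounts to $1-\left\vert I\right\vert_{\mu}/\left\vert\pi I\right\vert_{\mu}$ being bounded away from $0$, i.e.\ to the sharp reverse-doubling statement that a dyadic child carries a uniformly smaller share of its parent's mass — this fails for general $\mu$ and is exactly where doubling is indispensable. The other mild point is matching the convergence threshold of the geometric series to the stated exponent $-\theta_{\mu}^{\func{rev}}$, which is bookkeeping with the reverse-doubling exponent.
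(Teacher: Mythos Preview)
Your proof is correct and follows essentially the same approach as the paper: both reduce to $\kappa=1$, observe that only ancestors $Q\supsetneqq I$ contribute, show $\left\Vert\bigtriangleup_{Q;1}^{\mu}\mathbf{1}_{I}\right\Vert_{L^{2}\left(\mu\right)}^{2}\approx\left\vert I\right\vert_{\mu}^{2}/\left\vert Q\right\vert_{\mu}$ under doubling, and sum the resulting series via reverse doubling, exactly as in (\ref{ind}). The only cosmetic difference is that the paper obtains the $L^{2}$ estimate on $\bigtriangleup_{Q;1}^{\mu}\mathbf{1}_{I}$ by quoting the $L^{\infty}$ control (\ref{analogue'})--(\ref{in part}), whereas you compute the Haar projection explicitly as $\left\vert I\right\vert_{\mu}^{2}\left(\left\vert Q'\right\vert_{\mu}^{-1}-\left\vert Q\right\vert_{\mu}^{-1}\right)$ and then invoke doubling; you also make the lower bound (via the $n=1$ term) explicit, which the paper absorbs into the chain of $\approx$'s.
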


\subsubsection{Sharpness}

Here we construct measures for which $\left\Vert \mathbf{1}_{I}\right\Vert
_{W_{\func{dyad}}^{s}\left( \mu \right) }^{2}=\infty $ for all intervals $I$
and $s<0$, and thus are \emph{not} dyadic reverse doubling. A trivial
example is any finite measure $\mu $, and an infinite example is $d\mu
\left( x\right) =\mathbf{1}_{\left[ e,\infty \right) }\left( x\right) \frac{1%
}{x\ln x}$. In fact we have the following lemma.

\begin{lemma}
If there is $s<0$ such that 
\begin{equation*}
\left\Vert \mathbf{1}_{I}\right\Vert _{W_{\func{dyad}}^{s}\left( \mu \right)
}^{2}\leq C\ell \left( I\right) ^{-2s}\left\vert I\right\vert _{\mu }
\end{equation*}%
for all dyadic intervals $I$, then $\mu $ is a dyadic reverse doubling
measure with exponent $\left\vert s\right\vert $.
\end{lemma}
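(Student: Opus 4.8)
The plan is to compute $\left\Vert \mathbf{1}_{I}\right\Vert _{W_{\func{dyad}}^{s}\left( \mu \right) }^{2}$ \emph{exactly} in terms of the $\mu $-masses $\left\vert \pi ^{n}I\right\vert _{\mu }$ of the dyadic ancestors of $I$, and then to read off the reverse doubling inequality from a single tail of the resulting series. Since $s<0$ is fixed, I may take $\kappa =1$, so that $\bigtriangleup _{Q;1}^{\mu }$ is the weighted Haar projection. As in the derivation of (\ref{ind}), $\bigtriangleup _{Q;1}^{\mu }\mathbf{1}_{I}$ vanishes unless $Q\varsupsetneqq I$, and for $Q=\pi ^{n}I$ with $n\geq 1$, writing $Q^{\prime }=\pi ^{n-1}I$ for the child of $Q$ that contains $I$, one has $\bigtriangleup _{Q;1}^{\mu }\mathbf{1}_{I}=\frac{\left\vert I\right\vert _{\mu }}{\left\vert Q\right\vert _{\mu }}\mathbf{1}_{Q}-\frac{\left\vert I\right\vert _{\mu }}{\left\vert Q^{\prime }\right\vert _{\mu }}\mathbf{1}_{Q^{\prime }}$; a one-line computation then gives the \emph{exact} identity $\left\Vert \bigtriangleup _{Q;1}^{\mu }\mathbf{1}_{I}\right\Vert _{L^{2}\left( \mu \right) }^{2}=\left\vert I\right\vert _{\mu }^{2}\bigl( \left\vert \pi ^{n-1}I\right\vert _{\mu }^{-1}-\left\vert \pi ^{n}I\right\vert _{\mu }^{-1}\bigr) $ (this is the step behind (\ref{ind}) \emph{before} the doubling comparison is invoked). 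Using $\ell \left( \pi ^{n}I\right) =2^{n}\ell \left( I\right) $ and $-2s=2\left\vert s\right\vert $, summation over $n$ yields
\[
\left\Vert \mathbf{1}_{I}\right\Vert _{W_{\func{dyad}}^{s}\left( \mu \right) }^{2}=\ell \left( I\right) ^{-2s}\left\vert I\right\vert _{\mu }^{2}\sum_{n=1}^{\infty }2^{2n\left\vert s\right\vert }\left( \frac{1}{\left\vert \pi ^{n-1}I\right\vert _{\mu }}-\frac{1}{\left\vert \pi ^{n}I\right\vert _{\mu }}\right) .
\]

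Next I would feed in the hypothesis: dividing $\left\Vert \mathbf{1}_{I}\right\Vert _{W_{\func{dyad}}^{s}\left( \mu \right) }^{2}\leq C\,\ell \left( I\right) ^{-2s}\left\vert I\right\vert _{\mu }$ by $\ell \left( I\right) ^{-2s}\left\vert I\right\vert _{\mu }$ shows that $\left\vert I\right\vert _{\mu }\sum_{n\geq 1}2^{2n\left\vert s\right\vert }\bigl( \left\vert \pi ^{n-1}I\right\vert _{\mu }^{-1}-\left\vert \pi ^{n}I\right\vert _{\mu }^{-1}\bigr) \leq C$. Fixing $N\geq 1$, all summands are nonnegative and $2^{2n\left\vert s\right\vert }\geq 2^{2N\left\vert s\right\vert }$ for $n\geq N$, so I may discard the terms with $n<N$ and telescope the rest from below:
\[
C\ \geq \ 2^{2N\left\vert s\right\vert }\left\vert I\right\vert _{\mu }\sum_{n\geq N}\left( \frac{1}{\left\vert \pi ^{n-1}I\right\vert _{\mu }}-\frac{1}{\left\vert \pi ^{n}I\right\vert _{\mu }}\right) =2^{2N\left\vert s\right\vert }\left\vert I\right\vert _{\mu }\left( \frac{1}{\left\vert \pi ^{N-1}I\right\vert _{\mu }}-\lim_{m\rightarrow \infty }\frac{1}{\left\vert \pi ^{m}I\right\vert _{\mu }}\right) .
\]
Since $\left\vert \pi ^{m}I\right\vert _{\mu }\nearrow \infty $ the limit is $0$, whence $\left\vert \pi ^{N-1}I\right\vert _{\mu }\geq \frac{1}{C}2^{2N\left\vert s\right\vert }\left\vert I\right\vert _{\mu }\geq \frac{1}{C}2^{\left( N-1\right) \left\vert s\right\vert }\left\vert I\right\vert _{\mu }$. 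As $N$ and $I$ were arbitrary this is exactly the dyadic reverse doubling inequality $\left\vert \pi ^{m}I\right\vert _{\mu }\geq c\,2^{m\left\vert s\right\vert }\left\vert I\right\vert _{\mu }$ with $c=1/C$ (in fact with exponent $2\left\vert s\right\vert $), i.e. $\mu $ is dyadic reverse doubling with exponent $\left\vert s\right\vert $.

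The one place that needs care — and the main obstacle — is the vanishing of $\lim_{m}\left\vert \pi ^{m}I\right\vert _{\mu }^{-1}$, equivalently the fact that every maximal dyadic tower $\bigcup _{m}\pi ^{m}I$ carries infinite $\mu $-mass. This is precisely the regime relevant to the present sharpness discussion (infinite measures), and it holds automatically once $\mu $ is doubling; by contrast, if some tower had finite mass then the series in the first display would converge for a trivial reason and the hypothesis could hold with $\mu $ failing to be reverse doubling, so this mild nondegeneracy cannot be dropped. Apart from this point, the argument is just the bookkeeping behind the Haar computation already carried out for (\ref{ind}), and it uses no doubling of $\mu $ at all.
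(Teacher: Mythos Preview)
Your argument is correct and takes a somewhat different route from the paper's. The paper invokes the lower bound implicit in (\ref{ind}), namely $\sum_{n\geq 0} 2^{(n+1)|s|}/|\pi^n I|_\mu \lesssim |I|_\mu^{-2}\ell(I)^{2s}\|\mathbf{1}_I\|_{W^s_{\func{dyad}}(\mu)}^2$, and then reads off the reverse-doubling inequality from any single term of this bounded series. You instead compute $\|\mathbf{1}_I\|_{W^s_{\func{dyad}}(\mu)}^2$ \emph{exactly} as a weighted telescoping series in $1/|\pi^n I|_\mu$---this is the identity behind (\ref{ind}) before any doubling comparison is applied---and extract reverse doubling by telescoping the tail from level $N$. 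Your route sidesteps the $\approx$ of (\ref{ind}), which in the paper is justified via (\ref{analogue'}) for doubling $\mu$, and as a bonus yields exponent $2|s|$.

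Your caveat about $|\pi^m I|_\mu\to\infty$ is well placed: the same assumption is hidden inside the paper's $\lesssim$, since without it the lower bound $\|\bigtriangleup_{Q;1}^\mu\mathbf{1}_I\|_{L^2(\mu)}^2\gtrsim |I|_\mu^2/|Q|_\mu$ from (\ref{ind}) can fail term by term (take $\mu$ supported in $I$, so every Haar difference vanishes while the claimed series diverges). Your finite-tower observation therefore shows this nondegeneracy cannot be dropped from either argument. In short, both proofs establish the same refined statement; yours makes the needed hypothesis explicit rather than absorbing it into an implied constant.
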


\begin{proof}
We have 
\begin{equation*}
\sum_{n=0}^{\infty }2^{\left( n+1\right) \left\vert s\right\vert }\frac{1}{%
\left\vert \pi ^{n}I\right\vert _{\mu }}\lesssim \left\vert I\right\vert
_{\mu }^{-2}\ell \left( I\right) ^{2s}\left\Vert \mathbf{1}_{I}\right\Vert
_{W_{\func{dyad}}^{s}\left( \mu \right) }^{2}\leq C\left\vert I\right\vert
_{\mu }^{-1},\text{ \ \ \ \ for all intervals }I\text{,}
\end{equation*}%
which shows that $\left\vert \pi ^{n}I\right\vert _{\mu }\geq \frac{1}{C}%
2^{\left( n+1\right) \left\vert s\right\vert }\left\vert I\right\vert _{\mu
} $ for all intervals $I$, which is the dyadic reverse doubling condition
with $t=\left\vert s\right\vert $.
\end{proof}

\begin{remark}
There is an asymmetry inherent in the homogeneous Sobolev two weight
inequality (\ref{hom Sob inequ}) for general measures. If we wish to use 
\emph{cube testing} to characterize the Sobolev inequality (\ref{hom Sob
inequ}) for some $s>0$ \emph{assuming} the estimate $\left\Vert \mathbf{1}%
_{I}\right\Vert _{W_{\func{dyad}}^{-s}\left( \omega \right) }^{2}\leq C\ell
\left( I\right) ^{2s}\left\vert I\right\vert _{\omega }$, then from the
equivalence with the bilinear inequality (\ref{hom bil inequ}) and the
discussion and lemma above, we see that $\omega $ needs to be restricted, in
fact by reverse dyadic doubling with exponent essentially greater than $%
2\left\vert s\right\vert $, while no restriction needs to be made on $\sigma 
$.
\end{remark}

\subsubsection{Norms of moduli of Alpert wavelets}

Let $\mathbf{h}_{J;\kappa }^{\omega }=\left( h_{J;\kappa }^{\omega
,a}\right) _{a\in \Gamma }$ be the vector of Alpert wavelets associated with
the cube $J$. Note that for $\omega $ doubling and $0\leq s<1$, we have 
\begin{equation*}
\frac{\left\Vert \mathbf{h}_{J;\kappa }^{\omega }\right\Vert _{W_{\func{dyad}%
}^{-s}\left( \omega \right) }^{2}}{\ell \left( J\right) ^{2s}}=1,
\end{equation*}%
and we now show that the same sort of Sobolev estimate holds for the
absolute value $\left\vert \mathbf{h}_{J;\kappa }^{\omega }\right\vert $ of
the vector Alpert wavelet (which remains trivial in the case $\kappa =1$
since $\mathbf{h}_{J;\kappa }^{\omega }$is then constant on dyadic children
of $J$).

\begin{lemma}
\label{mod Alpert}Let $\mu $ be a doubling measure on $\mathbb{R}^{n}$. Then
the modulus of a vector of Alpert wavelets $\mathbf{h}_{J;\kappa }^{\mu
}=\left\{ h_{J;\kappa }^{\mu ,a}\right\} _{a\in \Gamma _{J,n,\kappa }}$
satisfies%
\begin{equation*}
\frac{\left\Vert \left\vert \mathbf{h}_{J;\kappa }^{\mu }\right\vert
\right\Vert _{W_{\func{dyad}}^{-s}\left( \mu \right) }^{2}}{\ell \left(
J\right) ^{2s}}\lesssim C,\ \ \ \ \ \text{for }J\in \mathcal{D}.
\end{equation*}
\end{lemma}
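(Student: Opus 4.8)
The plan is to compute $\Vert\,\vert\mathbf{h}_{J;\kappa}^{\mu}\vert\,\Vert_{W_{\func{dyad}}^{-s}(\mu)}^{2}$ through the $\kappa=1$ (Haar) description of the Sobolev norm, which is legitimate by the $\kappa$-independence proved in Lemma \ref{coin}, and then to split the sum $\Vert\,\vert\mathbf{h}_{J;\kappa}^{\mu}\vert\,\Vert_{W_{\func{dyad}}^{-s}(\mu)}^{2}\approx\sum_{Q\in\mathcal{D}}\ell(Q)^{2s}\Vert\bigtriangleup_{Q;1}^{\mu}\vert\mathbf{h}_{J;\kappa}^{\mu}\vert\Vert_{L^{2}(\mu)}^{2}$ according to the position of $Q$ relative to $J$. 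Write $g\equiv\vert\mathbf{h}_{J;\kappa}^{\mu}\vert$; it is supported on $J$ and, by (\ref{in part}), $\Vert g\Vert_{L^{\infty}(\mu)}\lesssim\vert J\vert_{\mu}^{-1/2}$, so $\Vert g\Vert_{L^{1}(\mu)}\lesssim\vert J\vert_{\mu}^{1/2}$ and $\Vert g\Vert_{L^{2}(\mu)}\lesssim1$. Since $Q$ and $J$ are dyadic, only $Q\supsetneqq J$, $Q=J$, and $Q\subsetneqq J$ contribute.

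For $Q\supsetneqq J$ the support of $g$ lies in one child of $Q$, whence $\Vert\bigtriangleup_{Q;1}^{\mu}g\Vert_{L^{\infty}(\mu)}\lesssim\vert J\vert_{\mu}^{1/2}\vert Q\vert_{\mu}^{-1}$; together with the doubling relation $\Vert\bigtriangleup_{Q;1}^{\mu}g\Vert_{L^{2}(\mu)}^{2}\approx\vert Q\vert_{\mu}\Vert\bigtriangleup_{Q;1}^{\mu}g\Vert_{L^{\infty}(\mu)}^{2}$ from (\ref{analogue'}) this gives $\Vert\bigtriangleup_{Q;1}^{\mu}g\Vert_{L^{2}(\mu)}^{2}\lesssim\vert J\vert_{\mu}/\vert Q\vert_{\mu}$. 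Summing over the ancestors $Q=\pi^{m}J$ and using the dyadic reverse doubling bound $\vert\pi^{m}J\vert_{\mu}\geq c\,2^{m\theta_{\mu}^{\func{rev}}}\vert J\vert_{\mu}$ yields $\sum_{Q\supsetneqq J}\ell(Q)^{2s}\Vert\bigtriangleup_{Q;1}^{\mu}g\Vert_{L^{2}(\mu)}^{2}\lesssim\ell(J)^{2s}\sum_{m\geq1}2^{-m(\theta_{\mu}^{\func{rev}}+2s)}\lesssim\ell(J)^{2s}$ once $2s>-\theta_{\mu}^{\func{rev}}$. The term $Q=J$ is immediate, $\ell(J)^{2s}\Vert\bigtriangleup_{J;1}^{\mu}g\Vert_{L^{2}(\mu)}^{2}\leq\ell(J)^{2s}\Vert g\Vert_{L^{2}(\mu)}^{2}\lesssim\ell(J)^{2s}$.

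The substantive range is $Q\subsetneqq J$, where $g$ displays the corner of $\vert\mathbf{h}_{J;\kappa}^{\mu}\vert$ along the zero set of its defining polynomial. Every dyadic $Q\subsetneqq J$ lies in some child $J'\in\mathfrak{C}(J)$, and on $J'$ we have $g=\sqrt{P_{J'}}$ where $P_{J'}=\sum_{a}(h_{J;\kappa}^{\mu,a})^{2}$ is a polynomial of degree $<2\kappa$; if $P_{J'}\equiv0$ on $J'$ then $g=0$ there and nothing is needed, so assume not and let $Z'$ be its zero set (we use here that a doubling measure has full support, so $\mu$-essential suprema on cubes are genuine suprema, and that $\vert J'\cap(\partial J')_{\delta}\vert_{\mu}\to0$, which is Corollary \ref{zero set doubling} for the linear functions defining $\partial J'$). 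Among the cubes $Q\subseteq J'$ at scale $\ell(Q)=2^{-k}\ell(J)$ I separate those with $\mathrm{dist}(Q,Z')\geq\ell(Q)$ from the rest. For the former, $g$ is $C^{1}$ on $Q$ with $\Vert\nabla g\Vert_{L^{\infty}(Q)}\leq\Vert\nabla\mathbf{h}_{J;\kappa}^{\mu}\Vert_{L^{\infty}(J')}\lesssim\ell(J)^{-1}\vert J\vert_{\mu}^{-1/2}$ — Markov's inequality for polynomials together with (\ref{in part}) and $\vert\nabla\vert\mathbf{v}\vert\vert\leq\vert\nabla\mathbf{v}\vert$ — so $\Vert\bigtriangleup_{Q;1}^{\mu}g\Vert_{L^{2}(\mu)}^{2}\lesssim\ell(Q)^{2}\ell(J)^{-2}\vert J\vert_{\mu}^{-1}\vert Q\vert_{\mu}$; summing at a fixed scale, where $\sum_{Q}\vert Q\vert_{\mu}=\vert J'\vert_{\mu}$, and then over $k\geq1$ gives $\ell(J)^{2s}\sum_{k\geq1}2^{-k(2s+2)}\lesssim\ell(J)^{2s}$. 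For the remaining cubes only the crude bound $\Vert\bigtriangleup_{Q;1}^{\mu}g\Vert_{L^{2}(\mu)}^{2}\leq\vert Q\vert_{\mu}\Vert g\Vert_{L^{\infty}(\mu)}^{2}\lesssim\vert Q\vert_{\mu}/\vert J\vert_{\mu}$ is available, but these scale-$k$ cubes all lie in $J'\cap Z'_{c_{n}2^{-k}\ell(J')}$, and normalizing $P_{J'}$ to a $J'$-normalized polynomial of degree $<2\kappa$ with the same zero set, Corollary \ref{zero set doubling} bounds the $\mu$-measure of that halo by $C_{n,\kappa}2^{-k\theta}\vert J'\vert_{\mu}$ for some $\theta>0$; summing over scales gives $\ell(J)^{2s}\sum_{k\geq1}2^{-k(2s+\theta)}\lesssim\ell(J)^{2s}$. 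There are only $2^{n}$ children and $\vert J'\vert_{\mu}\approx\vert J\vert_{\mu}$ by doubling, so this finishes $Q\subsetneqq J$, and the three ranges together prove the lemma.

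The step I expect to be the real obstacle is the control of the cubes hugging $Z'$ in the regime where $W_{\func{dyad}}^{-s}(\mu)$ is a \emph{positive}-order space, i.e. $-s>0$: there the weight $\ell(Q)^{2s}=\ell(Q)^{-2\vert s\vert}$ grows as the cubes shrink, and only the \emph{power} decay $\vert J'\cap Z'_{\delta}\vert_{\mu}\lesssim\delta^{\theta}\vert J'\vert_{\mu}$ of Corollary \ref{zero set doubling} makes $\sum_{k}2^{-k(2s+\theta)}$ converge, whereas the merely logarithmic decay of \cite[Lemma 24]{Saw6} would leave the divergent series $\sum_{k}2^{2k\vert s\vert}/k$. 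For $-s<0$, the negative-order case that actually enters the $T1$ theorem, the crude bound summed over all $Q\subsetneqq J$ already converges, so the lemma is routine there. All implicit constants depend only on $n$, $\kappa$ and the doubling constant of $\mu$: the Markov constant depends only on $n,\kappa$, the exponent $\theta$ in Corollary \ref{zero set doubling} on $n,\kappa$ and the doubling constant, and every $\vert J'\vert_{\mu}\approx\vert J\vert_{\mu}$ and reverse-doubling comparison is quantitative in the doubling constant.
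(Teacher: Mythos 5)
Your proposal is correct and rests on the same decisive input as the paper, namely the power decay of $\mu$ on halos of zero sets from Corollary \ref{zero set doubling}; you also identify the same critical regime ($-s>0$, where $\ell(Q)^{2s}$ grows as $Q$ shrinks and logarithmic decay would fail). The one genuine methodological difference is in how the cubes $Q\subsetneqq J$ away from the zero set are handled. The paper expands the norm with $\kappa$-Alpert projections $\bigtriangleup_{Q;\kappa}^{\mu}$, so that on any $Q$ disjoint from $Z$ the modulus coincides with a signed polynomial of degree $<\kappa$ and the projection vanishes \emph{identically} — only the halo cubes survive, and there is no Lipschitz argument at all. You instead expand with Haar projections $\bigtriangleup_{Q;1}^{\mu}$ (invoking $\kappa$-independence of the Sobolev space), so the away cubes do contribute, and you kill them by the oscillation bound $\Vert\bigtriangleup_{Q;1}^{\mu}g\Vert_{L^{\infty}}\lesssim\ell(Q)\Vert\nabla g\Vert_{\infty}$ together with Markov's inequality and Kato's $\vert\nabla\vert\mathbf{v}\vert\vert\leq\vert\nabla\mathbf{v}\vert$, producing the geometric factor $2^{-2k}$. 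Both arguments close; the paper's choice of $\kappa$-projection is the slightly cleaner device because it makes the away cubes disappear rather than merely decay, but your route has the small advantage of not needing the $\kappa$-independence Lemma \ref{coin} as an input before the computation even begins.
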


\begin{proof}
We expand%
\begin{equation*}
\left\Vert \left\vert h_{J;\kappa }^{\mu ,a}\right\vert \right\Vert _{W_{%
\func{dyad}}^{-s}\left( \mu \right) }^{2}=\sum_{Q}\ell \left( Q\right)
^{2s}\left\Vert \bigtriangleup _{Q;\kappa }^{\mu }\left\vert h_{J;\kappa
}^{\mu ,a}\right\vert \right\Vert _{L^{2}\left( \mu \right) }^{2}.
\end{equation*}%
For a cube $Q$ contained in a child $J^{\prime }$ of $J$ that is disjoint
from the zero set of the polynomial $\mathbf{1}_{J^{\prime }}h_{J;\kappa
}^{\mu ,a}$ on the child $J^{\prime }$, the absolute values on the Alpert
wavelet can be removed, and we obtain that $\bigtriangleup _{Q;\kappa }^{\mu
}\left\vert h_{J;\kappa }^{\mu ,a}\right\vert =\pm \bigtriangleup _{Q;\kappa
}^{\mu }h_{J;\kappa }^{\mu ,a}$ vanishes. On the other hand, if $Q\subset
J^{\prime }$ intersects the zero set $Z$ of the polynomial $\mathbf{1}%
_{J^{\prime }}h_{J;\kappa }^{\mu ,a}$, then we use $\left\Vert
\bigtriangleup _{Q;\kappa }^{\mu }f\right\Vert _{L^{2}\left( \mu \right)
}\leq C\frac{\left\vert \widehat{f}\left( Q\right) \right\vert }{\sqrt{%
\left\vert Q\right\vert _{\mu }}}$ to obtain the crude estimate, 
\begin{eqnarray*}
\left\Vert \bigtriangleup _{Q;\kappa }^{\mu }\left\vert h_{J;\kappa }^{\mu
,a}\right\vert \right\Vert _{L^{2}\left( \mu \right) }^{2} &=&\left\vert
\left\langle h_{Q;\kappa }^{\mu ,a},\left\vert h_{J;\kappa }^{\mu
,a}\right\vert \right\rangle _{L^{2}\left( \mu \right) }\right\vert
^{2}\int_{\mathbb{R}^{n}}\left\vert h_{J;\kappa }^{\mu ,a}\right\vert
^{2}d\mu =\left\vert \left\langle h_{Q;\kappa }^{\mu ,a},\left\vert
h_{J;\kappa }^{\mu ,a}\right\vert \right\rangle _{L^{2}\left( \mu \right)
}\right\vert ^{2} \\
&\lesssim &\left( \int_{Q}\left\vert h_{J;\kappa }^{\mu ,a}\right\vert
^{2}d\mu \right) \left\Vert h_{J;\kappa }^{\mu ,a}\right\Vert _{\infty
}^{2}\left\vert Q\right\vert _{\mu }\lesssim \frac{\left\vert Q\right\vert
_{\mu }}{\left\vert J\right\vert _{\mu }},
\end{eqnarray*}%
and together with Corollary \ref{zero set doubling}, we obtain%
\begin{eqnarray*}
&&\sum_{Q\subset J^{\prime }:\ Q\cap Z\neq \emptyset }\ell \left( Q\right)
^{2s}\left\Vert \bigtriangleup _{Q;\kappa }^{\mu }\left\vert h_{J;\kappa
}^{\mu ,a}\right\vert \right\Vert _{L^{2}\left( \mu \right) }^{2}\lesssim
\sum_{Q\subset J^{\prime }:\ Q\cap Z\neq \emptyset }\ell \left( Q\right)
^{2s}\frac{\left\vert Q\right\vert _{\mu }}{\left\vert J\right\vert _{\mu }}
\\
&=&\sum_{m=1}^{\infty }\left( 2^{-m}\ell \left( J\right) \right) ^{2s}\sum 
_{\substack{ Q\subset J^{\prime }:\ Q\cap Z\neq \emptyset  \\ \ell \left(
Q\right) =2^{-m}\ell \left( J\right) }}\frac{\left\vert Q\right\vert _{\mu }%
}{\left\vert J\right\vert _{\mu }}\lesssim \left( \sum_{m=1}^{\infty
}2^{-2ms}\sum_{\substack{ Q\subset J^{\prime }:\ Q\cap Z\neq \emptyset  \\ %
\ell \left( Q\right) =2^{-m}\ell \left( J\right) }}\frac{\left\vert
Q\right\vert _{\mu }}{\left\vert J\right\vert _{\mu }}\right) \ \ell \left(
J\right) ^{2s} \\
&\lesssim &\left( \sum_{m=1}^{\infty }2^{-2ms}2^{-\varepsilon m}\right) \
\ell \left( J\right) ^{2s}\ ,
\end{eqnarray*}%
which is the estimate we want when $s>0$ or $-\frac{\varepsilon }{2}<s\leq 0$%
, where $\varepsilon =\varepsilon \left( \mu \right) >0$.

Finally, for big cubes $Q$ containing $J$ there is only the tower above $J$
to consider, and trivial estimates work:%
\begin{eqnarray*}
&&\sum_{Q\in \mathcal{D}:\ Q\supset J}\ell \left( Q\right) ^{2s}\left\Vert
\bigtriangleup _{Q;\kappa }^{\mu }\left\vert h_{J;\kappa }^{\mu
,a}\right\vert \right\Vert _{L^{2}\left( \mu \right) }^{2}=\sum_{Q\in 
\mathcal{D}:\ Q\supset J}\ell \left( Q\right) ^{2s}\left\vert \left\langle
h_{Q;\kappa }^{\mu ,a},\left\vert h_{J;\kappa }^{\mu }\right\vert
\right\rangle _{L^{2}\left( \mu \right) }\right\vert ^{2}\int_{\mathbb{R}%
^{n}}\left\vert h_{J;\kappa }^{\mu ,a}\right\vert ^{2}d\mu \\
&\leq &\sum_{m=1}^{\infty }2^{2ms}\left\vert \left\langle h_{\pi ^{\left(
m\right) }J;\kappa }^{\mu ,a},\left\vert h_{J;\kappa }^{\mu ,a}\right\vert
\right\rangle _{L^{2}\left( \mu \right) }\right\vert ^{2}\ell \left(
J\right) ^{2s}\lesssim \sum_{m=1}^{\infty }2^{2ms}\left( \int_{J}\left\vert
h_{\pi ^{\left( m\right) }J;\kappa }^{\mu ,a}\right\vert ^{2}d\mu \right)
\ell \left( J\right) ^{2s} \\
&\lesssim &\sum_{m=1}^{\infty }2^{2ms}\frac{\left\vert J\right\vert _{\mu }}{%
\left\vert \pi ^{\left( m\right) }J\right\vert _{\mu }}\ell \left( J\right)
^{2s}\lesssim \ell \left( J\right) ^{2s},
\end{eqnarray*}%
provided $s<0$ or $s$ is small enough depending on the doubling constant of $%
\mu $.
\end{proof}

\subsection{Duality}

Here we compute the dual space of $W_{\func{dyad}}^{s}\left( \mu \right) $
under the $L^{2}\left( \mu \right) $ pairing 
\begin{equation*}
\left\langle f,g\right\rangle _{L^{2}\left( \mu \right) }=\int_{\mathbb{R}%
^{n}}f\left( x\right) g\left( x\right) d\mu \left( x\right) =\sum_{I\in 
\mathcal{D},J\in \mathcal{D}}\int_{\mathbb{R}^{n}}\bigtriangleup _{I}^{\mu
}f\bigtriangleup _{J}^{\mu }gd\mu =\sum_{I\in \mathcal{D}}\int_{\mathbb{R}%
^{n}}\bigtriangleup _{I}^{\mu }f\bigtriangleup _{I}^{\mu }gd\mu .
\end{equation*}

\begin{lemma}
\label{identify dual}Let $-1<s<1$. Then%
\begin{equation*}
\left( W_{\func{dyad}}^{s}\left( \mu \right) \right) ^{\ast }=W_{\func{dyad}%
}^{-s}\left( \mu \right) ,
\end{equation*}%
holds in the sense that if $g\in W_{\func{dyad}}^{-s}\left( \mu \right) $
then $f\rightarrow \left\langle f,g\right\rangle _{L^{2}\left( \mu \right) }$
defines a bounded linear functional on $W_{\func{dyad}}^{s}\left( \mu
\right) $, and conversely that every bounded linear functional on $W_{\func{%
dyad}}^{s}\left( \mu \right) $ arises in this way.
\end{lemma}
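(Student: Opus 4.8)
The plan is to exploit the orthonormal basis $\{\ell(Q)^{s}h_{Q;\kappa}^{\mu,a}\}_{(Q,a)\in\mathcal{D}\times\Gamma}$ of $W_{\func{dyad}}^{s}(\mu)$ established in Lemma \ref{complete set}, together with the companion basis $\{\ell(Q)^{-s}h_{Q;\kappa}^{\mu,a}\}$ of $W_{\func{dyad}}^{-s}(\mu)$, and to match them through the $L^{2}(\mu)$ pairing. First I would verify the \emph{easy inclusion}: given $g\in W_{\func{dyad}}^{-s}(\mu)$, write $g=\sum_{(Q,a)}\widehat{g}(Q,a)\,h_{Q;\kappa}^{\mu,a}$ and $f=\sum_{(Q,a)}\widehat{f}(Q,a)\,h_{Q;\kappa}^{\mu,a}$ in $L^{2}(\mu)$; since the Alpert system is orthonormal in $L^{2}(\mu)$, the pairing collapses to $\langle f,g\rangle_{L^{2}(\mu)}=\sum_{(Q,a)}\widehat{f}(Q,a)\widehat{g}(Q,a)$. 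By Lemma \ref{complete set}, $\|f\|_{W_{\func{dyad}}^{s}(\mu)}^{2}=\sum_{(Q,a)}\ell(Q)^{-2s}|\widehat{f}(Q,a)|^{2}$ and $\|g\|_{W_{\func{dyad}}^{-s}(\mu)}^{2}=\sum_{(Q,a)}\ell(Q)^{2s}|\widehat{g}(Q,a)|^{2}$, so Cauchy--Schwarz in the measure space $\mathcal{D}\times\Gamma$ (splitting $\ell(Q)^{-s}\cdot\ell(Q)^{s}$) gives
\begin{equation*}
|\langle f,g\rangle_{L^{2}(\mu)}|\leq \|f\|_{W_{\func{dyad}}^{s}(\mu)}\,\|g\|_{W_{\func{dyad}}^{-s}(\mu)},
\end{equation*}
so $f\mapsto\langle f,g\rangle_{L^{2}(\mu)}$ is a bounded linear functional, with norm at most $\|g\|_{W_{\func{dyad}}^{-s}(\mu)}$.

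For the \emph{converse}, let $\Lambda$ be a bounded linear functional on $W_{\func{dyad}}^{s}(\mu)$. Apply $\Lambda$ to the orthonormal basis elements and set $c_{Q,a}\equiv\Lambda\!\left(\ell(Q)^{s}h_{Q;\kappa}^{\mu,a}\right)$. By the Riesz representation theorem for the Hilbert space $W_{\func{dyad}}^{s}(\mu)$ applied in this basis, $\sum_{(Q,a)}|c_{Q,a}|^{2}=\|\Lambda\|^{2}<\infty$, and $\Lambda(f)=\sum_{(Q,a)}c_{Q,a}\,\langle f,\ell(Q)^{s}h_{Q;\kappa}^{\mu,a}\rangle_{W_{\func{dyad}}^{s}(\mu)}$ for $f\in W_{\func{dyad}}^{s}(\mu)$; but by the computation in Lemma \ref{complete set} the $W_{\func{dyad}}^{s}(\mu)$-inner product of $f$ against $\ell(Q)^{s}h_{Q;\kappa}^{\mu,a}$ equals $\ell(Q)^{-s}\widehat{f}(Q,a)$, so $\Lambda(f)=\sum_{(Q,a)}c_{Q,a}\,\ell(Q)^{-s}\widehat{f}(Q,a)$. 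Now define
\begin{equation*}
g\equiv\sum_{(Q,a)\in\mathcal{D}\times\Gamma}c_{Q,a}\,\ell(Q)^{-s}\,h_{Q;\kappa}^{\mu,a}.
\end{equation*}
Since $\sum_{(Q,a)}\ell(Q)^{2s}\big|c_{Q,a}\,\ell(Q)^{-s}\big|^{2}=\sum_{(Q,a)}|c_{Q,a}|^{2}<\infty$, Lemma \ref{complete set} (applied with $-s$ in place of $s$) shows $g\in W_{\func{dyad}}^{-s}(\mu)$ with $\|g\|_{W_{\func{dyad}}^{-s}(\mu)}=\|\Lambda\|$, and the orthonormality of the Alpert system in $L^{2}(\mu)$ gives $\langle f,g\rangle_{L^{2}(\mu)}=\sum_{(Q,a)}\widehat{f}(Q,a)\,c_{Q,a}\,\ell(Q)^{-s}=\Lambda(f)$ for all $f$ that are finite linear combinations of Alpert wavelets, hence by density for all $f\in W_{\func{dyad}}^{s}(\mu)$. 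This represents $\Lambda$ as the pairing against an element of $W_{\func{dyad}}^{-s}(\mu)$, completing the identification.

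The only genuine subtlety — and the step I would be most careful about — is the \emph{density and convergence bookkeeping}: one must check that finite linear combinations of Alpert wavelets are dense in $W_{\func{dyad}}^{s}(\mu)$ (immediate from the fact that $\{\ell(Q)^{s}h_{Q;\kappa}^{\mu,a}\}$ is an orthonormal \emph{basis}, not merely an orthonormal set, which is exactly the content of Lemma \ref{complete set} combined with the completeness of the Alpert projections in Theorem \ref{main1}), so that the identity $\Lambda(f)=\langle f,g\rangle_{L^{2}(\mu)}$ proved on the dense subspace extends by continuity — note both sides are continuous on $W_{\func{dyad}}^{s}(\mu)$, the right side by the easy inclusion already established. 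The restriction $-1<s<1$ is what guarantees (via Lemma \ref{complete set}, valid for all real $s$, but here we additionally want the spaces to be the genuine completions containing $L^{2}_{\mathrm{loc}}$ functions) that no degeneracy occurs; in fact the argument as organized above works verbatim for every real $s$ once one has the orthonormal basis, so the hypothesis $-1<s<1$ is only needed insofar as it is the range in which these dyadic spaces have been identified with the intrinsic space $W^{s}(\mu)$. No Calderón--Zygmund or doubling input beyond what is already packaged into Theorem \ref{main1} and Lemma \ref{complete set} is required.
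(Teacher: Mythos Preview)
Your proof is correct and follows essentially the same approach as the paper: both directions reduce to the orthonormal Alpert basis $\{\ell(Q)^{s}h_{Q;\kappa}^{\mu,a}\}$ of $W_{\func{dyad}}^{s}(\mu)$, with Cauchy--Schwarz for the embedding and a Riesz-type identification of the representing element $g$ via $\widehat{g}(Q,a)=\Lambda h_{Q;\kappa}^{\mu,a}$. The only cosmetic difference is that you invoke the Riesz representation theorem explicitly, whereas the paper derives the $\ell^{2}$ bound $\sum_{Q,a}\ell(Q)^{2s}|\Lambda h_{Q;\kappa}^{\mu,a}|^{2}\le\|\Lambda\|^{2}$ by hand from the duality $(\ell^{2})^{\ast}=\ell^{2}$; the resulting $g$ and the verification $\Lambda f=\langle f,g\rangle_{L^{2}(\mu)}$ are identical.
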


\begin{proof}
For $\kappa $ sufficiently large, Cauchy-Schwarz gives%
\begin{eqnarray*}
\left\vert \left\langle f,g\right\rangle _{L^{2}\left( \mu \right)
}\right\vert &=&\left\vert \left\langle \sum_{I\in \mathcal{D}%
}\bigtriangleup _{I;\kappa }^{\mu }f,\sum_{J\in \mathcal{D}}\bigtriangleup
_{J;\kappa }^{\mu }g\right\rangle _{L^{2}\left( \mu \right) }\right\vert
=\left\vert \sum_{I\in \mathcal{D}}\sum_{J\in \mathcal{D}}\int_{\mathbb{R}%
^{n}}\left( \bigtriangleup _{I;\kappa }^{\mu }f\right) \left( \bigtriangleup
_{J;\kappa }^{\mu }g\right) d\mu \right\vert \\
&=&\left\vert \sum_{I\in \mathcal{D}}\int_{\mathbb{R}^{n}}\left(
\bigtriangleup _{I;\kappa }^{\mu }f\right) \left( \bigtriangleup _{I;\kappa
}^{\mu }g\right) d\mu \right\vert =\left\vert \sum_{I\in \mathcal{D}}\int_{%
\mathbb{R}^{n}}\ell \left( I\right) ^{-s}\left( \bigtriangleup _{I;\kappa
}^{\mu }f\right) \ell \left( I\right) ^{s}\left( \bigtriangleup _{I;\kappa
}^{\mu }g\right) d\mu \right\vert \\
&\leq &\sqrt{\int_{\mathbb{R}^{n}}\sum_{I\in \mathcal{D}}\left\vert \ell
\left( I\right) ^{-s}\left( \bigtriangleup _{I;\kappa }^{\mu }f\right)
\right\vert ^{2}d\mu }\sqrt{\int_{\mathbb{R}^{n}}\sum_{I\in \mathcal{D}%
}\left\vert \ell \left( I\right) ^{s}\left( \bigtriangleup _{I;\kappa }^{\mu
}g\right) \right\vert ^{2}d\mu }=\left\Vert f\right\Vert _{W_{\func{dyad}%
}^{s}\left( \mu \right) }\left\Vert g\right\Vert _{W_{\func{dyad}%
}^{-s}\left( \mu \right) }.
\end{eqnarray*}%
Conversely, if $\Lambda \in W_{\func{dyad}}^{s}\left( \mu \right) ^{\ast }$\
is a continuous linear functional on $W_{\func{dyad}}^{s}\left( \mu \right) $%
, then for $\kappa $ sufficiently large%
\begin{eqnarray*}
&&\left\vert \sum_{I\in \mathcal{D}}\ell \left( I\right) ^{-s}\left\langle
f,h_{I;\kappa }^{\mu }\right\rangle _{L^{2}\left( \mu \right) }\ell \left(
I\right) ^{s}\Lambda h_{I;\kappa }^{\mu }\right\vert =\left\vert \sum_{I\in 
\mathcal{D}}\left\langle f,h_{I;\kappa }^{\mu }\right\rangle _{L^{2}\left(
\mu \right) }\Lambda h_{I;\kappa }^{\mu }\right\vert =\left\vert \sum_{I\in 
\mathcal{D}}\Lambda \left( \bigtriangleup _{I;\kappa }^{\mu }f\right)
\right\vert =\left\vert \Lambda f\right\vert \\
&\leq &\left\Vert \Lambda \right\Vert \left\Vert f\right\Vert _{W_{\func{dyad%
}}^{s}\left( \mu \right) }=\left\Vert \Lambda \right\Vert \sqrt{\int_{%
\mathbb{R}^{n}}\sum_{I\in \mathcal{D}}\left\vert \ell \left( I\right)
^{-s}\left( \bigtriangleup _{I;\kappa }^{\mu }f\right) \right\vert ^{2}d\mu }%
=\left\Vert \Lambda \right\Vert \sqrt{\sum_{I\in \mathcal{D}}\ell \left(
I\right) ^{-2s}\left\vert \left\langle f,h_{I;\kappa }^{\mu }\right\rangle
_{L^{2}\left( \mu \right) }\right\vert ^{2}}
\end{eqnarray*}%
for all choices of coefficients $\left\{ \ell \left( I\right)
^{-s}\left\langle f,h_{I;\kappa }^{\mu }\right\rangle _{L^{2}\left( \mu
\right) }\right\} _{I\in \mathcal{D}}\in \ell ^{2}\left( \mathcal{D}\right) $%
, and so we have $\ell \left( I\right) ^{s}\Lambda h_{I;\kappa }^{\mu }\in
\ell ^{2}\left( \mathcal{D}\right) $, i.e.%
\begin{equation*}
\sqrt{\sum_{I\in \mathcal{D}}\ell \left( I\right) ^{2s}\left\vert \Lambda
h_{I;\kappa }^{\mu }\right\vert ^{2}}\leq \left\Vert \Lambda \right\Vert .
\end{equation*}%
Thus if we define $g$ to have Alpert coefficients $\Lambda h_{I;\kappa
}^{\mu ,a}$, i.e.%
\begin{equation*}
g=\sum_{\substack{ I\in \mathcal{D}  \\ a\in \Gamma }}\left\langle
g,h_{I;\kappa }^{\mu ,a}\right\rangle _{L^{2}\left( \mu \right) }h_{I;\kappa
}^{\mu ,a}\equiv \sum_{\substack{ I\in \mathcal{D}  \\ a\in \Gamma }}\left(
\Lambda h_{I;\kappa }^{\mu ,a}\right) h_{I;\kappa }^{\mu ,a}\ ,
\end{equation*}%
then $g\in W_{\func{dyad}}^{-s}\left( \mu \right) $ since%
\begin{equation*}
\left\Vert g\right\Vert _{W_{\func{dyad}}^{-s}\left( \mu \right) }=\sqrt{%
\int_{\mathbb{R}^{n}}\sum_{I\in \mathcal{D}}\left\vert \ell \left( I\right)
^{s}\left( \bigtriangleup _{I;\kappa }^{\mu }g\right) \right\vert ^{2}d\mu }=%
\sqrt{\sum_{\substack{ I\in \mathcal{D}  \\ a\in \Gamma }}\ell \left(
I\right) ^{2s}\left\vert \left\langle g,h_{I;\kappa }^{\mu ,a}\right\rangle
_{L^{2}\left( \mu \right) }\right\vert ^{2}}=\sqrt{\sum_{\substack{ I\in 
\mathcal{D}  \\ a\in \Gamma }}\ell \left( I\right) ^{2s}\left\vert \Lambda
h_{I;\kappa }^{\mu ,a}\right\vert ^{2}}\leq \left\Vert \Lambda \right\Vert
<\infty ,
\end{equation*}%
and finally we have 
\begin{equation*}
\Lambda f=\sum_{\substack{ I\in \mathcal{D}  \\ a\in \Gamma }}\left\langle
f,h_{I;\kappa }^{\mu ,a}\right\rangle _{L^{2}\left( \mu \right) }\Lambda
h_{I;\kappa }^{\mu ,a}=\sum_{\substack{ I\in \mathcal{D}  \\ a\in \Gamma }}%
\left\langle f,h_{I;\kappa }^{\mu ,a}\right\rangle _{L^{2}\left( \mu \right)
}\left\langle g,h_{I;\kappa }^{\mu ,a}\right\rangle _{L^{2}\left( \mu
\right) }=\int_{\mathbb{R}^{n}}f\left( x\right) g\left( x\right) d\mu \left(
x\right) =\left\langle f,g\right\rangle _{L^{2}\left( \mu \right) }.
\end{equation*}
\end{proof}

\subsection{Quasiorthogonality in weighted Sobolev spaces}

Let $\bigtriangleup _{I;\kappa _{1}}^{\mu ,s}=\ell \left( I\right)
^{-s}\bigtriangleup _{I;\kappa _{1}}^{\mu }$and $\mathbb{E}_{I;\kappa
_{1}}^{\mu ,s}=\ell \left( I\right) ^{-s}\mathbb{E}_{I;\kappa _{1}}^{\mu }$.
Since $\left\{ \bigtriangleup _{I;\kappa _{1}}^{\mu ,s}\right\} _{I\in 
\mathcal{D}}$ is a complete set of orthogonal projections on $W_{\limfunc{%
dyad}}^{s}\left( \mathbb{R}^{n}\right) $, we have%
\begin{equation*}
\sum_{I\in \mathcal{D}}\ell \left( I\right) ^{-2s}\left\Vert \bigtriangleup
_{I;\kappa _{1}}^{\mu }f\right\Vert _{L^{2}\left( \mu \right)
}^{2}=\sum_{I\in \mathcal{D}}\left\Vert \bigtriangleup _{I;\kappa _{1}}^{\mu
,s}f\right\Vert _{W_{\limfunc{dyad}}^{s}\left( \mu \right) }^{2}=\left\Vert
\sum_{I\in \mathcal{D}}\bigtriangleup _{I;\kappa _{1}}^{\mu ,s}f\right\Vert
_{W_{\limfunc{dyad}}^{2}\left( \mu \right) }^{2}=\left\Vert f\right\Vert
_{W_{\limfunc{dyad}}^{s}\left( \mu \right) }^{2}\ ,
\end{equation*}%
and then if $\left\{ \mathbb{E}_{F;\kappa _{1}}^{\mu ,s}\right\} _{F\in 
\mathcal{F}}$ is a collection of projections, indexed by a subgrid $\mathcal{%
F}$ of $\mathcal{D}$ satisfying an appropriate Carleson condition, we expect
to have%
\begin{equation*}
\sum_{F\in \mathcal{F}}\ell \left( F\right) ^{-2s}\left\Vert \mathbb{E}%
_{F;\kappa _{1}}^{\mu }f\right\Vert _{L^{2}\left( \mu \right)
}^{2}=\sum_{F\in \mathcal{F}}\left\Vert \mathbb{E}_{F;\kappa _{1}}^{\mu
,s}f\right\Vert _{L^{2}\left( \mu \right) }^{2}\lesssim \left\Vert
f\right\Vert _{W_{\limfunc{dyad}}^{s}\left( \mu \right) }^{2}=\sum_{Q\in 
\mathcal{D}}\ell \left( Q\right) ^{-2s}\left\Vert \bigtriangleup _{Q;\kappa
_{1}}^{\mu }f\right\Vert _{L^{2}\left( \mu \right) }^{2}\ .
\end{equation*}%
This inequality says we can replace the collection of moment vanishing
projections $\left\{ \bigtriangleup _{Q;\kappa _{1}}^{\mu }\right\} _{Q\in 
\mathcal{D}}$ with a collection of averaging projections $\left\{ \mathbb{E}%
_{F;\kappa _{1}}^{\mu }\right\} _{F\in \mathcal{F}}$ provided the subset $%
\mathcal{F}$ of $\mathcal{D}$ is sufficiently small that an appropriate
Carleson condition holds. Here is the quasiorthogonality lemma with
appropriate Carleson condition that is suitable for use with Sobolev
spaces,and in which $\left\vert f\right\vert $ does not appear. Finally, it
can be viewed as a Sobolev space version of the Carleson Embedding Theorem.

\begin{definition}
A subgrid $\mathcal{F}\subset \mathcal{D}$ satisfies the $\varepsilon $\emph{%
-strong }$\mu $\emph{-Carleson condition} if,%
\begin{equation}
\sum_{\substack{ F\in \mathcal{F}  \\ F\subset F^{\prime }}}\left( \frac{%
\ell \left( F^{\prime }\right) }{\ell \left( F\right) }\right) ^{\varepsilon
}\left\vert F\right\vert _{\mu }\leq C\left\vert F^{\prime }\right\vert
_{\mu }\ ,\ \ \ \ \ F^{\prime }\in \mathcal{F}.  \label{eps Carleson}
\end{equation}
\end{definition}

\begin{lemma}[Quasiorthogonality Lemma]
\label{q lemma}Let $\mu $ be a doubling measure on $\mathbb{R}^{n}$. Suppose
that for some $\varepsilon >0$, the subgrid $\mathcal{F}\subset \mathcal{D}$
satisfies the $\varepsilon $-strong $\mu $-Carleson condition (\ref{eps
Carleson}). Then for $s<\frac{\varepsilon }{2}$ we have 
\begin{equation*}
\sum_{F\in \mathcal{F}}\ell \left( F\right) ^{-2s}\left\Vert \mathbb{E}%
_{F;\kappa }^{\mu }f\right\Vert _{L^{2}\left( \mu \right) }^{2}\lesssim
\left\Vert f\right\Vert _{W_{\limfunc{dyad}}^{s}\left( \mu \right)
}^{2}=\sum_{Q\in \mathcal{D}}\ell \left( Q\right) ^{-2s}\left\Vert
\bigtriangleup _{Q;\kappa }^{\mu }f\right\Vert _{L^{2}\left( \mu \right)
}^{2}\ .
\end{equation*}
\end{lemma}

\begin{proof}
Since $\frac{\bigtriangleup _{\pi ^{\left( m\right) }F;\kappa }^{\mu }f}{%
\left\Vert \bigtriangleup _{\pi ^{\left( m\right) }F;\kappa }^{\mu
}f\right\Vert _{\infty }}$ is a normalized polynomial of degree less than $%
\kappa $ on the $\mathcal{D}$-child $\left( \pi ^{\left( m\right) }F\right)
_{F}$ of $\pi ^{\left( m\right) }F$ that contains $F$, we have by (\ref%
{analogue'}) and (\ref{in part}),%
\begin{equation*}
\left\vert F\right\vert _{\mu }\left\Vert \mathbb{E}_{F;\kappa }^{\sigma
}\bigtriangleup _{\pi ^{\left( m\right) }F;\kappa }^{\mu }f\right\Vert
_{\infty }^{2}\approx \int_{F}\left\vert \mathbb{E}_{F;\kappa }^{\sigma
}\bigtriangleup _{\pi ^{\left( m\right) }F;\kappa }^{\mu }f\right\vert
^{2}d\mu \leq \int_{F}\left\vert \bigtriangleup _{\pi ^{\left( m\right)
}F;\kappa }^{\mu }f\right\vert ^{2}d\mu \approx \left\vert F\right\vert
_{\mu }\left\Vert \bigtriangleup _{\pi ^{\left( m\right) }F;\kappa }^{\mu
}f\right\Vert _{\infty }^{2}\ ,
\end{equation*}%
and so for any $t<0$ we have%
\begin{eqnarray*}
&&\sum_{F\in \mathcal{F}}\ell \left( F\right) ^{-2s}\left\Vert \mathbb{E}%
_{F;\kappa }^{\mu }f\right\Vert _{L^{2}\left( \mu \right) }^{2}=\sum_{F\in 
\mathcal{F}}\ell \left( F\right) ^{-2s}\left\Vert \mathbb{E}_{F;\kappa
_{1}}^{\mu }\left( \sum_{I\in \mathcal{D}}\bigtriangleup _{I;\kappa }^{\mu
}f\right) \right\Vert _{L^{2}\left( \mu \right) }^{2} \\
&=&\sum_{F\in \mathcal{F}}\ell \left( F\right) ^{-2s}\left\Vert
\sum_{m=1}^{\infty }\mathbb{E}_{F;\kappa _{1}}^{\mu }\bigtriangleup _{\pi
^{\left( m\right) }F;\kappa }^{\mu }f\right\Vert _{L^{2}\left( \mu \right)
}^{2}\lesssim \sum_{F\in \mathcal{F}}\ell \left( F\right) ^{-2s}\left\vert
F\right\vert _{\mu }\left( \sum_{m=1}^{\infty }\left\Vert \mathbb{E}%
_{F;\kappa }^{\mu }\bigtriangleup _{\pi ^{\left( m\right) }F;\kappa }^{\mu
}f\right\Vert _{\infty }\right) ^{2} \\
&\leq &\sum_{F\in \mathcal{F}}\ell \left( F\right) ^{-2s}\left\vert
F\right\vert _{\mu }\left( \sum_{m=1}^{\infty }\left\Vert \bigtriangleup
_{\pi ^{\left( m\right) }F;\kappa }^{\mu }f\right\Vert _{\infty }\right) ^{2}
\\
&\leq &\sum_{F\in \mathcal{F}}\ell \left( F\right) ^{-2s}\left\vert
F\right\vert _{\mu }\left( \sum_{m=0}^{\infty }\ell \left( \pi ^{\left(
m\right) }F\right) ^{2t}\right) \left( \sum_{m=0}^{\infty }\ell \left( \pi
^{\left( m\right) }F\right) ^{-2t}\left\Vert \bigtriangleup _{\pi ^{\left(
m\right) }F;\kappa }^{\mu }f\right\Vert _{\infty }^{2}\right) \\
&\approx &\sum_{F\in \mathcal{F}}\ell \left( F\right) ^{2t-2s}\left\vert
F\right\vert _{\mu }\sum_{m=0}^{\infty }\ell \left( \pi ^{\left( m\right)
}F\right) ^{-2t}\left\Vert \bigtriangleup _{\pi ^{\left( m\right) }F;\kappa
}^{\mu }f\right\Vert _{\infty }^{2}.
\end{eqnarray*}%
Substituting $F^{\prime }$ for $\pi ^{\left( m\right) }F$, and letting $t=s-%
\frac{\varepsilon }{2}<0$, we obtain from the $\varepsilon $-strong Carleson
condition (\ref{eps Carleson}) that%
\begin{eqnarray*}
&&\sum_{F\in \mathcal{F}}\ell \left( F\right) ^{-2s}\left\Vert \mathbb{E}%
_{F;\kappa }^{\mu }f\right\Vert _{L^{2}\left( \mu \right) }^{2}\lesssim
\sum_{F\in \mathcal{F}}\ell \left( F\right) ^{2t-2s}\sum_{m=0}^{\infty }%
\frac{\left\vert F\right\vert _{\mu }}{\left\vert \left( \pi ^{\left(
m\right) }F\right) _{F}\right\vert _{\mu }}\ell \left( \pi ^{\left( m\right)
}F\right) ^{-2t}\int_{\left( \pi ^{\left( m\right) }F\right) _{F}}\left\Vert
\bigtriangleup _{\pi ^{\left( m\right) }F;\kappa }^{\mu }f\right\Vert
_{\infty }^{2}d\mu \\
&\approx &\sum_{F\in \mathcal{F}}\ell \left( F\right)
^{2t-2s}\sum_{m=0}^{\infty }\frac{\left\vert F\right\vert _{\mu }}{%
\left\vert \left( \pi ^{\left( m\right) }F\right) _{F}\right\vert _{\mu }}%
\ell \left( \pi ^{\left( m\right) }F\right) ^{-2t}\int_{\left( \pi ^{\left(
m\right) }F\right) _{F}}\left\vert \bigtriangleup _{\pi ^{\left( m\right)
}F;\kappa }^{\mu }f\right\vert ^{2}d\mu \\
&=&\sum_{F^{\prime }\in \mathcal{F}}\left( \sum_{\substack{ F\in \mathcal{F} 
\\ F\subset F^{\prime }}}\ell \left( F\right) ^{-\varepsilon }\frac{%
\left\vert F\right\vert _{\mu }}{\left\vert \left( F^{\prime }\right)
_{F}\right\vert _{\mu }}\right) \ell \left( F^{\prime }\right) ^{\varepsilon
-2s}\left\Vert \bigtriangleup _{F^{\prime };\kappa }^{\mu }f\right\Vert
_{L^{2}\left( \mu \right) }^{2} \\
&\leq &\sum_{F^{\prime }\in \mathcal{F}}\left( C\ell \left( F^{\prime
}\right) ^{-\varepsilon }\frac{\left\vert F^{\prime }\right\vert _{\mu }}{%
\left\vert \left( F^{\prime }\right) _{F}\right\vert _{\mu }}\right) \ell
\left( F^{\prime }\right) ^{\varepsilon -2s}\left\Vert \bigtriangleup
_{F^{\prime };\kappa }^{\mu }f\right\Vert _{L^{2}\left( \mu \right)
}^{2}\lesssim \sum_{F^{\prime }\in \mathcal{F}}\ell \left( F^{\prime
}\right) ^{-2s}\left\Vert \bigtriangleup _{F^{\prime };\kappa }^{\mu
}f\right\Vert _{L^{2}\left( \mu \right) }^{2}\lesssim \left\Vert
f\right\Vert _{W_{\limfunc{dyad}}^{s}\left( \mu \right) }^{2}\ .
\end{eqnarray*}
\end{proof}

\begin{remark}
We can replace $f$ by its modulus $\left\vert f\right\vert $ in the above
lemma when $\kappa =1$ and $s>0$ is sufficiently small. Indeed, by the
reverse triangle inequality we have 
\begin{eqnarray*}
\left\Vert \left\vert f\right\vert \right\Vert _{W_{\func{diff};1}^{s}\left(
\mu \right) }^{2} &=&\sum_{Q\in \mathcal{D}}\int_{Q}\left\vert \frac{f\left(
x\right) -\mathbb{E}_{Q;1}^{\mu }f\left( x\right) }{\ell \left( Q\right) ^{s}%
}\right\vert ^{2}d\mu \left( x\right) =\sum_{Q\in \mathcal{D}%
}\int_{Q}\left\vert \frac{f\left( x\right) -\left( \frac{1}{\left\vert
Q\right\vert _{\mu }}\int_{Q}fd\mu \right) \mathbf{1}_{Q}\left( x\right) }{%
\ell \left( Q\right) ^{s}}\right\vert ^{2}d\mu \left( x\right) \\
&=&2\sum_{Q\in \mathbb{D}}\frac{1}{\left\vert Q\right\vert _{\mu }^{2}}%
\int_{Q}\int_{Q}\left\vert \frac{\left\vert f\left( x\right) \right\vert
-\left\vert f\left( y\right) \right\vert }{\ell \left( Q\right) ^{s}}%
\right\vert ^{2}d\mu \left( x\right) d\mu \left( y\right) \\
&\leq &2\sum_{Q\in \mathbb{D}}\frac{1}{\left\vert Q\right\vert _{\mu }^{2}}%
\int_{Q}\int_{Q}\left\vert \frac{f\left( x\right) -f\left( y\right) }{\ell
\left( Q\right) ^{s}}\right\vert ^{2}d\mu \left( x\right) d\mu \left(
y\right) =\left\Vert f\right\Vert _{W_{\func{diff};1}^{s}\left( \mu \right)
}^{2},
\end{eqnarray*}%
and now we use the equivalence $\left\Vert \cdot \right\Vert _{W_{\func{diff}%
;1}^{s}\left( \mu \right) }^{2}\approx \left\Vert \cdot \right\Vert _{W_{%
\limfunc{dyad};1}^{s}\left( \mu \right) }^{2}$ for $\left\vert s\right\vert $
sufficiently small.
\end{remark}

\section{Preliminaries: weighted Sobolev norm inequalities}

Duality shows the equivalence of weighted norm inequalities with bilinear
inequalities.

\begin{lemma}
The Sobolev norm inequality%
\begin{equation}
\left\Vert T_{\sigma }^{\alpha }f\right\Vert _{W_{\func{dyad}}^{s}\left(
\omega \right) }\leq \left\Vert T^{\alpha }\right\Vert _{\func{op}%
}\left\Vert f\right\Vert _{W_{\func{dyad}}^{s}\left( \sigma \right) }\ ,
\label{hom Sob inequ}
\end{equation}%
is equivalent to the bilinear inequality 
\begin{equation}
\left\vert \sum_{I\in \mathcal{D}}\sum_{J\in \mathcal{D}}\int_{\mathbb{R}%
^{n}}\left( T_{\sigma }^{\alpha }\bigtriangleup _{I}^{\sigma }f\right)
\bigtriangleup _{J}^{\omega }gd\omega \right\vert \leq \left\Vert T^{\alpha
}\right\Vert _{\func{bil}}\left\Vert f\right\Vert _{W_{\func{dyad}%
}^{s}\left( \sigma \right) }\left\Vert g\right\Vert _{W_{\func{dyad}%
}^{-s}\left( \omega \right) }\ ,\text{ \ \ for }f,g\in L^{2}\left( \mu
\right) .  \label{hom bil inequ}
\end{equation}
\end{lemma}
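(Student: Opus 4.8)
The plan is to obtain the equivalence from the duality identification $\left( W_{\func{dyad}}^{s}\left( \mu \right) \right) ^{\ast }=W_{\func{dyad}}^{-s}\left( \mu \right)$ of Lemma \ref{identify dual} (applied with $\mu =\omega$), together with the completeness and self-adjointness of the Alpert projections supplied by Theorem \ref{main1}. As is standard, I would first fix an admissible truncation $\widetilde{T^{\alpha }}$ of $T^{\alpha }$, prove the equivalence of the two inequalities associated with it (with matching constants), and then pass to the supremum over all admissible truncations; below the tilde is suppressed. I would also reduce at the outset to the case where $f$ and $g$ are \emph{finite} linear combinations of Alpert wavelets (for $\sigma$ and $\omega$ respectively), so that every sum over $I,J\in \mathcal{D}$ is literally finite. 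The general case then follows by density, using that for each admissible truncation $T_{\sigma }^{\alpha }$ is a bounded operator between the relevant $L^{2}$ and Sobolev spaces, so both sides of each of (\ref{hom Sob inequ}) and (\ref{hom bil inequ}) depend continuously on $f$ and $g$, and that finite Alpert sums are dense in $W_{\func{dyad}}^{s}\left( \sigma \right)$ and in $W_{\func{dyad}}^{-s}\left( \omega \right)$.

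The key preliminary step is to collapse the double sum in (\ref{hom bil inequ}) to an $L^{2}\left( \omega \right)$ pairing. Since each $\bigtriangleup _{J}^{\omega }$ is a self-adjoint projection on $L^{2}\left( \omega \right)$ and $\left\{ \bigtriangleup _{J}^{\omega }\right\} _{J\in \mathcal{D}}$ is complete there, summing first in $J$ gives $\sum_{J}\int \left( T_{\sigma }^{\alpha }\bigtriangleup _{I}^{\sigma }f\right) \bigtriangleup _{J}^{\omega }g\,d\omega =\int \left( T_{\sigma }^{\alpha }\bigtriangleup _{I}^{\sigma }f\right) g\,d\omega$, and then summing in $I$ and using linearity of $T_{\sigma }^{\alpha }$ with $\sum_{I}\bigtriangleup _{I}^{\sigma }f=f$ yields
\begin{equation*}
\sum_{I\in \mathcal{D}}\sum_{J\in \mathcal{D}}\int_{\mathbb{R}^{n}}\left( T_{\sigma }^{\alpha }\bigtriangleup _{I}^{\sigma }f\right) \bigtriangleup _{J}^{\omega }g\,d\omega =\left\langle T_{\sigma }^{\alpha }f,g\right\rangle _{L^{2}\left( \omega \right) }.
\end{equation*}
Given this identity, the forward implication is immediate: if (\ref{hom Sob inequ}) holds, then the pairing bound from Lemma \ref{identify dual} gives $\left\vert \left\langle T_{\sigma }^{\alpha }f,g\right\rangle _{L^{2}\left( \omega \right) }\right\vert \leq \left\Vert T_{\sigma }^{\alpha }f\right\Vert _{W_{\func{dyad}}^{s}\left( \omega \right) }\left\Vert g\right\Vert _{W_{\func{dyad}}^{-s}\left( \omega \right) }\leq \left\Vert T^{\alpha }\right\Vert _{\func{op}}\left\Vert f\right\Vert _{W_{\func{dyad}}^{s}\left( \sigma \right) }\left\Vert g\right\Vert _{W_{\func{dyad}}^{-s}\left( \omega \right) }$, which is (\ref{hom bil inequ}) with $\left\Vert T^{\alpha }\right\Vert _{\func{bil}}\leq \left\Vert T^{\alpha }\right\Vert _{\func{op}}$.

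For the converse, I would apply Lemma \ref{identify dual} with $s$ replaced by $-s$ to obtain the norm formula $\left\Vert h\right\Vert _{W_{\func{dyad}}^{s}\left( \omega \right) }=\sup \left\{ \left\vert \left\langle h,g\right\rangle _{L^{2}\left( \omega \right) }\right\vert :\left\Vert g\right\Vert _{W_{\func{dyad}}^{-s}\left( \omega \right) }\leq 1\right\}$; taking $h=T_{\sigma }^{\alpha }f$, substituting the collapsed identity, and invoking (\ref{hom bil inequ}) gives $\left\Vert T_{\sigma }^{\alpha }f\right\Vert _{W_{\func{dyad}}^{s}\left( \omega \right) }\leq \left\Vert T^{\alpha }\right\Vert _{\func{bil}}\left\Vert f\right\Vert _{W_{\func{dyad}}^{s}\left( \sigma \right) }$, i.e. (\ref{hom Sob inequ}) with $\left\Vert T^{\alpha }\right\Vert _{\func{op}}\leq \left\Vert T^{\alpha }\right\Vert _{\func{bil}}$. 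The only genuinely delicate point — and hence the main obstacle — is the legitimacy of the rearrangements collapsing the double sum and of moving the truncation through the a priori infinite sums when $f,g$ are arbitrary elements of $L^{2}\left( \mu \right)$ rather than finite wavelet combinations; this is precisely what the density reduction of the first paragraph handles, by verifying everything on finite sums and then extending by continuity.
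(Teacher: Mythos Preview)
Your proposal is correct and follows essentially the same route as the paper: both arguments collapse the double Alpert sum to the $L^{2}\left( \omega \right)$ pairing $\left\langle T_{\sigma }^{\alpha }f,g\right\rangle _{\omega }$ and then invoke the duality identification $\left( W_{\func{dyad}}^{s}\left( \omega \right) \right) ^{\ast }=W_{\func{dyad}}^{-s}\left( \omega \right)$ in both directions to match $\left\Vert T^{\alpha }\right\Vert _{\func{op}}$ with $\left\Vert T^{\alpha }\right\Vert _{\func{bil}}$. Your additional care with the density reduction to finite wavelet sums is a reasonable elaboration of what the paper leaves implicit.
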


\begin{proof}
Indeed, if the bilinear inequality holds and $f=\sum_{I\in \mathcal{D}%
}\bigtriangleup _{I}^{\sigma }f$ and $g=\sum_{J\in \mathcal{D}%
}\bigtriangleup _{J}^{\omega }g$, then%
\begin{equation*}
\left\vert \int_{\mathbb{R}^{n}}\left( T_{\sigma }^{\alpha }f\right)
gd\omega \right\vert =\left\vert \sum_{I\in \mathcal{D}}\sum_{J\in \mathcal{D%
}}\int_{\mathbb{R}^{n}}\left( T_{\sigma }^{\alpha }\bigtriangleup
_{I}^{\sigma }f\right) \bigtriangleup _{J}^{\omega }gd\omega \right\vert
\leq \left\Vert T^{\alpha }\right\Vert _{\func{bil}}\left\Vert f\right\Vert
_{W_{\func{dyad}}^{s}\left( \sigma \right) }\left\Vert g\right\Vert _{W_{%
\func{dyad}}^{-s}\left( \omega \right) }
\end{equation*}%
shows that%
\begin{eqnarray*}
\left\Vert T_{\sigma }^{\alpha }f\right\Vert _{W_{\func{dyad}}^{s}\left(
\omega \right) } &=&\sup_{\left\Vert g\right\Vert _{W_{\func{dyad}%
}^{s}\left( \omega \right) ^{\ast }}\leq 1}\left\vert \int_{\mathbb{R}%
^{n}}\left( T_{\sigma }^{\alpha }f\right) gd\omega \right\vert
=\sup_{\left\Vert g\right\Vert _{W_{\func{dyad}}^{-s}\left( \omega \right)
}\leq 1}\left\vert \int_{\mathbb{R}^{n}}\left( T_{\sigma }^{\alpha }f\right)
gd\omega \right\vert \leq \left\Vert T^{\alpha }\right\Vert _{\func{bil}%
}\left\Vert f\right\Vert _{W_{\func{dyad}}^{s}\left( \sigma \right) }\ , \\
&\Longrightarrow &\left\Vert T^{\alpha }\right\Vert _{\func{op}}\leq
\left\Vert T^{\alpha }\right\Vert _{\func{bil}}\text{ since }L^{2}\left( \mu
\right) \text{ is dense in }W_{\func{dyad}}^{s}\left( \mu \right) .
\end{eqnarray*}%
Conversely, if the norm inequality holds, then%
\begin{eqnarray*}
\left\vert \int_{\mathbb{R}^{n}}\left( T_{\sigma }^{\alpha }f\right)
gd\omega \right\vert &\leq &\left\Vert T_{\sigma }^{\alpha }f\right\Vert
_{W_{\func{dyad}}^{s}\left( \omega \right) }\left\Vert g\right\Vert _{W_{%
\func{dyad}}^{s}\left( \omega \right) ^{\ast }}\leq \left\Vert T^{\alpha
}\right\Vert _{\func{op}}\left\Vert f\right\Vert _{W_{\func{dyad}}^{s}\left(
\sigma \right) }\left\Vert g\right\Vert _{W_{\func{dyad}}^{s}\left( \omega
\right) ^{\ast }} \\
&=&\left\Vert T^{\alpha }\right\Vert _{\func{op}}\left\Vert f\right\Vert
_{W_{\func{dyad}}^{s}\left( \sigma \right) }\left\Vert g\right\Vert _{W_{%
\func{dyad}}^{-s}\left( \omega \right) }\ , \\
&\Longrightarrow &\left\Vert T^{\alpha }\right\Vert _{\func{bil}}\leq
\left\Vert T^{\alpha }\right\Vert _{\func{op}}.
\end{eqnarray*}
\end{proof}

\subsection{The good-bad decomposition}

Here we follow the random grid idea of Nazarov, Treil and Volberg. Denote by 
$\Omega _{\limfunc{dyad}}$ the collection of all dyadic grids $\mathcal{D}$.
For a weight $\mu $, we consider a random choice of dyadic grid $\mathcal{D}$
on the natural probability space $\Omega _{\limfunc{dyad}}$.

\begin{definition}
\label{d.good} For a positive integer $r$ and $0<\varepsilon <1$, a cube $%
J\in \mathcal{D}$ is said to be\emph{\ }$\left( r\emph{,}\varepsilon \right) 
$\emph{-bad} if there is a cube $I\in \mathcal{D}$ with $\lvert I\rvert \geq
2^{r}\lvert J\rvert $, and 
\begin{equation*}
\func{dist}(e(I),J)\leq \tfrac{1}{2}\lvert J\rvert ^{\varepsilon }\lvert
I\rvert ^{1-\varepsilon }\,.
\end{equation*}%
Here, $e(J)$ is the union of the boundaries of the children of the cube $J$.
(This contains the set of discontinuities of $h_{J;\kappa }^{\mu }$ and its
derivatives less than order $\kappa $.) Otherwise, $J$ is said to be $\left(
r\emph{,}\varepsilon \right) $\emph{-}$\QTR{up}{\func{good}}$.
\end{definition}

The basic proposition here is this, see e.g. \cite{Vol} and e.g. \cite{LaWi}
or \cite{SaShUr7} for higher dimensions.

\begin{proposition}
There is the conditional probability estimate 
\begin{equation*}
\mathbb{P}_{\limfunc{cond}}^{\Omega _{\limfunc{dyad}}}\left( J\text{ is }%
\left( r\emph{,}\varepsilon \right) \text{-}\QTR{up}{\func{bad}}:J\in 
\mathcal{D}\right) \leq C_{\varepsilon }2^{-\varepsilon r}.
\end{equation*}
\end{proposition}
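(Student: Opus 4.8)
The plan is to run the standard Nazarov--Treil--Volberg random-grid argument, splitting the bad event according to the scale of the offending ancestor cube. The only input I will take for granted -- it is built into the construction of the probability space $\Omega_{\func{dyad}}$ and is recorded in \cite{Vol}, and in the $n$-dimensional form we need in \cite{LaWi} and \cite{SaShUr7} -- is the following: conditioned on $\{J\in\mathcal{D}\}$, for each integer $k\geq1$ the position of the scale-$2^{k}\ell(J)$ ancestor $I_{k}(J)\in\mathcal{D}$ of $J$, relative to $J$, is uniformly distributed over the $2^{kn}$ admissible offsets, the offsets at different scales being independent. (The randomness fixing $J$ itself lives at scales $\leq\ell(J)$, while these offsets are controlled by the independent coordinates of the random parameter governing the scales between $\ell(J)$ and $2^{k}\ell(J)$.)

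Granting this, the first step is to reduce the bad event to a union over scales. If $J$ is $(r,\varepsilon)$-bad there is $I\in\mathcal{D}$ with $\lvert I\rvert\geq2^{r}\lvert J\rvert$ and $\func{dist}(e(I),J)\leq\tfrac12\lvert J\rvert^{\varepsilon}\lvert I\rvert^{1-\varepsilon}$. Since $\varepsilon>0$ and $\lvert J\rvert<\lvert I\rvert$ give $\lvert J\rvert^{\varepsilon}\lvert I\rvert^{1-\varepsilon}<\lvert I\rvert$, and $e(I)\subset\overline{I}$, the cube $I$ lies within distance $\lvert I\rvert$ of the much smaller $J$, hence is $I_{k}(J)$ or one of its $3^{n}-1$ same-scale neighbours for some $k\geq r$. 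Writing $\delta_{k}=\tfrac12\lvert J\rvert^{\varepsilon}(2^{k}\lvert J\rvert)^{1-\varepsilon}=\tfrac12\,2^{k(1-\varepsilon)}\lvert J\rvert$, it suffices to bound, for one cube $I$ with $\ell(I)=2^{k}\lvert J\rvert$, the conditional probability that $J$ meets the $\delta_{k}$-neighbourhood of $e(I)$. Now $e(I)$ consists of $\partial I$ together with the $n$ hyperplanes bisecting $I$, all parallel to the axes and at mutual distances $\gtrsim\ell(I)$; by the conditional uniformity of the offset of $J$ inside $I$, for any one of these $O(n)$ hyperplanes the probability that $J$ comes within $\delta_{k}$ of it is at most a dimensional constant times $(\delta_{k}+\ell(J))/\ell(I)\lesssim_{n}2^{-k\varepsilon}$ -- the summand $\ell(J)=2^{-k}\ell(I)$ merely accounts for the diameter of $J$ and is harmless for $k\geq1$. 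Summing over the $O(n)$ hyperplanes and the $O(n)$ candidate cubes at scale $k$ gives the conditional probability of the scale-$k$ part of the bad event as $\lesssim_{n}2^{-k\varepsilon}$.

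Finally, a union bound over the scales $k\geq r$ yields
\[
\mathbb{P}_{\func{cond}}^{\Omega_{\func{dyad}}}\bigl(J\text{ is }(r,\varepsilon)\text{-bad}:J\in\mathcal{D}\bigr)\;\leq\;C_{n}\sum_{k\geq r}2^{-k\varepsilon}\;=\;\frac{C_{n}}{1-2^{-\varepsilon}}\,2^{-\varepsilon r}\;=\;C_{\varepsilon}\,2^{-\varepsilon r}.
\]
The main obstacle is exactly the probabilistic input of the first paragraph: once the conditional uniformity of the ancestor offsets is pinned down for the specific model of $\Omega_{\func{dyad}}$ in use, the rest is the elementary scale-by-scale computation above.
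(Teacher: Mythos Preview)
The paper does not supply its own proof of this proposition; it simply cites \cite{Vol} and, for higher dimensions, \cite{LaWi} and \cite{SaShUr7}. Your argument is exactly the standard Nazarov--Treil--Volberg proof found in those references and is correct. Two cosmetic points: (i) the offsets of the ancestors $I_{k}(J)$ at different $k$ are not jointly independent---only the scale-by-scale increments are---but since your argument uses only the marginal uniformity at each fixed $k$ together with a union bound over $k$, this does not matter; (ii) the detour through the $3^{n}-1$ neighbouring cubes at scale $k$ is unnecessary, because for $I\neq I_{k}(J)$ the nearest points of $e(I)\subset\overline{I}$ to $J\subset I_{k}(J)$ already lie on $\partial I_{k}(J)\subset e(I_{k}(J))$, so the ancestor case alone covers everything.
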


Define projections 
\begin{equation}
\mathsf{P}_{\QTR{up}{\func{good};}\mathcal{D}}^{\mu }f=\mathsf{P}_{\QTR{up}{%
\func{good}}}^{\mu }f\equiv \sum_{I\text{ is }\left( r,\varepsilon \right) 
\text{-}\QTR{up}{\func{good}}\text{ }\in \mathcal{D}}\Delta _{I}^{\mu }f\,%
\text{\ and }\mathsf{P}_{\QTR{up}{\func{bad};}\mathcal{D}}^{\mu }f=\mathsf{P}%
_{\QTR{up}{\func{bad}}}^{\mu }f\equiv f-\mathsf{P}_{\QTR{up}{\func{good}}%
}^{\mu }f.  \label{e.Pgood}
\end{equation}%
Recall that%
\begin{eqnarray*}
\left\Vert f\right\Vert _{W_{\mathcal{D}}^{s}\left( \mu \right) }^{2}
&\equiv &\sum_{Q\in \mathcal{D}}\ell \left( Q\right) ^{-2s}\left\Vert
\bigtriangleup _{Q}^{\mu }f\right\Vert _{L^{2}\left( \mu \right) }^{2}, \\
\left\Vert f\right\Vert _{W_{\limfunc{dyad}}^{s}\left( \mu \right) }^{2}
&\approx &\left\Vert f\right\Vert _{W_{\mathcal{D}}^{s}\left( \mu \right)
}^{2},\ \ \ \ \ \text{for all }\mathcal{D}\in \Omega _{\limfunc{dyad}}.
\end{eqnarray*}%
The basic Proposition is then this.

\begin{proposition}
\label{p.Pgood}(cf. Theorem 17.1 in \cite{Vol} where the middle line below
is treated) We have the estimates 
\begin{eqnarray*}
\mathbb{E}_{\Omega _{\limfunc{dyad}}}^{\mathcal{D}}\left\Vert \mathsf{P}_{%
\QTR{up}{\func{bad};}\mathcal{D}}^{\mu }f\right\Vert _{W_{\mathcal{D}%
}^{s}\left( \mu \right) } &\leq &C_{\varepsilon }2^{-\frac{\varepsilon }{2}%
r}\left\Vert f\right\Vert _{W_{\func{dyad}}^{s}\left( \mu \right) }, \\
\mathbb{E}_{\Omega _{\limfunc{dyad}}}^{\mathcal{D}}\left\Vert \mathsf{P}_{%
\QTR{up}{\func{bad};}\mathcal{D}}^{\mu }f\right\Vert _{L^{p}\left( \mu
\right) } &\leq &C_{\varepsilon }2^{-\frac{\varepsilon r}{p}}\left\Vert
f\right\Vert _{L^{p}\left( \mu \right) }, \\
\mathbb{E}_{\Omega _{\limfunc{dyad}}}^{\mathcal{D}}\left\Vert \mathsf{P}_{%
\QTR{up}{\func{bad};}\mathcal{D}}^{\mu }f\right\Vert _{W_{\mathcal{D}%
}^{-s}\left( \mu \right) } &\leq &C_{\varepsilon }2^{-\frac{\varepsilon }{2}%
r}\left\Vert f\right\Vert _{W_{\func{dyad}}^{-s}\left( \mu \right) }.
\end{eqnarray*}
\end{proposition}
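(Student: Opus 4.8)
The plan is to reduce all three estimates to a single-scale probabilistic bound combined with the grid-independence of the Sobolev norms. The middle ($L^{p}$) estimate is Theorem 17.1 of \cite{Vol}: for $p=2$ it is contained in the computation below, and for general $p$ one combines the case $p=2$ with the Littlewood--Paley equivalence $\left\Vert g\right\Vert _{L^{p}\left( \mu \right) }\approx \bigl\Vert \bigl( \sum_{I\in \mathcal{D}}\left\vert \bigtriangleup _{I}^{\mu }g\right\vert ^{2}\bigr) ^{1/2}\bigr\Vert _{L^{p}\left( \mu \right) }$, valid for doubling $\mu $. So I concentrate on the first and third estimates, which are new but follow the identical scheme with $L^{2}$ replaced by $W_{\mathcal{D}}^{\pm s}$; I write $\mathbb{E}^{\mathcal{D}}$ for $\mathbb{E}_{\Omega _{\limfunc{dyad}}}^{\mathcal{D}}$.

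First, by Jensen's inequality it suffices to bound $\mathbb{E}^{\mathcal{D}}\left\Vert \mathsf{P}_{\func{bad};\mathcal{D}}^{\mu }f\right\Vert _{W_{\mathcal{D}}^{s}\left( \mu \right) }^{2}$ by $C_{\varepsilon }^{2}2^{-\varepsilon r}\left\Vert f\right\Vert _{W_{\func{dyad}}^{s}\left( \mu \right) }^{2}$, and similarly with $s$ replaced by $-s$ (both permissible for $\left\vert s\right\vert $ in the small range of Theorem \ref{dyad equiv}). Since $\left\{ \ell \left( Q\right) ^{s}h_{Q;\kappa }^{\mu ,a}\right\} $ is an orthonormal basis of $W_{\mathcal{D};\kappa }^{s}\left( \mu \right) $ by Lemma \ref{complete set}, and $\mathsf{P}_{\func{bad};\mathcal{D}}^{\mu }f=\sum_{I\in \mathcal{D}:\ I\ \left( r,\varepsilon \right) \text{-bad}}\bigtriangleup _{I;\kappa }^{\mu }f$, we obtain the clean identity
\begin{equation*}
\left\Vert \mathsf{P}_{\func{bad};\mathcal{D}}^{\mu }f\right\Vert _{W_{\mathcal{D}}^{s}\left( \mu \right) }^{2}=\sum_{j\in \mathbb{Z}}\ \sum_{\substack{ I\in \mathcal{D}:\ \ell \left( I\right) =2^{-j} \\ I\ \left( r,\varepsilon \right) \text{-bad}}}\ell \left( I\right) ^{-2s}\left\Vert \bigtriangleup _{I;\kappa }^{\mu }f\right\Vert _{L^{2}\left( \mu \right) }^{2}.
\end{equation*}
Here $\bigtriangleup _{I;\kappa }^{\mu }f$ depends only on the set $I$ (its $\mathcal{D}$-children are the standard dyadic subcubes of $I$), not otherwise on $\mathcal{D}$.

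Now we use the independence structure of the Nazarov--Treil--Volberg random grid: writing $\mathcal{D}=\mathcal{D}^{\beta }$ with $\beta =\left( \beta _{i}\right) _{i\in \mathbb{Z}}$ i.i.d.\ uniform on $\{0,1\}^{n}$, the scale-$2^{-j}$ level of $\mathcal{D}^{\beta }$ is a function of the ``fine'' coordinates $\left( \beta _{i}\right) _{i>j}$ only; hence conditioning on $\left( \beta _{i}\right) _{i>j}$ fixes the family of scale-$2^{-j}$ cubes of $\mathcal{D}$ together with all the projections $\bigtriangleup _{I;\kappa }^{\mu }f$ of that scale, while whether such a cube $I$ is $\left( r,\varepsilon \right) $-bad is, conditionally, a function of the complementary independent coarse coordinates, with conditional probability $\leq C_{\varepsilon }2^{-\varepsilon r}$ (this is the conditional probability estimate recalled above --- the standard computation summing, over ancestors $\pi ^{\left( m\right) }I$ with $m\geq r$, the probabilities $\lesssim 2^{-\varepsilon m}$ that the random shift brings $e\left( \pi ^{\left( m\right) }I\right) $ within $\tfrac{1}{2}\ell \left( I\right) ^{\varepsilon }\ell \left( \pi ^{\left( m\right) }I\right) ^{1-\varepsilon }$ of $I$; see \cite{Vol}, \cite{LaWi}, \cite{SaShUr7}). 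Thus, conditioning on $\left( \beta _{i}\right) _{i>j}$ and then taking expectation,
\begin{equation*}
\mathbb{E}^{\mathcal{D}}\sum_{\substack{ I\in \mathcal{D}:\ \ell \left( I\right) =2^{-j} \\ I\ \left( r,\varepsilon \right) \text{-bad}}}\ell \left( I\right) ^{-2s}\left\Vert \bigtriangleup _{I;\kappa }^{\mu }f\right\Vert _{L^{2}\left( \mu \right) }^{2}\leq C_{\varepsilon }2^{-\varepsilon r}\ \mathbb{E}^{\mathcal{D}}\sum_{I\in \mathcal{D}:\ \ell \left( I\right) =2^{-j}}\ell \left( I\right) ^{-2s}\left\Vert \bigtriangleup _{I;\kappa }^{\mu }f\right\Vert _{L^{2}\left( \mu \right) }^{2}.
\end{equation*}
Summing over $j\in \mathbb{Z}$ and using Lemma \ref{complete set} again gives $\mathbb{E}^{\mathcal{D}}\left\Vert \mathsf{P}_{\func{bad};\mathcal{D}}^{\mu }f\right\Vert _{W_{\mathcal{D}}^{s}\left( \mu \right) }^{2}\leq C_{\varepsilon }2^{-\varepsilon r}\,\mathbb{E}^{\mathcal{D}}\left\Vert f\right\Vert _{W_{\mathcal{D}}^{s}\left( \mu \right) }^{2}$, and the equivalence $\left\Vert f\right\Vert _{W_{\mathcal{D}}^{s}\left( \mu \right) }\approx \left\Vert f\right\Vert _{W_{\func{dyad}}^{s}\left( \mu \right) }$ of Theorem \ref{dyad equiv}, uniform in $\mathcal{D}$, turns the right side into $C_{\varepsilon }^{\prime }2^{-\varepsilon r}\left\Vert f\right\Vert _{W_{\func{dyad}}^{s}\left( \mu \right) }^{2}$. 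Jensen's inequality yields the first estimate; the same computation with $s$ replaced by $-s$ (legitimate for $\left\vert s\right\vert $ small by Theorem \ref{dyad equiv}) yields the third.

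The step I expect to require the most care is the independence claim: one must set up the random grid so that the projections $\bigtriangleup _{I;\kappa }^{\mu }f$ with $\ell \left( I\right) =2^{-j}$ are measurable with respect to the fine coordinates $\left( \beta _{i}\right) _{i>j}$ while, conditionally on these, the $\left( r,\varepsilon \right) $-badness of those cubes is a function of the complementary coarse coordinates, and then confirm $\mathbb{P}\big( I\ \left( r,\varepsilon \right) \text{-bad}\mid \left( \beta _{i}\right) _{i>j}\big) \leq C_{\varepsilon }2^{-\varepsilon r}$. This is exactly the good/bad mechanism of Nazarov, Treil and Volberg and is unaffected by passing from $L^{2}$ to $W_{\mathcal{D}}^{\pm s}$, since the weights $\ell \left( I\right) ^{\mp 2s}$ are deterministic functions of the scale that pass through the conditioning unchanged; all remaining steps are routine given Lemma \ref{complete set} and Theorem \ref{dyad equiv}.
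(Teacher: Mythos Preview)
Your proposal is correct and follows essentially the same approach as the paper: expand $\left\Vert \mathsf{P}_{\func{bad};\mathcal{D}}^{\mu }f\right\Vert _{W_{\mathcal{D}}^{s}\left( \mu \right) }^{2}$ as a weighted sum $\sum_{I\ \text{bad}}\ell(I)^{-2s}\left\Vert \bigtriangleup _{I;\kappa }^{\mu }f\right\Vert _{L^{2}(\mu)}^{2}$, apply the conditional probability bound $\mathbb{P}\left(I\ \text{bad}\mid I\in\mathcal{D}\right)\leq C_{\varepsilon}2^{-\varepsilon r}$ term by term, and then use Jensen to pass from the squared norm to the norm. You are simply more explicit than the paper about the independence structure of the random grid and the invocation of Theorem~\ref{dyad equiv} to identify $W_{\mathcal{D}}^{s}$ with $W_{\func{dyad}}^{s}$ uniformly in $\mathcal{D}$.
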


\begin{proof}
We have 
\begin{align*}
\mathbb{E}_{\Omega _{\limfunc{dyad}}}^{\mathcal{D}}\left( \left\Vert \mathsf{%
P}_{\QTR{up}{\func{bad};}\mathcal{D}}^{\mu }f\right\Vert _{W_{\func{dyad}%
}^{s}\left( \mu \right) }^{2}\right) & =\mathbb{E}_{\Omega _{\limfunc{dyad}%
}}^{\mathcal{D}}\sum_{I\in \mathcal{D}\text{ is }\left( r,\varepsilon
\right) \text{-}\QTR{up}{\func{bad}}}\ell \left( I\right) ^{-2s}\left\langle
f,h_{I}^{\mu }\right\rangle _{L^{2}\left( \mu \right) }^{2} \\
& \leq C_{\varepsilon }\mathbb{E}_{\Omega _{\limfunc{dyad}}}^{\mathcal{D}%
}2^{-\varepsilon r}\sum_{I\in \mathcal{D}}\ell \left( I\right)
^{-2s}\left\langle f,h_{I}^{\mu }\right\rangle _{L^{2}\left( \mu \right)
}^{2}=C_{\varepsilon }2^{-\varepsilon r}\left\Vert f\right\Vert _{W_{\func{%
dyad}}^{s}\left( \mu \right) }^{2}\,,
\end{align*}%
and then%
\begin{equation*}
\mathbb{E}_{\Omega _{\limfunc{dyad}}}^{\mathcal{D}}\left( \left\Vert \mathsf{%
P}_{\QTR{up}{\func{bad};}\mathcal{D}}^{\mu }f\right\Vert _{W_{\func{dyad}%
}^{s}\left( \mu \right) }\right) \leq \sqrt{\mathbb{E}_{\Omega _{\limfunc{%
dyad}}}^{\mathcal{D}}\left( \left\Vert \mathsf{P}_{\QTR{up}{\func{bad};}%
\mathcal{D}}^{\mu }f\right\Vert _{W_{\func{dyad}}^{s}\left( \mu \right)
}^{2}\right) }\leq C_{\varepsilon }2^{-\frac{\varepsilon }{2}r}\left\Vert
f\right\Vert _{W_{\func{dyad}}^{s}\left( \mu \right) }\ .
\end{equation*}%
Similarly for $L^{p}\left( \mu \right) $ and $W_{\func{dyad}}^{-s}\left( \mu
\right) $ in place of $W_{\func{dyad}}^{s}\left( \mu \right) $.
\end{proof}

From this we conclude the following: Given any $0<\varepsilon <1$, there is
a choice of $r$, depending on $\varepsilon $, so that the following holds.
Let $T:W_{\func{dyad}}^{s}\left( \sigma \right) \rightarrow W_{\func{dyad}%
}^{s}\left( \omega \right) $ be a bounded linear operator, where for
specificity we take $W_{\func{dyad}}^{s}=W_{\mathcal{D}_{0}}^{s}$, and $%
\mathcal{D}_{0}$ is the standard dyadic grid on $\mathbb{R}^{n}$. We then
have 
\begin{equation}
\left\Vert T\right\Vert _{W_{\func{dyad}}^{s}\left( \sigma \right)
\rightarrow W_{\func{dyad}}^{s}\left( \omega \right) }\leq 2\sup_{\left\Vert
f\right\Vert _{W_{\func{dyad}}^{s}\left( \sigma \right) }=1}\sup_{\left\Vert
\phi \right\Vert _{W_{\func{dyad}}^{-s}\left( \omega \right) }=1}\mathbb{E}%
_{\Omega _{\limfunc{dyad}}}^{\mathcal{D}}\mathbb{E}_{\Omega _{\limfunc{dyad}%
}}^{\mathcal{E}}\lvert \left\langle T\mathsf{P}_{\QTR{up}{\func{good};}%
\mathcal{D}}^{\sigma }f,\mathsf{P}_{\QTR{up}{\func{good};}\mathcal{D}%
}^{\omega }g\right\rangle _{\omega }\rvert \,.  \label{e.Tgood}
\end{equation}%
Indeed, we can choose $f\in W_{\func{dyad}}^{s}\left( \sigma \right) $ of
norm one, and $g\in W_{\func{dyad}}^{-s}\left( \omega \right) $ of norm one,
and we can write 
\begin{equation*}
f=\mathsf{P}_{\QTR{up}{\func{good};}\mathcal{D}}^{\sigma }f+\mathsf{P}_{%
\QTR{up}{\func{bad};}\mathcal{D}}^{\sigma }f\,
\end{equation*}%
and similarly for $g$ and $\mathcal{E}$, so that 
\begin{align*}
& \left\Vert T\right\Vert _{W_{\func{dyad}}^{s}\left( \sigma \right)
\rightarrow W_{\func{dyad}}^{s}\left( \omega \right) }=\left\vert
\left\langle T_{\sigma }f,g\right\rangle _{\omega }\right\vert \\
& \leq \mathbb{E}_{\Omega _{\limfunc{dyad}}}^{\mathcal{D}}\mathbb{E}_{\Omega
_{\limfunc{dyad}}}^{\mathcal{E}}\lvert \left\langle T_{\sigma }\mathsf{P}_{%
\QTR{up}{\func{good};}\mathcal{D}}^{\sigma }f,\mathsf{P}_{\QTR{up}{\func{good%
};}\mathcal{E}}^{\omega }g\right\rangle _{\omega }\rvert +\mathbb{E}_{\Omega
_{\limfunc{dyad}}}^{\mathcal{D}}\mathbb{E}_{\Omega _{\limfunc{dyad}}}^{%
\mathcal{E}}\lvert \left\langle T_{\sigma }\mathsf{P}_{\QTR{up}{\func{bad};}%
\mathcal{D}}^{\sigma }f,\mathsf{P}_{\QTR{up}{\func{good};}\mathcal{E}%
}^{\omega }g\right\rangle _{\omega }\rvert \\
& \quad +\mathbb{E}_{\Omega _{\limfunc{dyad}}}^{\mathcal{D}}\mathbb{E}%
_{\Omega _{\limfunc{dyad}}}^{\mathcal{E}}\lvert \left\langle T_{\sigma }%
\mathsf{P}_{\QTR{up}{\func{good};}\mathcal{D}}^{\sigma }f,\mathsf{P}_{%
\QTR{up}{\func{bad};}\mathcal{E}}^{\omega }g\right\rangle _{\omega }\rvert +%
\mathbb{E}_{\Omega _{\limfunc{dyad}}}^{\mathcal{D}}\mathbb{E}_{\Omega _{%
\limfunc{dyad}}}^{\mathcal{E}}\lvert \left\langle T_{\sigma }\mathsf{P}_{%
\QTR{up}{\func{bad};}\mathcal{D}}^{\sigma }f,\mathsf{P}_{\QTR{up}{\func{bad};%
}\mathcal{E}}^{\omega }g\right\rangle _{\omega }\rvert \\
& \leq \mathbb{E}_{\Omega _{\limfunc{dyad}}}^{\mathcal{D}}\mathbb{E}_{\Omega
_{\limfunc{dyad}}}^{\mathcal{E}}\lvert \left\langle T_{\sigma }\mathsf{P}_{%
\QTR{up}{\func{good};}\mathcal{D}}^{\sigma }f,\mathsf{P}_{\QTR{up}{\func{good%
};}\mathcal{E}}^{\omega }g\right\rangle _{\omega }\rvert +3C_{\varepsilon
}2^{-\frac{\varepsilon }{2}r}\left\Vert T\right\Vert _{W_{\func{dyad}%
}^{s}\left( \sigma \right) \rightarrow W_{\func{dyad}}^{s}\left( \omega
\right) }\ .
\end{align*}%
And this proves \eqref{e.Tgood} for $r$ sufficiently large.

This has the following implication for us: \emph{Given any linear operator }$%
T$\emph{\ and }$0<\varepsilon <1$\emph{, it suffices to consider only }$%
\left( \emph{r,}\varepsilon \right) $\emph{-}$\QTR{up}{\func{good}}$\emph{\
cubes for r sufficiently large, and prove an estimate for $\left\Vert
T\right\Vert _{W_{\func{dyad}}^{s}\left( \sigma \right) \rightarrow W_{\func{%
dyad}}^{s}\left( \omega \right) }$ that is independent of this assumption.}
Accordingly, we will call $\left( r\emph{,}\varepsilon \right) $-$\QTR{up}{%
\func{good}}$ cubes just $\QTR{up}{\func{good}}$\emph{\ cubes} from now on.
At certain points in the arguments below, such as in the treatment of the
neighbour form for $W_{\func{dyad}}^{s}\left( \sigma \right) $, we will need
to further restrict the parameter $\varepsilon $ (and accordingly $r$ as
well).

\subsection{Defining the norm inequality\label{Subsubsection norm}}

We now turn to a precise definition of the weighted norm inequality%
\begin{equation}
\left\Vert T_{\sigma }^{\alpha }f\right\Vert _{W_{\func{dyad}}^{s}\left(
\omega \right) }\leq \mathfrak{N}_{T^{\alpha }}\left\Vert f\right\Vert _{W_{%
\func{dyad}}^{s}\left( \sigma \right) },\ \ \ \ \ f\in W_{\func{dyad}%
}^{s}\left( \sigma \right) ,  \label{two weight'}
\end{equation}%
where $W_{\func{dyad}}^{s}\left( \sigma \right) $ is the Hilbert space
completion of the space of functions $f\in L_{\func{loc}}^{2}\left( \sigma
\right) $ for which 
\begin{equation*}
\left\Vert f\right\Vert _{W_{\func{dyad}}^{s}\left( \sigma \right) }<\infty .
\end{equation*}%
A similar definition holds for $W_{\func{dyad}}^{s}\left( \omega \right) $.
For a precise definition of (\ref{two weight'}), it is possible to proceed
with the notion of associating operators and kernels through an identity for
functions with disjoint support as in \cite{Ste2}. However, we choose to
follow the approach in \cite[see page 314]{SaShUr9}. So we suppose that $%
K^{\alpha }$ is a smooth $\alpha $-fractional Calder\'{o}n-Zygmund kernel,
and we introduce a family $\left\{ \eta _{\delta ,R}^{\alpha }\right\}
_{0<\delta <R<\infty }$ of nonnegative functions on $\left[ 0,\infty \right) 
$ so that the truncated kernels $K_{\delta ,R}^{\alpha }\left( x,y\right)
=\eta _{\delta ,R}^{\alpha }\left( \left\vert x-y\right\vert \right)
K^{\alpha }\left( x,y\right) $ are bounded with compact support for fixed $x$
or $y$, and uniformly satisfy (\ref{sizeandsmoothness'}). Then the truncated
operators 
\begin{equation*}
T_{\sigma ,\delta ,R}^{\alpha }f\left( x\right) \equiv \int_{\mathbb{R}%
^{n}}K_{\delta ,R}^{\alpha }\left( x,y\right) f\left( y\right) d\sigma
\left( y\right) ,\ \ \ \ \ x\in \mathbb{R}^{n},
\end{equation*}%
are pointwise well-defined, and we will refer to the pair $\left( K^{\alpha
},\left\{ \eta _{\delta ,R}^{\alpha }\right\} _{0<\delta <R<\infty }\right) $
as an $\alpha $-fractional singular integral operator, which we typically
denote by $T^{\alpha }$, suppressing the dependence on the truncations.

\begin{definition}
\label{def bounded}We say that an $\alpha $-fractional singular integral
operator $T^{\alpha }=\left( K^{\alpha },\left\{ \eta _{\delta ,R}^{\alpha
}\right\} _{0<\delta <R<\infty }\right) $ satisfies the norm inequality (\ref%
{two weight'}) provided%
\begin{equation*}
\left\Vert T_{\sigma ,\delta ,R}^{\alpha }f\right\Vert _{W_{\func{dyad}%
}^{s}\left( \omega \right) }\leq \mathfrak{N}_{T^{\alpha }}\left( \sigma
,\omega \right) \left\Vert f\right\Vert _{W_{\func{dyad}}^{s}\left( \sigma
\right) },\ \ \ \ \ f\in W_{\func{dyad}}^{s}\left( \sigma \right) ,0<\delta
<R<\infty .
\end{equation*}
\end{definition}

\begin{description}
\item[Independence of Truncations] \label{independence}In the presence of
the classical Muckenhoupt condition $A_{2}^{\alpha }$, the norm inequality (%
\ref{two weight'}) is essentially independent of the choice of truncations
used, including \emph{nonsmooth} truncations as well - see \cite{LaSaShUr3}.
However, in dealing with the Monotonicity Lemma \ref{mono} below, where $%
\kappa ^{th}$ order Taylor approximations are made on the truncated kernels,
it is necessary to use sufficiently smooth truncations. Similar comments
apply to the Cube Testing conditions (\ref{def Kappa polynomial'}) and (\ref%
{full testing}) below.
\end{description}

\subsubsection{Ellipticity of kernels}

Modifying slightly the definition in \cite[(39) on page 210]{Ste}, we say
that an $\alpha $-fractional Calder\'{o}n-Zygmund kernel $K^{\alpha }$ is 
\emph{elliptic in the sense of Stein} if there is a unit coordinate vector $%
\mathbf{e}_{k}\in \mathbb{R}^{n}$ for some $1\leq k\leq n$, and a positive
constant $c>0$ such that%
\begin{equation*}
\left\vert K^{\alpha }\left( x,x+t\mathbf{e}_{k}\right) \right\vert \geq
c\left\vert t\right\vert ^{\alpha -n},\ \ \ \ \ \text{for all }t\in \mathbb{R%
}.
\end{equation*}%
For example, the Beurling, Cauchy and Riesz transform kernels, as well as
those for $k$-iterated Riesz transforms are elliptic in the sense of Stein
for any $k\geq 1$.

\subsubsection{Cube testing}

While the next more general testing conditions with $\kappa >1$, introduced
in \cite{RaSaWi} and \cite{Saw6}, are not used in the statements of our
theorems, they will be used in the course of our proof.

The $\kappa $\emph{-cube testing conditions} associated with an $\alpha $%
-fractional singular integral operator $T^{\alpha }$, introduced in \cite%
{RaSaWi} for $s=0$, are given by%
\begin{eqnarray}
\left( \mathfrak{T}_{T^{\alpha }}^{\kappa ,s}\left( \sigma ,\omega \right)
\right) ^{2} &\equiv &\sup_{Q\in \mathcal{Q}^{n}}\max_{0\leq \left\vert
\beta \right\vert <\kappa }\frac{1}{\ell \left( Q\right) ^{-2s}\left\vert
Q\right\vert _{\sigma }}\left\Vert \mathbf{1}_{Q}T_{\sigma }^{\alpha }\left( 
\mathbf{1}_{Q}m_{Q}^{\beta }\right) \right\Vert _{W_{\func{dyad}}^{s}\left(
\omega \right) }^{2}<\infty ,  \label{def Kappa polynomial'} \\
\left( \mathfrak{T}_{\left( T^{\alpha }\right) ^{\ast }}^{\kappa ,-s}\left(
\omega ,\sigma \right) \right) ^{2} &\equiv &\sup_{Q\in \mathcal{Q}%
^{n}}\max_{0\leq \left\vert \beta \right\vert <\kappa }\frac{1}{\ell \left(
Q\right) ^{2s}\left\vert Q\right\vert _{\omega }}\left\Vert \mathbf{1}%
_{Q}T_{\omega }^{\alpha ,\ast }\left( \mathbf{1}_{Q}m_{Q}^{\beta }\right)
\right\Vert _{W_{\func{dyad}}^{-s}\left( \sigma \right) }^{2}<\infty , 
\notag
\end{eqnarray}%
where $\left( T^{\alpha ,\ast }\right) _{\omega }=\left( T_{\sigma }^{\alpha
}\right) ^{\ast }$, with $m_{Q}^{\beta }\left( x\right) \equiv \left( \frac{%
x-c_{Q}}{\ell \left( Q\right) }\right) ^{\beta }$ for any cube $Q$ and
multiindex $\beta $, where $c_{Q}$ is the center of the cube $Q$, and where
we interpret the right hand sides as holding uniformly over all sufficiently
smooth truncations of $T^{\alpha }$. Equivalently, in the presence of $%
A_{2}^{\alpha }$, we can take a single suitable truncation, see Independence
of Truncations in Subsubsection \ref{independence} above.

We also use the larger \emph{triple }$\kappa $\emph{-cube testing conditions}
in which the integrals over $Q$ are extended to the triple $3Q$ of $Q$:%
\begin{eqnarray}
\left( \mathfrak{TR}_{T^{\alpha }}^{\kappa ,s}\left( \sigma ,\omega \right)
\right) ^{2} &\equiv &\sup_{Q\in \mathcal{Q}^{n}}\max_{0\leq \left\vert
\beta \right\vert <\kappa }\frac{1}{\ell \left( Q\right) ^{-2s}\left\vert
Q\right\vert _{\sigma }}\left\Vert \mathbf{1}_{3Q}T_{\sigma }^{\alpha
}\left( \mathbf{1}_{Q}m_{Q}^{\beta }\right) \right\Vert _{W_{\func{dyad}%
}^{s}\left( \omega \right) }^{2}<\infty ,  \label{full testing} \\
\left( \mathfrak{TR}_{\left( T^{\alpha }\right) ^{\ast }}^{\kappa ,-s}\left(
\omega ,\sigma \right) \right) ^{2} &\equiv &\sup_{Q\in \mathcal{Q}%
^{n}}\max_{0\leq \left\vert \beta \right\vert <\kappa }\frac{1}{\ell \left(
Q\right) ^{2s}\left\vert Q\right\vert _{\omega }}\left\Vert \mathbf{1}%
_{3Q}T_{\omega }^{\alpha ,\ast }\left( \mathbf{1}_{Q}m_{Q}^{\beta }\right)
\right\Vert _{W_{\func{dyad}}^{-s}\left( \sigma \right) }^{2}<\infty . 
\notag
\end{eqnarray}

\subsection{Necessity of the classical Muckenhoupt condition}

Suppose that $T^{\alpha }f=K\ast f$ where $K^{\alpha }\left( x\right) =\frac{%
\Omega \left( x\right) }{\left\vert x\right\vert ^{n-\alpha }}$, and $\Omega
\left( x\right) $ is homogeneous of degree $0$ and smooth away from the
origin. Note that we do not require any cancellation properties on $\Omega $%
, except that when $\alpha =0$ we suppose $\int \Omega \left( x\right)
d\sigma _{n-1}\left( x\right) =0$ where $\sigma _{n-1}$ is surface measure
on the sphere (see e.g. \cite{Ste} page 68 for the case $\alpha =0$). We
assume $\Omega $ is nontrivial in the sense that there is a\ coordinate
direction $\Theta \in \mathbb{S}^{n-1}$ such that $\Omega \left( \Theta
\right) \neq 0$. Then there is a cone $\Gamma $ centered on $\Theta $ on
which $K\left( x\right) =\frac{\Omega \left( x\right) }{\left\vert
x\right\vert ^{n-\alpha }}$ and $\Omega \left( x\right) \geq c>0$ for $x\in
\Gamma $. Consider pairs of separated dyadic cubes in direction $\Theta $, 
\begin{equation*}
\mathcal{SP}_{\Theta }\equiv \left\{ \left( I,I^{\prime }\right) :\limfunc{%
dist}\left( I,I^{\prime }\right) \approx \ell \left( I\right) =\ell \left(
I^{\prime }\right) \text{, }R_{1}\ell \left( I\right) \leq \limfunc{dist}%
\left( I,I^{\prime }\right) \approx R\ell \left( I\right) \text{, and }%
I^{\prime }\text{ has direction }\Theta \text{ from }I\right\} ,
\end{equation*}%
where $R$ is chosen large enough that if the cone $\Gamma $ is translated to
any point in $I$, then it contains any cube $I^{\prime }$ for which $\left(
I,I^{\prime }\right) \in \mathcal{SP}_{\Theta }$.

We first derive the `separated' Muckenhoupt condition from the full testing
condition for $T^{\alpha }$, i.e.%
\begin{equation*}
\left( \frac{1}{\left\vert I^{\prime }\right\vert }\int_{I^{\prime }}d\omega
\right) \left( \frac{1}{\left\vert I\right\vert }\int_{I}d\sigma \right)
\leq \mathfrak{FT}_{T^{\alpha }}\left( s;\sigma ,\omega \right) ^{2},\ \ \ \
\ \left( I,I^{\prime }\right) \in \mathcal{SP}_{\Theta }\text{.}
\end{equation*}

We may assume without loss of generality that $\Theta =e_{1}$, the unit
vector in the direction of the positive $x_{1}$-axis. Now we choose a
special unit Haar function $h_{I^{\prime }}^{\omega }$, i.e. $\bigtriangleup
_{I^{\prime }}^{\omega }h_{I^{\prime }}^{\omega }=h_{I^{\prime }}^{\omega }$
and $\left\Vert h_{I^{\prime }}^{\omega }\right\Vert _{L^{2}\left( \omega
\right) }=1$, satisfying%
\begin{equation*}
h_{I^{\prime }}^{\omega }\left( x\right) =\sum_{K\in \mathfrak{C}\left(
I\right) }a_{K}\mathbf{1}_{K}\left( x\right) \text{, where }\left\{ 
\begin{array}{ccc}
a_{K}>0 & \text{ if } & K\text{ lies to the right of center} \\ 
a_{K}<0 & \text{ if } & K\text{ lies to the left of center}%
\end{array}%
\right. ,
\end{equation*}%
where for a cube $Q$ centered at the origin, we\ say a child $K$ lies to the
right of center if $K$ is contained in the half space where $x_{1}\geq 0$.
We now compute%
\begin{eqnarray*}
&&\left\Vert \mathbf{1}_{I^{\prime }}T^{\alpha }\left( \mathbf{1}_{I}\sigma
\right) \right\Vert _{W_{\func{dyad}}^{s}\left( \omega \right)
}^{2}=\sum_{J\in \mathcal{D}}\ell \left( J\right) ^{-2s}\left\Vert
\bigtriangleup _{J}^{\omega }\left( \mathbf{1}_{I^{\prime }}T^{\alpha }%
\mathbf{1}_{I}\sigma \right) \right\Vert _{L^{2}\left( \omega \right)
}^{2}\geq \ell \left( I^{\prime }\right) ^{-2s}\left\Vert \bigtriangleup
_{I^{\prime }}^{\omega }\left( T^{\alpha }\mathbf{1}_{I}\sigma \right)
\right\Vert _{L^{2}\left( \omega \right) }^{2} \\
&\geq &\ell \left( I^{\prime }\right) ^{-2s}\left\vert \left\langle
T^{\alpha }\mathbf{1}_{I}\sigma ,h_{I^{\prime }}^{\omega }\right\rangle
_{\omega }\right\vert ^{2}=\ell \left( I^{\prime }\right) ^{-2s}\left\vert
\int_{I^{\prime }}\left( \int_{I}K^{\alpha }\left( x,y\right) d\sigma \left(
y\right) \right) h_{I^{\prime }}^{\omega }\left( x\right) d\omega \left(
x\right) \right\vert ^{2} \\
&=&\ell \left( I^{\prime }\right) ^{-2s}\left\vert \int_{I^{\prime }}\left(
\int_{I}\left[ K^{\alpha }\left( x-y\right) -K^{\alpha }\left( c_{I^{\prime
}}-y\right) \right] d\sigma \left( y\right) \right) h_{I^{\prime }}^{\omega
}\left( x\right) d\omega \left( x\right) \right\vert ^{2} \\
&=&\ell \left( I^{\prime }\right) ^{-2s}\left\vert \int_{I^{\prime
}}\int_{I} \left[ K^{\alpha }\left( x-y\right) -K^{\alpha }\left(
c_{I^{\prime }}-y\right) \right] h_{I^{\prime }}^{\omega }\left( x\right)
d\sigma \left( y\right) d\omega \left( x\right) \right\vert ^{2} \\
&\gtrsim &c\ell \left( I^{\prime }\right) ^{-2s}\left( \int_{I^{\prime
}}\int_{I}\left\vert \frac{1}{\sqrt{\left\vert I\right\vert _{\omega }}}%
\frac{\ell \left( I^{\prime }\right) }{\limfunc{dist}\left( I,I^{\prime
}\right) ^{n+1-\alpha }}\right\vert d\sigma \left( y\right) d\omega \left(
x\right) \right) ^{2}=c\ell \left( I^{\prime }\right) ^{-2s}\frac{\left\vert
I\right\vert _{\sigma }^{2}\left\vert I^{\prime }\right\vert _{\omega }}{%
\limfunc{dist}\left( I,I^{\prime }\right) ^{2n-2\alpha }},
\end{eqnarray*}%
since $\omega $ is doubling. Thus we have%
\begin{eqnarray*}
\ell \left( I^{\prime }\right) ^{-2s}\frac{\left\vert I\right\vert _{\sigma
}^{2}\left\vert I^{\prime }\right\vert _{\omega }}{\limfunc{dist}\left(
I,I^{\prime }\right) ^{2n-2\alpha }} &\lesssim &\left\Vert \mathbf{1}%
_{I^{\prime }}T^{\alpha }\left( \mathbf{1}_{I}\sigma \right) \right\Vert
_{W_{\func{dyad}}^{s}\left( \omega \right) }^{2} \\
&\leq &\mathfrak{FT}_{T^{\alpha }}\left( s;\sigma ,\omega \right)
^{2}\left\Vert \mathbf{1}_{I}\right\Vert _{W_{\func{dyad}}^{s}\left( \sigma
\right) }^{2}\approx \mathfrak{FT}_{T^{\alpha }}\left( s;\sigma ,\omega
\right) ^{2}\ell \left( I\right) ^{-2s}\left\vert I\right\vert _{\sigma }\ ,
\end{eqnarray*}%
which gives the desired inequality,%
\begin{equation*}
\frac{\left\vert I\right\vert _{\sigma }}{\left\vert I\right\vert ^{n-\alpha
}}\frac{\left\vert I^{\prime }\right\vert _{\omega }}{\left\vert I^{\prime
}\right\vert ^{n-\alpha }}\lesssim \mathfrak{FT}_{T^{\alpha }}\left(
s;\sigma ,\omega \right) ^{2}.
\end{equation*}

Now using \cite{LaSaUr1} or \cite{SaShUr7}, we can derive from this the full
Muckenhoupt inequality when $\omega $ and $\sigma $ contain no common point
masses, which is certainly the case for doubling measures,%
\begin{equation*}
A_{2}^{\alpha }\left( \sigma ,\omega \right) =\sup_{I}\frac{\left\vert
I\right\vert _{\sigma }\left\vert I\right\vert _{\omega }}{\left\vert
I\right\vert ^{2\left( n-\alpha \right) }}\lesssim \mathfrak{FT}_{T^{\alpha
}}\left( s;\sigma ,\omega \right) ^{2}.
\end{equation*}%
Thus we have proved the following lemma.

\begin{lemma}
\label{S ell}If $T^{\alpha }f=K^{\alpha }\ast f$ where $K^{\alpha }\left(
x\right) =\frac{\Omega \left( x\right) }{\left\vert x\right\vert ^{n-\alpha }%
}$, and $\Omega \left( x\right) $ is homogeneous of degree $0$ and smooth
away from the origin and $\Omega \left( \mathbf{e}_{k}\right) \neq 0$ for
some $1\leq k\leq n$, then boundedness of $T^{\alpha }$ from $W_{\func{dyad}%
}^{s}\left( \sigma \right) $ to $W_{\func{dyad}}^{s}\left( \omega \right) $,
implies the $A_{2}^{\alpha }\left( \sigma ,\omega \right) $ condition, more
precisely,%
\begin{equation*}
\sqrt{A_{2}^{\alpha }\left( \sigma ,\omega \right) }\lesssim \mathfrak{FT}%
_{T^{\alpha }}\left( s;\sigma ,\omega \right) \leq \mathfrak{N}_{T^{\alpha
}}\left( s;\sigma ,\omega \right) .
\end{equation*}
\end{lemma}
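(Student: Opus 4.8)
The plan is to prove the chain $\sqrt{A_{2}^{\alpha }\left( \sigma ,\omega \right) }\lesssim \mathfrak{FT}_{T^{\alpha }}\left( s;\sigma ,\omega \right) \leq \mathfrak{N}_{T^{\alpha }}\left( s;\sigma ,\omega \right) $ in two steps. The second inequality is the easy, standard direction: every full-testing ratio $\left\Vert \mathbf{1}_{I^{\prime }}T_{\sigma }^{\alpha }\mathbf{1}_{I}\right\Vert _{W_{\func{dyad}}^{s}\left( \omega \right) }/\left\Vert \mathbf{1}_{I}\right\Vert _{W_{\func{dyad}}^{s}\left( \sigma \right) }$ is dominated by the operator norm, using that multiplication by an indicator is bounded on $W_{\func{dyad}}^{s}\left( \omega \right) $ once $s$ is small relative to the reverse doubling exponent of $\omega $ (via the difference-norm formula for $W_{\mathcal{D}_{\func{diff}};1}^{s}$ and reverse doubling), together with the already established identity $\left\Vert \mathbf{1}_{I}\right\Vert _{W_{\func{dyad}}^{s}\left( \mu \right) }\approx \ell \left( I\right) ^{-s}\sqrt{\left\vert I\right\vert _{\mu }}$. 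So all the work is in the first inequality.

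For that I would reduce to a one-tailed, \emph{separated} Muckenhoupt estimate over the family $\mathcal{SP}_{\Theta }$ of comparable, well-separated pairs in the coordinate direction $\Theta =\mathbf{e}_{k}$ on which $\Omega $ does not vanish, namely $\frac{\left\vert I\right\vert _{\sigma }}{\left\vert I\right\vert ^{n-\alpha }}\frac{\left\vert I^{\prime }\right\vert _{\omega }}{\left\vert I^{\prime }\right\vert ^{n-\alpha }}\lesssim \mathfrak{FT}_{T^{\alpha }}\left( s;\sigma ,\omega \right) ^{2}$. Fix such a pair, together with a cone $\Gamma $ around $\Theta $ on which $\Omega \geq c>0$, chosen so that its translate to any point of $I$ contains $I^{\prime }$ (this is the role of the large parameter $R$ in the definition of $\mathcal{SP}_{\Theta }$). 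Choose a unit $L^{2}\left( \omega \right) $ Haar function $h_{I^{\prime }}^{\omega }$ on $I^{\prime }$ that is a positive constant on the children of $I^{\prime }$ on the $+\Theta $ side of the center and a negative constant on those on the $-\Theta $ side, so that $\left\vert h_{I^{\prime }}^{\omega }\right\vert \approx \left\vert I^{\prime }\right\vert _{\omega }^{-1/2}$ pointwise, using that $\omega $ is doubling. Since $\bigtriangleup _{I^{\prime }}^{\omega }$ depends only on the values of its argument on $I^{\prime }$, the full testing hypothesis and orthogonality give
\[
\mathfrak{FT}_{T^{\alpha }}^{2}\,\ell \left( I\right) ^{-2s}\left\vert I\right\vert _{\sigma }\gtrsim \left\Vert \mathbf{1}_{I^{\prime }}T^{\alpha }\left( \mathbf{1}_{I}\sigma \right) \right\Vert _{W_{\func{dyad}}^{s}\left( \omega \right) }^{2}\geq \ell \left( I^{\prime }\right) ^{-2s}\left\vert \left\langle T^{\alpha }\left( \mathbf{1}_{I}\sigma \right) ,h_{I^{\prime }}^{\omega }\right\rangle _{\omega }\right\vert ^{2},
\]
and inserting $K^{\alpha }\left( c_{I^{\prime }}-y\right) $, which integrates to zero against $h_{I^{\prime }}^{\omega }$, turns the pairing into $\int_{I^{\prime }}\int_{I}\left[ K^{\alpha }\left( x-y\right) -K^{\alpha }\left( c_{I^{\prime }}-y\right) \right] h_{I^{\prime }}^{\omega }\left( x\right) d\sigma \left( y\right) d\omega \left( x\right) $. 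Expanding the bracket by the fundamental theorem of calculus along the segment from $c_{I^{\prime }}-y$ to $x-y$ (which stays in $\Gamma $ by the choice of $R$), the $\Theta $-component of $x-c_{I^{\prime }}$ carries the sign of $h_{I^{\prime }}^{\omega }\left( x\right) $, so if $\partial _{k}K^{\alpha }$ is sign-definite on $\Gamma $ the integrand is of one sign with $\left\vert \cdot \right\vert \approx \left\vert I^{\prime }\right\vert _{\omega }^{-1/2}\ell \left( I^{\prime }\right) \limfunc{dist}\left( I,I^{\prime }\right) ^{\alpha -n-1}$; integrating, squaring, and using $\ell \left( I^{\prime }\right) \approx \limfunc{dist}\left( I,I^{\prime }\right) $ yields the separated estimate.

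\textbf{The main obstacle} is exactly the sign-definiteness of $\partial _{k}K^{\alpha }$ near the axis $\Theta $. This follows from Euler's homogeneity identity: since $\Omega $ is homogeneous of degree $0$, one has $z\cdot \nabla \Omega \left( z\right) =0$, hence $\partial _{k}\Omega \left( \mathbf{e}_{k}\right) =0$, and therefore
\[
\partial _{k}K^{\alpha }\left( z\right) =\left( \partial _{k}\Omega \left( z\right) \right) \left\vert z\right\vert ^{\alpha -n}+\left( \alpha -n\right) \Omega \left( z\right) \left\vert z\right\vert ^{\alpha -n-2}z_{k}
\]
equals $\left( \alpha -n\right) \Omega \left( \mathbf{e}_{k}\right) \neq 0$ at $z=\mathbf{e}_{k}$, because $\alpha <n$ and $\Omega \left( \mathbf{e}_{k}\right) \neq 0$. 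By continuity, after shrinking $\Gamma $ and then enlarging $R$ so that all the Taylor segments above lie inside $\Gamma $, $\partial _{k}K^{\alpha }$ keeps this one sign on $\Gamma $, which is all that is needed (smoothness of $K^{\alpha }$ away from the origin gives the matching upper bound on $\left\vert \partial _{k}K^{\alpha }\right\vert $, so the magnitude is indeed of order $\limfunc{dist}\left( I,I^{\prime }\right) ^{\alpha -n-1}$). Finally, with the separated Muckenhoupt estimate in hand over all of $\mathcal{SP}_{\Theta }$, I invoke the now-standard bootstrapping argument of \cite{LaSaUr1} (or \cite{SaShUr7}) to pass from separated pairs to the full supremum defining $A_{2}^{\alpha }\left( \sigma ,\omega \right) $, which is legitimate here since doubling measures share no common point masses. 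This completes the proof.
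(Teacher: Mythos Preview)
Your approach is essentially that of the paper: isolate a single Haar coefficient of $T^{\alpha}(\mathbf{1}_{I}\sigma)$ on a separated partner cube $I'$, subtract the constant $K^{\alpha}(c_{I'}-y)$ using the mean-zero of $h_{I'}^{\omega}$, read off the separated Muckenhoupt bound, and then invoke \cite{LaSaUr1}/\cite{SaShUr7} to upgrade to the full $A_{2}^{\alpha}$. Your Euler--homogeneity remark that $\partial_{k}K^{\alpha}(\mathbf{e}_{k})=(\alpha-n)\Omega(\mathbf{e}_{k})\neq 0$ is a genuinely useful addition that the paper leaves implicit.

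There is one overstatement worth fixing. You write that once $\partial_{k}K^{\alpha}$ is sign-definite on $\Gamma$, ``the integrand is of one sign.'' The fundamental-theorem-of-calculus expansion produces the \emph{full} gradient
\[
K^{\alpha}(x-y)-K^{\alpha}(c_{I'}-y)=\sum_{j=1}^{n}(x_{j}-(c_{I'})_{j})\int_{0}^{1}\partial_{j}K^{\alpha}\bigl(c_{I'}-y+t(x-c_{I'})\bigr)\,dt,
\]
and for $j\neq k$ one has $\partial_{j}K^{\alpha}(\mathbf{e}_{k})=\partial_{j}\Omega(\mathbf{e}_{k})$, which in general does \emph{not} vanish (Euler only kills the radial derivative). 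Since $|x_{j}-(c_{I'})_{j}|\lesssim \ell(I')$ for all $j$, these transverse terms are of the same order as the $j=k$ term and can spoil a pointwise sign. What survives is the weaker (and sufficient) statement that, after integrating in $x$ and $y$ and making $R$ large so the second-order Taylor error is $O(1/R)$ relative to the main term, the pairing is $|I|_{\sigma}\,\limfunc{dist}^{\alpha-n-1}\bigl(\nabla K^{\alpha}(\mathbf{e}_{k})\cdot M\bigr)+o(\cdot)$ with $M_{j}=\int_{I'}(x_{j}-(c_{I'})_{j})h_{I'}^{\omega}\,d\omega$. Doubling of $\omega$ gives $M_{k}\approx \ell(I')\sqrt{|I'|_{\omega}}$ and $|M_{j}|\lesssim \ell(I')\sqrt{|I'|_{\omega}}$; to conclude you should either (i) argue that $|\partial_{k}K^{\alpha}(\mathbf{e}_{k})|\,M_{k}$ dominates $\sum_{j\neq k}|\partial_{j}K^{\alpha}(\mathbf{e}_{k})|\,|M_{j}|$ for the kernel at hand, or more robustly (ii) replace your fixed $h_{I'}^{\omega}$ by the unit Haar function maximizing $\langle V\cdot(x-c_{I'}),h\rangle_{\omega}$, i.e.\ take $h$ proportional to $\bigtriangleup_{I'}^{\omega}\bigl(V\cdot(x-c_{I'})\bigr)$ with $V=\nabla K^{\alpha}(\mathbf{e}_{k})$, which yields $|V\cdot M|=\|\bigtriangleup_{I'}^{\omega}(V\cdot(x-c_{I'}))\|_{L^{2}(\omega)}\approx |V|\,\ell(I')\sqrt{|I'|_{\omega}}$ by doubling. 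The paper's own write-up is equally terse at this point (it asserts the lower bound ``since $\omega$ is doubling''), so your argument is at the same level of rigor; the fix above just makes the step honest.
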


\subsection{Necessity of the strong $\protect\kappa ^{th}$ order Pivotal
Condition with full grids and doubling weights}

We will need the following definitions that capture a critical property of
stopping times arising from pivotal criteria.

\begin{definition}
Let $\sigma $ be a measure on $\mathbb{R}^{n}$ and $0\leq \beta <1$.

\begin{enumerate}
\item We say that a collection of dyadic cubes $\left\{ F_{r}\right\}
_{r=1}^{\infty }$ is a $\left( \beta ,\sigma \right) $-subdecomposition of a
dyadic cube $F$ if $F_{r}\subset F$ for all $1<r<\infty $ and 
\begin{equation*}
\sum_{r=1}^{\infty }\left( \frac{\left\vert F_{r}\right\vert _{\sigma }}{%
\left\vert F\right\vert _{\sigma }}\right) ^{1-\beta }\leq C.
\end{equation*}%
\newline

\item We say that a grid $\mathcal{F}\subset \mathcal{D}$ is $\left( \beta
,\sigma \right) $\emph{-full} if $\mathfrak{C}_{\mathcal{F}}\left( F\right) $
is a $\left( \beta ,\sigma \right) $-subdecomposition of $F$, uniformly over
all $F\in \mathcal{F}$ (with respect to a constant $C$ we suppress).
\end{enumerate}
\end{definition}

The smaller fractional Poisson integrals $\mathrm{P}_{\kappa }^{\alpha
}\left( Q,\mu \right) $ used here, in \cite{RaSaWi} and elsewhere, are given
by 
\begin{equation}
\mathrm{P}_{\kappa }^{\alpha }\left( Q,\mu \right) =\int_{\mathbb{R}^{n}}%
\frac{\ell \left( Q\right) ^{\kappa }}{\left( \ell \left( Q\right)
+\left\vert y-c_{Q}\right\vert \right) ^{n+\kappa -\alpha }}d\mu \left(
y\right) ,\ \ \ \ \ \kappa \geq 1,  \label{def kappa Poisson}
\end{equation}%
and the $\kappa ^{th}$-order fractional pivotal constants $\mathcal{V}%
_{2}^{\alpha ,\kappa },\mathcal{V}_{2}^{\alpha ,\kappa ,\ast }<\infty $, $%
\kappa \geq 1$, are given by%
\begin{eqnarray}
\left( \mathcal{V}_{2}^{\alpha ,\kappa }\left( \sigma ,\omega \right)
\right) ^{2} &=&\sup_{Q\supset \dot{\cup}Q_{r}}\frac{1}{\left\vert
Q\right\vert _{\sigma }}\sum_{r=1}^{\infty }\mathrm{P}_{\kappa }^{\alpha
}\left( Q_{r},\mathbf{1}_{Q}\sigma \right) ^{2}\left\vert Q_{r}\right\vert
_{\omega }\ ,  \label{both pivotal k} \\
\left( \mathcal{V}_{2}^{\alpha ,\kappa ,\ast }\left( \sigma ,\omega \right)
\right) ^{2} &=&\sup_{Q\supset \dot{\cup}Q_{r}}\frac{1}{\left\vert
Q\right\vert _{\omega }}\sum_{r=1}^{\infty }\mathrm{P}_{\kappa }^{\alpha
}\left( Q_{r},\mathbf{1}_{Q}\omega \right) ^{2}\left\vert Q_{r}\right\vert
_{\sigma }=\left( \mathcal{V}_{2}^{\alpha ,\kappa }\left( \omega ,\sigma
\right) \right) ^{2}\ ,  \notag
\end{eqnarray}%
and for $0<\beta ,\varepsilon \leq 1$, the $\left( \beta ,\varepsilon
\right) $-strong $\kappa $-pivotal constants $\mathcal{V}_{2,\beta
,\varepsilon }^{\alpha ,\kappa },\mathcal{V}_{2,\beta ,\varepsilon }^{\alpha
,\kappa ,\ast }<\infty $, $\kappa \geq 1$ are given by%
\begin{eqnarray}
\left( \mathcal{V}_{2,\beta ,\varepsilon }^{\alpha ,\kappa }\left( \sigma
,\omega \right) \right) ^{2} &=&\sup_{Q\supset \dot{\cup}Q_{r}\ \text{is }%
\left( \beta ,\sigma \right) \text{-full}}\frac{1}{\left\vert Q\right\vert
_{\sigma }}\sum_{r=1}^{\infty }\mathrm{P}_{\kappa }^{\alpha }\left( Q_{r},%
\mathbf{1}_{Q}\sigma \right) ^{2}\left( \frac{\ell \left( Q\right) }{\ell
\left( Q_{r}\right) }\right) ^{\varepsilon }\left\vert Q_{r}\right\vert
_{\omega }\ ,  \label{strong pivotal} \\
\left( \mathcal{V}_{2,\beta ,\varepsilon }^{\alpha ,\kappa ,\ast }\left(
\sigma ,\omega \right) \right) ^{2} &=&\sup_{Q\supset \dot{\cup}Q_{r}\ \text{%
is }\left( \beta ,\sigma \right) \text{-full}}\frac{1}{\left\vert
Q\right\vert _{\omega }}\sum_{r=1}^{\infty }\mathrm{P}_{\kappa }^{\alpha
}\left( Q_{r},\mathbf{1}_{Q}\omega \right) ^{2}\left( \frac{\ell \left(
Q\right) }{\ell \left( Q_{r}\right) }\right) ^{\varepsilon }\left\vert
Q_{r}\right\vert _{\sigma }=\left( \mathcal{V}_{2,\beta ,\varepsilon
}^{\alpha ,\kappa }\left( \omega ,\sigma \right) \right) ^{2}\ ,  \notag
\end{eqnarray}%
and where the suprema are taken over all subdecompositions of a cube $Q\in 
\mathcal{Q}^{n}$ into pairwise disjoint dyadic subcubes $Q_{r}$ for which $%
\left\{ Q_{r}\right\} _{r=1}^{\infty }$ is a $\left( \beta ,\sigma \right) $%
-subdecomposition of $Q$.

\begin{lemma}
\label{full}Let $\varepsilon >0$ be as in (\ref{e.Jsimeq}), and $0\leq
\alpha <n$. Let $\gamma >1$ and suppose $\mathcal{F}$ is a stopping time
grid constructed from a pair of doubling measures $\sigma ,\omega $ using
the stopping criterion,%
\begin{equation}
\mathrm{P}_{\kappa }^{\alpha }\left( I,\mathbf{1}_{F}\sigma \right)
^{2}\left\vert I\right\vert _{\omega }\geq \gamma \left\vert I\right\vert
_{\sigma }\ ,\ \ \ \ \ I\in \mathcal{D}\left( F\right) ,  \label{stop crit}
\end{equation}%
i.e. for each $F\in \mathcal{F}$, the set $\mathfrak{C}_{\mathcal{F}}\left(
F\right) $ of $\mathcal{F}$-children of $F$ consists of the maximal dyadic
subcubes $I$ of $F$ satisfying (\ref{stop crit}). Then $\mathcal{F}$ is $%
\left( \beta ,\sigma \right) $-full provided $\kappa >\frac{n}{2}$ and $%
\beta ,\varepsilon \ $ are sufficiently small. More precisely, under these
conditions we have%
\begin{equation*}
\sum_{r=1}^{\infty }\left( \frac{\left\vert F_{r}\right\vert _{\sigma }}{%
\left\vert F\right\vert _{\sigma }}\right) ^{1-\beta }\lesssim A_{2}^{\alpha
}\left( \sigma ,\omega \right) ^{1-\beta }.
\end{equation*}
\end{lemma}

\begin{proof}
Suppose $\left\{ F_{r}\right\} _{r=1}^{\infty }=\mathfrak{C}_{\mathcal{F}%
}\left( F\right) $ is the set of stopping children of $F$ in the grid $%
\mathcal{F}$. Then by maximality of the cubes $F_{r}$, they also satisfy%
\begin{equation*}
\mathrm{P}_{\kappa }^{\alpha }\left( \pi _{\mathcal{D}}F_{r},\mathbf{1}%
_{F}\sigma \right) ^{2}\left\vert \pi _{\mathcal{D}}F_{r}\right\vert
_{\omega }<\gamma \left\vert \pi _{\mathcal{D}}F_{r}\right\vert _{\sigma }\ ,
\end{equation*}%
and since both measures are doubling, we have%
\begin{equation*}
\mathrm{P}_{\kappa }^{\alpha }\left( F_{r},\mathbf{1}_{F}\sigma \right)
^{2}\left\vert F_{r}\right\vert _{\omega }\approx \left\vert
F_{r}\right\vert _{\sigma }\ .
\end{equation*}%
Thus the inequality $\lesssim $ in the above equivalence is from the
stopping criterion, while the opposite inequality $\gtrsim $ is from
doubling. Using doubling once more it follows easily that%
\begin{equation*}
\mathrm{P}_{\kappa }^{\alpha }\left( F_{r},\mathbf{1}_{F\setminus \gamma
F_{r}}\sigma \right) ^{2}\left\vert F_{r}\right\vert _{\omega }\approx
\left\vert F_{r}\right\vert _{\sigma }\ .
\end{equation*}%
Now using (\ref{e.Jsimeq}), namely%
\begin{equation*}
\mathrm{P}_{\kappa }^{\alpha }(J,\sigma \mathbf{1}_{K\setminus I})\lesssim
\left( \frac{\ell \left( J\right) }{\ell \left( I\right) }\right) ^{\kappa
-\varepsilon \left( n+m-\alpha \right) }\mathrm{P}_{\kappa }^{\alpha
}(I,\sigma \mathbf{1}_{K\setminus I}),
\end{equation*}%
we have%
\begin{eqnarray*}
&&\sum_{r=1}^{\infty }\left[ \frac{\left\vert F_{r}\right\vert _{\sigma }}{%
\left\vert F\right\vert _{\sigma }}\right] ^{1-\beta }\approx
\sum_{r=1}^{\infty }\left[ \frac{\mathrm{P}_{\kappa }^{\alpha }\left( F_{r},%
\mathbf{1}_{F\setminus \gamma F_{r}}\sigma \right) ^{2}\left\vert
F_{r}\right\vert _{\omega }}{\left\vert F\right\vert _{\sigma }}\right]
^{1-\beta } \\
&\lesssim &\sum_{r=1}^{\infty }\left[ \left( \frac{\ell \left( F_{r}\right) 
}{\ell \left( F\right) }\right) ^{2\kappa -2\varepsilon \left( n+\kappa
-\alpha \right) }\mathrm{P}_{\kappa }^{\alpha }(F,\sigma \mathbf{1}_{F})^{2}%
\frac{\left\vert F_{r}\right\vert _{\omega }}{\left\vert F\right\vert
_{\sigma }}\right] ^{1-\beta } \\
&\approx &\sum_{r=1}^{\infty }\left[ \left( \frac{\ell \left( F_{r}\right) }{%
\ell \left( F\right) }\right) ^{2\kappa -2\varepsilon \left( n+\kappa
-\alpha \right) }\frac{\left\vert F\right\vert _{\sigma }\left\vert
F_{r}\right\vert _{\omega }}{\left\vert F\right\vert ^{2-2\alpha }}\right]
^{1-\beta }\lesssim \sum_{r=1}^{\infty }\left[ \left( \frac{\ell \left(
F_{r}\right) }{\ell \left( F\right) }\right) ^{2\kappa -2\varepsilon \left(
n+\kappa -\alpha \right) +\theta _{\omega }^{\limfunc{rev}}}\frac{\left\vert
F\right\vert _{\sigma }\left\vert F\right\vert _{\omega }}{\left\vert
F\right\vert ^{2-2\alpha }}\right] ^{1-\beta } \\
&\lesssim &\sum_{r=1}^{\infty }\left[ \left( \frac{\ell \left( F_{r}\right) 
}{\ell \left( F\right) }\right) ^{2\kappa -2\varepsilon \left( n+\kappa
-\alpha \right) +\theta _{\omega }^{\limfunc{rev}}}A_{2}^{\alpha }\left(
\sigma ,\omega \right) \right] ^{1-\beta } \\
&\lesssim &A_{2}^{\alpha }\left( \sigma ,\omega \right) ^{1-\beta
}\sum_{r=1}^{\infty }\left( \frac{\ell \left( F_{r}\right) }{\ell \left(
F\right) }\right) ^{\left[ 2\kappa -2\varepsilon \left( n+\kappa -\alpha
\right) +\theta _{\omega }^{\limfunc{rev}}\right] \left( 1-\beta \right)
}\lesssim A_{2}^{\alpha }\left( \sigma ,\omega \right) ^{1-\beta },
\end{eqnarray*}%
provided 
\begin{equation*}
\left[ 2\kappa -2\varepsilon \left( n+\kappa -\alpha \right) +\theta
_{\omega }^{\limfunc{rev}}\right] \left( 1-\beta \right) \geq n,
\end{equation*}%
which in turn holds for any $\kappa >\frac{n}{2}$ and $\theta _{\omega }^{%
\limfunc{rev}}>0$ if $\varepsilon $ and $\beta $ are chosen sufficiently
small. Thus we conclude that $\left\{ F_{r}\right\} _{r=1}^{\infty }$ is a $%
\left( \beta ,\sigma \right) $-subdecomposition of $F$ for every $F\in 
\mathcal{F}$, which completes the proof that $\mathcal{F}$ is $\left( \beta
,\sigma \right) $-full.
\end{proof}

The following lemma is a variant of one in \cite[Subsection 4.1 on pages
12-13, especially Remark 15]{Saw6}, where it was the point of departure for
freeing the theory from reliance on energy conditions when the measures are
doubling. This variant exploits the critical notion of a full grid.

\begin{lemma}
\label{doub piv}Let $0\leq \alpha <n$ and $0<\varepsilon <1$ and suppose $%
\sigma $ is a doubling measure satisfying (\ref{combine}), i.e.%
\begin{equation*}
\left\vert I_{r}\right\vert _{\sigma }\lesssim \left( \frac{\ell \left(
I_{r}\right) }{\ell \left( I\right) }\right) ^{\theta _{\sigma }^{\func{rev}%
}}\left\vert I\right\vert _{\sigma }\ .
\end{equation*}%
Then for $\kappa >\theta _{\sigma }^{\limfunc{doub}}+\alpha -n$ and $\beta
=\varepsilon ^{\prime }=\frac{\varepsilon }{\theta _{\sigma }^{\func{rev}}}$%
, we have 
\begin{equation*}
\mathcal{V}_{2,\beta ,\varepsilon ^{\prime }}^{\alpha ,\kappa }\left( \sigma
,\omega \right) \leq C_{\kappa ,\alpha ,n,\varepsilon ,\theta _{\sigma }^{%
\limfunc{doub}},\theta _{\sigma }^{\func{rev}}}\sqrt{A_{2}^{\alpha }\left(
\sigma ,\omega \right) }.
\end{equation*}
\end{lemma}

\begin{proof}
A doubling measure $\sigma $ has a `doubling exponent' $\theta _{\sigma }^{%
\limfunc{doub}}>0$ and a positive constant $c$ that satisfies the condition
(see e.g. \cite{Saw6}),%
\begin{equation*}
\left\vert 2^{-j}Q\right\vert _{\sigma }\geq c2^{-j\theta _{\sigma }^{%
\limfunc{doub}}}\left\vert Q\right\vert _{\sigma }\ ,\ \ \ \ \ \text{for all 
}j\in \mathbb{N}.
\end{equation*}%
We can then exploit the doubling exponents $\theta _{\sigma }^{\limfunc{doub}%
}$ and reverse doubling exponents $\theta _{\sigma }^{\func{rev}}$ of the
doubling measure $\sigma $ in order to derive certain $\kappa ^{th}$ order
pivotal conditions $\mathcal{V}_{2,\beta ,\varepsilon ^{\prime }}^{\alpha
,\kappa }\left( \sigma ,\omega \right) <\infty $. Indeed, if $\sigma $ has
doubling exponent $\theta _{\sigma }^{\limfunc{doub}}$ and $\kappa >\theta
_{\sigma }^{\limfunc{doub}}+\alpha -n$, we have%
\begin{eqnarray}
&&\int_{\mathbb{R}^{n}\setminus I}\frac{\ell \left( I\right) ^{\kappa }}{%
\left( \ell \left( I\right) +\left\vert x-c_{I}\right\vert \right)
^{n+\kappa -\alpha }}d\sigma \left( x\right) =\sum_{j=1}^{\infty }\ell
\left( I\right) ^{\alpha -n}\int_{2^{j}I\setminus 2^{j-1}I}\frac{1}{\left( 1+%
\frac{\left\vert x-c_{I}\right\vert }{\ell \left( I\right) }\right)
^{n+\kappa -\alpha }}d\sigma \left( x\right)  \label{kappa large} \\
&\lesssim &\left\vert I\right\vert ^{\frac{\alpha }{n}-1}\sum_{j=1}^{\infty
}2^{-j\left( n+\kappa -\alpha \right) }\left\vert 2^{j}I\right\vert _{\sigma
}\lesssim \left\vert I\right\vert ^{\frac{\alpha }{n}-1}\sum_{j=1}^{\infty
}2^{-j\left( n+\kappa -\alpha \right) }\frac{1}{c2^{-j\theta _{\sigma }^{%
\limfunc{doub}}}}\left\vert I\right\vert _{\sigma }\leq C_{n,\kappa ,\alpha
,\beta ,\gamma }\left\vert I\right\vert ^{\frac{\alpha }{n}-1}\left\vert
I\right\vert _{\sigma }\ ,  \notag
\end{eqnarray}%
provided $n+\kappa -\alpha -\theta _{\sigma }^{\limfunc{doub}}>0$, i.e. $%
\kappa >\theta _{\sigma }^{\limfunc{doub}}+\alpha -n$. It follows that if $%
I\supset \overset{\cdot }{\dbigcup }_{r=1}^{\infty }I_{r}$ is a
subdecomposition of $I$ into pairwise disjoint cubes $I_{r}$, and $\kappa
>\theta _{\sigma }^{\limfunc{doub}}+\alpha -n$, then 
\begin{eqnarray*}
&&\sum_{r=1}^{\infty }\mathrm{P}_{\kappa }^{\alpha }\left( I_{r},\mathbf{1}%
_{I}\sigma \right) ^{2}\left( \frac{\ell \left( I\right) }{\ell \left(
I_{r}\right) }\right) ^{\varepsilon }\left\vert I_{r}\right\vert _{\omega
}\lesssim \sum_{r=1}^{\infty }\left( \left\vert I_{r}\right\vert ^{\frac{%
\alpha }{n}-1}\left\vert I_{r}\right\vert _{\sigma }\right) ^{2}\left( \frac{%
\ell \left( I\right) }{\ell \left( I_{r}\right) }\right) ^{\varepsilon
}\left\vert I_{r}\right\vert _{\omega } \\
&=&\sum_{r=1}^{\infty }\left( \frac{\ell \left( I\right) }{\ell \left(
I_{r}\right) }\right) ^{\varepsilon }\frac{\left\vert I_{r}\right\vert
_{\sigma }\left\vert I_{r}\right\vert _{\omega }}{\left\vert
I_{r}\right\vert ^{2\left( 1-\frac{\alpha }{n}\right) }}\left\vert
I_{r}\right\vert _{\sigma }\lesssim A_{2}^{\alpha }\left( \sigma ,\omega
\right) \sum_{r=1}^{\infty }\left( \frac{\ell \left( I\right) }{\ell \left(
I_{r}\right) }\right) ^{\varepsilon }\left\vert I_{r}\right\vert _{\sigma }\
.
\end{eqnarray*}

Now $\left( \frac{\ell \left( I\right) }{\ell \left( I_{r}\right) }\right)
^{\theta _{\sigma }^{\func{rev}}}\lesssim \frac{\left\vert I\right\vert
_{\sigma }}{\left\vert I_{r}\right\vert _{\sigma }}$, and so we conclude that%
\begin{eqnarray*}
\sum_{r=1}^{\infty }\mathrm{P}_{\kappa }^{\alpha }\left( I_{r},\mathbf{1}%
_{I}\sigma \right) ^{2}\left( \frac{\ell \left( I\right) }{\ell \left(
I_{r}\right) }\right) ^{\varepsilon }\left\vert I_{r}\right\vert _{\omega }
&\lesssim &A_{2}^{\alpha }\left( \sigma ,\omega \right) \sum_{r=1}^{\infty
}\left( \frac{\left\vert I\right\vert _{\sigma }}{\left\vert
I_{r}\right\vert _{\sigma }}\right) ^{\frac{\varepsilon }{\theta _{\sigma }^{%
\func{rev}}}}\left\vert I_{r}\right\vert _{\sigma } \\
A_{2}^{\alpha }\left( \sigma ,\omega \right) \sum_{r=1}^{\infty }\left( 
\frac{\left\vert I\right\vert _{\sigma }}{\left\vert I_{r}\right\vert
_{\sigma }}\right) ^{\frac{\varepsilon }{\theta _{\sigma }^{\func{rev}}}}%
\frac{\left\vert I_{r}\right\vert _{\sigma }}{\left\vert I\right\vert
_{\sigma }}\left\vert I\right\vert _{\sigma } &=&A_{2}^{\alpha }\left(
\sigma ,\omega \right) \sum_{r=1}^{\infty }\left( \frac{\left\vert
I_{r}\right\vert _{\sigma }}{\left\vert I\right\vert _{\sigma }}\right) ^{1-%
\frac{\varepsilon }{\theta _{\sigma }^{\func{rev}}}}\left\vert I\right\vert
_{\sigma }\leq CA_{2}^{\alpha }\left( \sigma ,\omega \right) \left\vert
I\right\vert _{\sigma }\;,
\end{eqnarray*}%
for $\varepsilon $ sufficiently small, if we assume in addition that $%
\left\{ I_{r}\right\} _{r=1}^{\infty }$ is a $\left( \beta ,\sigma \right) $%
-subdecomposition of $F$ with $\beta =\frac{\varepsilon }{\theta _{\sigma }^{%
\func{rev}}}$.

This then gives, 
\begin{eqnarray}
\left( \mathcal{V}_{2,\frac{\varepsilon }{\theta _{\sigma }^{\func{rev}}},%
\frac{\varepsilon }{\theta _{\sigma }^{\func{rev}}}}^{\alpha ,\kappa
}\right) ^{2} &=&\sup_{\left\{ I\right\} \cup \left\{ I_{r}\right\}
_{r=1}^{\infty }\text{ is }\left( \beta ,\sigma \right) \text{-full}}\frac{1%
}{\left\vert I\right\vert _{\sigma }}\sum_{r=1}^{\infty }\mathrm{P}_{\kappa
}^{\alpha }\left( I_{r},\mathbf{1}_{I}\sigma \right) ^{2}\left( \frac{\ell
\left( I\right) }{\ell \left( I_{r}\right) }\right) ^{\varepsilon
}\left\vert I_{r}\right\vert _{\omega }\lesssim A_{2}^{\alpha }\left( \sigma
,\omega \right) \ ,  \label{piv control} \\
\ \ \ \ \ \text{for }\kappa &>&\theta _{\sigma }^{\limfunc{doub}}+\alpha -n%
\text{ and }\varepsilon >0\text{ sufficiently small}\ .  \notag
\end{eqnarray}
\end{proof}

A similar result holds for $\mathcal{V}_{2,\beta ,\varepsilon ^{\prime
}}^{\alpha ,\kappa ,\ast }$ if $\kappa +n-\alpha >\theta _{\omega }^{%
\limfunc{doub}}$ and $\varepsilon >0$ is sufficiently small, where $%
\varepsilon ^{\prime }=\frac{\varepsilon }{\theta _{\omega }^{\func{rev}}}$.

\subsection{The energy lemma}

For $0\leq \alpha <n$ and $m\in \mathbb{R}_{+}$, we recall from (\ref{def
kappa Poisson}) the $m^{th}$-order fractional Poisson integral%
\begin{equation*}
\mathrm{P}_{m}^{\alpha }\left( J,\mu \right) \equiv \int_{\mathbb{R}^{n}}%
\frac{\ell \left( J\right) ^{m}}{\left( \ell \left( J\right) +\left\vert
y-c_{J}\right\vert \right) ^{m+n-\alpha }}d\mu \left( y\right) ,
\end{equation*}%
where $\mathrm{P}_{1}^{\alpha }\left( J,\mu \right) =\mathrm{P}^{\alpha
}\left( J,\mu \right) $ is the standard Poisson integral. The case $s=0$ of
the following extension of the `energy lemma' is due to Rahm, Sawyer and
Wick \cite{RaSaWi}, and is proved in detail in \cite[Lemmas 28 and 29 on
pages 27-30]{Saw6}.

\begin{definition}
Given a subset $\mathcal{J}\subset \mathcal{D}$, define the projection $%
\mathsf{P}_{\mathcal{J}}^{\omega }\equiv \sum_{J^{\prime }\in \mathcal{J}%
}\bigtriangleup _{J^{\prime };\kappa }^{\omega }$, and given a cube $J\in 
\mathcal{D}$, define the projection $\mathsf{P}_{J}^{\omega }\equiv
\sum_{J^{\prime }\in \mathcal{D}:\ J^{\prime }\subset J}\bigtriangleup
_{J^{\prime };\kappa }^{\omega }$.
\end{definition}

\begin{lemma}[\textbf{Energy Lemma}]
Fix $\kappa \geq 1$. Let $J\ $be a cube in $\mathcal{D}$, and let $\Psi
_{J}\in W_{\func{dyad}}^{-s}\left( \omega \right) $ be supported in $J$ with
vanishing $\omega $-means up to order less than $\kappa $. Let $\nu $ be a
positive measure supported in $\mathbb{R}^{n}\setminus \gamma J$ with $%
\gamma >1$, and let $T^{\alpha }$ be a smooth $\alpha $-fractional singular
integral operator with $0\leq \alpha <n$. Then for $\left\vert s\right\vert $
sufficiently small, we have the `pivotal' bound%
\begin{equation}
\left\vert \left\langle T^{\alpha }\left( \varphi \nu \right) ,\Psi
_{J}\right\rangle _{L^{2}\left( \omega \right) }\right\vert \lesssim
C_{\gamma }\mathrm{P}_{\kappa }^{\alpha }\left( J,\nu \right) \ell \left(
J\right) ^{-s}\sqrt{\left\vert J\right\vert _{\omega }}\left\Vert \Psi
_{J}\right\Vert _{W_{\func{dyad}}^{-s}\left( \omega \right) }\ ,
\label{piv bound}
\end{equation}%
for any function $\varphi $ with $\left\vert \varphi \right\vert \leq 1$.
\end{lemma}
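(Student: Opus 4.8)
The plan is to adapt the proof of the $s=0$ case (the pivotal half of the energy lemma, due to \cite{RaSaWi} and carried out in detail in \cite{Saw6}), keeping track of the extra powers of $\ell \left( J\right) $ that the $W^{s}$-scaling introduces and checking that the resulting geometric series still converges once $\left\vert s\right\vert $ is small. Because $\Psi _{J}$ is supported in $J$ with vanishing $\omega $-means up to order less than $\kappa $, the weighted Alpert expansion of $\Psi _{J}$ reduces, via completeness of the projections $\left\{ \bigtriangleup _{Q;\kappa }^{\omega }\right\} $ together with the moment conditions (\ref{mom con}), to $\Psi _{J}=\mathsf{P}_{J}^{\omega }\Psi _{J}=\sum_{J^{\prime }\in \mathcal{D}:\ J^{\prime }\subset J}\bigtriangleup _{J^{\prime };\kappa }^{\omega }\Psi _{J}$, so it suffices to estimate $\left\langle T^{\alpha }\left( \varphi \nu \right) ,\bigtriangleup _{J^{\prime };\kappa }^{\omega }\Psi _{J}\right\rangle _{L^{2}\left( \omega \right) }$ for each dyadic $J^{\prime }\subset J$ and then sum over $J^{\prime }$.

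For a fixed such $J^{\prime }$, the moment vanishing (\ref{mom con}) of $\bigtriangleup _{J^{\prime };\kappa }^{\omega }\Psi _{J}$ lets me subtract from $x\mapsto \int K^{\alpha }\left( x,y\right) \varphi \left( y\right) d\nu \left( y\right) $ its Taylor polynomial of degree less than $\kappa $ centred at $c_{J^{\prime }}$ without changing the pairing. Since $J^{\prime }$ is a convex set of diameter $\approx \ell \left( J^{\prime }\right) $, the Taylor remainder is bounded for $x\in J^{\prime }$ by a constant times $\ell \left( J^{\prime }\right) ^{\kappa }\sup_{\xi \in J^{\prime }}\left\vert \nabla _{x}^{\kappa }K^{\alpha }\left( \xi ,y\right) \right\vert $, hence by (\ref{sizeandsmoothness'}) by $\ell \left( J^{\prime }\right) ^{\kappa }\limfunc{dist}\left( J^{\prime },y\right) ^{\alpha -\kappa -n}$. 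Because $\nu $ is supported in $\mathbb{R}^{n}\setminus \gamma J$ and $J^{\prime }\subset J$, one has $\limfunc{dist}\left( J^{\prime },y\right) \geq \limfunc{dist}\left( J,\mathbb{R}^{n}\setminus \gamma J\right) \approx _{\gamma }\ell \left( J\right) $ and, for every $y\in \func{supp}\nu $, $\left\vert c_{J^{\prime }}-y\right\vert \approx _{\gamma }\left\vert c_{J}-y\right\vert \approx \ell \left( J\right) +\left\vert c_{J}-y\right\vert $; integrating the remainder against $\left\vert \varphi \right\vert d\nu \leq d\nu $ therefore produces a ``kernel factor'' at most $C_{\gamma }\left( \frac{\ell \left( J^{\prime }\right) }{\ell \left( J\right) }\right) ^{\kappa }\mathrm{P}_{\kappa }^{\alpha }\left( J,\nu \right) $, with $\mathrm{P}_{\kappa }^{\alpha }$ as in (\ref{def kappa Poisson}). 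Combining this with Cauchy--Schwarz on $J^{\prime }$ and $\int_{J^{\prime }}\left\vert \bigtriangleup _{J^{\prime };\kappa }^{\omega }\Psi _{J}\right\vert d\omega \leq \sqrt{\left\vert J^{\prime }\right\vert _{\omega }}\left\Vert \bigtriangleup _{J^{\prime };\kappa }^{\omega }\Psi _{J}\right\Vert _{L^{2}\left( \omega \right) }$ gives
\begin{equation*}
\left\vert \left\langle T^{\alpha }\left( \varphi \nu \right) ,\bigtriangleup _{J^{\prime };\kappa }^{\omega }\Psi _{J}\right\rangle _{L^{2}\left( \omega \right) }\right\vert \lesssim C_{\gamma }\left( \frac{\ell \left( J^{\prime }\right) }{\ell \left( J\right) }\right) ^{\kappa }\mathrm{P}_{\kappa }^{\alpha }\left( J,\nu \right) \sqrt{\left\vert J^{\prime }\right\vert _{\omega }}\left\Vert \bigtriangleup _{J^{\prime };\kappa }^{\omega }\Psi _{J}\right\Vert _{L^{2}\left( \omega \right) }.
\end{equation*}

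Summing this over $J^{\prime }\subset J$, I would insert the factors $\ell \left( J^{\prime }\right) ^{-s}$ and $\ell \left( J^{\prime }\right) ^{s}$ and apply Cauchy--Schwarz a second time to bound $\sum_{J^{\prime }\subset J}\left( \frac{\ell \left( J^{\prime }\right) }{\ell \left( J\right) }\right) ^{\kappa }\sqrt{\left\vert J^{\prime }\right\vert _{\omega }}\left\Vert \bigtriangleup _{J^{\prime };\kappa }^{\omega }\Psi _{J}\right\Vert _{L^{2}\left( \omega \right) }$ by
\begin{equation*}
\left( \sum_{J^{\prime }\subset J}\left( \frac{\ell \left( J^{\prime }\right) }{\ell \left( J\right) }\right) ^{2\kappa }\ell \left( J^{\prime }\right) ^{-2s}\left\vert J^{\prime }\right\vert _{\omega }\right) ^{\frac{1}{2}}\left( \sum_{J^{\prime }\subset J}\ell \left( J^{\prime }\right) ^{2s}\left\Vert \bigtriangleup _{J^{\prime };\kappa }^{\omega }\Psi _{J}\right\Vert _{L^{2}\left( \omega \right) }^{2}\right) ^{\frac{1}{2}},
\end{equation*}
where the second factor is exactly $\left\Vert \Psi _{J}\right\Vert _{W_{\func{dyad}}^{-s}\left( \omega \right) }$. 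Grouping the $J^{\prime }$ in the first factor by their generation $m$ below $J$ and using the partition identity $\sum_{J^{\prime }\subset J:\ \ell \left( J^{\prime }\right) =2^{-m}\ell \left( J\right) }\left\vert J^{\prime }\right\vert _{\omega }=\left\vert J\right\vert _{\omega }$ gives $\ell \left( J\right) ^{-2s}\left\vert J\right\vert _{\omega }\sum_{m\geq 0}2^{-2m\left( \kappa -s\right) }$, which converges provided $s<\kappa $ --- in particular for $\left\vert s\right\vert $ sufficiently small. Assembling the three displays yields (\ref{piv bound}).

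The step I expect to require the most care is the combined bookkeeping of the two Cauchy--Schwarz estimates and the geometric summation: the final constant must not depend on how deep the subcubes $J^{\prime }$ descend, and this uniformity is exactly what the condition $s<\kappa $ (hence $\left\vert s\right\vert $ small) secures --- were $s$ comparable to $\kappa $ the last series would diverge, which is also why $\kappa $ must be taken large relative to the doubling data elsewhere. A secondary point is the elementary but necessary comparison $\left\vert c_{J^{\prime }}-y\right\vert \approx _{\gamma }\left\vert c_{J}-y\right\vert $ for $y\notin \gamma J$ and $J^{\prime }\subset J$, which is where the constant picks up its dependence on $\gamma $; the remainder of the argument is the size/smoothness estimate (\ref{sizeandsmoothness'}) applied to the $\kappa ^{th}$-order Taylor remainder, exactly as in the $s=0$ case.
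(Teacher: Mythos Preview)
Your proof is correct. The paper does not give a self-contained proof of this statement; instead it first proves the Monotonicity Lemma~\ref{mono} (a sharper two-term estimate $\Phi+\Psi$, needed elsewhere in the argument) and then remarks that the Energy Lemma follows from it ``in a standard way'' (Lemma~\ref{ener}), producing a version that carries the extra factor $\frac{\left\Vert \left\vert \mathbf{h}_{J;\kappa }^{\omega }\right\vert \right\Vert _{W_{\func{dyad}}^{-s}\left( \omega \right) }^{2}}{\ell \left( J\right) ^{2s}}$, which is in turn controlled by Lemma~\ref{mod Alpert}. Your route is the same in spirit---Alpert expansion of $\Psi_{J}$, $\kappa^{th}$-order Taylor remainder of the kernel, Poisson bound, Cauchy--Schwarz summation over $J^{\prime}\subset J$---but by stopping the Taylor expansion at order $\kappa-1$ and bounding the full remainder pointwise, you obtain the pivotal estimate directly, without the $\Phi/\Psi$ split and hence without invoking Lemma~\ref{mod Alpert}. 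This is a mild simplification for the pivotal bound itself; the paper's detour through the Monotonicity Lemma is there because that sharper statement is used independently.
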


We also recall from \cite[Lemma 33]{Saw6} the following Poisson estimate,
that is a straightforward extension of the case $m=1$ due to Nazarov, Treil
and Volberg in \cite{NTV4}.

\begin{lemma}
\label{Poisson inequality}Fix $m\geq 1$. Suppose that $J\subset I\subset K$
and that $\limfunc{dist}\left( J,\partial I\right) >2\sqrt{n}\ell \left(
J\right) ^{\varepsilon }\ell \left( I\right) ^{1-\varepsilon }$. Then%
\begin{equation}
\mathrm{P}_{m}^{\alpha }(J,\sigma \mathbf{1}_{K\setminus I})\lesssim \left( 
\frac{\ell \left( J\right) }{\ell \left( I\right) }\right) ^{m-\varepsilon
\left( n+m-\alpha \right) }\mathrm{P}_{m}^{\alpha }(I,\sigma \mathbf{1}%
_{K\setminus I}).  \label{e.Jsimeq}
\end{equation}
\end{lemma}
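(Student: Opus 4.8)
The plan is to prove Lemma~\ref{Poisson inequality} by a purely \emph{pointwise} comparison of the two fractional Poisson kernels on the region $K\setminus I$ carrying the measure, followed by a trivial integration. Fix $y\in K\setminus I$. Since $c_{J}\in J$ lies in the interior of $I$ while $y\notin I$, the segment from $c_{J}$ to $y$ meets $\partial I$, whence
\[
\left\vert y-c_{J}\right\vert \geq \limfunc{dist}\left( c_{J},\partial I\right) \geq \limfunc{dist}\left( J,\partial I\right) >2\sqrt{n}\,\ell \left( J\right) ^{\varepsilon }\ell \left( I\right) ^{1-\varepsilon }\geq 2\sqrt{n}\,\ell \left( J\right) ,
\]
the last step using $0<\varepsilon <1$ and $\ell \left( J\right) \leq \ell \left( I\right) $. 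In particular $\ell \left( J\right) +\left\vert y-c_{J}\right\vert \approx \left\vert y-c_{J}\right\vert $ with absolute constants. Also $\left\vert c_{J}-c_{I}\right\vert \leq \tfrac{\sqrt{n}}{2}\ell \left( I\right) $ since $J\subset I$, so $\left\vert y-c_{I}\right\vert \leq \left\vert y-c_{J}\right\vert +\tfrac{\sqrt{n}}{2}\ell \left( I\right) $ and therefore $\ell \left( I\right) +\left\vert y-c_{I}\right\vert \lesssim \ell \left( I\right) +\left\vert y-c_{J}\right\vert $.

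The next step is to absorb the stray $\ell \left( I\right) $: writing $\ell \left( I\right) =\left( \ell \left( I\right) /\ell \left( J\right) \right) ^{\varepsilon }\ell \left( J\right) ^{\varepsilon }\ell \left( I\right) ^{1-\varepsilon }$ and invoking $\ell \left( J\right) ^{\varepsilon }\ell \left( I\right) ^{1-\varepsilon }<\tfrac{1}{2\sqrt{n}}\left\vert y-c_{J}\right\vert $ from the separation hypothesis gives $\ell \left( I\right) \lesssim \left( \ell \left( I\right) /\ell \left( J\right) \right) ^{\varepsilon }\left\vert y-c_{J}\right\vert $; combined with the trivial $\left\vert y-c_{J}\right\vert \leq \left( \ell \left( I\right) /\ell \left( J\right) \right) ^{\varepsilon }\left\vert y-c_{J}\right\vert $ (as $\ell \left( I\right) \geq \ell \left( J\right) $, $\varepsilon >0$) we obtain, uniformly in $y\in K\setminus I$,
\[
\frac{\ell \left( I\right) +\left\vert y-c_{I}\right\vert }{\ell \left( J\right) +\left\vert y-c_{J}\right\vert }\lesssim \left( \frac{\ell \left( I\right) }{\ell \left( J\right) }\right) ^{\varepsilon }.
\]
Consequently the ratio of the two integrands obeys
\[
\frac{\ell \left( J\right) ^{m}\left( \ell \left( J\right) +\left\vert y-c_{J}\right\vert \right) ^{-\left( m+n-\alpha \right) }}{\ell \left( I\right) ^{m}\left( \ell \left( I\right) +\left\vert y-c_{I}\right\vert \right) ^{-\left( m+n-\alpha \right) }}=\left( \frac{\ell \left( J\right) }{\ell \left( I\right) }\right) ^{m}\left( \frac{\ell \left( I\right) +\left\vert y-c_{I}\right\vert }{\ell \left( J\right) +\left\vert y-c_{J}\right\vert }\right) ^{m+n-\alpha }\lesssim \left( \frac{\ell \left( J\right) }{\ell \left( I\right) }\right) ^{m-\varepsilon \left( n+m-\alpha \right) },
\]
and integrating this pointwise inequality $d\sigma \mathbf{1}_{K\setminus I}\left( y\right) $ over $\mathbb{R}^{n}$ yields precisely \eqref{e.Jsimeq}.

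There is no genuine obstacle here; the entire content lies in the elementary geometry encoded in the separation hypothesis. The one point demanding care is that one may \emph{not} simply discard the $\ell \left( I\right) $ in $\ell \left( I\right) +\left\vert y-c_{I}\right\vert $, because for $y$ near $\partial I$ with $J$ deep inside $I$ the quantity $\left\vert y-c_{J}\right\vert $ can be far smaller than $\ell \left( I\right) $; it is exactly the bound $\ell \left( I\right) \lesssim \left( \ell \left( I\right) /\ell \left( J\right) \right) ^{\varepsilon }\left\vert y-c_{J}\right\vert $, which uses the hypothesis $\limfunc{dist}\left( J,\partial I\right) >2\sqrt{n}\,\ell \left( J\right) ^{\varepsilon }\ell \left( I\right) ^{1-\varepsilon }$ in full strength, that compensates. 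The computation is uniform in $\alpha \in \left[ 0,n\right) $ and in $m\geq 1$, and the case $m=1$ recovers the Nazarov--Treil--Volberg estimate.
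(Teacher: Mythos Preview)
Your proof is correct. The paper does not supply its own argument for this lemma; it simply recalls the statement from \cite[Lemma~33]{Saw6}, noting that it is a straightforward extension of the $m=1$ case due to Nazarov, Treil and Volberg. Your pointwise comparison of the two Poisson kernels on $K\setminus I$, with the key observation $\ell(I)\lesssim\left(\ell(I)/\ell(J)\right)^{\varepsilon}\left\vert y-c_{J}\right\vert$ coming from the separation hypothesis, is exactly the standard route and matches what one finds in those references.
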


We now give Sobolev modifications to several known arguments. The next lemma
was proved in \cite{RaSaWi} for $s=0$.

\begin{lemma}
\label{mono}Let $0\leq \alpha <n$, $\kappa \in \mathbb{N}$ and $0<\delta <1$%
. Suppose that$\ I$ and $J$ are cubes in $\mathbb{R}^{n}$ such that $%
J\subset 2J\subset I$, and that $\mu $ is a signed measure on $\mathbb{R}%
^{n} $ supported outside $I$. Finally suppose that $T^{\alpha }$ is a smooth
fractional singular integral on $\mathbb{R}^{n}$ with kernel $K^{\alpha
}\left( x,y\right) =K_{y}^{\alpha }\left( x\right) $, and that $\omega $ is
a locally finite positive Borel measure on $\mathbb{R}^{n}$. Then%
\begin{equation}
\left\Vert \bigtriangleup _{J;\kappa }^{\omega }T^{\alpha }\mu \right\Vert
_{W_{\func{dyad}}^{s}\left( \omega \right) }^{2}\lesssim \Phi _{\kappa
,s}^{\alpha }\left( J,\mu \right) ^{2}+\Psi _{\kappa ,s}^{\alpha }\left(
J,\left\vert \mu \right\vert \right) ^{2},  \label{estimate}
\end{equation}%
where for a measure $\nu $,%
\begin{eqnarray*}
&&\Phi _{\kappa ,s}^{\alpha }\left( J,\nu \right) ^{2}\equiv
\sum_{\left\vert \beta \right\vert =\kappa }\left\vert \int_{\mathbb{R}%
^{n}}\left( K_{y}^{\alpha }\right) ^{\left( \kappa \right) }\left(
m_{J}^{\kappa }\right) d\nu \left( y\right) \right\vert ^{2}\left\Vert
\bigtriangleup _{J;\kappa }^{\omega }x^{\beta }\right\Vert _{W_{\func{dyad}%
}^{s}\left( \omega \right) }^{2}\ , \\
&&\Psi _{\kappa ,s}^{\alpha }\left( J,\left\vert \nu \right\vert \right)
^{2}\equiv \left( \frac{\mathrm{P}_{\kappa +\delta }^{\alpha }\left(
J,\left\vert \nu \right\vert \right) }{\left\vert J\right\vert ^{\frac{%
\kappa }{n}}}\right) ^{2}\left\Vert \left\vert x-m_{J}^{\kappa }\right\vert
^{\kappa }\right\Vert _{W_{\func{dyad}}^{s}\left( \mathbf{1}_{J}\omega
\right) }^{2}\frac{\left\Vert \left\vert \mathbf{h}_{J;\kappa }^{\omega
}\right\vert \right\Vert _{W_{\func{dyad}}^{-s}\left( \omega \right) }^{2}}{%
\ell \left( J\right) ^{2s}}\ , \\
&&\text{where }m_{J}^{\kappa }\in J\text{ satisfies }\left\Vert \left\vert
x-m_{J}^{\kappa }\right\vert ^{\kappa }\right\Vert _{W_{\func{dyad}%
}^{s}\left( \mathbf{1}_{J}\omega \right) }^{2}=\inf_{m\in J}\left\Vert
\left\vert x-m\right\vert ^{\kappa }\right\Vert _{W_{\func{dyad}}^{s}\left( 
\mathbf{1}_{J}\omega \right) }^{2}.
\end{eqnarray*}
\end{lemma}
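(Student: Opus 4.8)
The plan is to localize everything to $J$, exploit the smoothness of $K^{\alpha}$ off $I$ to Taylor-expand the kernel, kill the low-order terms with the moment-vanishing property of Alpert projections, and then treat the exact degree-$\kappa$ Taylor term (which produces $\Phi$) and the Taylor remainder (which produces $\Psi$) separately; the remainder estimate is where the Sobolev structure and the doubling hypothesis on $\omega$ genuinely enter. First, since $\bigtriangleup_{J;\kappa}^{\omega}g$ lies in the single Alpert space $L^{2}_{J;\kappa}(\omega)$, the computation in Lemma \ref{complete set} gives $\|\bigtriangleup_{J;\kappa}^{\omega}g\|_{W_{\func{dyad}}^{s}(\omega)}^{2}=\ell(J)^{-2s}\|\bigtriangleup_{J;\kappa}^{\omega}g\|_{L^{2}(\omega)}^{2}$, and $\bigtriangleup_{J;\kappa}^{\omega}g=\bigtriangleup_{J;\kappa}^{\omega}(\mathbf{1}_{J}g)$, so it suffices (for each admissible, sufficiently smooth truncation, uniformly, cf.\ Independence of Truncations) to estimate $\ell(J)^{-2s}\|\bigtriangleup_{J;\kappa}^{\omega}(\mathbf{1}_{J}T^{\alpha}\mu)\|_{L^{2}(\omega)}^{2}$. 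Because $J\subset 2J\subset I$ and $\mu$ is supported outside $I$, one has $|x-y|\approx\ell(J)+|y-c_{J}|$ for $x\in J$ and $y$ in the support of $\mu$, and $K^{\alpha}_{y}$ is smooth near $J$; Taylor-expanding $K^{\alpha}_{y}(x)$ in $x$ about $m_{J}^{\kappa}$ to order $\kappa-1$ with the Taylor remainder in integral form and integrating $d\mu(y)$ writes $\mathbf{1}_{J}T^{\alpha}\mu$ on $J$ as a polynomial of degree $<\kappa$, plus the exact degree-$\kappa$ term $\sum_{|\beta|=\kappa}\tfrac{1}{\beta!}\bigl(\int\partial^{\beta}K^{\alpha}_{y}(m_{J}^{\kappa})\,d\mu(y)\bigr)(x-m_{J}^{\kappa})^{\beta}$, plus a remainder $R$. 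Applying $\bigtriangleup_{J;\kappa}^{\omega}$ annihilates the degree-$<\kappa$ polynomial by (\ref{mom con}), replaces $(x-m_{J}^{\kappa})^{\beta}$ by $\bigtriangleup_{J;\kappa}^{\omega}x^{\beta}$, and leaves $\bigtriangleup_{J;\kappa}^{\omega}(\mathbf{1}_{J}R)$.

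For the leading term, the triangle inequality in $W_{\func{dyad}}^{s}(\omega)$ and Cauchy--Schwarz over the finitely many $\beta$ with $|\beta|=\kappa$ (their number depending only on $n,\kappa$) give
\[
\Bigl\|\sum_{|\beta|=\kappa}\tfrac{1}{\beta!}\bigl(\textstyle\int\partial^{\beta}K^{\alpha}_{y}(m_{J}^{\kappa})\,d\mu(y)\bigr)\bigtriangleup_{J;\kappa}^{\omega}x^{\beta}\Bigr\|_{W_{\func{dyad}}^{s}(\omega)}^{2}\lesssim\Phi_{\kappa,s}^{\alpha}(J,\mu)^{2}.
\]
For the remainder, note that on $J$ the $\kappa$-th derivatives of $K^{\alpha}_{y}$ are bounded by $C(\ell(J)+|y-c_{J}|)^{\alpha-\kappa-n}$ and are Lipschitz with constant $C(\ell(J)+|y-c_{J}|)^{\alpha-\kappa-1-n}$, by (\ref{sizeandsmoothness'}) together with $|x-y|\approx\ell(J)+|y-c_{J}|$; interpolating these two bounds with exponent $\delta\in(0,1)$ yields $|\partial^{\beta}K^{\alpha}_{y}(\xi)-\partial^{\beta}K^{\alpha}_{y}(m_{J}^{\kappa})|\lesssim\ell(J)^{\delta}(\ell(J)+|y-c_{J}|)^{\alpha-\kappa-n-\delta}$ for $\xi\in J$ and $|\beta|=\kappa$. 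Hence for $x\in J$,
\[
|R(x)|\lesssim|x-m_{J}^{\kappa}|^{\kappa}\,\ell(J)^{\delta}\!\int(\ell(J)+|y-c_{J}|)^{\alpha-\kappa-n-\delta}\,d|\mu|(y)=|x-m_{J}^{\kappa}|^{\kappa}\,\frac{\mathrm{P}_{\kappa+\delta}^{\alpha}(J,|\mu|)}{|J|^{\kappa/n}}.
\]

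Now expand $\bigtriangleup_{J;\kappa}^{\omega}(\mathbf{1}_{J}R)$ in the Alpert basis $\{h_{J;\kappa}^{\omega,a}\}_{a\in\Gamma}$, bound $|\langle\mathbf{1}_{J}R,h_{J;\kappa}^{\omega,a}\rangle_{\omega}|\le\langle|R|,|h_{J;\kappa}^{\omega,a}|\rangle_{\omega}$, and then estimate the scalar $\langle\mathbf{1}_{J}|x-m_{J}^{\kappa}|^{\kappa},|h_{J;\kappa}^{\omega,a}|\rangle_{\omega}$ by pairing through the $L^{2}(\omega)$-duality of $W_{\func{dyad}}^{s}(\omega)$ with $W_{\func{dyad}}^{-s}(\omega)$ from Lemma \ref{identify dual}; summing the squares over $a\in\Gamma$ gives
\[
\|\bigtriangleup_{J;\kappa}^{\omega}(\mathbf{1}_{J}R)\|_{L^{2}(\omega)}^{2}\lesssim\Bigl(\frac{\mathrm{P}_{\kappa+\delta}^{\alpha}(J,|\mu|)}{|J|^{\kappa/n}}\Bigr)^{2}\bigl\|\mathbf{1}_{J}|x-m_{J}^{\kappa}|^{\kappa}\bigr\|_{W_{\func{dyad}}^{s}(\omega)}^{2}\sum_{a\in\Gamma}\bigl\||h_{J;\kappa}^{\omega,a}|\bigr\|_{W_{\func{dyad}}^{-s}(\omega)}^{2}.
\]
Since $\sum_{a}\||h_{J;\kappa}^{\omega,a}|\|_{W_{\func{dyad}}^{-s}(\omega)}^{2}=\||\mathbf{h}_{J;\kappa}^{\omega}|\|_{W_{\func{dyad}}^{-s}(\omega)}^{2}$, multiplying by $\ell(J)^{-2s}$ and identifying $\|\mathbf{1}_{J}|x-m_{J}^{\kappa}|^{\kappa}\|_{W_{\func{dyad}}^{s}(\omega)}$ with $\|\,|x-m_{J}^{\kappa}|^{\kappa}\|_{W^{s}(\mathbf{1}_{J}\omega)}$ (comparable for doubling $\omega$ and small $|s|$, by the grid-independence results above applied inside $J$) recovers $\Psi_{\kappa,s}^{\alpha}(J,|\mu|)^{2}$; combining with the leading term gives (\ref{estimate}).

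The main obstacle, and the only real difference from the case $s=0$ (where one just uses $\|\bigtriangleup_{J;\kappa}^{\omega}(\mathbf{1}_{J}R)\|_{L^{2}(\omega)}\le\|\mathbf{1}_{J}R\|_{L^{2}(\omega)}$), is the last step: one must peel off the factor $\||\mathbf{h}_{J;\kappa}^{\omega}|\|_{W_{\func{dyad}}^{-s}(\omega)}^{2}/\ell(J)^{2s}$ and know it is bounded, which is precisely Lemma \ref{mod Alpert}; that lemma in turn rests on the power decay of $\omega$ near zero sets of polynomials (Corollary \ref{zero set doubling}), and it is here that the doubling of $\omega$ and the smallness of $|s|$ are used and the merely logarithmic decay would be insufficient. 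A secondary nuisance is the bookkeeping of the powers of $\ell(J)$ and $|J|^{1/n}$ so that the stated forms of $\Phi$ and $\Psi$ come out verbatim, together with the (routine, for doubling measures) comparison of $\|\mathbf{1}_{J}\,\cdot\,\|_{W_{\func{dyad}}^{s}(\omega)}$ with $\|\cdot\|_{W^{s}(\mathbf{1}_{J}\omega)}$.
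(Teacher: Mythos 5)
Your proof follows the same route as the paper: Taylor expansion of $K_y^\alpha$ about $m_J^\kappa$, moment-vanishing of the Alpert wavelets to kill the low-order Taylor polynomial, isolation of the exact degree-$\kappa$ term yielding $\Phi$, and a pointwise bound on the remainder followed by the $W^s$--$W^{-s}$ duality pairing, with Lemma \ref{mod Alpert} supplying the bound on $\left\Vert \left\vert \mathbf{h}_{J;\kappa}^{\omega}\right\vert \right\Vert _{W_{\func{dyad}}^{-s}\left( \omega \right) }$. The only cosmetic differences (integral versus Lagrange form of the Taylor remainder; interpolation versus a direct mean-value argument for the H\"older modulus of $\partial^\beta K_y^\alpha$) do not change the substance, and your identification of where the doubling of $\omega$ and the restriction to small $|s|$ genuinely enter is exactly right.
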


\begin{remark}
Note that when $s=0$, we have $\frac{\left\Vert \left\vert \mathbf{h}%
_{J;\kappa }^{\omega }\right\vert \right\Vert _{W_{\func{dyad}}^{-s}\left(
\omega \right) }^{2}}{\ell \left( J\right) ^{2s}}=\frac{\left\Vert \mathbf{h}%
_{J;\kappa }^{\omega }\right\Vert _{L^{2}\left( \omega \right) }^{2}}{1}=1$,
and so the above inequality becomes the familiar Monotonicity Lemma. For $s$
close to zero, Lemma \ref{mod Alpert} shows that $\frac{\left\Vert
\left\vert \mathbf{h}_{J;\kappa }^{\omega }\right\vert \right\Vert _{W_{%
\func{dyad}}^{-s}\left( \omega \right) }^{2}}{\ell \left( J\right) ^{2s}}%
\lesssim 1$, which gives the same familiar form.
\end{remark}

\begin{proof}[Proof of Lemma \protect\ref{mono}]
The proof is an easy adaptation of the one-dimensional proof in \cite{RaSaWi}%
, which was in turn adapted from the proofs in \cite{LaWi} and \cite{SaShUr7}%
, but using a $\kappa ^{th}$ order Taylor expansion instead of a first order
expansion on the kernel $\left( K_{y}^{\alpha }\right) \left( x\right)
=K^{\alpha }\left( x,y\right) $. Due to the importance of this lemma, as
explained above, we repeat the short argument.

Let $\left\{ h_{J;\kappa }^{\omega ,a}\right\} _{a\in \Gamma _{J,n,\kappa }}$
be an orthonormal basis of $L_{J;\kappa }^{2}\left( \omega \right) $
consisting of Alpert functions as above. Now we use the Calder\'{o}n-Zygmund
smoothness estimate (\ref{sizeandsmoothness'}), together with Taylor's
formula 
\begin{eqnarray*}
K_{y}^{\alpha }\left( x\right) &=&\limfunc{Tay}\left( K_{y}^{\alpha }\right)
\left( x,c\right) +\frac{1}{\kappa !}\sum_{\left\vert \beta \right\vert
=\kappa }\left( K_{y}^{\alpha }\right) ^{\left( \beta \right) }\left( \theta
\left( x,c\right) \right) \left( x-c\right) ^{\beta }; \\
\limfunc{Tay}\left( K_{y}^{\alpha }\right) \left( x,c\right) &\equiv
&K_{y}^{\alpha }\left( c\right) +\left[ \left( x-c\right) \cdot \nabla %
\right] K_{y}^{\alpha }\left( c\right) +...+\frac{1}{\left( \kappa -1\right)
!}\left[ \left( x-c\right) \cdot \nabla \right] ^{\kappa -1}K_{y}^{\alpha
}\left( c\right) ,
\end{eqnarray*}%
and the vanishing means of the Alpert functions $h_{J;\kappa }^{\omega ,a}$
for $a\in \Gamma _{J,n,\kappa }$, to obtain 
\begin{eqnarray*}
&&\left\langle T^{\alpha }\mu ,h_{J;\kappa }^{\omega ,a}\right\rangle
_{L^{2}\left( \omega \right) }=\int_{\mathbb{R}^{n}}\left\{ \int_{\mathbb{R}%
^{n}}K^{\alpha }\left( x,y\right) h_{J;\kappa }^{\omega ,a}\left( x\right)
d\omega \left( x\right) \right\} d\mu \left( y\right) =\int_{\mathbb{R}%
^{n}}\left\langle K_{y}^{\alpha },h_{J;\kappa }^{\omega ,a}\right\rangle
_{L^{2}\left( \omega \right) }d\mu \left( y\right) \\
&=&\int_{\mathbb{R}^{n}}\left\langle K_{y}^{\alpha }\left( x\right) -%
\limfunc{Tay}\left( K_{y}^{\alpha }\right) \left( x,m_{J}^{\kappa }\right)
,h_{J;\kappa }^{\omega ,a}\left( x\right) \right\rangle _{L^{2}\left( \omega
\right) }d\mu \left( y\right) \\
&=&\int_{\mathbb{R}^{n}}\left\langle \frac{1}{\kappa !}\sum_{\left\vert
\beta \right\vert =\kappa }\left( K_{y}^{\alpha }\right) ^{\left( \beta
\right) }\left( \theta \left( x,m_{J}^{\kappa }\right) \right) \left(
x-m_{J}^{\kappa }\right) ^{\beta },h_{J;\kappa }^{\omega ,a}\left( x\right)
\right\rangle _{L^{2}\left( \omega \right) }d\mu \left( y\right) \ \ \ \ \ 
\text{(some }\theta \left( x,m_{J}^{\kappa }\right) \in J\text{) } \\
&=&\sum_{\left\vert \beta \right\vert =\kappa }\left\langle \left[ \int_{%
\mathbb{R}^{n}}\frac{1}{\kappa !}\left( K_{y}^{\alpha }\right) ^{\left(
\beta \right) }\left( m_{J}^{\kappa }\right) d\mu \left( y\right) \right]
\left( x-m_{J}^{\kappa }\right) ^{\beta },h_{J;\kappa }^{\omega
,a}\right\rangle _{L^{2}\left( \omega \right) } \\
&&+\sum_{\left\vert \beta \right\vert =\kappa }\left\langle \left[ \int_{%
\mathbb{R}^{n}}\frac{1}{\kappa !}\left[ \left( K_{y}^{\alpha }\right)
^{\left( \beta \right) }\left( \theta \left( x,m_{J}^{\kappa }\right)
\right) -\sum_{\left\vert \beta \right\vert =\kappa }\left( K_{y}^{\alpha
}\right) ^{\left( \beta \right) }\left( m_{J}^{\kappa }\right) \right] d\mu
\left( y\right) \right] \left( x-m_{J}^{\kappa }\right) ^{\beta
},h_{J;\kappa }^{\omega ,a}\right\rangle _{L^{2}\left( \omega \right) }\ .
\end{eqnarray*}%
Then using that $\int_{\mathbb{R}^{n}}\left( K_{y}^{\alpha }\right) ^{\left(
\beta \right) }\left( m_{J}^{\kappa }\right) d\mu \left( y\right) $ is
independent of $x\in J$, and that $\left\langle \left( x-m_{J}^{\kappa
}\right) ^{\beta },\mathbf{h}_{J;\kappa }^{\omega }\right\rangle
_{L^{2}\left( \omega \right) }=\left\langle x^{\beta },\mathbf{h}_{J;\kappa
}^{\omega }\right\rangle _{L^{2}\left( \omega \right) }$ by moment vanishing
of the Alpert wavelets, we can continue with 
\begin{eqnarray*}
&&\left\langle T^{\alpha }\mu ,h_{J;\kappa }^{\omega ,a}\right\rangle
_{L^{2}\left( \omega \right) }=\sum_{\left\vert \beta \right\vert =\kappa } 
\left[ \int_{\mathbb{R}^{n}}\frac{1}{\kappa !}\left( K_{y}^{\alpha }\right)
^{\left( \beta \right) }\left( m_{J}^{\kappa }\right) d\mu \left( y\right) %
\right] \cdot \left\langle x^{\beta },h_{J;\kappa }^{\omega ,a}\right\rangle
_{L^{2}\left( \omega \right) } \\
&&\ \ \ \ \ +\frac{1}{\kappa !}\sum_{\left\vert \beta \right\vert =\kappa
}\left\langle \left[ \int_{\mathbb{R}^{n}}\left[ \left( K_{y}^{\alpha
}\right) ^{\left( \beta \right) }\left( \theta \left( x,m_{J}^{\kappa
}\right) \right) -\left( K_{y}^{\alpha }\right) ^{\left( \beta \right)
}\left( m_{J}^{\kappa }\right) \right] d\mu \left( y\right) \right] \left(
x-m_{J}^{\kappa }\right) ^{\beta },h_{J;\kappa }^{\omega ,a}\right\rangle
_{L^{2}\left( \omega \right) }\ .
\end{eqnarray*}%
Hence%
\begin{eqnarray*}
&&\left\vert \left\langle T^{\alpha }\mu ,h_{J;\kappa }^{\omega
,a}\right\rangle _{L^{2}\left( \omega \right) }-\sum_{\left\vert \beta
\right\vert =\kappa }\left[ \int_{\mathbb{R}^{n}}\frac{1}{\kappa !}\left(
K_{y}^{\alpha }\right) ^{\left( \beta \right) }\left( m_{J}^{\kappa }\right)
d\mu \left( y\right) \right] \cdot \left\langle x^{\beta },h_{J;\kappa
}^{\omega ,a}\right\rangle _{L^{2}\left( \omega \right) }\right\vert \\
&\leq &\frac{1}{\kappa !}\sum_{\left\vert \beta \right\vert =\kappa
}\left\vert \left\langle \left[ \int_{\mathbb{R}^{n}}\sup_{\theta \in
J}\left\vert \left( K_{y}^{\alpha }\right) ^{\left( \beta \right) }\left(
\theta \right) -\left( K_{y}^{\alpha }\right) ^{\left( \beta \right) }\left(
m_{J}^{\kappa }\right) \right\vert d\left\vert \mu \right\vert \left(
y\right) \right] \left\vert x-m_{J}^{\kappa }\right\vert ^{\kappa
},\left\vert h_{J;\kappa }^{\omega ,a}\right\vert \right\rangle
_{L^{2}\left( \omega \right) }\right\vert \\
&\lesssim &\left\Vert C_{CZ}\frac{\mathrm{P}_{\kappa +\delta }^{\alpha
}\left( J,\left\vert \mu \right\vert \right) }{\left\vert J\right\vert
^{\kappa }}\left\vert x-m_{J}^{\kappa }\right\vert ^{\kappa }\right\Vert
_{W_{\func{dyad}}^{s}\left( \omega \right) }\left\Vert \left\vert
h_{J;\kappa }^{\omega ,a}\right\vert \right\Vert _{W_{\func{dyad}%
}^{-s}\left( \omega \right) } \\
&\lesssim &C_{CZ}\frac{\mathrm{P}_{\kappa +\delta }^{\alpha }\left(
J,\left\vert \mu \right\vert \right) }{\left\vert J\right\vert ^{\kappa }}%
\left\Vert \left\vert x-m_{J}^{\kappa }\right\vert ^{\kappa }\right\Vert
_{W_{\func{dyad}}^{s}\left( \mathbf{1}_{J}\omega \right) }\left\Vert
\left\vert h_{J;\kappa }^{\omega ,a}\right\vert \right\Vert _{W_{\func{dyad}%
}^{-s}\left( \omega \right) }
\end{eqnarray*}%
where in the last line we have used%
\begin{eqnarray*}
&&\int_{\mathbb{R}^{n}}\sup_{\theta \in J}\left\vert \left( K_{y}^{\alpha
}\right) ^{\left( \beta \right) }\left( \theta \right) -\left( K_{y}^{\alpha
}\right) ^{\left( \beta \right) }\left( m_{J}^{\kappa }\right) \right\vert
d\left\vert \mu \right\vert \left( y\right) \\
&\lesssim &C_{CZ}\int_{\mathbb{R}^{n}}\left( \frac{\left\vert J\right\vert }{%
\left\vert y-c_{J}\right\vert }\right) ^{\delta }\frac{d\left\vert \mu
\right\vert \left( y\right) }{\left\vert y-c_{J}\right\vert ^{\kappa
+1-\alpha }}=C_{CZ}\frac{\mathrm{P}_{\kappa +\delta }^{\alpha }\left(
J,\left\vert \mu \right\vert \right) }{\left\vert J\right\vert ^{\kappa }}.
\end{eqnarray*}

Thus with $v_{J}^{\beta }=\frac{1}{\kappa !}\int_{\mathbb{R}^{n}}\left(
K_{y}^{\alpha }\right) ^{\left( \beta \right) }\left( m_{J}^{\kappa }\right)
d\mu \left( y\right) $, and noting that the functions $\left\{ v_{J}^{\beta
}h_{J;\kappa }^{\omega ,a}\right\} _{a\in \Gamma _{J,n,\kappa }}$ are
orthonormal in $a\in \Gamma _{J,n,\kappa }$ for each $\beta $ and $J$, we
have%
\begin{eqnarray*}
\left\vert v_{J}^{\beta }\left\langle x^{\beta },h_{J;\kappa }^{\omega
,a}\right\rangle _{L^{2}\left( \omega \right) }\right\vert ^{2}
&=&\sum_{a\in \Gamma _{J,n,\kappa }}\left\vert \left\langle x^{\beta
},v_{J}^{\beta }\cdot h_{J;\kappa }^{\omega ,a}\right\rangle _{L^{2}\left(
\omega \right) }\right\vert ^{2}=\left\Vert \bigtriangleup _{J;\kappa
}^{\omega }v_{J}^{\beta }x^{\beta }\right\Vert _{L^{2}\left( \omega \right)
}^{2} \\
&=&\left\vert v_{J}^{\beta }\right\vert ^{2}\left\Vert \bigtriangleup
_{J;\kappa }^{\omega }x^{\beta }\right\Vert _{L^{2}\left( \omega \right)
}^{2}=\left\vert v_{J}^{\beta }\right\vert ^{2}\ell \left( J\right)
^{2s}\left\Vert \bigtriangleup _{J;\kappa }^{\omega }x^{\beta }\right\Vert
_{W_{\func{dyad}}^{s}\left( \omega \right) }^{2}\ ,
\end{eqnarray*}%
and hence%
\begin{eqnarray*}
&&\left\Vert \bigtriangleup _{J;\kappa }^{\omega }T^{\alpha }\mu \right\Vert
_{W_{\func{dyad}}^{s}\left( \omega \right) }^{2}=\left\vert \widehat{%
T^{\alpha }\mu }\left( J;\kappa \right) \right\vert ^{2}\ell \left( J\right)
^{-2s}=\ell \left( J\right) ^{-2s}\sum_{a\in \Gamma _{J,n,\kappa
}}\left\vert \left\langle T^{\alpha }\mu ,h_{J;\kappa }^{\omega
,a}\right\rangle _{L^{2}\left( \omega \right) }\right\vert ^{2} \\
&=&\ell \left( J\right) ^{-2s}\sum_{\left\vert \beta \right\vert =\kappa
}\left\vert v_{J}^{\beta }\right\vert ^{2}\ell \left( J\right)
^{2s}\left\Vert \bigtriangleup _{J;\kappa }^{\omega }x^{\kappa }\right\Vert
_{W_{\func{dyad}}^{s}\left( \omega \right) }^{2} \\
&&+O\left( \frac{\mathrm{P}_{\kappa +\delta }^{\alpha }\left( J,\left\vert
\mu \right\vert \right) }{\left\vert J\right\vert ^{\frac{\kappa }{n}}}%
\right) ^{2}\ell \left( J\right) ^{-2s}\left\Vert \left\vert x-m_{J}^{\kappa
}\right\vert ^{\kappa }\right\Vert _{W_{\func{dyad}}^{s}\left( \mathbf{1}%
_{J}\omega \right) }^{2}\sum_{a\in \Gamma _{J,n,\kappa }}\left\Vert
\left\vert h_{J;\kappa }^{\omega ,a}\right\vert \right\Vert _{W_{\func{dyad}%
}^{-s}\left( \omega \right) }^{2}.
\end{eqnarray*}

Thus we conclude that%
\begin{eqnarray*}
\left\Vert \bigtriangleup _{J;\kappa }^{\omega }T^{\alpha }\mu \right\Vert
_{W_{\func{dyad}}^{s}\left( \omega \right) }^{2} &\leq
&C_{1}\sum_{\left\vert \beta \right\vert =\kappa }\left\vert \frac{1}{\kappa
!}\int_{\mathbb{R}^{n}}\left( K_{y}^{\alpha }\right) ^{\left( \beta \right)
}\left( m_{J}\right) d\mu \left( y\right) \right\vert ^{2}\left\Vert
\bigtriangleup _{J;\kappa }^{\omega }x^{\kappa }\right\Vert _{W_{\func{dyad}%
}^{s}\left( \omega \right) }^{2} \\
&&+C_{2}\left( \frac{\mathrm{P}_{\kappa +\delta }^{\alpha }\left(
J,\left\vert \mu \right\vert \right) }{\left\vert J\right\vert ^{\frac{%
\kappa }{n}}}\right) ^{2}\left\Vert \left\vert x-m_{J}^{\kappa }\right\vert
^{\kappa }\right\Vert _{W_{\func{dyad}}^{s}\left( \mathbf{1}_{J}\omega
\right) }^{2}\frac{\left\Vert \left\vert \mathbf{h}_{J;\kappa }^{\omega
}\right\vert \right\Vert _{W_{\func{dyad}}^{-s}\left( \omega \right) }^{2}}{%
\ell \left( J\right) ^{2s}}\ ,
\end{eqnarray*}%
where $\mathbf{h}_{J;\kappa }^{\omega }\equiv \left\{ h_{J;\kappa }^{\omega
,a}\right\} _{a\in \Gamma _{J,n,\kappa }}$ and%
\begin{equation*}
\sum_{\left\vert \beta \right\vert =\kappa }\left\vert \frac{1}{\kappa !}%
\int_{\mathbb{R}^{n}}\left( K_{y}^{\alpha }\right) ^{\left( \beta \right)
}\left( m_{J}\right) d\mu \left( y\right) \right\vert ^{2}\lesssim \left( 
\frac{\mathrm{P}_{\kappa }^{\alpha }\left( J,\left\vert \mu \right\vert
\right) }{\left\vert J\right\vert ^{\frac{\kappa }{n}}}\right) ^{2}.
\end{equation*}
\end{proof}

The following Energy Lemma follows from the above Monotonicity Lemma in a
standard way, see e.g. \cite{SaShUr7}. Recall that for a subset $\mathcal{J}%
\subset \mathcal{D}$, and for a cube $J\in \mathcal{D}$, there are
projections $\mathsf{P}_{\mathcal{J}}^{\omega }\equiv \sum_{J^{\prime }\in 
\mathcal{J}}\bigtriangleup _{J^{\prime };\kappa }^{\omega }$and $\mathsf{P}%
_{J}^{\omega }\equiv \sum_{J^{\prime }\in \mathcal{D}:\ J^{\prime }\subset
J}\bigtriangleup _{J^{\prime };\kappa }^{\omega }$. Recall also that $%
\mathbf{h}_{J;\kappa }^{\omega }\equiv \left\{ h_{J;\kappa }^{\omega
,a}\right\} _{a\in \Gamma _{J,n,\kappa }}$ is the vector of Alpert wavelets
associated with the cube $J$.

\begin{lemma}[\textbf{Energy Lemma}]
\label{ener}Fix $\kappa \geq 1$ and a locally finite positive Borel measure $%
\omega $. Let $J\ $be a cube in $\mathcal{D}$. Let $\Psi _{J}\in W_{\func{%
dyad}}^{-s}\left( \omega \right) $ be supported in $J$ with vanishing $%
\omega $-means up to order less than $\kappa $. Let $\nu $ be a positive
measure supported in $\mathbb{R}^{n}\setminus \gamma J$ with $\gamma >1$.
Let $T^{\alpha }$ be a smooth $\alpha $-fractional singular integral
operator with $0\leq \alpha <n$. Then we have the `pivotal' bound%
\begin{equation}
\left\vert \left\langle T^{\alpha }\left( \varphi \nu \right) ,\Psi
_{J}\right\rangle _{L^{2}\left( \omega \right) }\right\vert \lesssim
C_{\gamma }\mathrm{P}_{\kappa }^{\alpha }\left( J,\nu \right) \ell \left(
J\right) ^{-s}\sqrt{\left\vert J\right\vert _{\omega }}\left\Vert \Psi
_{J}\right\Vert _{W_{\func{dyad}}^{-s}\left( \omega \right) }\frac{%
\left\Vert \left\vert \mathbf{h}_{J;\kappa }^{\omega }\right\vert
\right\Vert _{W_{\func{dyad}}^{-s}\left( \omega \right) }^{2}}{\ell \left(
J\right) ^{2s}}\ ,  \label{piv lemma}
\end{equation}%
for any function $\varphi $ with $\left\vert \varphi \right\vert \leq 1$.
\end{lemma}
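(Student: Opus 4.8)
The plan is to recast the proof of the Monotonicity Lemma~\ref{mono}, replacing the single Alpert function there by the general test function $\Psi _{J}$. Write $\left\langle T^{\alpha }\left( \varphi \nu \right) ,\Psi _{J}\right\rangle _{L^{2}\left( \omega \right) }=\int_{\mathbb{R}^{n}}\left\langle K_{y}^{\alpha },\Psi _{J}\right\rangle _{L^{2}\left( \omega \right) }\varphi \left( y\right) d\nu \left( y\right) $ and Taylor-expand $x\mapsto K_{y}^{\alpha }\left( x\right) $ to order $\kappa $ about the point $m_{J}^{\kappa }\in J$ furnished by Lemma~\ref{mono}; since $\Psi _{J}$ is supported in $J$ with vanishing $\omega $-moments of order $<\kappa $, the Taylor polynomial contributes nothing, leaving only the degree-$\kappa $ remainder. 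Splitting each remainder coefficient into its value at $m_{J}^{\kappa }$ plus an $x$-dependent error exactly as in the proof of Lemma~\ref{mono}, and using moment vanishing again to replace $\left( x-m_{J}^{\kappa }\right) ^{\beta }$ by $x^{\beta }$ in the frozen part, one obtains
\begin{equation*}
\left\langle T^{\alpha }\left( \varphi \nu \right) ,\Psi _{J}\right\rangle
_{L^{2}\left( \omega \right) }=\sum_{\left\vert \beta \right\vert =\kappa
}v_{J}^{\beta }\left\langle x^{\beta },\Psi _{J}\right\rangle _{L^{2}\left(
\omega \right) }+\sum_{\left\vert \beta \right\vert =\kappa }\left\langle
E_{\beta }\left( \cdot \right) \left( \cdot -m_{J}^{\kappa }\right) ^{\beta
},\Psi _{J}\right\rangle _{L^{2}\left( \omega \right) },
\end{equation*}
where $v_{J}^{\beta }$ is a scalar and $E_{\beta }$ a bounded $C^{1}$ function on $J$, both assembled from $\varphi \nu $ and the order-$\kappa $ kernel derivatives.

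Next I would estimate every inner product by Cauchy-Schwarz in the $L^{2}\left( \omega \right) $-duality between $W_{\func{dyad}}^{s}\left( \omega \right) $ and $W_{\func{dyad}}^{-s}\left( \omega \right) $ (Lemma~\ref{identify dual}). Since $\Psi _{J}$ and each of $\mathsf{P}_{J}^{\omega }x^{\beta }$ and $\mathsf{P}_{J}^{\omega }\bigl[E_{\beta }\left( \cdot \right) \left( \cdot -m_{J}^{\kappa }\right) ^{\beta }\bigr]$ have Alpert support inside $J$, this produces the factor $\left\Vert \Psi _{J}\right\Vert _{W_{\func{dyad}}^{-s}\left( \omega \right) }$ \emph{without} ever passing to $\left\vert \Psi _{J}\right\vert $. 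The scalar factors $\left\vert v_{J}^{\beta }\right\vert $ and the size and Lipschitz norms of $E_{\beta }$ are controlled by the size and smoothness estimates \eqref{sizeandsmoothness'}, using that $\nu $ is supported outside $\gamma J$, so that $\ell \left( J\right) +\left\vert y-c_{J}\right\vert \approx _{\gamma }\left\vert y-m_{J}^{\kappa }\right\vert $ there and $\mathrm{P}_{\kappa +\delta }^{\alpha }\leq \mathrm{P}_{\kappa }^{\alpha }$; this yields $\left\vert v_{J}^{\beta }\right\vert \lesssim _{\gamma }\mathrm{P}_{\kappa }^{\alpha }\left( J,\nu \right) /\ell \left( J\right) ^{\kappa }$ and Lipschitz constant $\lesssim _{\gamma }\mathrm{P}_{\kappa }^{\alpha }\left( J,\nu \right) /\ell \left( J\right) $ for $E_{\beta }\left( \cdot \right) \left( \cdot -m_{J}^{\kappa }\right) ^{\beta }$. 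The remaining Sobolev factors are routine geometric sums over the generations of subcubes of $J$: $\left\Vert \mathsf{P}_{J}^{\omega }x^{\beta }\right\Vert _{W_{\func{dyad}}^{s}\left( \omega \right) }^{2}\lesssim \sum_{Q\subset J}\ell \left( Q\right) ^{2\kappa -2s}\left\vert Q\right\vert _{\omega }\lesssim \ell \left( J\right) ^{2\kappa -2s}\left\vert J\right\vert _{\omega }$ (needing $s<\kappa $), and, from the Lipschitz bound, $\left\Vert \mathsf{P}_{J}^{\omega }\bigl[E_{\beta }\left( \cdot \right) \left( \cdot -m_{J}^{\kappa }\right) ^{\beta }\bigr]\right\Vert _{W_{\func{dyad}}^{s}\left( \omega \right) }^{2}\lesssim \sum_{Q\subset J}\tfrac{\mathrm{P}_{\kappa }^{\alpha }\left( J,\nu \right) ^{2}}{\ell \left( J\right) ^{2}}\ell \left( Q\right) ^{2-2s}\left\vert Q\right\vert _{\omega }\lesssim \mathrm{P}_{\kappa }^{\alpha }\left( J,\nu \right) ^{2}\ell \left( J\right) ^{-2s}\left\vert J\right\vert _{\omega }$ (needing $s<1$). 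Multiplying through gives $\left\vert \left\langle T^{\alpha }\left( \varphi \nu \right) ,\Psi _{J}\right\rangle _{L^{2}\left( \omega \right) }\right\vert \lesssim _{\gamma }\mathrm{P}_{\kappa }^{\alpha }\left( J,\nu \right) \ell \left( J\right) ^{-s}\sqrt{\left\vert J\right\vert _{\omega }}\left\Vert \Psi _{J}\right\Vert _{W_{\func{dyad}}^{-s}\left( \omega \right) }$, which is the asserted bound; reinstating the benign factor $\frac{\left\Vert \left\vert \mathbf{h}_{J;\kappa }^{\omega }\right\vert \right\Vert _{W_{\func{dyad}}^{-s}\left( \omega \right) }^{2}}{\ell \left( J\right) ^{2s}}$ (which is $1$ when $\kappa =1$ and, by Lemma~\ref{mod Alpert}, between two positive constants for $\left\vert s\right\vert $ small) only weakens it.

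A second, equivalent route, which is the one that literally exhibits the $\mathbf{h}$-factor, is to expand $\Psi _{J}=\sum_{J^{\prime }\subset J}\bigtriangleup _{J^{\prime };\kappa }^{\omega }\Psi _{J}$, reduce by Cauchy-Schwarz to bounding $\sum_{J^{\prime }\subset J}\left\Vert \bigtriangleup _{J^{\prime };\kappa }^{\omega }T^{\alpha }\left( \varphi \nu \right) \right\Vert _{W_{\func{dyad}}^{s}\left( \omega \right) }^{2}$, apply the Monotonicity Lemma~\ref{mono} to each term (with $\mu =\varphi \nu $ and $I$ a fixed dilate of $J$, the $O_{\gamma }\left( 1\right) $ top generations with $2J^{\prime }\not\subset I$ treated directly), bound the factors $\ell \left( J^{\prime }\right) ^{-2s}\left\Vert \left\vert \mathbf{h}_{J^{\prime };\kappa }^{\omega }\right\vert \right\Vert _{W_{\func{dyad}}^{-s}\left( \omega \right) }^{2}$ by a constant via Lemma~\ref{mod Alpert}, compare $\mathrm{P}_{\kappa }^{\alpha }\left( J^{\prime },\nu \right) \approx _{\gamma }\left( \ell \left( J^{\prime }\right) /\ell \left( J\right) \right) ^{\kappa }\mathrm{P}_{\kappa }^{\alpha }\left( J,\nu \right) $ (elementary, since $\nu $ lives outside $\gamma J$; cf.\ Lemma~\ref{Poisson inequality}), and sum the geometric series $\sum_{J^{\prime }\subset J}\left( \ell \left( J^{\prime }\right) /\ell \left( J\right) \right) ^{2\kappa }\ell \left( J^{\prime }\right) ^{-2s}\left\vert J^{\prime }\right\vert _{\omega }\approx \ell \left( J\right) ^{-2s}\left\vert J\right\vert _{\omega }$. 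The single genuine difficulty here --- and the reason the Monotonicity Lemma had to be recast in Sobolev form at all --- is that $f\mapsto \left\vert f\right\vert $ is \emph{not} bounded on $W_{\func{dyad}}^{-s}\left( \omega \right) $ for $s>0$, so the crude estimate $\left\vert \left\langle G,\Psi _{J}\right\rangle _{L^{2}\left( \omega \right) }\right\vert \leq \left\langle G,\left\vert \Psi _{J}\right\vert \right\rangle _{L^{2}\left( \omega \right) }\leq \left\Vert G\right\Vert _{W_{\func{dyad}}^{s}\left( \omega \right) }\left\Vert \left\vert \Psi _{J}\right\vert \right\Vert _{W_{\func{dyad}}^{-s}\left( \omega \right) }$ for the error term is unavailable; one must instead control $\mathsf{P}_{J}^{\omega }$ of the smooth, Lipschitz $x$-dependent remainder in $W_{\func{dyad}}^{s}\left( \omega \right) $, keeping the cancellation of $\Psi _{J}$ intact. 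Everything else is routine and needs only $\left\vert s\right\vert $ sufficiently small (in particular $s<1$).
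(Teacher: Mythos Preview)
Your proposal is correct, and your second route is precisely the ``standard way'' the paper invokes: the paper does not write out a proof of Lemma~\ref{ener} but simply states that it ``follows from the above Monotonicity Lemma in a standard way, see e.g.\ \cite{SaShUr7}.'' That standard way is exactly what you describe --- expand $\Psi_J=\sum_{J'\subset J}\bigtriangleup_{J';\kappa}^{\omega}\Psi_J$, apply Lemma~\ref{mono} to each $J'$, and sum the resulting geometric series using $\mathrm{P}_\kappa^\alpha(J',\nu)\approx_\gamma(\ell(J')/\ell(J))^\kappa\mathrm{P}_\kappa^\alpha(J,\nu)$. This route naturally produces the factor $\ell(J)^{-2s}\Vert\,\vert\mathbf{h}_{J;\kappa}^{\omega}\vert\,\Vert_{W_{\func{dyad}}^{-s}(\omega)}^{2}$, which one then bounds above by a constant via Lemma~\ref{mod Alpert}.

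Your first route is a legitimate alternative: it re-runs the Taylor expansion argument of Lemma~\ref{mono} directly with $\Psi_J$ in place of a single Alpert wavelet, using the $W^{s}$/$W^{-s}$ duality of Lemma~\ref{identify dual} on the frozen and remainder pieces. This yields the cleaner bound \emph{without} the $\mathbf{h}$-factor, which is in fact the form \eqref{piv bound} that the paper actually uses in every application (Intertwining Proposition, neighbour and stopping forms, etc.). One small caveat: your remark that ``reinstating the benign factor \dots\ only weakens it'' presumes that $\ell(J)^{-2s}\Vert\,\vert\mathbf{h}_{J;\kappa}^{\omega}\vert\,\Vert_{W_{\func{dyad}}^{-s}(\omega)}^{2}$ is bounded \emph{below} by a positive constant; Lemma~\ref{mod Alpert} only supplies the upper bound. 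This does not affect correctness --- your second route delivers the stated inequality with the factor in place, and your first route delivers the stronger factor-free version that is what is actually needed --- but the parenthetical ``between two positive constants'' overstates what Lemma~\ref{mod Alpert} proves.
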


\section{The strong $\protect\kappa $-pivotal corona decomposition}

To set the stage for control of the stopping form below in the absence of
the energy condition, we construct the \emph{strong }$\kappa $\emph{-pivotal}
corona decomposition for $f\in W_{\func{dyad}}^{s}\left( \mu \right) $, in
analogy with the energy version for $L^{2}\left( \sigma \right) $ and $%
L^{2}\left( \omega \right) $ used in the two part paper \cite{LaSaShUr3},%
\cite{Lac} and in \cite{SaShUr7}.

Fix $\gamma >1$ and define $\mathcal{G}_{0}=\left\{ F_{1}^{0}\right\} $ to
consist of the single cube $F_{1}^{0}$, and define the first generation $%
\mathcal{G}_{1}=\left\{ F_{k}^{1}\right\} _{k}$ of $\kappa $\emph{-pivotal
stopping children} of $F_{1}^{0}$ to be the \emph{maximal} dyadic subcubes $%
I $ of $F_{0}$ satisfying%
\begin{equation*}
\mathrm{P}_{\kappa }^{\alpha }\left( I,\mathbf{1}_{F_{1}^{0}}\sigma \right)
^{2}\left\vert I\right\vert _{\omega }\geq \gamma \left\vert I\right\vert
_{\sigma }.
\end{equation*}%
Then define the second generation $\mathcal{G}_{2}=\left\{ F_{k}^{2}\right\}
_{k}$ of CZ $\kappa $-pivotal\emph{\ }$s$-stopping children of $F_{1}^{0}$
to be the \emph{maximal} dyadic subcubes $I$ of some $F_{k}^{1}\in \mathcal{G%
}_{1}$ satisfying%
\begin{equation*}
\mathrm{P}_{\kappa }^{\alpha }\left( I,\mathbf{1}_{F_{k}^{1}}\sigma \right)
^{2}\left\vert I\right\vert _{\omega }\geq \gamma \left\vert I\right\vert
_{\sigma }.
\end{equation*}%
Continue by recursion to define $\mathcal{G}_{n}$ for all $n\geq 0$, and
then set 
\begin{equation*}
\mathcal{F\equiv }\dbigcup\limits_{n=0}^{\infty }\mathcal{G}_{n}=\left\{
F_{k}^{n}:n\geq 0,k\geq 1\right\}
\end{equation*}%
to be the set of all $\kappa $-pivotal stopping intervals in $F_{1}^{0}$
obtained in this way.

\subsection{Carleson condition for stopping cubes and corona controls}

The $\varepsilon $-strong $\sigma $-Carleson condition (\ref{eps Carleson})
holds for a $\left( \beta ,\sigma \right) $-full grid $\mathcal{F}$ by the
usual calculation,%
\begin{equation*}
\sum_{F^{\prime }\in \mathfrak{C}_{\mathcal{F}}\left( F\right) }\left( \frac{%
\ell \left( F\right) }{\ell \left( F^{\prime }\right) }\right) ^{\varepsilon
}\left\vert F^{\prime }\right\vert _{\sigma }\leq \frac{1}{\gamma }\left\{
\sum_{F^{\prime }\in \mathfrak{C}_{\mathcal{F}}\left( F\right) }\mathrm{P}%
_{\kappa }^{\alpha }\left( F^{\prime },\mathbf{1}_{F}\sigma \right)
^{2}\left( \frac{\ell \left( F\right) }{\ell \left( F^{\prime }\right) }%
\right) ^{\varepsilon }\left\vert F^{\prime }\right\vert _{\omega }\right\}
\leq \frac{1}{\gamma }\mathcal{V}_{2,\beta ,\varepsilon }^{\alpha ,\kappa
}\left( \sigma ,\omega \right) \left\vert F\right\vert _{\sigma }\ .
\end{equation*}%
Indeed, set $\mathfrak{C}_{\mathcal{F}}^{\left( \ell \right) }\left(
F\right) $ to be the $\ell ^{th}$ generation of $\mathcal{F}$-subcubes of $F$%
, and define $\mathcal{F}\left( F\right) =\dbigcup\limits_{\ell =0}^{\infty }%
\mathfrak{C}_{\mathcal{F}}^{\left( \ell \right) }\left( F\right) $ to be the
collection of all $\mathcal{F}$-subcubes of $F$. Then if $\frac{\mathcal{V}%
_{2,\beta ,\varepsilon }^{\alpha ,\kappa }+1}{\gamma }<\frac{1}{2}$ and $%
\mathcal{F}$ is a $\left( \beta ,\sigma \right) $-full grid, we have the $%
\varepsilon $-strong $\sigma $-Carleson condition,%
\begin{equation}
\sum_{F^{\prime }\in \mathcal{F}\left( F\right) }\left( \frac{\ell \left(
F\right) }{\ell \left( F^{\prime }\right) }\right) ^{\varepsilon }\left\vert
F^{\prime }\right\vert _{\sigma }=\sum_{\ell =0}^{\infty }\sum_{F^{\prime
}\in \mathfrak{C}_{\mathcal{F}}^{\left( \ell \right) }\left( F\right)
}\left( \frac{\ell \left( F\right) }{\ell \left( F^{\prime }\right) }\right)
^{\varepsilon }\left\vert F^{\prime }\right\vert _{\sigma }\leq \sum_{\ell
=0}^{\infty }\left( \frac{\mathcal{V}_{2,\beta ,\varepsilon }^{\alpha
,\kappa }+1}{\gamma }\right) ^{\ell }\left\vert F\right\vert _{\sigma }\leq
2\left\vert F\right\vert _{\sigma }\ .  \label{sigma Car}
\end{equation}

Using Lemma \ref{q lemma}, this Carleson condition delivers a basic method
of control by quasiorthogonality (see \cite{LaSaShUr3} and \cite{SaShUr7}
for the case $s=0$),%
\begin{equation}
\sum_{F\in \mathcal{F}}\left\vert F\right\vert _{\sigma }\left( \ell \left(
F\right) ^{-s}E_{F}^{\sigma }f\right) ^{2}\lesssim \left\Vert f\right\Vert
_{W_{\func{dyad}}^{s}\left( \sigma \right) }^{2},  \label{quasi orth}
\end{equation}%
provided $\mathcal{F}$ is a $\beta $-full grid and $\left\vert s\right\vert $
is sufficiently small depending on $\beta $, which is used repeatedly in
conjunction with orthogonality of Sobolev projections $\bigtriangleup
_{J;\kappa }^{\omega }g$,%
\begin{equation}
\sum_{J\in \mathcal{D}}\left\Vert \bigtriangleup _{J;\kappa }^{\omega
}g\right\Vert _{W_{\func{dyad}}^{-s}\left( \omega \right) }^{2}=\left\Vert
g\right\Vert _{W_{\func{dyad}}^{-s}\left( \omega \right) }^{2}.  \label{orth}
\end{equation}

Recall that for an arbitrary grid $\mathcal{F}\subset \mathcal{D}$, the
coronas $\mathcal{C}_{F}$ for $F\in \mathcal{F}$ are defined by 
\begin{equation*}
\mathcal{C}_{F}\equiv \left\{ I\in \mathcal{D}:I\subset F\text{ and }%
I\not\subset F^{\prime }\text{ for any }F^{\prime }\in \mathcal{F}\text{
with }F^{\prime }\varsubsetneqq F\right\} .
\end{equation*}%
We have, from the definition of the stopping times $\mathcal{F}$ above, that
the $\left( \beta ,\varepsilon \right) $-strong $\kappa $-pivotal control
holds for $\beta >0$ sufficiently small, i.e.%
\begin{equation}
\mathrm{P}_{\kappa }^{\alpha }\left( I,\mathbf{1}_{F}\sigma \right)
^{2}\left( \frac{\ell \left( F\right) }{\ell \left( I\right) }\right)
^{\varepsilon }\left\vert I\right\vert _{\omega }<\Gamma \left\vert
I\right\vert _{\sigma },\ \ \ \ \ I\in \mathcal{C}_{F}\text{ and }F\in 
\mathcal{F}.  \label{energy control}
\end{equation}

\section{Reduction of the proof to local forms}

To prove Theorem \ref{pivotal theorem}, we begin by proving the bilinear
form bound,%
\begin{equation*}
\left\vert \left\langle T_{\sigma }^{\alpha }f,g\right\rangle _{\omega
}\right\vert \lesssim \left( \sqrt{A_{2}^{\alpha }\left( \sigma ,\omega
\right) }+\mathfrak{TR}_{T^{\alpha }}^{\left( \kappa \right) }\left( \sigma
,\omega \right) +\mathfrak{TR}_{\left( T^{\alpha }\right) ^{\ast }}^{\left(
\kappa \right) }\left( \omega ,\sigma \right) +\mathcal{V}_{2,\beta
,\varepsilon }^{\alpha ,\kappa }+\mathcal{V}_{2,\beta ,\varepsilon }^{\alpha
,\kappa ,\ast }\right) \ \left\Vert f\right\Vert _{W_{\limfunc{dyad}%
}^{s}\left( \sigma \right) }\left\Vert g\right\Vert _{W_{\limfunc{dyad}%
}^{-s}\left( \omega \right) },
\end{equation*}%
for $0<\beta ,\varepsilon <1$ sufficiently small. Following the weighted
Haar expansions of Nazarov, Treil and Volberg, we write $f$ and $g$ in
weighted Alpert wavelet expansions,%
\begin{equation}
\left\langle T_{\sigma }^{\alpha }f,g\right\rangle _{\omega }=\left\langle
T_{\sigma }^{\alpha }\left( \sum_{I\in \mathcal{D}}\bigtriangleup _{I;\kappa
_{1}}^{\sigma }f\right) ,\left( \sum_{J\in \mathcal{D}}\bigtriangleup
_{J;\kappa _{2}}^{\omega }g\right) \right\rangle _{\omega }.
\label{expansion}
\end{equation}%
Then following \cite{SaShUr7} and many others, the $L^{2}$ inner product in (%
\ref{expansion}) can be expanded as%
\begin{equation*}
\left\langle T_{\sigma }^{\alpha }f,g\right\rangle _{\omega }=\left\langle
T_{\sigma }^{\alpha }\left( \sum_{I\in \mathcal{D}}\bigtriangleup _{I;\kappa
_{1}}^{\sigma }f\right) ,\left( \sum_{J\in \mathcal{D}}\bigtriangleup
_{J;\kappa _{2}}^{\omega }g\right) \right\rangle _{\omega }=\sum_{I\in 
\mathcal{D}\ \text{and }J\in \mathcal{D}}\left\langle T_{\sigma }^{\alpha
}\left( \bigtriangleup _{I;\kappa _{1}}^{\sigma }f\right) ,\left(
\bigtriangleup _{J;\kappa _{2}}^{\omega }g\right) \right\rangle _{\omega }\ .
\end{equation*}%
Then the sum is further decomposed by first the \emph{Cube Size Splitting},
then using the \emph{Shifted Corona Decomposition}, according to the \emph{%
Canonical Splitting}. We assume the reader is familiar with the notation and
arguments in the first eight sections of \cite{SaShUr7}. The $n$-dimensional
decompositions used in \cite{SaShUr7} are in spirit the same as the
one-dimensional decompositions in \cite{LaSaShUr3}, as well as the $n$%
-dimensional decompositions in \cite{LaWi}, but differ in significant
details.

A fundamental result of Nazarov, Treil and Volberg \cite{NTV4} is that all
the cubes $I$ and $J$ appearing in the bilinear form above may be assumed to
be $\left( r,\varepsilon \right) -\limfunc{good}$, where a dyadic interval $%
K $ is $\left( r,\varepsilon \right) -\limfunc{good}$, or simply $\limfunc{%
good}$, if for \emph{every} dyadic supercube $L$ of $K$, it is the case that 
\textbf{either} $K$ has side length at least $2^{1-r}$ times that of $L$, 
\textbf{or} $K\Subset _{\left( r,\varepsilon \right) }L$. We say that a
dyadic cube $K$ is $\left( r,\varepsilon \right) $-\emph{deeply embedded} in
a dyadic cube $L$, or simply $r$\emph{-deeply embedded} in $L$, which we
write as $K\Subset _{r,\varepsilon }L$, when $K\subset L$ and both 
\begin{eqnarray}
\ell \left( K\right) &\leq &2^{-r}\ell \left( L\right) ,
\label{def deep embed} \\
\limfunc{dist}\left( K,\dbigcup\limits_{L^{\prime }\in \mathfrak{C}_{%
\mathcal{D}}L}\partial L^{\prime }\right) &\geq &2\ell \left( K\right)
^{\varepsilon }\ell \left( L\right) ^{1-\varepsilon }.  \notag
\end{eqnarray}

Here is a brief schematic diagram as in \cite{AlSaUr}, summarizing the
shifted corona decompositions as used in \cite{AlSaUr} and \cite{SaShUr7}
for Alpert and Haar wavelet expansions of $f$ and $g$. We first introduce
parameters as in \cite{AlSaUr} and \cite{SaShUr7}. We will choose $%
\varepsilon >0$ sufficiently small later in the argument, and then $r$ must
be chosen sufficiently large depending on $\varepsilon $ in order to reduce
matters to $\left( r,\varepsilon \right) -\func{good}$ functions by the
Nazarov, Treil and Volberg argument.

\begin{definition}
\label{def parameters}The parameters $\tau $ and $\rho $ are fixed to
satisfy 
\begin{equation*}
\tau >r\text{ and }\rho >r+\tau ,
\end{equation*}%
where $r$ is the goodness parameter already fixed.
\end{definition}

\begin{equation*}
\fbox{$%
\begin{array}{ccccccc}
\left\langle T_{\sigma }^{\alpha }f,g\right\rangle _{\omega } &  &  &  &  & 
&  \\ 
\downarrow &  &  &  &  &  &  \\ 
\mathsf{B}_{\Subset _{\mathbf{\rho }}}\left( f,g\right) & + & \mathsf{B}_{_{%
\mathbf{\rho }}\Supset }\left( f,g\right) & + & \mathsf{B}_{\cap }\left(
f,g\right) & + & \mathsf{B}_{\diagup }\left( f,g\right) \\ 
\downarrow &  &  &  &  &  &  \\ 
\mathsf{T}_{\limfunc{diagonal}}\left( f,g\right) & + & \mathsf{T}_{\limfunc{%
far}\limfunc{below}}\left( f,g\right) & + & \mathsf{T}_{\limfunc{far}%
\limfunc{above}}\left( f,g\right) & + & \mathsf{T}_{\limfunc{disjoint}%
}\left( f,g\right) \\ 
\downarrow &  & \downarrow &  &  &  &  \\ 
\mathsf{B}_{\Subset _{\mathbf{\rho }}}^{F}\left( f,g\right) &  & \mathsf{T}_{%
\limfunc{far}\limfunc{below}}^{1}\left( f,g\right) & + & \mathsf{T}_{%
\limfunc{far}\limfunc{below}}^{2}\left( f,g\right) &  &  \\ 
\downarrow &  &  &  &  &  &  \\ 
\mathsf{B}_{\func{stop}}^{F}\left( f,g\right) & + & \mathsf{B}_{\func{%
paraproduct}}^{F}\left( f,g\right) & + & \mathsf{B}_{\func{neighbour}%
}^{F}\left( f,g\right) & + & \mathsf{B}_{\limfunc{commutator}}^{F}\left(
f,g\right)%
\end{array}%
$}
\end{equation*}

\subsection{Cube size splitting}

The Nazarov, Treil and Volberg \emph{Cube Size Splitting} of the inner
product $\left\langle T_{\sigma }^{\alpha }f,g\right\rangle _{\omega }$
splits the pairs of cubes $\left( I,J\right) $ in a simultaneous Alpert
decomposition of $f$ and $g$ into four groups determined by relative
position, is given by%
\begin{eqnarray*}
\left\langle T_{\sigma }^{\alpha }f,g\right\rangle _{\omega }
&=&\dsum\limits_{I,J\in \mathcal{D}}\left\langle T_{\sigma }^{\alpha }\left(
\bigtriangleup _{I;\kappa }^{\sigma }f\right) ,\left( \bigtriangleup
_{J;\kappa }^{\omega }g\right) \right\rangle _{\omega } \\
&=&\dsum\limits_{\substack{ I,J\in \mathcal{D}  \\ J\Subset _{\rho
,\varepsilon }I}}\left\langle T_{\sigma }^{\alpha }\left( \bigtriangleup
_{I;\kappa }^{\sigma }f\right) ,\left( \bigtriangleup _{J;\kappa }^{\omega
}g\right) \right\rangle _{\omega }+\dsum\limits_{\substack{ I,J\in \mathcal{D%
}  \\ J_{\rho ,\varepsilon }\Supset I}}\left\langle T_{\sigma }^{\alpha
}\left( \bigtriangleup _{I;\kappa }^{\sigma }f\right) ,\left( \bigtriangleup
_{J;\kappa }^{\omega }g\right) \right\rangle _{\omega } \\
&&+\dsum\limits_{\substack{ I,J\in \mathcal{D}  \\ J\cap I=\emptyset \text{
and }\frac{\ell \left( J\right) }{\ell \left( I\right) }\notin \left[
2^{-\rho },2^{\rho }\right] }}\left\langle T_{\sigma }^{\alpha }\left(
\bigtriangleup _{I;\kappa }^{\sigma }f\right) ,\left( \bigtriangleup
_{J;\kappa }^{\omega }g\right) \right\rangle _{\omega } \\
&&+\dsum\limits_{\substack{ I,J\in \mathcal{D}  \\ 2^{-\rho }\leq \frac{\ell
\left( J\right) }{\ell \left( I\right) }\leq 2^{\rho }}}\left\langle
T_{\sigma }^{\alpha }\left( \bigtriangleup _{I;\kappa }^{\sigma }f\right)
,\left( \bigtriangleup _{J;\kappa }^{\omega }g\right) \right\rangle _{\omega
} \\
&=&\mathsf{B}_{\Subset _{\rho ,\varepsilon }}\left( f,g\right) +\mathsf{B}%
_{_{\rho ,\varepsilon }\Supset }\left( f,g\right) +\mathsf{B}_{\cap }\left(
f,g\right) +\mathsf{B}_{\diagup }\left( f,g\right) .
\end{eqnarray*}%
Note however that the assumption the cubes $I$ and $J$ are $\left(
r,\varepsilon \right) -\limfunc{good}$ remains in force throughout the proof.

We will now make use of the $\kappa $-cube testing and triple testing
constants, defined in (\ref{def Kappa polynomial'}) and (\ref{full testing}%
), to prove the following bound in the Sobolev setting, which in the case $%
s=0$ was proved in \cite[see Lemma 31]{Saw6} following the Nazarov, Treil
and Volberg arguments for Haar wavelets in \cite[see the proof of Lemma 7.1]%
{SaShUr7} (see also \cite{LaSaShUr3}),%
\begin{equation}
\left\vert \mathsf{B}_{\cap }\left( f,g\right) +\mathsf{B}_{\diagup }\left(
f,g\right) \right\vert \leq C\left( \mathfrak{T}_{T^{\alpha }}^{\kappa ,s}+%
\mathfrak{T}_{T^{\alpha ,\ast }}^{\kappa ,-s}+\mathcal{WBP}_{T^{\alpha
}}^{\left( \kappa _{1},\kappa _{2}\right) ,s}\left( \sigma ,\omega \right) +%
\sqrt{A_{2}^{\alpha }}\right) \left\Vert f\right\Vert _{W_{\limfunc{dyad}%
}^{s}\left( \sigma \right) }\left\Vert g\right\Vert _{W_{\limfunc{dyad}%
}^{-s}\left( \omega \right) },  \label{routine}
\end{equation}%
where if $\Omega _{\limfunc{dyad}}$ is the set of all dyadic grids, 
\begin{equation*}
\mathcal{WBP}_{T^{\alpha }}^{\left( \kappa _{1},\kappa _{2}\right) ,s}\left(
\sigma ,\omega \right) \equiv \sup_{\mathcal{D}\in \Omega }\sup_{\substack{ %
Q,Q^{\prime }\in \mathcal{D}  \\ Q\subset 3Q^{\prime }\setminus Q^{\prime }%
\text{ or }Q^{\prime }\subset 3Q\setminus Q}}\frac{\ell \left( Q\right)
^{s}\ell \left( Q^{\prime }\right) ^{-s}}{\sqrt{\left\vert Q\right\vert
_{\sigma }\left\vert Q^{\prime }\right\vert _{\omega }}}\sup_{\substack{ %
f\in \left( \mathcal{P}_{Q}^{\kappa _{1}}\right) _{\limfunc{norm}}\left(
\sigma \right)  \\ g\in \left( \mathcal{P}_{Q^{\prime }}^{\kappa
_{2}}\right) _{\limfunc{norm}}\left( \omega \right) }}\left\vert
\int_{Q^{\prime }}T_{\sigma }^{\alpha }\left( \mathbf{1}_{Q}f\right) \
gd\omega \right\vert <\infty
\end{equation*}%
is a weak boundedness constant that in the case $s=0$ was introduced in \cite%
{Saw6}. Here we will use the case $\kappa _{1}=\kappa _{2}=\kappa $.
However, we only use that this constant is removed in the final section
below using the following bound proved in \cite[see (6.25) in Subsection 6.7
and note that only triple testing is needed there by choosing $\ell \left(
Q^{\prime }\right) \leq \ell \left( Q\right) $ (using duality and $T^{\alpha
,\ast }$ if needed)]{Saw6}, and which holds also in the Sobolev setting
using Cauchy-Schwarz and triple testing, 
\begin{equation}
\mathcal{WBP}_{T^{\alpha }}^{\left( \kappa ,\kappa \right) ,s}\left( \sigma
,\omega \right) \leq C_{\kappa }\left( \mathfrak{TR}_{T^{\alpha }}^{\kappa
,s}\left( \sigma ,\omega \right) +\mathfrak{TR}_{T^{\alpha ,\ast }}^{\kappa
,-s}\left( \omega ,\sigma \right) \right) .  \label{unif bound'}
\end{equation}

In fact the stronger bound with absolute values inside the sums in (\ref%
{routine}) was proved in the case $s=0$ in the previous references,%
\begin{eqnarray}
&&\dsum\limits_{\substack{ I,J\in \mathcal{D}  \\ J\cap I=\emptyset \text{
and }\frac{\ell \left( J\right) }{\ell \left( I\right) }\notin \left[
2^{-\rho },2^{\rho }\right] }}\left\vert \left\langle T_{\sigma }^{\alpha
}\left( \bigtriangleup _{I;\kappa }^{\sigma }f\right) ,\left( \bigtriangleup
_{J;\kappa }^{\omega }g\right) \right\rangle _{\omega }\right\vert
+\dsum\limits_{\substack{ I,J\in \mathcal{D}  \\ 2^{-\rho }\leq \frac{\ell
\left( J\right) }{\ell \left( I\right) }\leq 2^{\rho }}}\left\vert
\left\langle T_{\sigma }^{\alpha }\left( \bigtriangleup _{I;\kappa }^{\sigma
}f\right) ,\left( \bigtriangleup _{J;\kappa }^{\omega }g\right)
\right\rangle _{\omega }\right\vert  \label{routine'} \\
&\leq &C\left( \mathfrak{T}_{T^{\alpha }}^{\kappa ,s}+\mathfrak{T}%
_{T^{\alpha ,\ast }}^{\kappa ,-s}+\mathcal{WBP}_{T^{\alpha }}^{\left( \kappa
,\kappa \right) ,s}\left( \sigma ,\omega \right) +\sqrt{A_{2}^{\alpha }}%
\right) \left\Vert f\right\Vert _{W_{\limfunc{dyad}}^{s}\left( \sigma
\right) }\left\Vert g\right\Vert _{W_{\limfunc{dyad}}^{-s}\left( \omega
\right) }.  \notag
\end{eqnarray}%
This bound will be useful later since it yields the same bound for the sum
of any subcollection of the index set, and for the convenience of the
reader, we prove (\ref{routine'}) below. Since the \emph{below} and \emph{%
above} forms $\mathsf{B}_{\Subset _{\mathbf{\rho },\varepsilon }}\left(
f,g\right) ,\mathsf{B}_{_{\mathbf{\rho },\varepsilon }\Supset }\left(
f,g\right) $ are symmetric, matters are then reduced to proving%
\begin{equation}
\left\vert \mathsf{B}_{\Subset _{\mathbf{\rho },\varepsilon }}\left(
f,g\right) \right\vert \lesssim \left( \mathfrak{T}_{T^{\alpha }}^{s}+%
\mathfrak{T}_{T^{\alpha ,\ast }}^{-s}+\sqrt{A_{2}^{\alpha }}\right)
\left\Vert f\right\Vert _{W_{\limfunc{dyad}}^{s}\left( \sigma \right)
}\left\Vert g\right\Vert _{W_{\limfunc{dyad}}^{-s}\left( \omega \right) }\ .
\label{red proving}
\end{equation}

We introduce some notation in order to prove (\ref{routine'}). For weighted
Alpert wavelet projections $\bigtriangleup _{I;\kappa }^{\sigma }$, we write
the projection $\mathbb{E}_{I^{\prime };\kappa }^{\sigma }\bigtriangleup
_{I;\kappa }^{\sigma }f$ onto the child $I^{\prime }\in \mathfrak{C}_{%
\mathcal{D}}\left( I\right) $ as $M_{I^{\prime };\kappa }^{\sigma }\mathbf{1}%
_{I_{\pm }}$, where $M_{I^{\prime };\kappa }^{\sigma }$ is a polynomial of
degree less than $\kappa $ restricted to $I^{\prime }$. Then we let $%
P_{I^{\prime };\kappa }^{\sigma }\equiv \frac{M_{I^{\prime };\kappa
}^{\sigma }}{\left\Vert M_{I^{\prime };\kappa }^{\sigma }\right\Vert
_{\infty }}$ be its normalization on $I^{\prime }$. From (\ref{analogue'})
we have the estimate,%
\begin{equation}
\left\Vert \mathbb{E}_{I^{\prime };\kappa }^{\sigma }\bigtriangleup
_{I;\kappa }^{\sigma }f\right\Vert _{\infty }\leq C\frac{\left\Vert
\bigtriangleup _{I;\kappa }^{\sigma }f\right\Vert _{L^{2}\left( \sigma
\right) }}{\sqrt{\left\vert I^{\prime }\right\vert _{\sigma }}}\approx C%
\frac{\left\vert \widehat{f_{\kappa }}\left( I\right) \right\vert }{\sqrt{%
\left\vert I^{\prime }\right\vert _{\sigma }}}.  \label{analogue''}
\end{equation}

\begin{proof}[Proof of (\protect\ref{routine'})]
To handle the second term in (\ref{routine'}) we first decompose it into%
\begin{eqnarray*}
&&\left\{ \sum_{\substack{ I,J\in \mathcal{D}:\ J\subset 3I  \\ 2^{-\rho
}\ell \left( I\right) \leq \ell \left( J\right) \leq 2^{\rho }\ell \left(
I\right) }}+\sum_{\substack{ I,J\in \mathcal{D}:\ I\subset 3J  \\ 2^{-\rho
}\ell \left( I\right) \leq \ell \left( J\right) \leq 2^{\rho }\ell \left(
I\right) }}+\sum_{\substack{ I,J\in \mathcal{D}  \\ 2^{-\rho }\ell \left(
I\right) \leq \ell \left( J\right) \leq 2^{\rho }\ell \left( I\right)  \\ %
J\not\subset 3I\text{ and }I\not\subset 3J}}\right\} \left\vert \left\langle
T_{\sigma }^{\alpha }\left( \bigtriangleup _{I;\kappa }^{\sigma }f\right)
,\bigtriangleup _{J;\kappa }^{\omega }g\right\rangle _{\omega }\right\vert \\
&&\ \ \ \ \ \ \ \ \ \ \equiv A_{1}+A_{2}+A_{3}.
\end{eqnarray*}%
The proof of the bound for term $A_{3}$ is similar to that of the bound for
the first term in (\ref{routine'}), and so we will defer its proof until
after the second term has been proved.

We now consider term $A_{1}$ as term $A_{2}$ is symmetric. To handle this
term we will write the Alpert functions $h_{I;\kappa }^{\sigma }$ and $%
h_{J;\kappa }^{\omega }$ as linear combinations of polynomials times
indicators of the children of their supporting cubes, denoted $I_{\theta }$
and $J_{\theta ^{\prime }}$ respectively. Then we use the testing condition
on $I_{\theta }$ and $J_{\theta ^{\prime }}$ when they \emph{overlap}, i.e.
their interiors intersect; we use the weak boundedness property on $%
I_{\theta }$ and $J_{\theta ^{\prime }}$ when they \emph{touch}, i.e. their
interiors are disjoint but their closures intersect (even in just a point);
and finally we use the $A_{2}^{\alpha }$ condition when $I_{\theta }$ and $%
J_{\theta ^{\prime }}$ are \emph{separated}, i.e. their closures are
disjoint. We will suppose initially that the side length of $J$ is at most
the side length $I$, i.e. $\ell \left( J\right) \leq \ell \left( I\right) $,
the proof for $J=\pi I$ being similar but for one point mentioned below.

So suppose that $I_{\theta }$ is a child of $I$ and that $J_{\theta ^{\prime
}}$ is a child of $J$. If $J_{\theta ^{\prime }}\subset I_{\theta }$ we have
using (\ref{analogue''}), 
\begin{eqnarray*}
&&\left\vert \left\langle T_{\sigma }^{\alpha }\left( \mathbf{1}_{I_{\theta
}}\bigtriangleup _{I;\kappa }^{\sigma }f\right) ,\mathbf{1}_{J_{\theta
^{\prime }}}\bigtriangleup _{J;\kappa }^{\omega }g\right\rangle _{\omega
}\right\vert \lesssim \frac{\left\vert \widehat{f_{\kappa }}\left( I\right)
\right\vert }{\sqrt{\left\vert I\right\vert _{\sigma }}}\left\vert
\left\langle T_{\sigma }^{\alpha }\left( P_{I_{\theta };\kappa }^{\sigma }%
\mathbf{1}_{I_{\theta }}\right) ,P_{J_{\theta ^{\prime }};\kappa }^{\omega }%
\mathbf{1}_{J_{\theta ^{\prime }}}\right\rangle _{\omega }\right\vert \frac{%
\left\vert \widehat{g_{\kappa }}\left( J\right) \right\vert }{\sqrt{%
\left\vert J\right\vert _{\omega }}} \\
&\lesssim &\frac{\left\vert \widehat{f_{\kappa }}\left( I\right) \right\vert 
}{\sqrt{\left\vert I\right\vert _{\sigma }}}\left( \int_{J_{\theta ^{\prime
}}}\left\vert T_{\sigma }^{\alpha }\left( P_{I_{\theta };\kappa }^{\sigma }%
\mathbf{1}_{I_{\theta }}\right) \right\vert ^{2}d\omega \right) ^{\frac{1}{2}%
}\left\vert \widehat{g_{\kappa }}\left( J\right) \right\vert \lesssim \frac{%
\left\vert \widehat{f_{\kappa }}\left( I\right) \right\vert }{\sqrt{%
\left\vert I\right\vert _{\sigma }}}\mathfrak{T}_{T_{\alpha }}^{\kappa
,s}\left\vert I\right\vert _{\sigma }^{\frac{1}{2}}\left\vert \widehat{%
g_{\kappa }}\left( J\right) \right\vert \lesssim \mathfrak{T}_{T_{\alpha
}}^{\kappa ,s}\left\vert \widehat{f_{\kappa }}\left( I\right) \right\vert
\left\vert \widehat{g_{\kappa }}\left( J\right) \right\vert ,
\end{eqnarray*}%
where $\widehat{f_{\kappa }}\left( I\right) $ denotes the vector Alpert
coefficient of $f$ at the dyadic cube $I$. The point referred to above is
that when $J=\pi I$ we write 
\begin{equation*}
\left\langle T_{\sigma }^{\alpha }\left( P_{I_{\theta };\kappa }^{\sigma }%
\mathbf{1}_{I_{\theta }}\right) ,P_{J_{\theta ^{\prime }};\kappa }^{\omega }%
\mathbf{1}_{J_{\theta ^{\prime }}}\right\rangle _{\omega }=\left\langle
P_{I_{\theta };\kappa }^{\sigma }\mathbf{1}_{I_{\theta }},T_{\omega
}^{\alpha ,\ast }\left( P_{J_{\theta ^{\prime }};\kappa }^{\omega }\mathbf{1}%
_{J_{\theta ^{\prime }}}\right) \right\rangle _{\sigma }\ ,
\end{equation*}%
and get the dual testing constant $\mathfrak{T}_{T_{\alpha }^{\ast
}}^{\kappa ,s}$. If $J_{\theta ^{\prime }}$ and $I_{\theta }$ touch, then $%
\ell \left( J_{\theta ^{\prime }}\right) \leq \ell \left( I_{\theta }\right) 
$ and we have $J_{\theta ^{\prime }}\subset 3I_{\theta }\setminus I_{\theta
} $, and so%
\begin{eqnarray*}
&&\left\vert \left\langle T_{\sigma }^{\alpha }\left( \mathbf{1}_{I_{\theta
}}\bigtriangleup _{I;\kappa }^{\sigma }f\right) ,\mathbf{1}_{J_{\theta
^{\prime }}}\bigtriangleup _{J;\kappa }^{\omega }g\right\rangle _{\omega
}\right\vert \lesssim \frac{\left\vert \widehat{f_{\kappa }}\left( I\right)
\right\vert }{\sqrt{\left\vert I\right\vert _{\sigma }}}\left\vert
\left\langle T_{\sigma }^{\alpha }\left( P_{I_{\theta };\kappa }^{\sigma }%
\mathbf{1}_{I_{\theta }}\right) ,P_{J_{\theta ^{\prime }};\kappa }^{\omega }%
\mathbf{1}_{J_{\theta ^{\prime }}}\right\rangle _{\omega }\right\vert \frac{%
\left\vert \widehat{g_{\kappa }}\left( J\right) \right\vert }{\sqrt{%
\left\vert J\right\vert _{\omega }}} \\
&\lesssim &\frac{\left\vert \widehat{f_{\kappa }}\left( I\right) \right\vert 
}{\sqrt{\left\vert I\right\vert _{\sigma }}}\mathcal{WBP}_{T^{\alpha
}}^{\left( \kappa ,\kappa \right) ,s}\sqrt{\left\vert I\right\vert _{\sigma
}\left\vert J\right\vert _{\omega }}\frac{\left\vert \widehat{g_{\kappa }}%
\left( J\right) \right\vert }{\sqrt{\left\vert J\right\vert _{\omega }}}=%
\mathcal{WBP}_{T^{\alpha }}^{\left( \kappa ,\kappa \right) ,s}\left\vert 
\widehat{f_{\kappa }}\left( I\right) \right\vert \left\vert \widehat{%
g_{\kappa }}\left( J\right) \right\vert .
\end{eqnarray*}%
Finally, if $J_{\theta ^{\prime }}$ and $I_{\theta }$ are separated, and if $%
K$ is the smallest (not necessarily dyadic) cube containing both $J_{\theta
^{\prime }}$ and $I_{\theta }$, then $\limfunc{dist}\left( I_{\theta
},J_{\theta ^{\prime }}\right) \approx \ell \left( K\right) $ and we have%
\begin{eqnarray*}
&&\left\vert \left\langle T_{\sigma }^{\alpha }\left( \mathbf{1}_{I_{\theta
}}\bigtriangleup _{I;\kappa }^{\sigma }f\right) ,\mathbf{1}_{J_{\theta
^{\prime }}}\bigtriangleup _{J;\kappa }^{\omega }g\right\rangle _{\omega
}\right\vert \lesssim \frac{\left\vert \widehat{f_{\kappa }}\left( I\right)
\right\vert }{\sqrt{\left\vert I\right\vert _{\sigma }}}\left\vert
\left\langle T_{\sigma }^{\alpha }\left( \mathbf{1}_{I_{\theta }}\right) ,%
\mathbf{1}_{J_{\theta ^{\prime }}}\right\rangle _{\omega }\right\vert \frac{%
\left\vert \widehat{g_{\kappa }}\left( J\right) \right\vert }{\sqrt{%
\left\vert J\right\vert _{\omega }}} \\
&\lesssim &\frac{\left\vert \widehat{f_{\kappa }}\left( I\right) \right\vert 
}{\sqrt{\left\vert I\right\vert _{\sigma }}}\frac{1}{\limfunc{dist}\left(
I_{\theta },J_{\theta ^{\prime }}\right) ^{n-\alpha }}\left\vert I_{\theta
}\right\vert _{\sigma }\left\vert J_{\theta ^{\prime }}\right\vert _{\omega }%
\frac{\left\vert \widehat{g_{\kappa }}\left( J\right) \right\vert }{\sqrt{%
\left\vert J\right\vert _{\omega }}}=\frac{\sqrt{\left\vert I_{\theta
}\right\vert _{\sigma }\left\vert J_{\theta ^{\prime }}\right\vert _{\omega }%
}}{\limfunc{dist}\left( I_{\theta },J_{\theta ^{\prime }}\right) ^{n-\alpha }%
}\left\vert \widehat{f_{\kappa }}\left( I\right) \right\vert \left\vert 
\widehat{g_{\kappa }}\left( J\right) \right\vert \lesssim \sqrt{%
A_{2}^{\alpha }}\left\vert \widehat{f_{\kappa }}\left( I\right) \right\vert
\left\vert \widehat{g_{\kappa }}\left( J\right) \right\vert .
\end{eqnarray*}%
Now we sum over all the children of $J$ and $I$ satisfying $2^{-\rho }\ell
\left( I\right) \leq \ell \left( J\right) \leq 2^{\rho }\ell \left( I\right) 
$ for which $J\subset 3I$ to obtain that%
\begin{equation*}
A_{1}\lesssim \left( \mathfrak{T}_{T^{\alpha }}^{\kappa ,s}+\mathfrak{T}%
_{T^{\alpha ,\ast }}^{\kappa ,-s}+\mathcal{WBP}_{T^{\alpha }}^{\left( \kappa
,\kappa \right) ,s}\left( \sigma ,\omega \right) +\sqrt{A_{2}^{\alpha }}%
\right) \sum_{\substack{ I,J\in \mathcal{D}:\ J\subset 3I  \\ 2^{-\rho }\ell
\left( I\right) \leq \ell \left( J\right) \leq 2^{\rho }\ell \left( I\right) 
}}\left\vert \widehat{f_{\kappa }}\left( I\right) \right\vert \left\vert 
\widehat{g_{\kappa }}\left( J\right) \right\vert \ .
\end{equation*}%
It is at this point that the Sobolev norms make their appearance, through an
application of the Cauchy-Schwarz inequality to obtain%
\begin{eqnarray*}
&&\sum_{\substack{ I,J\in \mathcal{D}:\ J\subset 3I  \\ 2^{-\rho }\ell
\left( I\right) \leq \ell \left( J\right) \leq 2^{\rho }\ell \left( I\right) 
}}\left\vert \widehat{f_{\kappa }}\left( I\right) \right\vert \left\vert 
\widehat{g_{\kappa }}\left( J\right) \right\vert \leq C_{\rho }\left( \sum 
_{\substack{ I,J\in \mathcal{D}:\ J\subset 3I  \\ 2^{-\rho }\ell \left(
I\right) \leq \ell \left( J\right) \leq 2^{\rho }\ell \left( I\right) }}%
\left\vert \widehat{f_{\kappa }}\left( I\right) \right\vert ^{2}\ell \left(
I\right) ^{-2s}\right) ^{\frac{1}{2}}\left( \sum_{\substack{ I,J\in \mathcal{%
D}:\ J\subset 3I  \\ 2^{-\rho }\ell \left( I\right) \leq \ell \left(
J\right) \leq 2^{\rho }\ell \left( I\right) }}\left\vert \widehat{g_{\kappa }%
}\left( J\right) \right\vert ^{2}\ell \left( J\right) ^{2s}\right) ^{\frac{1%
}{2}} \\
&&\ \ \ \ \ \ \ \ \ \ \ \ \ \ \ \ \ \ \ \ \ \ \ \ \ \ \ \ \ \ \lesssim
\left\Vert f\right\Vert _{W_{\func{dyad}}^{s}\left( \sigma \right)
}\left\Vert g\right\Vert _{W_{\func{dyad}}^{-s}\left( \omega \right) }\ .
\end{eqnarray*}%
This completes our proof of the bound for the second term in (\ref{routine'}%
), save for the deferral of term $A_{3}$, which we bound below.

\bigskip

Now we turn to the sum of separated cubes in (\ref{routine'}). We split the
pairs $\left( I,J\right) \in \mathcal{D}^{\sigma }\times \mathcal{D}^{\omega
}$ occurring in the first term in (\ref{routine'}) into two groups, those
with side length of $J$ smaller than side length of $I$, and those with side
length of $I$ smaller than side length of $J$, treating only the former
case, the latter being symmetric. Thus we prove the following bound:%
\begin{equation*}
\mathcal{A}\left( f,g\right) \equiv \sum_{\substack{ I,J\in \mathcal{D}  \\ %
I\cap J=\emptyset \text{ and }\ell \left( J\right) \leq 2^{-\rho }\ell
\left( I\right) }}\left\vert \left\langle T_{\sigma }^{\alpha }\left(
\bigtriangleup _{I;\kappa }^{\sigma }f\right) ,\bigtriangleup _{J;\kappa
}^{\omega }g\right\rangle _{\omega }\right\vert \lesssim \sqrt{A_{2}^{\alpha
}}\left\Vert f\right\Vert _{W_{\func{dyad}}^{s}\left( \sigma \right)
}\left\Vert g\right\Vert _{W_{\func{dyad}}^{-s}\left( \omega \right) }.
\end{equation*}%
We apply the `pivotal' bound (\ref{piv bound}) from the Energy Lemma to
estimate the inner product $\left\langle T_{\sigma }^{\alpha }\left(
\bigtriangleup _{I;\kappa }^{\sigma }f\right) ,\bigtriangleup _{J;\kappa
}^{\omega }g\right\rangle _{\omega }$ and obtain,%
\begin{equation*}
\left\vert \left\langle T_{\sigma }^{\alpha }\left( \bigtriangleup
_{I;\kappa }^{\sigma }f\right) ,\bigtriangleup _{J;\kappa }^{\omega
}g\right\rangle _{\omega }\right\vert \lesssim \mathrm{P}_{\kappa }^{\alpha
}\left( J,\left\vert \bigtriangleup _{I;\kappa }^{\sigma }f\right\vert
\sigma \right) \ell \left( J\right) ^{-s}\sqrt{\left\vert J\right\vert
_{\omega }}\left\Vert \bigtriangleup _{J;\kappa }^{\omega }g\right\Vert _{W_{%
\func{dyad}}^{-s}\left( \omega \right) }\,.
\end{equation*}%
Denote by $\limfunc{dist}$ the $\ell ^{\infty }$ distance in $\mathbb{R}^{n}$%
: $\limfunc{dist}\left( x,y\right) =\max_{1\leq j\leq n}\left\vert
x_{j}-y_{j}\right\vert $. We now estimate separately the long-range and
mid-range cases where $\limfunc{dist}\left( J,I\right) \geq \ell \left(
I\right) $ holds or not, and we decompose $\mathcal{A}$ accordingly:%
\begin{equation*}
\mathcal{A}\left( f,g\right) \equiv \mathcal{A}^{\limfunc{long}}\left(
f,g\right) +\mathcal{A}^{\limfunc{mid}}\left( f,g\right) .
\end{equation*}

\bigskip

\textbf{The long-range case}: We begin with the case where $\limfunc{dist}%
\left( J,I\right) $ is at least $\ell \left( I\right) $, i.e. $J\cap
3I=\emptyset $. Since $J$ and $I$ are separated by at least $\max \left\{
\ell \left( J\right) ,\ell \left( I\right) \right\} $, we have the inequality%
\begin{equation*}
\mathrm{P}_{\kappa }^{\alpha }\left( J,\left\vert \bigtriangleup _{I;\kappa
}^{\sigma }f\right\vert \sigma \right) \approx \int_{I}\frac{\ell \left(
J\right) }{\left\vert y-c_{J}\right\vert ^{n+1-\alpha }}\left\vert
\bigtriangleup _{I;\kappa }^{\sigma }f\left( y\right) \right\vert d\sigma
\left( y\right) \lesssim \left\Vert \bigtriangleup _{I;\kappa }^{\sigma
}f\right\Vert _{W_{\func{dyad}}^{s}\left( \sigma \right) }\ell \left(
I\right) ^{s}\frac{\ell \left( J\right) \sqrt{\left\vert I\right\vert
_{\sigma }}}{\limfunc{dist}\left( I,J\right) ^{n+1-\alpha }},
\end{equation*}%
since $\int_{I}\left\vert \bigtriangleup _{I;\kappa }^{\sigma }f\left(
y\right) \right\vert d\sigma \left( y\right) \leq \left\Vert \bigtriangleup
_{I;\kappa }^{\sigma }f\right\Vert _{L^{2}\left( \sigma \right) }\sqrt{%
\left\vert I\right\vert _{\sigma }}$ and $\left\Vert \bigtriangleup
_{I;\kappa }^{\sigma }f\right\Vert _{W_{\func{dyad}}^{s}\left( \sigma
\right) }=\ell \left( I\right) ^{-s}\left\Vert \bigtriangleup _{I;\kappa
}^{\sigma }f\right\Vert _{L^{2}\left( \sigma \right) }$. Thus with $A\left(
f,g\right) =\mathcal{A}^{\limfunc{long}}\left( f,g\right) $ we have%
\begin{eqnarray*}
A\left( f,g\right) &\lesssim &\sum_{I\in \mathcal{D}}\sum_{J\;:\;\ell \left(
J\right) \leq \ell \left( I\right) :\ \limfunc{dist}\left( I,J\right) \geq
\ell \left( I\right) }\left\Vert \bigtriangleup _{I;\kappa }^{\sigma
}f\right\Vert _{W_{\func{dyad}}^{s}\left( \sigma \right) }\left\Vert
\bigtriangleup _{J;\kappa }^{\omega }g\right\Vert _{W_{\func{dyad}%
}^{-s}\left( \omega \right) } \\
&&\ \ \ \ \ \ \ \ \ \ \ \ \ \ \ \times \left( \frac{\ell \left( I\right) }{%
\ell \left( J\right) }\right) ^{s}\frac{\ell \left( J\right) }{\limfunc{dist}%
\left( I,J\right) ^{n+1-\alpha }}\sqrt{\left\vert I\right\vert _{\sigma }}%
\sqrt{\left\vert J\right\vert _{\omega }} \\
&\equiv &\sum_{\left( I,J\right) \in \mathcal{P}}\left\Vert \bigtriangleup
_{I;\kappa }^{\sigma }f\right\Vert _{W_{\func{dyad}}^{s}\left( \sigma
\right) }\left\Vert \bigtriangleup _{J;\kappa }^{\omega }g\right\Vert _{W_{%
\func{dyad}}^{-s}\left( \omega \right) }A\left( I,J\right) ; \\
\text{with }A\left( I,J\right) &\equiv &\left( \frac{\ell \left( I\right) }{%
\ell \left( J\right) }\right) ^{s}\frac{\ell \left( J\right) }{\limfunc{dist}%
\left( I,J\right) ^{n+1-\alpha }}\sqrt{\left\vert I\right\vert _{\sigma }}%
\sqrt{\left\vert J\right\vert _{\omega }}; \\
\text{ and }\mathcal{P} &\equiv &\left\{ \left( I,J\right) \in \mathcal{D}%
\times \mathcal{D}:\ell \left( J\right) \leq \ell \left( I\right) \text{ and 
}\limfunc{dist}\left( I,J\right) \geq \ell \left( I\right) \right\} .
\end{eqnarray*}%
Now let $\mathcal{D}_{N}\equiv \left\{ K\in \mathcal{D}:\ell \left( K\right)
=2^{N}\right\} $ for each $N\in \mathbb{Z}$. For $N\in \mathbb{Z}$ and $t\in 
\mathbb{Z}_{+}$, we further decompose $A\left( f,g\right) $ by pigeonholing
the sidelengths of $I$ and $J$ by $2^{N}$ and $2^{N-t}$ respectively: 
\begin{eqnarray*}
A\left( f,g\right) &=&\sum_{t=0}^{\infty }\sum_{N\in \mathbb{Z}%
}A_{N}^{t}\left( f,g\right) ; \\
A_{N}^{t}\left( f,g\right) &\equiv &\sum_{\left( I,J\right) \in \mathcal{P}%
_{N}^{t}}\left\Vert \bigtriangleup _{I;\kappa }^{\sigma }f\right\Vert _{W_{%
\func{dyad}}^{s}\left( \sigma \right) }\left\Vert \bigtriangleup _{J;\kappa
}^{\omega }g\right\Vert _{W_{\func{dyad}}^{-s}\left( \omega \right) }A\left(
I,J\right) \\
\text{where }\mathcal{P}_{N}^{t} &\equiv &\left\{ \left( I,J\right) \in 
\mathcal{D}_{N}\times \mathcal{D}_{N-t}:\limfunc{dist}\left( I,J\right) \geq
\ell \left( I\right) \right\} .
\end{eqnarray*}%
Now $A_{N}^{t}\left( f,g\right) =A_{N}^{t}\left( \mathsf{P}_{N;\kappa
}^{\sigma }f,\mathsf{P}_{N-t;\kappa }^{\omega }g\right) $ where $\mathsf{P}%
_{M;\kappa }^{\mu }=\dsum\limits_{K\in \mathcal{D}_{M}}\bigtriangleup
_{K;\kappa }^{\mu }$ denotes Alpert projection onto the linear span $%
\limfunc{Span}\left\{ h_{K;\kappa }^{\mu ,a}\right\} _{K\in \mathcal{D}%
_{M},a\in \Gamma _{K,n,\kappa }}$, and so by orthogonality of the
projections $\left\{ \mathsf{P}_{M;\kappa }^{\mu }\right\} _{M\in \mathbb{Z}%
} $ we have%
\begin{eqnarray*}
\left\vert \sum_{N\in \mathbb{Z}}A_{N}^{t}\left( f,g\right) \right\vert
&=&\sum_{N\in \mathbb{Z}}\left\vert A_{N}^{t}\left( \mathsf{P}_{N;\kappa
}^{\sigma }f,\mathsf{P}_{N-t;\kappa }^{\omega }g\right) \right\vert \leq
\sum_{N\in \mathbb{Z}}\left\Vert A_{N}^{t}\right\Vert \left\Vert \mathsf{P}%
_{N;\kappa }^{\sigma }f\right\Vert _{W_{\func{dyad}}^{s}\left( \sigma
\right) }\left\Vert \mathsf{P}_{N-t;\kappa }^{\omega }g\right\Vert _{W_{%
\func{dyad}}^{-s}\left( \omega \right) } \\
&\leq &\left\{ \sup_{N\in \mathbb{Z}}\left\Vert A_{N}^{t}\right\Vert
\right\} \left( \sum_{N\in \mathbb{Z}}\left\Vert \mathsf{P}_{N;\kappa
}^{\sigma }f\right\Vert _{W_{\func{dyad}}^{s}\left( \sigma \right)
}^{2}\right) ^{\frac{1}{2}}\left( \sum_{N\in \mathbb{Z}}\left\Vert \mathsf{P}%
_{N-t;\kappa }^{\omega }g\right\Vert _{W_{\func{dyad}}^{-s}\left( \omega
\right) }^{2}\right) ^{\frac{1}{2}} \\
&\leq &\left\{ \sup_{N\in \mathbb{Z}}\left\Vert A_{N}^{t}\right\Vert
\right\} \left\Vert f\right\Vert _{W_{\func{dyad}}^{s}\left( \sigma \right)
}\left\Vert g\right\Vert _{W_{\func{dyad}}^{-s}\left( \omega \right) }.
\end{eqnarray*}%
Thus it suffices to show an estimate uniform in $N$ with geometric decay in $%
t$, and we will show%
\begin{equation}
\left\vert A_{N}^{t}\left( f,g\right) \right\vert \leq C2^{-t}\sqrt{%
A_{2}^{\alpha }}\left\Vert f\right\Vert _{W_{\func{dyad}}^{s}\left( \sigma
\right) }\left\Vert g\right\Vert _{W_{\func{dyad}}^{-s}\left( \omega \right)
},\ \ \ \ \ \text{for }t\geq 0\text{ and }N\in \mathbb{Z}.  \label{AsN}
\end{equation}

We now pigeonhole the distance between $I$ and $J$:%
\begin{eqnarray*}
A_{N}^{t}\left( f,g\right) &=&\dsum\limits_{\ell =0}^{\infty }A_{N,\ell
}^{t}\left( f,g\right) ; \\
A_{N,\ell }^{t}\left( f,g\right) &\equiv &\sum_{\left( I,J\right) \in 
\mathcal{P}_{N,\ell }^{t}}\left\Vert \bigtriangleup _{I;\kappa }^{\sigma
}f\right\Vert _{W_{\func{dyad}}^{s}\left( \sigma \right) }\left\Vert
\bigtriangleup _{J;\kappa }^{\omega }g\right\Vert _{W_{\func{dyad}%
}^{-s}\left( \omega \right) }A\left( I,J\right) \\
\text{where }\mathcal{P}_{N,\ell }^{t} &\equiv &\left\{ \left( I,J\right)
\in \mathcal{D}_{N}\times \mathcal{D}_{N-s}:\limfunc{dist}\left( I,J\right)
\approx 2^{N+\ell }\right\} .
\end{eqnarray*}%
If we define $\mathcal{H}\left( A_{N,\ell }^{t}\right) $ to be the bilinear
form on $\ell ^{2}\times \ell ^{2}$ with matrix $\left[ A\left( I,J\right) %
\right] _{\left( I,J\right) \in \mathcal{P}_{N,\ell }^{t}}$, then it remains
to show that the norm $\left\Vert \mathcal{H}\left( A_{N,\ell }^{t}\right)
\right\Vert _{\ell ^{2}\rightarrow \ell ^{2}}$ is bounded by $C2^{-t\left(
1-s\right) -\ell }\sqrt{A_{2}^{\alpha }}$. In turn, this is equivalent to
showing that the norm $\left\Vert \mathcal{H}\left( B_{N,\ell }^{t}\right)
\right\Vert _{\ell ^{2}\rightarrow \ell ^{2}}$ of the bilinear form $%
\mathcal{H}\left( B_{N,\ell }^{t}\right) \equiv \mathcal{H}\left( A_{N,\ell
}^{t}\right) ^{\limfunc{tr}}\mathcal{H}\left( A_{N,\ell }^{t}\right) $ on
the sequence space $\ell ^{2}$ is bounded by $C^{2}2^{-2t\left( 1-s\right)
-2\ell }A_{2}^{\alpha }$. Here $\mathcal{H}\left( B_{N,\ell }^{t}\right) $
is the quadratic form with matrix kernel $\left[ B_{N,\ell }^{t}\left(
J,J^{\prime }\right) \right] _{J,J^{\prime }\in \mathcal{D}_{N-s}}$ having
entries:%
\begin{equation*}
B_{N,\ell }^{t}\left( J,J^{\prime }\right) \equiv \sum_{I\in \mathcal{D}%
_{N}:\ \limfunc{dist}\left( I,J\right) \approx \limfunc{dist}\left(
I,J^{\prime }\right) \approx 2^{N+\ell }}A\left( I,J\right) A\left(
I,J^{\prime }\right) ,\ \ \ \ \ \text{for }J,J^{\prime }\in \mathcal{D}%
_{N-t}.
\end{equation*}

We are reduced to showing,%
\begin{equation*}
\left\Vert \mathcal{H}\left( B_{N,\ell }^{t}\right) \right\Vert _{\ell
^{2}\rightarrow \ell ^{2}}\leq C2^{-2t\left( 1-s\right) -2\ell
}A_{2}^{\alpha }\ \ \ \ \text{for }t\geq 0\text{, }\ell \geq 0\text{ and }%
N\in \mathbb{Z},
\end{equation*}%
which is an estimate in which Alpert projections no longer play a role, and
this estimate is proved as in \cite{NTV4}, and more precisely as in \cite%
{SaShUr7}. Note that the only arithmetic difference in the argument here is
that in the estimates, the parameter $t>0$ is replaced by $t\left(
1-s\right) >0$, which has no effect on the conclusion. This completes our
proof of the long-range estimate%
\begin{equation*}
\mathcal{A}^{\limfunc{long}}\left( f,g\right) \lesssim \sqrt{A_{2}^{\alpha }}%
\left\Vert f\right\Vert _{W_{\func{dyad}}^{s}\left( \sigma \right)
}\left\Vert g\right\Vert _{W_{\func{dyad}}^{-s}\left( \omega \right) }\ .
\end{equation*}

\bigskip

At this point we pause to complete the bound for $A_{3}$ in the second term
in (\ref{routine'}). Indeed, the deferred term $A_{3}$ can be handled using
the above argument since $3J\cap I=\emptyset =J\cap 3I$ implies that we can
use the Energy Lemma as we did above.

\bigskip

\textbf{The mid range case}: Let%
\begin{equation*}
\mathcal{P}\equiv \left\{ \left( I,J\right) \in \mathcal{D}\times \mathcal{D}%
:J\text{ is good},\ \ell \left( J\right) \leq 2^{-\rho }\ell \left( I\right)
,\text{ }J\subset 3I\setminus I\right\} .
\end{equation*}%
For $\left( I,J\right) \in \mathcal{P}$, the `pivotal' bound (\ref{piv bound}%
) from the Energy Lemma gives%
\begin{equation*}
\left\vert \left\langle T_{\sigma }^{\alpha }\left( \bigtriangleup
_{I;\kappa }^{\sigma }f\right) ,\bigtriangleup _{J}^{\omega }g\right\rangle
_{\omega }\right\vert \lesssim \mathrm{P}_{\kappa }^{\alpha }\left(
J,\left\vert \bigtriangleup _{I;\kappa }^{\sigma }f\right\vert \sigma
\right) \ell \left( J\right) ^{-s}\sqrt{\left\vert J\right\vert _{\omega }}%
\left\Vert \bigtriangleup _{J;\kappa }^{\omega }g\right\Vert _{W_{\limfunc{%
dyad}}^{-s}\left( \omega \right) }\,.
\end{equation*}%
Now we pigeonhole the lengths of $I$ and $J$ and the distance between them
by defining%
\begin{equation*}
\mathcal{P}_{N,d}^{t}\equiv \left\{ \left( I,J\right) \in \mathcal{D}\times 
\mathcal{D}:J\text{ is good},\ \ell \left( I\right) =2^{N},\ \ell \left(
J\right) =2^{N-t},\text{ }J\subset 3I\setminus I,\ 2^{d-1}\leq \limfunc{dist}%
\left( I,J\right) \leq 2^{d}\right\} .
\end{equation*}%
Note that the closest a good cube $J$ can come to $I$ is determined by the
goodness inequality, which gives this bound for $2^{d}\geq \limfunc{dist}%
\left( I,J\right) $: 
\begin{eqnarray*}
&&2^{d}\geq \frac{1}{2}\ell \left( I\right) ^{1-\varepsilon }\ell \left(
J\right) ^{\varepsilon }=\frac{1}{2}2^{N\left( 1-\varepsilon \right)
}2^{\left( N-t\right) \varepsilon }=\frac{1}{2}2^{N-\varepsilon t}; \\
&&\text{which implies }N-\varepsilon t-1\leq d\leq N,
\end{eqnarray*}%
where the last inequality holds because we are in the case of the mid-range
term. Thus we have%
\begin{eqnarray*}
&&\dsum\limits_{\left( I,J\right) \in \mathcal{P}}\left\vert \left\langle
T_{\sigma }^{\alpha }\left( \bigtriangleup _{I;\kappa }^{\sigma }f\right)
,\bigtriangleup _{J;\kappa }^{\omega }g\right\rangle _{\omega }\right\vert
\lesssim \dsum\limits_{\left( I,J\right) \in \mathcal{P}}\left\Vert
\bigtriangleup _{J;\kappa }^{\omega }g\right\Vert _{W_{\limfunc{dyad}%
}^{-s}\left( \omega \right) }\mathrm{P}_{\kappa }^{\alpha }\left(
J,\left\vert \bigtriangleup _{I;\kappa }^{\sigma }f\right\vert \sigma
\right) \ell \left( J\right) ^{-s}\sqrt{\left\vert J\right\vert _{\omega }}
\\
&&\ \ \ \ \ =\dsum\limits_{t=\rho }^{\infty }\ \sum_{N\in \mathbb{Z}}\
\sum_{d=N-\varepsilon t-1}^{N}\ \sum_{\left( I,J\right) \in \mathcal{P}%
_{N,d}^{t}}\ \left\Vert \bigtriangleup _{J;\kappa }^{\omega }g\right\Vert
_{W_{\limfunc{dyad}}^{-s}\left( \omega \right) }\mathrm{P}_{\kappa }^{\alpha
}\left( J,\left\vert \bigtriangleup _{I;\kappa }^{\sigma }f\right\vert
\sigma \right) \ell \left( J\right) ^{-s}\sqrt{\left\vert J\right\vert
_{\omega }}.
\end{eqnarray*}%
Now we use%
\begin{eqnarray*}
\mathrm{P}^{\alpha }\left( J,\left\vert \bigtriangleup _{I;\kappa }^{\sigma
}f\right\vert \sigma \right) &=&\int_{I}\frac{\ell \left( J\right) }{\left(
\ell \left( J\right) +\left\vert y-c_{J}\right\vert \right) ^{n+1-\alpha }}%
\left\vert \bigtriangleup _{I;\kappa }^{\sigma }f\left( y\right) \right\vert
d\sigma \left( y\right) \\
&\lesssim &\frac{2^{N-t}}{2^{d\left( n+1-\alpha \right) }}\left\Vert
\bigtriangleup _{I}^{\sigma }f\right\Vert _{W_{\limfunc{dyad}}^{s}\left(
\sigma \right) }\ell \left( I\right) ^{s}\sqrt{\left\vert I\right\vert
_{\sigma }}
\end{eqnarray*}%
and apply Cauchy-Schwarz in $J$ and use $J\subset 3I\setminus I$ to get%
\begin{eqnarray*}
&&\dsum\limits_{\left( I,J\right) \in \mathcal{P}}\left\vert \left\langle
T_{\sigma }^{\alpha }\left( \bigtriangleup _{I;\kappa }^{\sigma }f\right)
,\bigtriangleup _{J;\kappa }^{\omega }g\right\rangle _{\omega }\right\vert \\
&\lesssim &\dsum\limits_{t=\rho }^{\infty }\ \sum_{N\in \mathbb{Z}}\
\sum_{d=N-\varepsilon t-1}^{N}\ \sum_{I\in \mathcal{D}_{N}}\frac{%
2^{N-t\left( 1-s\right) }2^{N\left( n-\alpha \right) }}{2^{d\left(
n+1-\alpha \right) }}\left\Vert \bigtriangleup _{I;\kappa }^{\sigma
}f\right\Vert _{W_{\limfunc{dyad}}^{s}\left( \sigma \right) }\frac{\sqrt{%
\left\vert I\right\vert _{\sigma }}\sqrt{\left\vert 3I\setminus I\right\vert
_{\omega }}}{2^{N\left( n-\alpha \right) }} \\
&&\ \ \ \ \ \ \ \ \ \ \ \ \ \ \ \ \ \ \ \ \ \ \ \ \ \ \ \ \ \ \times \sqrt{%
\sum_{\substack{ J\in \mathcal{D}_{N-t}  \\ J\subset 3I\setminus I\text{ and 
}\limfunc{dist}\left( I,J\right) \approx 2^{d}}}\left\Vert \bigtriangleup
_{J;\kappa }^{\omega }g\right\Vert _{W_{\limfunc{dyad}}^{-s}\left( \omega
\right) }^{2}} \\
&\lesssim &\left( 1+\varepsilon t\right) \dsum\limits_{t=\rho }^{\infty }\
\sum_{N\in \mathbb{Z}}\frac{2^{N-t\left( 1-s\right) }2^{N\left( n-\alpha
\right) }}{2^{\left( N-\varepsilon t\right) \left( n+1-\alpha \right) }}%
\sqrt{A_{2}^{\alpha }}\sum_{I\in \mathcal{D}_{N}}\left\Vert \bigtriangleup
_{I;\kappa }^{\sigma }f\right\Vert _{W_{\limfunc{dyad}}^{s}\left( \sigma
\right) }\sqrt{\sum_{\substack{ J\in \mathcal{D}_{N-t}  \\ J\subset
3I\setminus I}}\left\Vert \bigtriangleup _{J;\kappa }^{\omega }g\right\Vert
_{W_{\limfunc{dyad}}^{-s}\left( \omega \right) }^{2}} \\
&\lesssim &\left( 1+\varepsilon t\right) \dsum\limits_{t=\rho }^{\infty
}2^{-t\left[ 1-s-\varepsilon \left( n+1-\alpha \right) \right] }\sqrt{%
A_{2}^{\alpha }}\left\Vert f\right\Vert _{W_{\limfunc{dyad}}^{s}\left(
\sigma \right) }\left\Vert g\right\Vert _{W_{\limfunc{dyad}}^{-s}\left(
\omega \right) }\lesssim \sqrt{A_{2}^{\alpha }}\left\Vert f\right\Vert _{W_{%
\limfunc{dyad}}^{s}\left( \sigma \right) }\left\Vert g\right\Vert _{W_{%
\limfunc{dyad}}^{-s}\left( \omega \right) },
\end{eqnarray*}%
where in the third line above we have used $\sum_{d=N-\varepsilon
t-1}^{N}1\lesssim 1+\varepsilon t$, and in the last line 
\begin{equation*}
\frac{2^{N-t\left( 1-s\right) }2^{N\left( n-\alpha \right) }}{2^{\left(
N-\varepsilon t\right) \left( n+1-\alpha \right) }}=2^{-t\left[
1-s-\varepsilon \left( n+1-\alpha \right) \right] },
\end{equation*}%
followed by Cauchy-Schwarz in $I$ and $N$, using that we have bounded
overlap in the triples of $I$ for $I\in \mathcal{D}_{N}$. We have also
assumed here that $0<\varepsilon <\frac{1-s}{n+1-\alpha }$, and this
completes the proof of (\ref{routine'}).
\end{proof}

\subsection{Shifted corona decomposition}

To prove (\ref{red proving}), we recall the \emph{Shifted Corona
Decomposition}, as opposed to the \emph{parallel} corona decomposition used
in \cite{Saw6}, associated with the Calder\'{o}n-Zygmund $\kappa $-pivotal
stopping cubes $\mathcal{F}$ introduced above. But first we must invoke
standard arguments, using the full $\kappa $-cube testing conditions (\ref%
{full testing}),\ to permit us to assume that $f$ and $g$ are supported in a
finite union of dyadic cubes $F_{0}$ on which they have vanishing moments of
order less than $\kappa $.

\subsubsection{The initial reduction using full testing}

For this construction, we will follow the treatment as given in \cite%
{SaShUr12}. We first restrict $f$ and $g$ to be supported in a large common
cube $Q_{\infty }$. Then we cover $Q_{\infty }$ with $2^{n}$ pairwise
disjoint cubes $I_{\infty }\in \mathcal{D}$ with $\ell \left( I_{\infty
}\right) =\ell \left( Q_{\infty }\right) $. We now claim we can reduce
matters to consideration of the $2^{2n}$ forms%
\begin{equation*}
\sum_{I\in \mathcal{D}:\ I\subset I_{\infty }}\sum_{J\in \mathcal{D}:\
J\subset J_{\infty }}\int_{\mathbb{R}^{n}}\left( T_{\sigma }^{\alpha
}\bigtriangleup _{I;\kappa }^{\sigma }f\right) \bigtriangleup _{J;\kappa
}^{\omega }gd\omega ,
\end{equation*}%
as both $I_{\infty }$ and $J_{\infty }$ range over the dyadic cubes as
above. First we note that when $I_{\infty }$ and $J_{\infty }$ are distinct,
the corresponding form is included in the sum $\mathsf{B}_{\cap }\left(
f,g\right) +\mathsf{B}_{\diagup }\left( f,g\right) $, and hence controlled.
Thus it remains to consider the forms with $I_{\infty }=J_{\infty }$ and use
the cubes $I_{\infty }$ as the starting cubes in our corona construction
below. Indeed, we have from (\ref{Alpert expan}) that%
\begin{eqnarray*}
f &=&\sum_{I\in \mathcal{D}:\ I\subset I_{\infty }}\bigtriangleup _{I;\kappa
}^{\sigma }f+\mathbb{E}_{I_{\infty };\kappa }^{\sigma }f, \\
g &=&\sum_{J\in \mathcal{D}:\ J\subset I_{\infty }}\bigtriangleup _{J;\kappa
}^{\omega }g+\mathbb{E}_{I_{\infty };\kappa }^{\omega }g,
\end{eqnarray*}%
which can then be used to write the bilinear form $\int \left( T_{\sigma
}^{\alpha }f\right) gd\omega $ as a sum of the forms%
\begin{eqnarray}
&&\ \ \ \ \ \ \ \ \ \ \ \ \ \ \ \int_{\mathbb{R}^{n}}\left( T_{\sigma
}^{\alpha }f\right) gd\omega =\sum_{I_{\infty }}\left\{ \sum_{I,J\in 
\mathcal{D}:\ I,J\subset I_{\infty }}\int_{\mathbb{R}^{n}}\left( T_{\sigma
}^{\alpha }\bigtriangleup _{I;\kappa }^{\sigma }f\right) \bigtriangleup
_{J;\kappa }^{\omega }gd\omega \right.  \label{sum of forms} \\
&&\left. +\sum_{I\in \mathcal{D}:\ I\subset I_{\infty }}\int_{\mathbb{R}%
^{n}}\left( T_{\sigma }^{\alpha }\bigtriangleup _{I;\kappa }^{\sigma
}f\right) \mathbb{E}_{I_{\infty };\kappa }^{\omega }gd\omega +\sum_{J\in 
\mathcal{D}:\ J\subset I_{\infty }}\int_{\mathbb{R}^{n}}\left( T_{\sigma
}^{\alpha }\mathbb{E}_{I_{\infty };\kappa }^{\sigma }f\right) \bigtriangleup
_{J;\kappa }^{\omega }gd\omega +\int_{\mathbb{R}^{n}}\left( T_{\sigma
}^{\alpha }\mathbb{E}_{I_{\infty };\kappa }^{\sigma }f\right) \mathbb{E}%
_{I_{\infty };\kappa }^{\omega }gd\omega \right\} ,  \notag
\end{eqnarray}%
taken over the $2^{n}$ cubes $I_{\infty }$ above.

The second, third and fourth sums in (\ref{sum of forms}) can be controlled
by the full testing conditions (\ref{full testing}), e.g.%
\begin{eqnarray}
&&\left\vert \sum_{I\in \mathcal{D}:\ I\subset I_{\infty }}\int_{\mathbb{R}%
^{n}}\left( T_{\sigma }^{\alpha }\bigtriangleup _{I;\kappa }^{\sigma
}f\right) \mathbb{E}_{I_{\infty };\kappa }^{\omega }gd\omega \right\vert
=\left\vert \int_{I_{\infty }}\left( \sum_{I\in \mathcal{D}:\ I\subset
I_{\infty }}\bigtriangleup _{I;\kappa }^{\sigma }f\right) T_{\omega
}^{\alpha ,\ast }\left( \mathbb{E}_{I_{\infty };\kappa }^{\omega }g\right)
d\sigma \right\vert  \label{top control} \\
&\leq &\left\Vert \sum_{I\in \mathcal{D}:\ I\subset I_{\infty
}}\bigtriangleup _{I;\kappa }^{\sigma }f\right\Vert _{W_{\limfunc{dyad}%
}^{s}\left( \sigma \right) }\left\Vert \mathbf{1}_{I_{\infty }}T_{\omega
}^{\alpha ,\ast }\left( \mathbb{E}_{I_{\infty };\kappa }^{\omega }g\right)
\right\Vert _{W_{\limfunc{dyad}}^{-s}\left( \sigma \right) }  \notag \\
&=&\left\Vert \sum_{I\in \mathcal{D}:\ I\subset I_{\infty }}\bigtriangleup
_{I;\kappa }^{\sigma }f\right\Vert _{W_{\limfunc{dyad}}^{s}\left( \sigma
\right) }\left\Vert \mathbb{E}_{I_{\infty };\kappa }^{\omega }g\right\Vert
_{L^{\infty }}\left\Vert \mathbf{1}_{I_{\infty }}T_{\omega }^{\alpha ,\ast
}\left( \frac{\mathbb{E}_{I_{\infty };\kappa }^{\omega }g}{\left\Vert 
\mathbb{E}_{I_{\infty };\kappa }^{\omega }g\right\Vert _{L^{\infty }}}%
\right) \right\Vert _{W_{\limfunc{dyad}}^{-s}\left( \sigma \right) }  \notag
\\
&\lesssim &\mathfrak{T}_{T_{\omega }^{\alpha ,\ast }}^{\kappa }\left\Vert
f\right\Vert _{W_{\limfunc{dyad}}^{s}\left( \sigma \right) }\left\Vert
g\right\Vert _{W_{\limfunc{dyad}}^{-s}\left( \omega \right) }\ ,  \notag
\end{eqnarray}%
and similarly for the third and fourth sum.

\subsubsection{The corona and shifted corona projections}

Given a grid $\mathcal{F}\subset \mathcal{D}$ and its associated coronas $%
\mathcal{C}_{F}$, define the two Alpert corona projections, 
\begin{equation*}
\mathsf{P}_{\mathcal{C}_{F}}^{\sigma }\equiv \sum_{I\in \mathcal{C}%
_{F}}\bigtriangleup _{I;\kappa _{1}}^{\sigma }\text{ and }\mathsf{P}_{%
\mathcal{C}_{F}^{\tau -\limfunc{shift}}}^{\omega }\equiv \sum_{J\in \mathcal{%
C}_{F}^{\tau -\limfunc{shift}}}\bigtriangleup _{J;\kappa _{2}}^{\omega }\ ,
\end{equation*}%
where%
\begin{eqnarray}
\mathcal{C}_{F}^{\tau -\limfunc{shift}} &\equiv &\left[ \mathcal{C}%
_{F}\setminus \mathcal{N}_{\mathcal{D}}^{\tau }\left( F\right) \right] \cup
\dbigcup\limits_{F^{\prime }\in \mathfrak{C}_{\mathcal{F}}\left( F\right) }%
\left[ \mathcal{N}_{\mathcal{D}}^{\tau }\left( F^{\prime }\right) \setminus 
\mathcal{N}_{\mathcal{D}}^{\tau }\left( F\right) \right] ;  \label{def shift}
\\
\text{where }\mathcal{N}_{\mathcal{D}}^{\tau }\left( F\right) &\equiv
&\left\{ J\in \mathcal{D}:J\subset F\text{ and }\ell \left( J\right)
>2^{-\tau }\ell \left( F\right) \right\} .  \notag
\end{eqnarray}%
Thus the \emph{shifted} corona $\mathcal{C}_{F}^{\tau -\limfunc{shift}}$ has
the top $\tau $ levels from $\mathcal{C}_{F}$ removed, and includes the
first $\tau $ levels from each of its $\mathcal{F}$-children, except if they
have already been removed. We must restrict the Alpert supports of $f$ and $%
g $ to $\limfunc{good}$ cubes, as defined e.g. in \cite{Saw6}, so that with
the superscript $\limfunc{good}$ denoting this restriction,%
\begin{equation*}
\mathsf{P}_{\mathcal{C}_{F}}^{\sigma }f=\sum_{I\in \mathcal{C}_{F}^{\limfunc{%
good}}}\bigtriangleup _{I;\kappa _{1}}^{\sigma }\text{ and }\mathsf{P}_{%
\mathcal{C}_{F}^{\tau -\limfunc{shift}}}^{\omega }g=\sum_{J\in \mathcal{C}%
_{F}^{\limfunc{good},\tau -\limfunc{shift}}}\bigtriangleup _{J;\kappa
_{2}}^{\omega }\ ,
\end{equation*}%
where $\mathcal{C}_{F}^{\limfunc{good}}\equiv \mathcal{C}_{F}\cap \mathcal{D}%
^{\limfunc{good}}$ and $\mathcal{C}_{F}^{\limfunc{good},\tau -\limfunc{shift}%
}\equiv \mathcal{C}_{F}^{\tau -\limfunc{shift}}\cap \mathcal{D}^{\limfunc{%
good}}$, and $\mathcal{D}^{\limfunc{good}}$ consists of the $\left(
r,\varepsilon \right) -\func{good}$ cubes in $\mathcal{D}$.

A simple but important property is the fact that the $\tau $-shifted coronas 
$\mathcal{C}_{F}^{\tau -\limfunc{shift}}$ have overlap bounded by $\tau $:%
\begin{equation}
\sum_{F\in \mathcal{F}}\mathbf{1}_{\mathcal{C}_{F}^{\tau -\limfunc{shift}%
}}\left( J\right) \leq \tau ,\ \ \ \ \ J\in \mathcal{D}.  \label{tau overlap}
\end{equation}%
It is convenient, for use in the canonical splitting below, to introduce the
following shorthand notation for $F,G\in \mathcal{F}$:%
\begin{equation*}
\left\langle T_{\sigma }^{\alpha }\left( \mathsf{P}_{\mathcal{C}%
_{F}}^{\sigma }f\right) ,\mathsf{P}_{\mathcal{C}_{G}^{\tau -\limfunc{shift}%
}}^{\omega }g\right\rangle _{\omega }^{\Subset _{\mathbf{\rho }}}\equiv \sum 
_{\substack{ I\in \mathcal{C}_{F}\text{ and }J\in \mathcal{C}_{G}^{\tau -%
\limfunc{shift}}  \\ J\Subset _{\rho }I}}\left\langle T_{\sigma }^{\alpha
}\left( \bigtriangleup _{I;\kappa }^{\sigma }f\right) ,\left( \bigtriangleup
_{J;\kappa }^{\omega }g\right) \right\rangle _{\omega }\ .
\end{equation*}

\subsection{Canonical splitting}

We then proceed with the \emph{Canonical Splitting} as in \cite{SaShUr7},
but with Alpert wavelets in place of Haar wavelets,%
\begin{eqnarray*}
&&\mathsf{B}_{\Subset _{\rho }}\left( f,g\right) =\sum_{F,G\in \mathcal{F}%
}\left\langle T_{\sigma }\left( \mathsf{P}_{\mathcal{C}_{F}}^{\sigma
}f\right) ,\mathsf{P}_{\mathcal{C}_{G}^{\tau -\limfunc{shift}}}^{\omega
}g\right\rangle _{\omega }^{\Subset _{\mathbf{\rho }}} \\
&=&\sum_{F\in \mathcal{F}}\left\langle T_{\sigma }\left( \mathsf{P}_{%
\mathcal{C}_{F}}^{\sigma }f\right) ,\mathsf{P}_{\mathcal{C}_{F}^{\tau -%
\limfunc{shift}}}^{\omega }g\right\rangle _{\omega }^{\Subset _{\mathbf{\rho 
}}}+\sum_{\substack{ F,G\in \mathcal{F}  \\ G\subsetneqq F}}\left\langle
T_{\sigma }\left( \mathsf{P}_{\mathcal{C}_{F}}^{\sigma }f\right) ,\mathsf{P}%
_{\mathcal{C}_{G}^{\tau -\limfunc{shift}}}^{\omega }g\right\rangle _{\omega
}^{\Subset _{\mathbf{\rho }}} \\
&&+\sum_{\substack{ F,G\in \mathcal{F}  \\ G\supsetneqq F}}\left\langle
T_{\sigma }\left( \mathsf{P}_{\mathcal{C}_{F}}^{\sigma }f\right) ,\mathsf{P}%
_{\mathcal{C}_{G}^{\tau -\limfunc{shift}}}^{\omega }g\right\rangle _{\omega
}^{\Subset _{\mathbf{\rho }}}+\sum_{\substack{ F,G\in \mathcal{F}  \\ F\cap
G=\emptyset }}\left\langle T_{\sigma }\left( \mathsf{P}_{\mathcal{C}%
_{F}}^{\sigma }f\right) ,\mathsf{P}_{\mathcal{C}_{G}^{\tau -\limfunc{shift}%
}}^{\omega }g\right\rangle _{\omega }^{\Subset _{\mathbf{\rho }}} \\
&\equiv &\mathsf{T}_{\limfunc{diagonal}}\left( f,g\right) +\mathsf{T}_{%
\limfunc{far}\limfunc{below}}\left( f,g\right) +\mathsf{T}_{\limfunc{far}%
\limfunc{above}}\left( f,g\right) +\mathsf{T}_{\limfunc{disjoint}}\left(
f,g\right) .
\end{eqnarray*}

The two forms $\mathsf{T}_{\limfunc{far}\limfunc{above}}\left( f,g\right) $
and $\mathsf{T}_{\limfunc{disjoint}}\left( f,g\right) $ each vanish just as
in \cite{SaShUr7}, since there are no pairs $\left( I,J\right) \in \mathcal{C%
}_{F}\times \mathcal{C}_{G}^{\mathbf{\tau }-\limfunc{shift}}$ with both (%
\textbf{i}) $J\Subset _{\rho }I$ and (\textbf{ii}) either $F\subsetneqq G$
or $G\cap F=\emptyset $.

\subsubsection{The far below form}

Here is a generalization to weighted Sobolev spaces of the Intertwining
Proposition from \cite[Proposition 36 on page 35]{Saw6}, that uses strong $%
\kappa $-pivotal conditions with Alpert wavelets. Recall that $0<\varepsilon
<1$ and $r$ is chosen sufficiently large depending on $\varepsilon $. The
argument given here is considerably simpler than that in \cite{Saw6}.

\begin{proposition}[The Intertwining Proposition]
\label{Int Prop}Suppose $\sigma ,\omega $ are positive locally finite Borel
measures on $\mathbb{R}^{n}$, that $\sigma $ is doubling, and that $\mathcal{%
F}$ satisfies an $\theta _{\sigma }^{\func{rev}}$-strong $\sigma $-Carleson
condition. Then for a smooth $\alpha $-fractional singular integral $%
T^{\alpha }$, and for $\limfunc{good}$ functions $f\in W_{\limfunc{dyad}%
}^{s}\left( \sigma \right) $ and $g\in W_{\limfunc{dyad}}^{-s}\left( \omega
\right) $, and with $\kappa _{1},\kappa _{2}\geq 1$ sufficiently large, we
have the following bound for $\mathsf{T}_{\limfunc{far}\limfunc{below}%
}\left( f,g\right) =\sum_{F\in \mathcal{F}}\ \sum_{I:\ I\supsetneqq F}\
\left\langle T_{\sigma }^{\alpha }\bigtriangleup _{I;\kappa _{1}}^{\sigma }f,%
\mathsf{P}_{\mathcal{C}_{F}^{\mathbf{\tau }-\limfunc{shift}}}^{\omega
}g\right\rangle _{\omega }$: 
\begin{equation}
\left\vert \mathsf{T}_{\limfunc{far}\limfunc{below}}\left( f,g\right)
\right\vert \lesssim \left( \mathcal{V}_{2,\beta ,\varepsilon ^{\prime
}}^{\alpha ,\kappa _{1}}+\sqrt{A_{2}^{\alpha }}\right) \ \left\Vert
f\right\Vert _{W_{\limfunc{dyad}}^{s}\left( \sigma \right) }\left\Vert
g\right\Vert _{W_{\limfunc{dyad}}^{-s}\left( \omega \right) },
\label{far below est}
\end{equation}%
where $\beta $ is as in Lemma \ref{full} and $\varepsilon ^{\prime }>0$ is
sufficiently small.
\end{proposition}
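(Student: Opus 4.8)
Here is the plan.

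The strategy is to follow the proof of the Intertwining Proposition in \cite{Saw6}, but with weighted Alpert wavelets, with the pivotal Energy Lemma \ref{ener} in place of its $s=0$ precursor, and — the one genuinely new device — with the $\varepsilon $-strong $\kappa $-pivotal condition $\mathcal{V}_{2,\varepsilon }^{\alpha ,\kappa _{1}}$ for $\varepsilon =\theta _{\sigma }^{\func{rev}}$, which is what allows us to absorb the extra Sobolev weight $\ell \left( J\right) ^{-2s}$ that has no counterpart when $s=0$. Throughout we take $\kappa _{1}\leq \kappa _{2}$ large, in particular $\kappa _{1}>\theta _{\sigma }^{\limfunc{doub}}+\alpha -n$, so that $\mathcal{V}_{2,\theta _{\sigma }^{\func{rev}}}^{\alpha ,\kappa _{1}}\lesssim A_{2}^{\alpha }$ by Lemma \ref{doub piv}; we also use the reduction already made to $f$ having vanishing $\sigma $-moments of order $<\kappa $ on the top cube $F_{0}$, and restrict to $\func{good}$ wavelet supports.

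First I would reindex and telescope. For each $F\in \mathcal{F}$ put $h_{F}\equiv \sum_{I:\ I\supsetneqq F}\bigtriangleup _{I;\kappa _{1}}^{\sigma }f$, so that $\mathsf{T}_{\limfunc{far}\limfunc{below}}\left( f,g\right) =\sum_{F\in \mathcal{F}}\left\langle T_{\sigma }^{\alpha }h_{F},\mathsf{P}_{\mathcal{C}_{F}^{\mathbf{\tau }-\limfunc{shift}}}^{\omega }g\right\rangle _{\omega }$. By the telescoping identity (\ref{telescoping}) with $P=F_{0}$, and since $\mathbb{E}_{F_{0};\kappa _{1}}^{\sigma }f=0$ by the vanishing moments, we have $\mathbf{1}_{F}h_{F}=\mathbb{E}_{F;\kappa _{1}}^{\sigma }f$, a polynomial of degree $<\kappa _{1}$ on $F$ whose $L^{\infty }\left( F\right) $-norm is controlled by the stopping data, so that $\sum_{F}\ell \left( F\right) ^{-2s}\left\vert F\right\vert _{\sigma }\left\Vert \mathbb{E}_{F;\kappa _{1}}^{\sigma }f\right\Vert _{L^{\infty }\left( F\right) }^{2}\lesssim \left\Vert f\right\Vert _{W_{\limfunc{dyad}}^{s}\left( \sigma \right) }^{2}$ by the quasiorthogonality (\ref{quasi orth}) — this is where the hypothesized $\theta _{\sigma }^{\func{rev}}$-strong $\sigma $-Carleson condition (\ref{sigma Car}) is used — while $\mathbf{1}_{F^{c}}h_{F}=\sum_{K}\mathbf{1}_{K}P_{K}$ is a sum of polynomials of degree $<\kappa _{1}$ over the pairwise disjoint "cousin" cubes $K$ tiling $F_{0}\setminus F$. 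Crucially, every $J\in \mathcal{C}_{F}^{\mathbf{\tau }-\limfunc{shift}}$ satisfies $J\subset F$ with $\ell \left( J\right) \leq 2^{-\tau }\ell \left( F\right) $, so goodness forces $J\Subset _{r,\varepsilon }F$; hence $\gamma J\subset F$ and $\gamma J$ lies inside the single $\mathcal{D}$-child $F_{J}$ of $F$ containing $J$, for a fixed large $\gamma $.

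Next I would treat the far pieces by the Energy Lemma. Split $h_{F}=\mathbf{1}_{F_{J}}\mathbb{E}_{F;\kappa _{1}}^{\sigma }f+\big( \mathbf{1}_{F\setminus F_{J}}\mathbb{E}_{F;\kappa _{1}}^{\sigma }f+\mathbf{1}_{F^{c}}h_{F}\big) $; the bracketed part is supported in $\mathbb{R}^{n}\setminus \gamma J$, so (\ref{piv lemma}) of Lemma \ref{ener}, together with Lemma \ref{mod Alpert} to discard the factor $\ell \left( J\right) ^{-2s}\left\Vert \left\vert \mathbf{h}_{J;\kappa _{2}}^{\omega }\right\vert \right\Vert _{W_{\limfunc{dyad}}^{-s}\left( \omega \right) }^{2}\lesssim 1$, gives for each $J$ a bound by a constant times $\big( \left\Vert \mathbb{E}_{F;\kappa _{1}}^{\sigma }f\right\Vert _{L^{\infty }\left( F\right) }\mathrm{P}_{\kappa _{1}}^{\alpha }\left( J,\mathbf{1}_{F}\sigma \right) +\sum_{K}\left\Vert P_{K}\right\Vert _{L^{\infty }\left( K\right) }\mathrm{P}_{\kappa _{1}}^{\alpha }\left( J,\mathbf{1}_{K}\sigma \right) \big) \ell \left( J\right) ^{-s}\sqrt{\left\vert J\right\vert _{\omega }}\left\Vert \bigtriangleup _{J;\kappa _{2}}^{\omega }g\right\Vert _{W_{\limfunc{dyad}}^{-s}\left( \omega \right) }$. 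In the cousin sum, the cubes $K$ touching $F$ are reorganized by distance and absorbed into $\sqrt{A_{2}^{\alpha }}$ using the Poisson comparison Lemma \ref{Poisson inequality}, exactly as in the separated estimates of the previous section, while the remaining $K$'s contribute pivotal Poisson integrals over $F_{0}\supset \{ J\} $. The piece $\mathbf{1}_{F_{J}}\mathbb{E}_{F;\kappa _{1}}^{\sigma }f$ is handled by descending from $F_{J}$ towards $J$: at each cube $L$ of the chain with $\ell \left( L\right) \gtrsim 2^{r}\ell \left( J\right) $, splitting off the siblings of the child of $L$ containing $J$ produces a far piece again handled by Lemma \ref{ener}, and the finitely many bottom leftovers — $T_{\sigma }^{\alpha }$ of a normalized polynomial on a cube lying within a bounded number of scales of, and containing, $J$ — are dominated by $\sqrt{A_{2}^{\alpha }}$ as in \cite{Saw6}, with only bounded overlap in $J$.

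Finally I would assemble by Cauchy--Schwarz in the pair $\left( F,J\right) $: by the orthogonality (\ref{orth}) and the bounded overlap (\ref{tau overlap}) of the $\tau $-shifted coronas, $\sum_{F}\sum_{J\in \mathcal{C}_{F}^{\mathbf{\tau }-\limfunc{shift}}}\left\Vert \bigtriangleup _{J;\kappa _{2}}^{\omega }g\right\Vert _{W_{\limfunc{dyad}}^{-s}\left( \omega \right) }^{2}\lesssim \tau \left\Vert g\right\Vert _{W_{\limfunc{dyad}}^{-s}\left( \omega \right) }^{2}$, while the complementary factor is, for each $F$, a sum over $J\in \mathcal{C}_{F}^{\mathbf{\tau }-\limfunc{shift}}$ of $\mathrm{P}_{\kappa _{1}}^{\alpha }\left( J,\mathbf{1}_{F}\sigma \right) ^{2}\ell \left( J\right) ^{-2s}\left\vert J\right\vert _{\omega }\left\Vert \mathbb{E}_{F;\kappa _{1}}^{\sigma }f\right\Vert _{L^{\infty }\left( F\right) }^{2}$ (the cousin and leftover terms already being absorbed into $\sqrt{A_{2}^{\alpha }}$). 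Here is the new point: since $0<2s\leq \theta _{\sigma }^{\func{rev}}=\varepsilon $ and $J\subset F$, one has $\ell \left( J\right) ^{-2s}=\ell \left( F\right) ^{-2s}\left( \ell \left( F\right) /\ell \left( J\right) \right) ^{2s}\leq \ell \left( F\right) ^{-2s}\left( \ell \left( F\right) /\ell \left( J\right) \right) ^{\varepsilon }$, so the definition (\ref{strong pivotal}) of $\mathcal{V}_{2,\varepsilon }^{\alpha ,\kappa _{1}}$ applied with $Q=F$ and the $Q_{r}$ a disjointification of the cubes $J$ bounds the inner sum by $\big( \mathcal{V}_{2,\varepsilon }^{\alpha ,\kappa _{1}}\big) ^{2}\ell \left( F\right) ^{-2s}\left\vert F\right\vert _{\sigma }\left\Vert \mathbb{E}_{F;\kappa _{1}}^{\sigma }f\right\Vert _{L^{\infty }\left( F\right) }^{2}$; summing over $F$ and using the quasiorthogonality recorded above yields $\lesssim \big( \mathcal{V}_{2,\varepsilon }^{\alpha ,\kappa _{1}}\big) ^{2}\left\Vert f\right\Vert _{W_{\limfunc{dyad}}^{s}\left( \sigma \right) }^{2}$, and combining the two factors gives (\ref{far below est}). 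The main obstacle, and precisely why \cite{Saw6} does not transfer verbatim, is the weight $\ell \left( J\right) ^{-2s}$: the ordinary $\kappa $-pivotal condition cannot absorb it, whereas the $\varepsilon $-strong one does at the cost of the restriction $s<\tfrac{1}{2}\theta _{\sigma }^{\func{rev}}$, one of the constraints forcing "$s$ sufficiently small"; a secondary technical nuisance is arranging that the genuinely local interactions carry no testing constant, which is exactly what the goodness/deep-embedding geometry buys.
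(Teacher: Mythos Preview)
Your plan follows the older \cite{Saw6} line of argument; the paper takes a different and, as it says, ``considerably simpler'' route. Rather than splitting $h_{F}$ into an inside-$F$ polynomial $\mathbb{E}_{F;\kappa_{1}}^{\sigma}f$ plus an outside piece and then descending a chain toward each $J$, the paper rewrites $f_{F}=\sum_{I\supsetneq F}\bigtriangleup_{I;\kappa_{1}}^{\sigma}f$ along the $\mathcal{F}$-tower above $F$ as $\beta_{F}-\gamma_{F}$, where $\gamma_{F}=\sum_{m}\mathbf{1}_{\pi_{\mathcal{F}}^{m+1}F\setminus\pi_{\mathcal{F}}^{m}F}\,\mathbb{E}_{\pi_{\mathcal{F}}^{m+1}F;\kappa_{1}}^{\sigma}f$ and $\beta_{F}$ collects the sibling contributions $\mathbf{1}_{\theta(I)}\mathbb{E}_{I;\kappa_{1}}^{\sigma}f$; both are supported \emph{outside} $F$. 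Since $g_{F}=\mathsf{P}_{\mathcal{C}_{F}^{\tau-\mathrm{shift}}}^{\omega}g$ is supported in $F$ with vanishing $\omega$-moments of order $<\kappa_{2}$, the Energy Lemma applies \emph{once}, at the level of $F$, giving $\mathrm{P}_{\kappa}^{\alpha}(F,\cdot)\,\ell(F)^{-s}\sqrt{|F|_{\omega}}\,\|g_{F}\|_{W^{-s}}$. The Poisson integral at $F$ is then transferred via Lemma~\ref{Poisson inequality} up to $\pi_{\mathcal{F}}^{m}F$ (or to the maximizer $G_{m}[F]\in(\pi_{\mathcal{F}}^{m}F,\pi_{\mathcal{F}}^{m+1}F]$ for $\beta_{F}$), where the strong pivotal \emph{stopping control} (\ref{energy control}) yields $\mathcal{V}_{2,\theta_{\sigma}^{\mathrm{rev}}}^{\alpha,\kappa}\sqrt{|\pi_{\mathcal{F}}^{m}F|_{\sigma}}$; geometric decay in $m$, Cauchy--Schwarz, and quasiorthogonality (Lemma~\ref{q lemma} via the strong Carleson condition) finish. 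No ``near'' piece survives, no chain descent is performed, and no testing constant enters. Your use of the strong pivotal condition to absorb $\ell(J)^{-2s}$ is the right idea, but in the paper it is invoked through the stopping control at the $\mathcal{F}$-ancestors, not through a disjointification of the cubes $J$ inside $F$.

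The substantive worry in your outline is the ``bottom leftover'': after descending to a cube $L\supset J$ with $\ell(L)\approx 2^{r}\ell(J)$, you are left with $\big\langle T_{\sigma}^{\alpha}(\mathbf{1}_{L}\,\mathbb{E}_{F;\kappa_{1}}^{\sigma}f),\bigtriangleup_{J;\kappa_{2}}^{\omega}g\big\rangle_{\omega}$, where the support of $\mathbf{1}_{L}$ \emph{contains} $J$. The Energy Lemma does not apply here, and this inner product is of $\kappa$-cube testing type (cf.\ the paraproduct and the $\mathsf{B}_{\cap}+\mathsf{B}_{\diagup}$ estimates in the paper, where exactly such overlapping pairs pick up $\mathfrak{T}_{T^{\alpha}}^{\kappa,s}$). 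I do not see how to bound it by $\sqrt{A_{2}^{\alpha}}$ alone, and the proposition's stated bound contains no testing constant. A secondary issue: in your assembly the sum $\sum_{J\in\mathcal{C}_{F}^{\tau-\mathrm{shift}}}\mathrm{P}_{\kappa_{1}}^{\alpha}(J,\mathbf{1}_{F}\sigma)^{2}(\ell(F)/\ell(J))^{2s}|J|_{\omega}$ runs over \emph{nested} cubes, so one must pigeonhole by scale and use $2s<\varepsilon$ (strict) to sum the geometric series --- not $2s\le\varepsilon$ as written. The paper's tower argument sidesteps both issues.
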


\begin{proof}
We write%
\begin{eqnarray*}
f_{F} &\equiv &\sum_{I:\ I\supsetneqq F}\bigtriangleup _{I;\kappa
_{1}}^{\sigma }f=\sum_{m=1}^{\infty }\sum_{I:\ \pi _{\mathcal{F}%
}^{m}F\subsetneqq I\subset \pi _{\mathcal{F}}^{m+1}F}\bigtriangleup
_{I;\kappa _{1}}^{\sigma }f \\
&=&\sum_{m=1}^{\infty }\sum_{I:\ \pi _{\mathcal{F}}^{m}F\subsetneqq I\subset
\pi _{\mathcal{F}}^{m+1}F}\mathbf{1}_{\theta \left( I\right) }\left( \mathbb{%
E}_{I;\kappa _{1}}^{\sigma }f-\mathbb{E}_{\pi _{\mathcal{F}}^{m+1}F;\kappa
_{1}}^{\sigma }f\right) \\
&=&\sum_{m=1}^{\infty }\sum_{I:\ \pi _{\mathcal{F}}^{m}F\subsetneqq I\subset
\pi _{\mathcal{F}}^{m+1}F}\mathbf{1}_{\theta \left( I\right) }\left( \mathbb{%
E}_{I;\kappa _{1}}^{\sigma }f\right) -\sum_{m=1}^{\infty }\mathbf{1}_{\pi _{%
\mathcal{F}}^{m+1}F\setminus \pi _{\mathcal{F}}^{m}F}\left( \mathbb{E}_{\pi
_{\mathcal{F}}^{m+1}F;\kappa _{1}}^{\sigma }f\right) \\
&\equiv &\beta _{F}-\gamma _{F}\ ,
\end{eqnarray*}%
and then%
\begin{equation*}
\sum_{F\in \mathcal{F}}\ \left\langle T_{\sigma }^{\alpha
}f_{F},g_{F}\right\rangle _{\omega }=\sum_{F\in \mathcal{F}}\ \left\langle
T_{\sigma }^{\alpha }\beta _{F},g_{F}\right\rangle _{\omega }+\sum_{F\in 
\mathcal{F}}\ \left\langle T_{\sigma }^{\alpha }\gamma
_{F},g_{F}\right\rangle _{\omega }\ .
\end{equation*}%
Now we use the pivotal bound (\ref{piv bound}),%
\begin{equation*}
\left\vert \left\langle T^{\alpha }\left( \varphi \nu \right) ,\Psi
_{J}\right\rangle _{L^{2}\left( \omega \right) }\right\vert \lesssim \mathrm{%
P}_{\kappa }^{\alpha }\left( J,\nu \right) \ell \left( J\right) ^{-s}\sqrt{%
\left\vert J\right\vert _{\omega }}\left\Vert \Psi _{J}\right\Vert _{W_{%
\func{dyad}}^{-s}\left( \omega \right) },
\end{equation*}%
the pivotal stopping control (\ref{energy control}),%
\begin{equation*}
\mathrm{P}_{\kappa }^{\alpha }\left( I,\mathbf{1}_{F}\sigma \right)
^{2}\left( \frac{\ell \left( F\right) }{\ell \left( I\right) }\right)
^{\varepsilon }\left\vert I\right\vert _{\omega }<\Gamma \left\vert
I\right\vert _{\sigma },\ \ \ \ \ I\in \mathcal{C}_{F}\text{ and }F\in 
\mathcal{F},
\end{equation*}%
and (\ref{e.Jsimeq}), namely 
\begin{equation*}
\mathrm{P}_{k}^{\alpha }\left( J,\sigma \mathbf{1}_{K\setminus I}\right)
\lesssim \left( \frac{\ell \left( J\right) }{\ell \left( I\right) }\right)
^{k-\varepsilon \left( n+k-\alpha \right) }\mathrm{P}_{k}^{\alpha }\left(
I,\sigma \mathbf{1}_{K\setminus I}\right) ,
\end{equation*}%
to obtain that%
\begin{eqnarray*}
&&\left\vert \sum_{F\in \mathcal{F}}\left\langle T_{\sigma }^{\alpha }\gamma
_{F},g_{F}\right\rangle _{\omega }\right\vert \lesssim \sum_{F\in \mathcal{F}%
}\mathrm{P}_{\kappa }^{\alpha }\left( F,\sum_{m=1}^{\infty }\mathbf{1}_{\pi
_{\mathcal{F}}^{m+1}F\setminus \pi _{\mathcal{F}}^{m}F}\left\vert \mathbb{E}%
_{\pi _{\mathcal{F}}^{m+1}F;\kappa _{1}}^{\sigma }f\right\vert \sigma
\right) \ell \left( F\right) ^{-s}\sqrt{\left\vert F\right\vert _{\omega }}%
\left\Vert g_{F}\right\Vert _{W_{\limfunc{dyad}}^{-s}\left( \omega \right) }
\\
&=&\sum_{m=1}^{\infty }\sum_{F\in \mathcal{F}}\left\Vert \mathbb{E}_{\pi _{%
\mathcal{F}}^{m+1}F;\kappa _{1}}^{\sigma }f\right\Vert _{\infty }\mathrm{P}%
_{\kappa }^{\alpha }\left( F,\mathbf{1}_{\pi _{\mathcal{F}}^{m+1}F\setminus
\pi _{\mathcal{F}}^{m}F}\sigma \right) \ell \left( F\right) ^{-s}\sqrt{%
\left\vert F\right\vert _{\omega }}\left\Vert g_{F}\right\Vert _{W_{\limfunc{%
dyad}}^{-s}\left( \omega \right) } \\
&\leq &\sum_{m=1}^{\infty }\sum_{F\in \mathcal{F}}\left\Vert \mathbb{E}_{\pi
_{\mathcal{F}}^{m+1}F;\kappa _{1}}^{\sigma }f\right\Vert _{\infty }\left( 
\frac{\ell \left( F\right) }{\ell \left( \pi _{\mathcal{F}}^{m}F\right) }%
\right) ^{\kappa -\varepsilon \left( n+\kappa -\alpha \right) }\mathrm{P}%
_{\kappa }^{\alpha }\left( \pi _{\mathcal{F}}^{m}F,\mathbf{1}_{\pi _{%
\mathcal{F}}^{m+1}F\setminus \pi _{\mathcal{F}}^{m}F}\sigma \right) \ell
\left( F\right) ^{-s}\sqrt{\left\vert F\right\vert _{\omega }}\left\Vert
g_{F}\right\Vert _{W_{\limfunc{dyad}}^{-s}\left( \omega \right) }
\end{eqnarray*}%
equals%
\begin{eqnarray*}
&&\sum_{m=1}^{\infty }\sum_{F\in \mathcal{F}}\left\Vert \mathbb{E}_{\pi _{%
\mathcal{F}}^{m+1}F;\kappa _{1}}^{\sigma }f\right\Vert _{\infty }\left( 
\frac{\ell \left( F\right) }{\ell \left( \pi _{\mathcal{F}}^{m}F\right) }%
\right) ^{\kappa -\varepsilon \left( n+\kappa -\alpha \right) -s}\ell \left(
\pi _{\mathcal{F}}^{m}F\right) ^{-s} \\
&&\ \ \ \ \ \ \ \ \ \ \ \ \ \ \ \ \ \ \ \ \ \ \ \ \ \ \ \ \ \ \times \left\{ 
\mathrm{P}_{\kappa }^{\alpha }\left( \pi _{\mathcal{F}}^{m}F,\mathbf{1}_{\pi
_{\mathcal{F}}^{m+1}F\setminus \pi _{\mathcal{F}}^{m}F}\sigma \right) \sqrt{%
\left\vert \pi _{\mathcal{F}}^{m}F\right\vert _{\omega }}\right\} \sqrt{%
\frac{\left\vert F\right\vert _{\omega }}{\left\vert \pi _{\mathcal{F}%
}^{m}F\right\vert _{\omega }}}\left\Vert g_{F}\right\Vert _{W_{\limfunc{dyad}%
}^{-s}\left( \omega \right) } \\
&\leq &\sum_{m=1}^{\infty }\sum_{F\in \mathcal{F}}\left\Vert \mathbb{E}_{\pi
_{\mathcal{F}}^{m+1}F;\kappa _{1}}^{\sigma }f\right\Vert _{\infty }\left( 
\frac{\ell \left( F\right) }{\ell \left( \pi _{\mathcal{F}}^{m}F\right) }%
\right) ^{\eta +\eta ^{\prime }}\ell \left( \pi _{\mathcal{F}}^{m}F\right)
^{-s}\left\{ \mathcal{V}_{2,\beta ,\varepsilon ^{\prime }}^{\alpha ,\kappa
}\ \sqrt{\left\vert \pi _{\mathcal{F}}^{m}F\right\vert _{\sigma }}\right\} 
\sqrt{\frac{\left\vert F\right\vert _{\omega }}{\left\vert \pi _{\mathcal{F}%
}^{m}F\right\vert _{\omega }}}\left\Vert g_{F}\right\Vert _{W_{\limfunc{dyad}%
}^{-s}\left( \omega \right) },
\end{eqnarray*}%
where we have used the pivotal stopping inequality, and written%
\begin{equation*}
\kappa -\varepsilon \left( n+\kappa -\alpha \right) -s+\theta _{\sigma }^{%
\func{rev}}=\eta +\eta ^{\prime },
\end{equation*}%
with $\eta ,\eta ^{\prime }>0$ to be chosen later. Note that this requires
the Alpert parameter $\kappa $ to satisfy%
\begin{equation}
\kappa >\frac{\varepsilon \left( n-\alpha \right) +s-\theta _{\sigma }^{%
\func{rev}}}{1-\varepsilon }.  \label{kappa needed}
\end{equation}%
Then by Cauchy-Schwarz we have%
\begin{eqnarray*}
\left\vert \sum_{F\in \mathcal{F}}\left\langle T_{\sigma }^{\alpha }\gamma
_{F},g_{F}\right\rangle _{\omega }\right\vert &\lesssim &\mathcal{V}%
_{2,\beta ,\varepsilon ^{\prime }}^{\alpha ,\kappa }\left(
\sum_{m=1}^{\infty }\sum_{F\in \mathcal{F}}\left\Vert \mathbb{E}_{\pi _{%
\mathcal{F}}^{m+1}F;\kappa _{1}}^{\sigma }f\right\Vert _{\infty }^{2}\ell
\left( \pi _{\mathcal{F}}^{m}F\right) ^{-2s}\left\vert \pi _{\mathcal{F}%
}^{m}F\right\vert _{\sigma }\left( \frac{\ell \left( F\right) }{\ell \left(
\pi _{\mathcal{F}}^{m}F\right) }\right) ^{2\eta }\frac{\left\vert
F\right\vert _{\omega }}{\left\vert \pi _{\mathcal{F}}^{m}F\right\vert
_{\omega }}\right) ^{\frac{1}{2}} \\
&&\ \ \ \ \ \ \ \ \ \ \ \ \ \ \ \times \left( \sum_{m=1}^{\infty }\sum_{F\in 
\mathcal{F}}\left( \frac{\ell \left( F\right) }{\ell \left( \pi _{\mathcal{F}%
}^{m}F\right) }\right) ^{2\eta ^{\prime }}\left\Vert g_{F}\right\Vert _{W_{%
\limfunc{dyad}}^{-s}\left( \omega \right) }^{2}\right) ^{\frac{1}{2}}.
\end{eqnarray*}

The square of the first factor satisfies%
\begin{eqnarray*}
&&\sum_{m=1}^{\infty }\sum_{F\in \mathcal{F}}\left\Vert \mathbb{E}_{\pi _{%
\mathcal{F}}^{m+1}F;\kappa _{1}}^{\sigma }f\right\Vert _{\infty
}^{2}\left\vert \pi _{\mathcal{F}}^{m}F\right\vert _{\sigma }\ell \left( \pi
_{\mathcal{F}}^{m}F\right) ^{-2s}\left( \frac{\ell \left( F\right) }{\ell
\left( \pi _{\mathcal{F}}^{m}F\right) }\right) ^{2\eta }\frac{\left\vert
F\right\vert _{\omega }}{\left\vert \pi _{\mathcal{F}}^{m}F\right\vert
_{\omega }} \\
&=&\sum_{F^{\prime }\in \mathcal{F}}\left\Vert \mathbb{E}_{\pi _{\mathcal{F}%
}F^{\prime };\kappa _{1}}^{\sigma }f\right\Vert _{\infty }^{2}\ell \left(
F^{\prime }\right) ^{-2s}\left\vert F^{\prime }\right\vert _{\sigma }\sum 
_{\substack{ F\in \mathcal{F}  \\ F\subset F^{\prime }}}\left( \frac{\ell
\left( F\right) }{\ell \left( F^{\prime }\right) }\right) ^{2\eta }\frac{%
\left\vert F\right\vert _{\omega }}{\left\vert F^{\prime }\right\vert
_{\omega }} \\
&\lesssim &\sum_{F^{\prime }\in \mathcal{F}}\left\Vert \mathbb{E}_{\pi _{%
\mathcal{F}}F^{\prime };\kappa _{1}}^{\sigma }f\right\Vert _{\infty
}^{2}\ell \left( F^{\prime }\right) ^{-2s}\left\vert F^{\prime }\right\vert
_{\sigma }=\sum_{F^{\prime \prime }\in \mathcal{F}}\left\Vert \mathbb{E}%
_{F^{\prime \prime };\kappa _{1}}^{\sigma }f\right\Vert _{\infty
}^{2}\sum_{F^{\prime }\in \mathfrak{C}_{\mathcal{F}}\left( F^{\prime \prime
}\right) }\ell \left( F^{\prime }\right) ^{-2s}\left\vert F^{\prime
}\right\vert _{\sigma } \\
&\lesssim &\sum_{F^{\prime \prime }\in \mathcal{F}}\left\Vert \mathbb{E}%
_{F^{\prime \prime };\kappa _{1}}^{\sigma }f\right\Vert _{\infty }^{2}\ell
\left( F^{\prime \prime }\right) ^{-2s}\left\vert F^{\prime \prime
}\right\vert _{\sigma }\lesssim \left\Vert f\right\Vert _{W_{\limfunc{dyad}%
}^{s}\left( \sigma \right) }^{2}\ ,
\end{eqnarray*}%
where the first inequality in the last line follows from the strong $\sigma $%
-Carleson condition, and the second inequality follows from the
Quasiorthogonality Lemma \ref{q lemma} since Lemma \ref{full} shows that $%
\mathcal{F}$ is a $\left( \beta ,\sigma \right) $-full grid for a
sufficiently small $\beta >0$. The square of the second factor satisfies%
\begin{equation*}
\sum_{m=1}^{\infty }\sum_{F\in \mathcal{F}}\left( \frac{\ell \left( F\right) 
}{\ell \left( \pi _{\mathcal{F}}^{m}F\right) }\right) ^{2\eta ^{\prime
}}\left\Vert g_{F}\right\Vert _{W_{\limfunc{dyad}}^{-s}\left( \omega \right)
}^{2}\lesssim \sum_{F\in \mathcal{F}}\left\Vert g_{F}\right\Vert _{W_{%
\limfunc{dyad}}^{-s}\left( \omega \right) }^{2}\leq \left\Vert g\right\Vert
_{W_{\limfunc{dyad}}^{-s}\left( \omega \right) }^{2}\ ,
\end{equation*}%
where we have used $\eta ^{\prime }>0$.

It remains to bound $\sum_{F\in \mathcal{F}}\left\langle T_{\sigma }^{\alpha
}\beta _{F},g_{F}\right\rangle _{\omega }$ where $\beta
_{F}=\sum_{m=1}^{\infty }\sum_{I:\ \pi _{\mathcal{F}}^{m}F\subsetneqq
I\subset \pi _{\mathcal{F}}^{m+1}F}\mathbf{1}_{\theta \left( I\right)
}\left( \mathbb{E}_{I;\kappa _{1}}^{\sigma }f\right) $. The difference
between the previous estimate and this one is that the averages $\mathbf{1}%
_{\pi _{\mathcal{F}}^{m+1}F\setminus \pi _{\mathcal{F}}^{m}F}\left\vert 
\mathbb{E}_{\pi _{\mathcal{F}}^{m+1}F;\kappa _{1}}^{\sigma }f\right\vert $
inside the Poisson kernel have been replaced with the sum of averages $%
\sum_{m=1}^{\infty }\sum_{I:\ \pi _{\mathcal{F}}^{m}F\subsetneqq I\subset
\pi _{\mathcal{F}}^{m+1}F}\mathbf{1}_{\theta \left( I\right) }\left\vert 
\mathbb{E}_{I;\kappa _{1}}^{\sigma }f\right\vert $, but where the sum is
taken over pairwise disjoint sets $\left\{ \theta \left( I\right) \right\}
_{\pi _{\mathcal{F}}^{m}F\subsetneqq I\subset \pi _{\mathcal{F}}^{m+1}F}$.
We start with%
\begin{eqnarray*}
&&\left\vert \sum_{F\in \mathcal{F}}\left\langle T_{\sigma }^{\alpha }\beta
_{F},g_{F}\right\rangle _{\omega }\right\vert \lesssim \sum_{F\in \mathcal{F}%
}\mathrm{P}_{\kappa }^{\alpha }\left( F,\sum_{m=1}^{\infty }\sum_{I:\ \pi _{%
\mathcal{F}}^{m}F\subsetneqq I\subset \pi _{\mathcal{F}}^{m+1}F}\mathbf{1}%
_{\theta \left( I\right) }\left\vert \mathbb{E}_{I;\kappa _{1}}^{\sigma
}f\right\vert \sigma \right) \ell \left( F\right) ^{-s}\sqrt{\left\vert
F\right\vert _{\omega }}\left\Vert g_{F}\right\Vert _{W_{\limfunc{dyad}%
}^{-s}\left( \omega \right) } \\
&=&\sum_{m=1}^{\infty }\sum_{F\in \mathcal{F}}\sum_{I:\ \pi _{\mathcal{F}%
}^{m}F\subsetneqq I\subset \pi _{\mathcal{F}}^{m+1}F}\left\Vert \mathbb{E}%
_{I;\kappa _{1}}^{\sigma }f\right\Vert _{\infty }\mathrm{P}_{\kappa
}^{\alpha }\left( F,\mathbf{1}_{\theta \left( I\right) }\sigma \right) \ell
\left( F\right) ^{-s}\sqrt{\left\vert F\right\vert _{\omega }}\left\Vert
g_{F}\right\Vert _{W_{\limfunc{dyad}}^{-s}\left( \omega \right) }\equiv S.
\end{eqnarray*}

Then we use%
\begin{eqnarray*}
\sum_{I:\ \pi _{\mathcal{F}}^{m}F\subsetneqq I\subset \pi _{\mathcal{F}%
}^{m+1}F}\left\Vert \mathbb{E}_{I;\kappa _{1}}^{\sigma }f\right\Vert
_{\infty }\mathrm{P}_{\kappa }^{\alpha }\left( F,\mathbf{1}_{\theta \left(
I\right) }\sigma \right) &\leq &\left( \sup_{I:\ \pi _{\mathcal{F}%
}^{m}F\subsetneqq I\subset \pi _{\mathcal{F}}^{m+1}F}\left\Vert \mathbb{E}%
_{I;\kappa _{1}}^{\sigma }f\right\Vert _{\infty }\right) \mathrm{P}_{\kappa
}^{\alpha }\left( F,\sum_{I:\ \pi _{\mathcal{F}}^{m}F\subsetneqq I\subset
\pi _{\mathcal{F}}^{m+1}F}\mathbf{1}_{\theta \left( I\right) }\sigma \right)
\\
&=&\left( \sup_{I:\ \pi _{\mathcal{F}}^{m}F\subsetneqq I\subset \pi _{%
\mathcal{F}}^{m+1}F}\left\Vert \mathbb{E}_{I;\kappa _{1}}^{\sigma
}f\right\Vert _{\infty }\right) \mathrm{P}_{\kappa }^{\alpha }\left( F,%
\mathbf{1}_{\pi _{\mathcal{F}}^{m+1}F\setminus \pi _{\mathcal{F}%
}^{m}F}\sigma \right) ,
\end{eqnarray*}%
and obtain that%
\begin{equation*}
S\leq \sum_{m=1}^{\infty }\sum_{F\in \mathcal{F}}\left( \sup_{I:\ \pi _{%
\mathcal{F}}^{m}F\subsetneqq I\subset \pi _{\mathcal{F}}^{m+1}F}\left\Vert 
\mathbb{E}_{I;\kappa _{1}}^{\sigma }f\right\Vert _{\infty }\right) \mathrm{P}%
_{\kappa }^{\alpha }\left( F,\mathbf{1}_{\pi _{\mathcal{F}}^{m+1}F\setminus
\pi _{\mathcal{F}}^{m}F}\sigma \right) \ell \left( F\right) ^{-s}\sqrt{%
\left\vert F\right\vert _{\omega }}\left\Vert g_{F}\right\Vert _{W_{\limfunc{%
dyad}}^{-s}\left( \omega \right) }\ .
\end{equation*}%
Now we define $G_{m}\left[ F\right] \in \left( \pi _{\mathcal{F}}^{m}F,\pi _{%
\mathcal{F}}^{m+1}F\right] $ so that $\sup_{I:\ \pi _{\mathcal{F}%
}^{m}F\subsetneqq I\subset \pi _{\mathcal{F}}^{m+1}F}\left\Vert \mathbb{E}%
_{I;\kappa _{1}}^{\sigma }f\right\Vert _{\infty }=\left\Vert \mathbb{E}%
_{G_{m}\left[ F\right] ;\kappa _{1}}^{\sigma }f\right\Vert _{\infty }$, and
dominate $S$ by%
\begin{eqnarray*}
S &\leq &\sum_{m=1}^{\infty }\sum_{F\in \mathcal{F}}\left\Vert \mathbb{E}%
_{G_{m}\left[ F\right] ;\kappa _{1}}^{\sigma }f\right\Vert _{\infty }\mathrm{%
P}_{\kappa }^{\alpha }\left( F,\mathbf{1}_{\pi _{\mathcal{F}%
}^{m+1}F\setminus \pi _{\mathcal{F}}^{m}F}\sigma \right) \ell \left(
F\right) ^{-s}\sqrt{\left\vert F\right\vert _{\omega }}\left\Vert
g_{F}\right\Vert _{W_{\limfunc{dyad}}^{-s}\left( \omega \right) } \\
&\lesssim &\sum_{m=1}^{\infty }\sum_{F\in \mathcal{F}}\left\Vert \mathbb{E}%
_{G_{m}\left[ F\right] ;\kappa _{1}}^{\sigma }f\right\Vert _{\infty }\left( 
\frac{\ell \left( F\right) }{\ell \left( G_{m}\left[ F\right] \right) }%
\right) ^{\eta }\mathrm{P}_{\kappa }^{\alpha }\left( G_{m}\left[ F\right] ,%
\mathbf{1}_{\pi _{\mathcal{F}}^{m+1}F\setminus \pi _{\mathcal{F}%
}^{m}F}\sigma \right) \ell \left( F\right) ^{-s}\sqrt{\left\vert
F\right\vert _{\omega }}\left\Vert g_{F}\right\Vert _{W_{\limfunc{dyad}%
}^{-s}\left( \omega \right) } \\
&=&\sum_{m=1}^{\infty }\sum_{F\in \mathcal{F}}\left\Vert \mathbb{E}_{G_{m}%
\left[ F\right] ;\kappa _{1}}^{\sigma }f\right\Vert _{\infty }\left( \frac{%
\ell \left( F\right) }{\ell \left( G_{m}\left[ F\right] \right) }\right)
^{\eta }\left\{ \mathrm{P}_{\kappa }^{\alpha }\left( G_{m}\left[ F\right] ,%
\mathbf{1}_{\pi _{\mathcal{F}}^{m+1}F\setminus \pi _{\mathcal{F}%
}^{m}F}\sigma \right) \left( \frac{\ell \left( \pi _{\mathcal{F}%
}^{m+1}F\right) }{\ell \left( G_{m}\left[ F\right] \right) }\right)
^{\varepsilon }\sqrt{\left\vert G_{m}\left[ F\right] \right\vert _{\omega }}%
\right\} \\
&&\ \ \ \ \ \ \ \ \ \ \ \ \ \ \ \ \ \ \ \ \ \ \ \ \ \ \ \ \ \ \ \ \ \ \times
\left( \frac{\ell \left( \pi _{\mathcal{F}}^{m+1}F\right) }{\ell \left( G_{m}%
\left[ F\right] \right) }\right) ^{-\varepsilon }\ell \left( F\right) ^{-s}%
\sqrt{\frac{\left\vert F\right\vert _{\omega }}{\left\vert G_{m}\left[ F%
\right] \right\vert _{\omega }}}\left\Vert g_{F}\right\Vert _{W_{\limfunc{%
dyad}}^{-s}\left( \omega \right) },
\end{eqnarray*}%
and then continue with%
\begin{eqnarray*}
&\lesssim &\mathcal{V}_{2,\beta ,\varepsilon ^{\prime }}^{\alpha ,\kappa
}\sum_{m=1}^{\infty }\sum_{F\in \mathcal{F}}\left\Vert \mathbb{E}_{G_{m}%
\left[ F\right] ;\kappa _{1}}^{\sigma }f\right\Vert _{\infty }\left( \frac{%
\ell \left( F\right) }{\ell \left( G\left[ F\right] \right) }\right) ^{\eta
}\ell \left( F\right) ^{-s}\left\{ \sqrt{\left\vert G_{m}\left[ F\right]
\right\vert _{\sigma }}\right\} \left( \frac{\ell \left( \pi _{\mathcal{F}%
}^{m+1}F\right) }{\ell \left( G_{m}\left[ F\right] \right) }\right)
^{-\varepsilon }\sqrt{\frac{\left\vert F\right\vert _{\omega }}{\left\vert
G_{m}\left[ F\right] \right\vert _{\omega }}}\left\Vert g_{F}\right\Vert
_{W_{\limfunc{dyad}}^{-s}\left( \omega \right) } \\
&=&\mathcal{V}_{2,\beta ,\varepsilon ^{\prime }}^{\alpha ,\kappa
}\sum_{m=1}^{\infty }\sum_{F\in \mathcal{F}}\left\Vert \mathbb{E}_{G\left[ F%
\right] ;\kappa _{1}}^{\sigma }f\right\Vert _{\infty }\ell \left( G_{m}\left[
F\right] \right) ^{-s}\sqrt{\left\vert G_{m}\left[ F\right] \right\vert
_{\sigma }}\left( \frac{\ell \left( F\right) }{\ell \left( G_{m}\left[ F%
\right] \right) }\right) ^{\eta -s}\left( \frac{\ell \left( \pi _{\mathcal{F}%
}^{m+1}F\right) }{\ell \left( G_{m}\left[ F\right] \right) }\right)
^{-\varepsilon }\sqrt{\frac{\left\vert F\right\vert _{\omega }}{\left\vert
G_{m}\left[ F\right] \right\vert _{\omega }}}\left\Vert g_{F}\right\Vert
_{W_{\limfunc{dyad}}^{-s}\left( \omega \right) } \\
&\approx &\mathcal{V}_{2,\beta ,\varepsilon ^{\prime }}^{\alpha ,\kappa
}\sum_{m=1}^{\infty }\sum_{F\in \mathcal{F}}\left\Vert \bigtriangleup _{G_{m}%
\left[ F\right] ;\kappa _{1}}^{\sigma }f\right\Vert _{W_{\limfunc{dyad}%
}^{s}\left( \sigma \right) }\left( \frac{\ell \left( F\right) }{\ell \left(
G_{m}\left[ F\right] \right) }\right) ^{\eta -s}\left( \frac{\ell \left( \pi
_{\mathcal{F}}^{m+1}F\right) }{\ell \left( G_{m}\left[ F\right] \right) }%
\right) ^{-\varepsilon }\sqrt{\frac{\left\vert F\right\vert _{\omega }}{%
\left\vert G_{m}\left[ F\right] \right\vert _{\omega }}}\left\Vert
g_{F}\right\Vert _{W_{\limfunc{dyad}}^{-s}\left( \omega \right) }.
\end{eqnarray*}%
Since there is geometric gain in the product 
\begin{equation*}
\left( \frac{\ell \left( F\right) }{\ell \left( G_{m}\left[ F\right] \right) 
}\right) ^{\eta -s}\left( \frac{\ell \left( \pi _{\mathcal{F}}^{m+1}F\right) 
}{\ell \left( G_{m}\left[ F\right] \right) }\right) ^{-\varepsilon }\sqrt{%
\frac{\left\vert F\right\vert _{\omega }}{\left\vert G_{m}\left[ F\right]
\right\vert _{\omega }}}\leq \left( \frac{\ell \left( F\right) }{\ell \left(
G_{m}\left[ F\right] \right) }\right) ^{\eta -s}\lesssim 2^{-m\left( \eta
-s\right) },
\end{equation*}%
provided $\eta >s$, an application of Cauchy-Schwarz finishes the proof
since $G_{m}\left[ F\right] $ is uniquely determined by $F$ and $m$ in the
tower $\left( \pi _{\mathcal{F}}^{m}F,\pi _{\mathcal{F}}^{m+1}F\right] $:%
\begin{eqnarray*}
&&\mathcal{V}_{2,\beta ,\varepsilon ^{\prime }}^{\alpha ,\kappa
}\sum_{m=1}^{\infty }2^{-m\left( \eta -s\right) }\sum_{F\in \mathcal{F}%
}\left\Vert \bigtriangleup _{G_{m}\left[ F\right] ;\kappa _{1}}^{\sigma
}f\right\Vert _{W_{\limfunc{dyad}}^{s}\left( \sigma \right) }\left\Vert
g_{F}\right\Vert _{W_{\limfunc{dyad}}^{-s}\left( \omega \right) } \\
&\leq &\mathcal{V}_{2,\beta ,\varepsilon ^{\prime }}^{\alpha ,\kappa
}\sum_{m=1}^{\infty }2^{-m\left( \eta -s\right) }\left( \sum_{F\in \mathcal{F%
}}\left\Vert \bigtriangleup _{G_{m}\left[ F\right] ;\kappa _{1}}^{\sigma
}f\right\Vert _{W_{\limfunc{dyad}}^{s}\left( \sigma \right) }^{2}\right) ^{%
\frac{1}{2}}\left( \sum_{F\in \mathcal{F}}\left\Vert g_{F}\right\Vert _{W_{%
\limfunc{dyad}}^{-s}\left( \omega \right) }^{2}\right) ^{\frac{1}{2}} \\
&\leq &\mathcal{V}_{2,\beta ,\varepsilon ^{\prime }}^{\alpha ,\kappa
}\sum_{m=1}^{\infty }2^{-m\left( \eta -s\right) }\left\Vert f\right\Vert
_{W_{\limfunc{dyad}}^{s}\left( \sigma \right) }\left\Vert g\right\Vert _{W_{%
\limfunc{dyad}}^{-s}\left( \omega \right) }=C\mathcal{V}_{2,\beta
,\varepsilon ^{\prime }}^{\alpha ,\kappa }\left\Vert f\right\Vert _{W_{%
\limfunc{dyad}}^{s}\left( \sigma \right) }\left\Vert g\right\Vert _{W_{%
\limfunc{dyad}}^{-s}\left( \omega \right) }\ .
\end{eqnarray*}%
Note that we have used $\eta >s$, which requires a bit more on $\kappa $
than was used in (\ref{kappa needed}), namely that%
\begin{equation*}
\kappa -\varepsilon \left( n+\kappa -\alpha \right) -s+\theta _{\sigma }^{%
\func{rev}}=\eta +\eta ^{\prime }>s,
\end{equation*}%
which requires 
\begin{equation}
\kappa >\frac{\varepsilon \left( n-\alpha \right) +2s-\theta _{\sigma }^{%
\func{rev}}}{1-\varepsilon }.  \label{kappa needed'}
\end{equation}
\end{proof}

\subsubsection{The diagonal form}

To handle the diagonal term $\mathsf{T}_{\limfunc{diagonal}}\left(
f,g\right) $, we decompose according to the stopping times $\mathcal{F}$,
which we recall are $\beta $-full for some $\beta >0$ by Lemma \ref{full},%
\begin{equation*}
\mathsf{T}_{\limfunc{diagonal}}\left( f,g\right) =\sum_{F\in \mathcal{F}}%
\mathsf{B}_{\Subset _{\mathbf{\rho }}}^{F}\left( f,g\right) \equiv
\left\langle T_{\sigma }^{\alpha }\left( \mathsf{P}_{\mathcal{C}%
_{F}}^{\sigma }f\right) ,\mathsf{P}_{\mathcal{C}_{F}^{\tau -\limfunc{shift}%
}}^{\omega }g\right\rangle _{\omega }^{\Subset _{\mathbf{\rho }}},
\end{equation*}%
and it is enough, using Cauchy-Schwarz and quasiorthogonality (\ref{quasi
orth}) in $f$, together with orthogonality (\ref{orth}) in both $f$ and $g$,
to prove the `below form' bound involving the usual cube testing constant,%
\begin{equation}
\left\vert \mathsf{B}_{\Subset _{\rho }}^{F}\left( f,g\right) \right\vert
\lesssim \left( \mathfrak{T}_{T^{\alpha }}^{s}+\sqrt{A_{2}^{\alpha }}\right)
\ \left( \ell \left( F\right) ^{-s}\left\Vert \mathbb{E}_{F;\kappa }^{\sigma
}f\right\Vert _{\infty }\sqrt{\left\vert F\right\vert _{\sigma }}+\left\Vert 
\mathsf{P}_{\mathcal{C}_{F}}^{\sigma }f\right\Vert _{W_{\limfunc{dyad}%
}^{s}\left( \sigma \right) }\right) \ \left\Vert \mathsf{P}_{\mathcal{C}%
_{F}^{\tau -\limfunc{shift}}}^{\omega }g\right\Vert _{W_{\limfunc{dyad}%
}^{-s}\left( \omega \right) }\ .  \label{below form bound}
\end{equation}%
Indeed, using quasiorthogonality, Lemma \ref{q lemma}, and orthogonality of
projections $\mathsf{P}_{\mathcal{C}_{F}}^{\sigma }f$ and $\mathsf{P}_{%
\mathcal{C}_{F}^{\tau -\limfunc{shift}}}^{\omega }g$ this then gives the
estimate,%
\begin{equation}
\left\vert \mathsf{T}_{\limfunc{diagonal}}\left( f,g\right) \right\vert
\lesssim \left( \mathfrak{T}_{T^{\alpha }}^{s}+\sqrt{A_{2}^{\alpha }}\right)
\left\Vert f\right\Vert _{W_{\limfunc{dyad}}^{s}\left( \sigma \right)
}\left\Vert g\right\Vert _{W_{\limfunc{dyad}}^{-s}\left( \omega \right) }.
\label{diag est}
\end{equation}

Thus at this point we have essentially reduced the proof of Theorem \ref%
{pivotal theorem} to

\begin{enumerate}
\item proving (\ref{below form bound}),

\item and controlling the triple polynomial testing condition (\ref{full
testing}) by the usual cube testing condition and the classical Muckenhoupt
condition.
\end{enumerate}

In the next section we address the first issue by proving the inequality (%
\ref{below form bound}) for the below forms $\mathsf{B}_{\Subset _{\rho
}}^{F}\left( f,g\right) $. In the final section, we address the second issue
and complete the proofs of our theorems by drawing together all of the
estimates.

\section{The Nazarov, Treil and Volberg reach for Alpert wavelets}

It will be convenient to denote our fractional singular integral operators
by $T^{\lambda }$, $0\leq \lambda <n$, instead of $T^{\alpha }$, thus
freeing up $\alpha $ for the familiar role of denoting multi-indices in $%
\mathbb{Z}_{+}^{n}$. Before getting started, we note that for a doubling
measure $\mu $, a cube $I$ and a polynomial $P$, we have%
\begin{equation*}
\left\Vert P\mathbf{1}_{I}\right\Vert _{L^{\infty }\left( \mu \right)
}=\sup_{x\in I}\left\vert P\left( x\right) \right\vert
\end{equation*}%
because $\mu $ charges all open sets, and so in particular, $\left\Vert P%
\mathbf{1}_{I}\right\Vert _{L^{\infty }\left( \sigma \right) }=\left\Vert P%
\mathbf{1}_{I}\right\Vert _{L^{\infty }\left( \omega \right) }$.

We will often follow the analogous arguments in \cite{AlSaUr}, and point out
the places where significant new approaches are needed. See \cite{AlSaUr}
for a review of the classical reach of Nazarov, Treil and Volberg using Haar
wavelet projections $\bigtriangleup _{I}^{\sigma }$, namely the beautiful
and ingenious `thinking outside the box' idea of the paraproduct / stopping
/ neighbour decomposition of Nazarov, Treil and Volberg \cite{NTV4} using
Haar wavelets.

When using weighted Alpert wavelet projections $\bigtriangleup _{I;\kappa
}^{\sigma }$ instead, the projection $\mathbb{E}_{I^{\prime };\kappa
}^{\sigma }\bigtriangleup _{I;\kappa }^{\sigma }f$ onto the child $I^{\prime
}\in \mathfrak{C}_{\mathcal{D}}\left( I\right) $ equals $M_{I^{\prime
};\kappa }\mathbf{1}_{I_{\pm }}$ where $M=M_{I^{\prime };\kappa }$ is a
polynomial of degree less than $\kappa $ restricted to $I^{\prime }$, and
hence no longer commutes with the operator $T_{\sigma }^{\lambda }$ - unless
it is the constant polynomial. We now recall the modifications used in \cite%
{AlSaUr}, where they obtained, 
\begin{eqnarray*}
&&\mathsf{B}_{\Subset _{\rho ,\varepsilon };\kappa }^{F}\left( f,g\right)
\equiv \sum_{\substack{ I\in \mathcal{C}_{F}\text{ and }J\in \mathcal{C}%
_{F}^{\tau -\limfunc{shift}}  \\ J\Subset _{\rho ,\varepsilon }I}}%
\left\langle T_{\sigma }^{\lambda }\bigtriangleup _{I;\kappa }^{\sigma
}f,\bigtriangleup _{J;\kappa }^{\omega }g\right\rangle _{\omega } \\
&=&\sum_{\substack{ I\in \mathcal{C}_{F}\text{ and }J\in \mathcal{C}_{F}^{%
\mathbf{\tau }-\limfunc{shift}}  \\ J\Subset _{\rho ,\varepsilon }I}}%
\left\langle T_{\sigma }^{\lambda }\left( \mathbf{1}_{I_{J}}\bigtriangleup
_{I;\kappa }^{\sigma }f\right) ,\bigtriangleup _{J;\kappa }^{\omega
}g\right\rangle _{\omega }+\sum_{\substack{ I\in \mathcal{C}_{F}\text{ and }%
J\in \mathcal{C}_{F}^{\mathbf{\tau }-\limfunc{shift}}  \\ J\Subset _{\rho
,\varepsilon }I}}\sum_{\theta \left( I_{J}\right) \in \mathfrak{C}_{\mathcal{%
D}}\left( I\right) \setminus \left\{ I_{J}\right\} }\left\langle T_{\sigma
}^{\lambda }\left( \mathbf{1}_{\theta \left( I_{J}\right) }\bigtriangleup
_{I;\kappa }^{\sigma }f\right) ,\bigtriangleup _{J;\kappa }^{\omega
}g\right\rangle _{\omega } \\
&\equiv &\mathsf{B}_{\func{home};\kappa }^{F}\left( f,g\right) +\mathsf{B}_{%
\limfunc{neighbour};\kappa }^{F}\left( f,g\right) .
\end{eqnarray*}%
They further decomposed the $\mathsf{B}_{\func{home};\kappa }^{F}$ form
using 
\begin{equation}
M_{I^{\prime }}=M_{I^{\prime };\kappa }\equiv \mathbf{1}_{I^{\prime
}}\bigtriangleup _{I;\kappa }^{\sigma }f=\mathbb{E}_{I^{\prime };\kappa
}^{\sigma }\bigtriangleup _{I;\kappa }^{\sigma }f,  \label{def M}
\end{equation}%
to obtain%
\begin{eqnarray*}
\mathsf{B}_{\func{home};\kappa }^{F}\left( f,g\right) &=&\sum_{\substack{ %
I\in \mathcal{C}_{F}\text{ and }J\in \mathcal{C}_{F}^{\tau -\limfunc{shift}} 
\\ J\Subset _{\rho ,\varepsilon }I}}\left\langle T_{\sigma }^{\lambda
}\left( M_{I_{J}}\mathbf{1}_{I_{J}}\right) ,\bigtriangleup _{J;\kappa
}^{\omega }g\right\rangle _{\omega } \\
&=&\sum_{\substack{ I\in \mathcal{C}_{F}\text{ and }J\in \mathcal{C}%
_{F}^{\tau -\limfunc{shift}}  \\ J\Subset _{\rho ,\varepsilon }I}}%
\left\langle M_{I_{J}}T_{\sigma }^{\lambda }\mathbf{1}_{I_{J}},%
\bigtriangleup _{J;\kappa }^{\omega }g\right\rangle _{\omega }+\sum 
_{\substack{ I\in \mathcal{C}_{F}\text{ and }J\in \mathcal{C}_{F}^{\tau -%
\limfunc{shift}}  \\ J\Subset _{\rho ,\varepsilon }I}}\left\langle \left[
T_{\sigma }^{\lambda },M_{I_{J}}\right] \mathbf{1}_{I_{J}},\bigtriangleup
_{J;\kappa }^{\omega }g\right\rangle _{\omega } \\
&=&\sum_{\substack{ I\in \mathcal{C}_{F}\text{ and }J\in \mathcal{C}%
_{F}^{\tau -\limfunc{shift}}  \\ J\Subset _{\rho ,\varepsilon }I}}%
\left\langle M_{I_{J}}T_{\sigma }^{\lambda }\mathbf{1}_{F},\bigtriangleup
_{J;\kappa }^{\omega }g\right\rangle _{\omega }-\sum_{\substack{ I\in 
\mathcal{C}_{F}\text{ and }J\in \mathcal{C}_{F}^{\tau -\limfunc{shift}}  \\ %
J\Subset _{\rho ,\varepsilon }I}}\left\langle M_{I_{J}}T_{\sigma }^{\lambda }%
\mathbf{1}_{F\setminus I_{J}},\bigtriangleup _{J;\kappa }^{\omega
}g\right\rangle _{\omega } \\
&&+\sum_{\substack{ I\in \mathcal{C}_{F}\text{ and }J\in \mathcal{C}%
_{F}^{\tau -\limfunc{shift}}  \\ J\Subset _{\rho ,\varepsilon }I}}%
\left\langle \left[ T_{\sigma }^{\lambda },M_{I_{J}}\right] \mathbf{1}%
_{I_{J}},\bigtriangleup _{J;\kappa }^{\omega }g\right\rangle _{\omega } \\
&\equiv &\mathsf{B}_{\limfunc{paraproduct};\kappa }^{F}\left( f,g\right) -%
\mathsf{B}_{\limfunc{stop};\kappa }^{F}\left( f,g\right) +\mathsf{B}_{%
\limfunc{commutator};\kappa }^{F}\left( f,g\right) .
\end{eqnarray*}%
Altogether then we have the weighted Alpert version of the Nazarov, Treil
and Volberg paraproduct decomposition that was obtained by Alexis, Sawyer
and Uriarte-Tuero in \cite{AlSaUr},%
\begin{equation*}
\mathsf{B}_{\Subset _{\mathbf{\rho },\varepsilon };\kappa }^{F}\left(
f,g\right) =\mathsf{B}_{\limfunc{paraproduct};\kappa }^{F}\left( f,g\right) +%
\mathsf{B}_{\limfunc{stop};\kappa }^{F}\left( f,g\right) +\mathsf{B}_{%
\limfunc{commutator};\kappa }^{F}\left( f,g\right) +\mathsf{B}_{\limfunc{%
neighbour};\kappa }^{F}\left( f,g\right) .
\end{equation*}

\subsection{The paraproduct form}

Following \cite{AlSaUr}, we first pigeonhole the sum over pairs $I$ and $J$
arising in the paraproduct form according to which child $K\in \mathfrak{C}_{%
\mathcal{D}}\left( I\right) $ contains $J$,%
\begin{equation*}
\mathsf{B}_{\limfunc{paraproduct};\kappa }^{F}\left( f,g\right) =\sum_{I\in 
\mathcal{C}_{F}}\sum_{K\in \mathfrak{C}_{\mathcal{D}}\left( I\right) }\sum 
_{\substack{ J\in \mathcal{C}_{F}^{\tau -\limfunc{shift}}:\ J\Subset _{\rho
,\varepsilon }I  \\ J\subset K}}\left\langle M_{K;\kappa }T_{\sigma
}^{\lambda }\mathbf{1}_{F},\bigtriangleup _{J;\kappa }^{\omega
}g\right\rangle _{\omega }.
\end{equation*}%
The form $\mathsf{B}_{\limfunc{paraproduct};\kappa }^{F}\left( f,g\right) $
will be handled using the telescoping property in part (2) of Theorem \ref%
{main1}, to sum the restrictions to a cube $J\in \mathcal{C}_{F}^{\tau -%
\limfunc{shift}}$ of the polynomials $M_{K;\kappa }$ on a child $K\in 
\mathfrak{C}_{\mathcal{D}}\left( I\right) $ of $I$, over the relevant cubes $%
I$, to obtain a restricted polynomial $\mathbf{1}_{J}P_{K;\kappa }$ that is
controlled by $E_{F}^{\sigma }\left\vert f\right\vert $, and then passing
the polynomial $M_{J;\kappa }$ over to $\bigtriangleup _{J;\kappa }^{\omega
}g$. More precisely, for each $J\in \mathcal{C}_{F}^{\mathbf{\tau }-\limfunc{%
shift}}$, let $I_{J}^{\natural }$ denote the smallest $L\in \mathcal{C}_{F}$
such that $J\Subset _{\rho ,\varepsilon }L$ provided it exists. Note that $J$
is at most $\tau $ levels below the bottom of the corona $\mathcal{C}_{F}$,
and since $\rho >2\tau $, we have that \emph{either} $\pi _{\mathcal{D}%
}^{\left( \rho \right) }J\in \mathcal{C}_{F}$ \emph{or} that $\pi _{\mathcal{%
D}}^{\left( \rho \right) }J\supsetneqq F$. Let $I_{J}^{\flat }$ denote the $%
\mathcal{D}$-child of $I_{J}^{\natural }$ that contains $J$, provided $%
I_{J}^{\natural }$ exists. We have 
\begin{equation}
\sum_{I\in \mathcal{C}_{F}:\ I_{J}^{\natural }\subset I}\mathbf{1}%
_{J}M_{K;\kappa }=\mathbf{1}_{J}\sum_{I\in \mathcal{C}_{F}:\ I_{J}^{\natural
}\subset I}M_{K;\kappa }=\mathbf{1}_{J}\left( \mathbb{E}_{I_{J}^{\flat
};\kappa }^{\sigma }f-\mathbb{E}_{F;\kappa }^{\sigma }f\right) \equiv
\left\{ 
\begin{array}{ccc}
\mathbf{1}_{J}P_{J;\kappa } & \text{ if } & I_{J}^{\natural }\text{ exists}
\\ 
0 & \text{ if } & I_{J}^{\natural }\text{ doesn't exist}%
\end{array}%
\right. .  \label{either event}
\end{equation}%
Then we write%
\begin{equation*}
\mathsf{B}_{\limfunc{paraproduct};\kappa }^{F}\left( f,g\right) =\sum 
_{\substack{ I\in \mathcal{C}_{F}\text{, }I_{J}^{\flat }\in \mathcal{C}_{F}%
\text{ and }J\in \mathcal{C}_{F}^{\tau -\limfunc{shift}}  \\ J\Subset _{%
\mathbf{\rho },\varepsilon }I}}\left\langle M_{I_{J};\kappa }T_{\sigma
}^{\lambda }\mathbf{1}_{F},\bigtriangleup _{J;\kappa }^{\omega
}g\right\rangle _{\omega }\ .
\end{equation*}

From (\ref{either event}) we obtain 
\begin{equation}
\left\Vert \mathbf{1}_{J}P_{J;\kappa }\right\Vert _{L^{\infty }\left( \sigma
\right) }\leq \left\Vert \mathbb{E}_{I_{J}^{\flat };\kappa }^{\sigma
}f\right\Vert _{L^{\infty }\left( \sigma \right) }+\left\Vert \mathbb{E}%
_{F;\kappa }^{\sigma }f\right\Vert _{L^{\infty }\left( \sigma \right) },
\label{unif bdd}
\end{equation}%
and so 
\begin{eqnarray*}
\mathsf{B}_{\limfunc{paraproduct};\kappa }^{F}\left( f,g\right)
&=&\sum_{J\in \mathcal{C}_{F}^{\tau -\limfunc{shift}}}\left\langle \mathbf{1}%
_{J}\left( \sum_{\substack{ I\in \mathcal{C}_{F}:\ I_{J}^{\flat }\in 
\mathcal{C}_{F}  \\ J\Subset _{\mathbf{\rho },\varepsilon }I}}\sum 
_{\substack{ I^{\prime }\in \mathfrak{C}_{\mathcal{D}}\left( I\right)  \\ %
J\subset I^{\prime }\text{ }}}M_{I^{\prime };\kappa }\right) T_{\sigma
}^{\lambda }\mathbf{1}_{F},\bigtriangleup _{J;\kappa }^{\omega
}g\right\rangle _{\omega } \\
&=&\sum_{J\in \mathcal{C}_{F}^{\tau -\limfunc{shift}}}\left\langle \mathbf{1}%
_{J}P_{J;\kappa }T_{\sigma }^{\lambda }\mathbf{1}_{F},\bigtriangleup
_{J;\kappa }^{\omega }g\right\rangle _{\omega }
\end{eqnarray*}%
to obtain%
\begin{eqnarray*}
\left\vert \mathsf{B}_{\limfunc{paraproduct};\kappa }^{F}\left( f,g\right)
\right\vert &=&\left\vert \sum_{J\in \mathcal{C}_{F}^{\tau -\limfunc{shift}%
}}\left\langle T_{\sigma }^{\lambda }\mathbf{1}_{F},P_{J;\kappa
}\bigtriangleup _{J;\kappa }^{\omega }g\right\rangle _{\omega }\right\vert
=\left\vert \left\langle T_{\sigma }^{\lambda }\mathbf{1}_{F},\sum_{J\in 
\mathcal{C}_{F}^{\tau -\limfunc{shift}}}P_{J;\kappa }\bigtriangleup
_{J;\kappa }^{\omega }g\right\rangle _{\omega }\right\vert \\
&\leq &\left\Vert T_{\sigma }^{\lambda }\mathbf{1}_{F}\right\Vert _{W_{\func{%
dyad}}^{s}\left( \mu \right) }\left\Vert \mathbb{E}_{F;\kappa }^{\sigma
}f\right\Vert _{L^{\infty }\left( \sigma \right) }\left\Vert \sum_{J\in 
\mathcal{C}_{F}^{\tau -\limfunc{shift}}}\frac{P_{J;\kappa }}{E_{F}^{\sigma
}\left\vert f\right\vert }\bigtriangleup _{J;\kappa }^{\omega }g\right\Vert
_{W_{\func{dyad}}^{-s}\left( \omega \right) } \\
&\leq &\mathfrak{T}_{T^{\lambda }}^{s}\ell \left( F\right) ^{-s}\sqrt{%
\left\vert F\right\vert _{\sigma }}\left\Vert \mathbb{E}_{F;\kappa }^{\sigma
}f\right\Vert _{L^{\infty }\left( \sigma \right) }\left\Vert \sum_{J\in 
\mathcal{C}_{F}^{\tau -\limfunc{shift}}}\frac{P_{J;\kappa }}{E_{F}^{\sigma
}\left\vert f\right\vert }\bigtriangleup _{J;\kappa }^{\omega }g\right\Vert
_{W_{\func{dyad}}^{-s}\left( \omega \right) }.
\end{eqnarray*}

Now we will use an almost orthogonality argument that reflects the fact that
for $J^{\prime }$ small compared to $J$, the function $M_{J^{\prime };\kappa
}\bigtriangleup _{J^{\prime };\kappa }^{\omega }g$ has vanishing $\omega $%
-mean, and the polynomial $\mathbf{1}_{J}P_{J;\kappa }^{\func{corona}%
}\bigtriangleup _{J;\kappa }^{\omega }g$ is relatively smooth at the scale
of $J^{\prime }$, together with the fact that the polynomials 
\begin{equation*}
R_{J;\kappa }\equiv \frac{P_{J;\kappa }}{E_{F}^{\sigma }\left\vert
f\right\vert }
\end{equation*}%
of degree at most $\kappa -1$, have $L^{\infty }$ norm uniformly bounded by
the constant $C$ appearing in (\ref{unif bdd}). We begin by writing 
\begin{eqnarray*}
&&\left\Vert \sum_{J\in \mathcal{C}_{F}^{\tau -\limfunc{shift}}}R_{J;\kappa
}\bigtriangleup _{J;\kappa }^{\omega }g\right\Vert _{W_{\func{dyad}%
}^{-s}\left( \omega \right) }^{2}=\left\langle \sum_{J\in \mathcal{C}%
_{F}^{\tau -\limfunc{shift}}}R_{J;\kappa }\bigtriangleup _{J;\kappa
}^{\omega }g,\sum_{J^{\prime }\in \mathcal{C}_{F}^{\tau -\limfunc{shift}%
}}R_{J^{\prime };\kappa }\bigtriangleup _{J^{\prime };\kappa }^{\omega
}g\right\rangle _{W_{\func{dyad}}^{-s}\left( \omega \right) } \\
&=&\left\{ \sum_{\substack{ J,J^{\prime }\in \mathcal{C}_{F}^{\tau -\limfunc{%
shift}}  \\ J^{\prime }\subset J}}+\sum_{\substack{ J,J^{\prime }\in 
\mathcal{C}_{F}^{\tau -\limfunc{shift}}  \\ J\subsetneqq J^{\prime }}}%
\right\} \left\langle R_{J;\kappa }\bigtriangleup _{J;\kappa }^{\omega
}g,R_{J^{\prime };\kappa }\bigtriangleup _{J^{\prime };\kappa }^{\omega
}g\right\rangle _{W_{\func{dyad}}^{-s}\left( \omega \right) }\equiv A+B.
\end{eqnarray*}%
By symmetry, it suffices to estimate term $A$. We will use the definitions 
\begin{eqnarray*}
\left\Vert f\right\Vert _{W_{\func{dyad}}^{s}\left( \mu \right) }^{2}
&\equiv &\sum_{Q\in \mathcal{D}}\ell \left( Q\right) ^{-2s}\left\Vert
\bigtriangleup _{Q;\kappa }^{\mu }f\right\Vert _{L^{2}\left( \mu \right)
}^{2}, \\
\left\langle f,h\right\rangle _{W_{\func{dyad}}^{s}\left( \mu \right) }
&\equiv &\sum_{Q\in \mathcal{D}}\ell \left( Q\right) ^{-2s}\left\langle
\bigtriangleup _{Q;\kappa }^{\mu }f,\bigtriangleup _{Q;\kappa }^{\mu
}h\right\rangle _{L^{2}\left( \mu \right) },
\end{eqnarray*}%
together with the fact that when $\kappa =1$, $\bigtriangleup _{Q;1}^{\omega
}R_{J;\kappa }\bigtriangleup _{J;\kappa }^{\omega }g$ has one vanishing
moment, to obtain%
\begin{eqnarray*}
A &=&\sum_{\substack{ J,J^{\prime }\in \mathcal{C}_{F}^{\tau -\limfunc{shift}%
}  \\ J^{\prime }\subset J}}\sum_{\substack{ Q\in \mathcal{D}  \\ Q\subset J 
}}\ell \left( Q\right) ^{2s}\int_{\mathbb{R}^{n}}\left( \bigtriangleup
_{Q;1}^{\omega }R_{J;\kappa }\bigtriangleup _{J;\kappa }^{\omega }g\right)
\left( \bigtriangleup _{Q;1}^{\omega }R_{J^{\prime };\kappa }\bigtriangleup
_{J^{\prime };\kappa }^{\omega }g\right) d\omega \\
&\lesssim &\sum_{J\in \mathcal{C}_{F}^{\tau -\limfunc{shift}}}\left\Vert
\bigtriangleup _{J;\kappa }^{\omega }g\right\Vert _{W_{\func{dyad}%
}^{-s}\left( \omega \right) }^{2}=\sum_{J\in \mathcal{C}_{F}^{\tau -\limfunc{%
shift}}}\ell \left( J\right) ^{2s}\left\Vert \bigtriangleup _{J;\kappa
}^{\omega }g\right\Vert _{L^{2}\left( \omega \right) }^{2}=\left\Vert 
\mathsf{P}_{\mathcal{C}_{F}^{\tau -\limfunc{shift}}}^{\omega }g\right\Vert
_{W_{\func{dyad}}^{-s}\left( \omega \right) }^{2}.
\end{eqnarray*}%
Note that here it is important to know that $W_{\func{dyad}}^{-s}\left(
\omega \right) $ equals both $W_{\mathcal{D};\kappa }^{-s}\left( \omega
\right) $ and $W_{\mathcal{D};1}^{-s}\left( \omega \right) $.

Indeed, if $J^{\prime }$ is small compared to $J$, and $J^{\prime }\subset
J_{J^{\prime }}\subset J$, then there are just three possibilities for $Q$,
namely $Q\cap J^{\prime }=\emptyset $, $Q\supsetneqq J^{\prime }$, and $%
Q\subset J^{\prime }$. If $Q\cap J^{\prime }=\emptyset $ then the integral
vanishes by support considerations. If $Q\supsetneqq J^{\prime }$ then the
integral vanishes since $\bigtriangleup _{Q;1}^{\omega }\left( R_{J;\kappa
}\bigtriangleup _{J;\kappa }^{\omega }g\right) $ is constant on $J^{\prime }$%
, $R_{J^{\prime };\kappa }\bigtriangleup _{J^{\prime };\kappa }^{\omega }g$
has $\omega $-mean $0$ on $J^{\prime }$, and $\bigtriangleup _{Q;1}^{\omega
} $ is a projection. Thus we are left with the case where $Q\subset
J^{\prime }\subset J$. We have 
\begin{eqnarray*}
&&\left\vert \int_{\mathbb{R}^{n}}\left( \bigtriangleup _{Q;1}^{\omega
}\left( R_{J;\kappa }\bigtriangleup _{J;\kappa }^{\omega }g\right) \right)
\left( \bigtriangleup _{Q;1}^{\omega }\left( R_{J^{\prime };\kappa
}\bigtriangleup _{J^{\prime };\kappa }^{\omega }g\right) \right) d\omega
\right\vert \\
&=&\left\Vert R_{J;\kappa }\bigtriangleup _{J;\kappa }^{\omega }g\right\Vert
_{L^{\infty }\left( \omega \right) }\left\Vert R_{J^{\prime };\kappa
}\bigtriangleup _{J^{\prime };\kappa }^{\omega }g\right\Vert _{L^{\infty
}\left( \omega \right) } \\
&&\times \left\vert \int_{\mathbb{R}^{n}}\left( \frac{\bigtriangleup
_{Q;1}^{\omega }\left( R_{J;\kappa }\bigtriangleup _{J;\kappa }^{\omega
}g\right) }{\left\Vert R_{J;\kappa }\bigtriangleup _{J;\kappa }^{\omega
}g\right\Vert _{L^{\infty }\left( \omega \right) }}\right) \left( \frac{%
\bigtriangleup _{Q;1}^{\omega }\left( R_{J^{\prime };\kappa }\bigtriangleup
_{J^{\prime };\kappa }^{\omega }g\right) }{\left\Vert R_{J^{\prime };\kappa
}\bigtriangleup _{J^{\prime };\kappa }^{\omega }g\right\Vert _{L^{\infty
}\left( \omega \right) }}\right) d\omega \right\vert ,
\end{eqnarray*}%
and now if $Q\subset J^{\prime }$, we get%
\begin{eqnarray*}
&&\int_{\mathbb{R}^{n}}\left( \frac{\bigtriangleup _{Q;1}^{\omega }\left(
R_{J;\kappa }\bigtriangleup _{J;\kappa }^{\omega }g\right) }{\left\Vert
R_{J;\kappa }\bigtriangleup _{J;\kappa }^{\omega }g\right\Vert _{L^{\infty
}\left( \omega \right) }}\right) \left( \frac{\bigtriangleup _{Q;1}^{\omega
}\left( R_{J^{\prime };\kappa }\bigtriangleup _{J^{\prime };\kappa }^{\omega
}g\right) }{\left\Vert R_{J^{\prime };\kappa }\bigtriangleup _{J^{\prime
};\kappa }^{\omega }g\right\Vert _{L^{\infty }\left( \omega \right) }}%
\right) d\omega \\
&=&\int_{\mathbb{R}^{n}}\left( \frac{\bigtriangleup _{Q;1}^{\omega
}R_{J;\kappa }\bigtriangleup _{J;\kappa }^{\omega }g}{\left\Vert R_{J;\kappa
}\bigtriangleup _{J;\kappa }^{\omega }g\right\Vert _{L^{\infty }\left(
\omega \right) }}\right) \left( \frac{R_{J^{\prime };\kappa }\bigtriangleup
_{J^{\prime };\kappa }^{\omega }g}{\left\Vert R_{J^{\prime };\kappa
}\bigtriangleup _{J^{\prime };\kappa }^{\omega }g\right\Vert _{L^{\infty
}\left( \omega \right) }}\right) d\omega
\end{eqnarray*}%
where $\bigtriangleup _{Q;1}^{\omega }$ has one vanishing mean, and hence%
\begin{eqnarray*}
&&\left\vert \int \left( \bigtriangleup _{Q;1}^{\omega }\left( R_{J;\kappa
}\bigtriangleup _{J;\kappa }^{\omega }g\right) \right) \left( \bigtriangleup
_{Q;1}^{\omega }\left( R_{J^{\prime };\kappa }\bigtriangleup _{J^{\prime
};\kappa }^{\omega }g\right) \right) d\omega \right\vert \\
&\leq &\frac{\ell \left( Q\right) }{\ell \left( J\right) }\left\Vert
R_{J;\kappa }\bigtriangleup _{J;\kappa }^{\omega }g\right\Vert _{L^{\infty
}\left( \omega \right) }\frac{\ell \left( Q\right) }{\ell \left( J^{\prime
}\right) }\left\Vert R_{J^{\prime };\kappa }\bigtriangleup _{J^{\prime
};\kappa }^{\omega }g\right\Vert _{L^{\infty }\left( \omega \right)
}\left\vert Q\right\vert _{\omega } \\
&\lesssim &\frac{\left\Vert \bigtriangleup _{J;\kappa }^{\omega
}g\right\Vert _{L^{2}\left( \omega \right) }}{\sqrt{\left\vert J\right\vert
_{\omega }}}\frac{\left\Vert \bigtriangleup _{J^{\prime };\kappa }^{\omega
}g\right\Vert _{L^{2}\left( \omega \right) }}{\sqrt{\left\vert J^{\prime
}\right\vert _{\omega }}}\frac{\ell \left( Q\right) }{\ell \left( J\right) }%
\frac{\ell \left( Q\right) }{\ell \left( J^{\prime }\right) }\left\vert
Q\right\vert _{\omega } \\
&=&\frac{\ell \left( Q\right) }{\ell \left( J\right) }\frac{\ell \left(
Q\right) }{\ell \left( J^{\prime }\right) }\sqrt{\frac{\left\vert
Q\right\vert _{\omega }}{\left\vert J\right\vert _{\omega }}}\sqrt{\frac{%
\left\vert Q\right\vert _{\omega }}{\left\vert J^{\prime }\right\vert
_{\omega }}}\left\Vert \bigtriangleup _{J;\kappa }^{\omega }g\right\Vert
_{L^{2}\left( \omega \right) }\left\Vert \bigtriangleup _{J^{\prime };\kappa
}^{\omega }g\right\Vert _{L^{2}\left( \omega \right) } \\
&\leq &\sqrt{\frac{\left\vert Q\right\vert _{\omega }}{\left\vert
J\right\vert _{\omega }}}\frac{\ell \left( Q\right) }{\ell \left( J\right) }%
\left\Vert \bigtriangleup _{J^{\prime };\kappa }^{\omega }g\right\Vert
_{L^{2}\left( \omega \right) }\left\Vert \bigtriangleup _{J;\kappa }^{\omega
}g\right\Vert _{L^{2}\left( \omega \right) },
\end{eqnarray*}%
by (\ref{analogue'}), i.e.%
\begin{eqnarray*}
\left\Vert R_{J^{\prime };\kappa }\bigtriangleup _{J^{\prime };\kappa
}^{\omega }g\right\Vert _{L^{\infty }\left( \omega \right) }\sqrt{\left\vert
J^{\prime }\right\vert _{\omega }} &\lesssim &\left\Vert \bigtriangleup
_{J^{\prime };\kappa }^{\omega }g\right\Vert _{L^{\infty }\left( \omega
\right) }\sqrt{\left\vert J^{\prime }\right\vert _{\omega }}\lesssim
\left\Vert \bigtriangleup _{J^{\prime };\kappa }^{\omega }g\right\Vert
_{L^{2}\left( \omega \right) }, \\
\left\Vert R_{J;\kappa }\bigtriangleup _{J;\kappa }^{\omega }g\right\Vert
_{L^{\infty }\left( \omega \right) }\sqrt{\left\vert J\right\vert _{\omega }}
&\lesssim &\left\Vert \bigtriangleup _{J;\kappa }^{\omega }g\right\Vert
_{L^{\infty }\left( \omega \right) }\sqrt{\left\vert J\right\vert _{\omega }}%
\lesssim \left\Vert \bigtriangleup _{J;\kappa }^{\omega }g\right\Vert
_{L^{2}\left( \omega \right) }.
\end{eqnarray*}%
Note that it was necessary to invoke the Haar wavelets with $\kappa =1$ in
order to obtain this inequality for $s\neq 0$.

We now note for use in the next estimate that%
\begin{eqnarray*}
&&\sum_{\substack{ Q\in \mathcal{D}  \\ Q\subset J^{\prime }}}\ell \left(
Q\right) ^{2s}\sqrt{\frac{\left\vert Q\right\vert _{\omega }}{\left\vert
J\right\vert _{\omega }}}\frac{\ell \left( Q\right) }{\ell \left( J\right) }%
=\sum_{m=0}^{\infty }\sum_{\substack{ Q\in \mathcal{D}  \\ Q\subset
J^{\prime }\text{ and }\ell \left( Q\right) =2^{-m}\ell \left( J^{\prime
}\right) }}\ell \left( Q\right) ^{2s}\sqrt{\frac{\left\vert Q\right\vert
_{\omega }}{\left\vert J\right\vert _{\omega }}}\frac{\ell \left( Q\right) }{%
\ell \left( J\right) } \\
&=&\sum_{m=0}^{\infty }\ell \left( J^{\prime }\right) ^{2s}2^{-2sm}\frac{%
2^{-m}\ell \left( J^{\prime }\right) }{\ell \left( J\right) }\sqrt{\frac{1}{%
\left\vert J\right\vert _{\omega }}}\sum_{\substack{ Q\in \mathcal{D}  \\ %
Q\subset J^{\prime }\text{ and }\ell \left( Q\right) =2^{-m}\ell \left(
J^{\prime }\right) }}1\sqrt{\left\vert Q\right\vert _{\omega }} \\
&\leq &\sum_{m=0}^{\infty }\ell \left( J^{\prime }\right) ^{2s}2^{-2sm}\frac{%
2^{-m}\ell \left( J^{\prime }\right) }{\ell \left( J\right) }\sqrt{\frac{1}{%
\left\vert J\right\vert _{\omega }}}2^{\frac{m}{2}}\sqrt{\sum_{\substack{ %
Q\in \mathcal{D}  \\ Q\subset J^{\prime }\text{ and }\ell \left( Q\right)
=2^{-m}\ell \left( J^{\prime }\right) }}\left\vert Q\right\vert _{\omega }}
\\
&=&\ell \left( J^{\prime }\right) ^{2s}\frac{\ell \left( J^{\prime }\right) 
}{\ell \left( J\right) }\sqrt{\frac{\left\vert J^{\prime }\right\vert
_{\omega }}{\left\vert J\right\vert _{\omega }}}\sum_{m=0}^{\infty }2^{\frac{%
m}{2}}2^{-2sm}2^{-m}\leq C_{s}\ell \left( J^{\prime }\right) ^{2s}\frac{\ell
\left( J^{\prime }\right) }{\ell \left( J\right) }\sqrt{\frac{\left\vert
J^{\prime }\right\vert _{\omega }}{\left\vert J\right\vert _{\omega }}}.
\end{eqnarray*}%
Thus recalling that we restricted attention to the case $J^{\prime }\subset
J $ by symmetry, we have%
\begin{eqnarray*}
&&\sum_{\substack{ J,J^{\prime }\in \mathcal{C}_{F}^{\tau -\limfunc{shift}} 
\\ J^{\prime }\subset J}}\sum_{\substack{ Q\in \mathcal{D}  \\ Q\subset J}}%
\ell \left( Q\right) ^{2s}\left\vert \int_{\mathbb{R}^{n}}\left(
\bigtriangleup _{Q;1}^{\omega }R_{J;\kappa }\bigtriangleup _{J;\kappa
}^{\omega }g\right) \left( \bigtriangleup _{Q;1}^{\omega }R_{J^{\prime
};\kappa }\bigtriangleup _{J^{\prime };\kappa }^{\omega }g\right) d\omega
\right\vert \\
&=&\sum_{\substack{ J,J^{\prime }\in \mathcal{C}_{F}^{\tau -\limfunc{shift}} 
\\ J^{\prime }\subset J}}\sum_{\substack{ Q\in \mathcal{D}  \\ Q\subset
J^{\prime }}}\ell \left( Q\right) ^{2s}\left\vert \int_{\mathbb{R}%
^{n}}\left( \bigtriangleup _{Q;1}^{\omega }R_{J;\kappa }\bigtriangleup
_{J;\kappa }^{\omega }g\right) \left( \bigtriangleup _{Q;1}^{\omega
}R_{J^{\prime };\kappa }\bigtriangleup _{J^{\prime };\kappa }^{\omega
}g\right) d\omega \right\vert \\
&\lesssim &\sum_{\substack{ J,J^{\prime }\in \mathcal{C}_{F}^{\tau -\limfunc{%
shift}}  \\ J^{\prime }\subset J}}\sum_{\substack{ Q\in \mathcal{D}  \\ %
Q\subset J^{\prime }}}\ell \left( Q\right) ^{2s}\sqrt{\frac{\left\vert
Q\right\vert _{\omega }}{\left\vert J\right\vert _{\omega }}}\frac{\ell
\left( Q\right) }{\ell \left( J\right) }\left\Vert \bigtriangleup
_{J^{\prime };\kappa }^{\omega }g\right\Vert _{L^{2}\left( \omega \right)
}\left\Vert \bigtriangleup _{J;\kappa }^{\omega }g\right\Vert _{L^{2}\left(
\omega \right) } \\
&\lesssim &\sum_{\substack{ J,J^{\prime }\in \mathcal{C}_{F}^{\tau -\limfunc{%
shift}}  \\ J^{\prime }\subset J}}\ell \left( J^{\prime }\right) ^{2s}\frac{%
\ell \left( J^{\prime }\right) }{\ell \left( J\right) }\sqrt{\frac{%
\left\vert J^{\prime }\right\vert _{\omega }}{\left\vert J\right\vert
_{\omega }}}\left\Vert \bigtriangleup _{J^{\prime };\kappa }^{\omega
}g\right\Vert _{L^{2}\left( \omega \right) }\left\Vert \bigtriangleup
_{J;\kappa }^{\omega }g\right\Vert _{L^{2}\left( \omega \right) }
\end{eqnarray*}%
which is%
\begin{eqnarray*}
&=&\sum_{m=0}^{\infty }2^{-m}2^{-2sm}\sum_{\substack{ J,J^{\prime }\in 
\mathcal{C}_{F}^{\tau -\limfunc{shift}}  \\ \ell \left( J^{\prime }\right)
=2^{-m}\ell \left( J\right) }}\ell \left( J\right) ^{2s}\sqrt{\frac{%
\left\vert J^{\prime }\right\vert _{\omega }}{\left\vert J\right\vert
_{\omega }}}\left\Vert \bigtriangleup _{J^{\prime };\kappa }^{\omega
}g\right\Vert _{L^{2}\left( \omega \right) }\left\Vert \bigtriangleup
_{J;\kappa }^{\omega }g\right\Vert _{L^{2}\left( \omega \right) } \\
&\leq &\sum_{m=0}^{\infty }2^{-m}2^{-2sm}\sum_{J\in \mathcal{C}_{F}^{\tau -%
\limfunc{shift}}}\ell \left( J\right) ^{2s}\sum_{J^{\prime }\in \mathcal{C}%
_{F}^{\mathbf{\tau }-\limfunc{shift}}:\ \ell \left( J^{\prime }\right)
=2^{-m}\ell \left( J\right) }\sqrt{\frac{\left\vert J^{\prime }\right\vert
_{\omega }}{\left\vert J\right\vert _{\omega }}}\left\Vert \bigtriangleup
_{J^{\prime };\kappa }^{\omega }g\right\Vert _{L^{2}\left( \omega \right)
}\left\Vert \bigtriangleup _{J;\kappa }^{\omega }g\right\Vert _{L^{2}\left(
\omega \right) } \\
&\lesssim &\sum_{m=0}^{\infty }2^{-m}2^{-2sm}\sum_{J\in \mathcal{C}%
_{F}^{\tau -\limfunc{shift}}}\ell \left( J\right) ^{2s}\sqrt{\sum_{J^{\prime
}\in \mathcal{C}_{F}^{\mathbf{\tau }-\limfunc{shift}}:\ \ell \left(
J^{\prime }\right) =2^{-m}\ell \left( J\right) }\frac{\left\vert J^{\prime
}\right\vert _{\omega }}{\left\vert J\right\vert _{\omega }}\left\Vert
\bigtriangleup _{J;\kappa }^{\omega }g\right\Vert _{L^{2}\left( \omega
\right) }^{2}}\sqrt{\sum_{J^{\prime }:\ \ell \left( J^{\prime }\right)
=2^{-m}\ell \left( J\right) }\left\Vert \bigtriangleup _{J^{\prime };\kappa
}^{\omega }g\right\Vert _{L^{2}\left( \omega \right) }^{2}}
\end{eqnarray*}%
which is at most%
\begin{eqnarray*}
&\lesssim &\sum_{m=0}^{\infty }2^{-m}2^{-2sm}\sum_{J\in \mathcal{C}%
_{F}^{\tau -\limfunc{shift}}}\ell \left( J\right) ^{2s}\left\Vert
\bigtriangleup _{J;\kappa }^{\omega }g\right\Vert _{L^{2}\left( \omega
\right) }\sqrt{\sum_{J^{\prime }\in \mathcal{C}_{F}^{\tau -\limfunc{shift}%
}:\ \ell \left( J^{\prime }\right) =2^{-m}\ell \left( J\right) }\left\Vert
\bigtriangleup _{J^{\prime };\kappa }^{\omega }g\right\Vert _{L^{2}\left(
\omega \right) }^{2}} \\
&\lesssim &\sum_{m=0}^{\infty }2^{-m}2^{-2sm}\sqrt{\sum_{J\in \mathcal{C}%
_{F}^{\tau -\limfunc{shift}}}\ell \left( J\right) ^{2s}\left\Vert
\bigtriangleup _{J;\kappa }^{\omega }g\right\Vert _{L^{2}\left( \omega
\right) }^{2}}\sqrt{\sum_{J\in \mathcal{C}_{F}^{\tau -\limfunc{shift}}}\ell
\left( J\right) ^{2s}\sum_{J^{\prime }\in \mathcal{C}_{F}^{\tau -\limfunc{%
shift}}:\ \ell \left( J^{\prime }\right) =2^{-m}\ell \left( J\right)
}\left\Vert \bigtriangleup _{J^{\prime };\kappa }^{\omega }g\right\Vert
_{L^{2}\left( \omega \right) }^{2}} \\
&\lesssim &\sqrt{\sum_{J\in \mathcal{C}_{F}^{\tau -\limfunc{shift}}}\ell
\left( J\right) ^{2s}\left\Vert \bigtriangleup _{J;\kappa }^{\omega
}g\right\Vert _{L^{2}\left( \omega \right) }^{2}}\sum_{m=0}^{\infty
}2^{-m}2^{-2sm}2^{ms}\sqrt{\sum_{J\in \mathcal{C}_{F}^{\tau -\limfunc{shift}%
}}\sum_{J^{\prime }\in \mathcal{C}_{F}^{\tau -\limfunc{shift}}:\ \ell \left(
J^{\prime }\right) =2^{-m}\ell \left( J\right) }\ell \left( J^{\prime
}\right) ^{2s}\left\Vert \bigtriangleup _{J^{\prime };\kappa }^{\omega
}g\right\Vert _{L^{2}\left( \omega \right) }^{2}} \\
&\leq &\left\Vert \mathsf{P}_{\mathcal{C}_{F}^{\tau -\limfunc{shift}%
}}^{\omega }g\right\Vert _{W_{\limfunc{dyad}}^{-s}\left( \omega \right)
}\sum_{m=0}^{\infty }2^{-m}2^{-2sm}2^{ms}\sqrt{\sum_{J^{\prime }\in \mathcal{%
C}_{F}^{\tau -\limfunc{shift}}}\ell \left( J^{\prime }\right)
^{2s}\left\Vert \bigtriangleup _{J^{\prime };\kappa }^{\omega }g\right\Vert
_{L^{2}\left( \omega \right) }^{2}}=C_{s}\left\Vert \mathsf{P}_{\mathcal{C}%
_{F}^{\tau -\limfunc{shift}}}^{\omega }g\right\Vert _{W_{\limfunc{dyad}%
}^{-s}\left( \omega \right) }^{2}
\end{eqnarray*}%
provided that $s>-1$. Altogether we have shown%
\begin{eqnarray*}
\left\vert \mathsf{B}_{\limfunc{paraproduct};\kappa }^{F}\left( f,g\right)
\right\vert &\lesssim &\mathfrak{T}_{T^{\lambda }}^{s}\ell \left( F\right)
^{-s}\sqrt{\left\vert F\right\vert _{\sigma }}\left\Vert \mathbb{E}%
_{F;\kappa }^{\sigma }f\right\Vert _{L^{\infty }\left( \sigma \right)
}\left\Vert \sum_{J\in \mathcal{C}_{F}^{\tau -\limfunc{shift}}}\frac{%
P_{J;\kappa }}{E_{F}^{\sigma }\left\vert f\right\vert }\bigtriangleup
_{J;\kappa }^{\omega }g\right\Vert _{W_{\func{dyad}}^{-s}\left( \omega
\right) } \\
&\lesssim &\mathfrak{T}_{T^{\lambda }}^{s}\ell \left( F\right) ^{-s}\sqrt{%
\left\vert F\right\vert _{\sigma }}\left\Vert \mathbb{E}_{F;\kappa }^{\sigma
}f\right\Vert _{L^{\infty }\left( \sigma \right) }C_{s}\left\Vert \mathsf{P}%
_{\mathcal{C}_{F}^{\tau -\limfunc{shift}}}^{\omega }g\right\Vert _{W_{%
\limfunc{dyad}}^{-s}\left( \omega \right) }.
\end{eqnarray*}

For future reference we record the fact that the main inequality proved
above,%
\begin{equation*}
\left\Vert \sum_{J\in \mathcal{C}_{F}^{\tau -\limfunc{shift}}}R_{J;\kappa
}\bigtriangleup _{J;\kappa }^{\omega }g\right\Vert _{W_{\func{dyad}%
}^{-s}\left( \omega \right) }^{2}\lesssim \sum_{J\in \mathcal{C}_{F}^{\tau -%
\limfunc{shift}}}\left\Vert \bigtriangleup _{J;\kappa }^{\omega
}g\right\Vert _{W_{\func{dyad}}^{-s}\left( \omega \right) }^{2},
\end{equation*}%
continues to hold if $\mathcal{C}_{F}^{\tau -\limfunc{shift}}$ is replaced
by an arbitrary subset $\mathcal{J}$ of the dyadic grid $\mathcal{D}$, and
if the polynomials $R_{J;\kappa }$ are replaced with any family of
polynomials $\left\{ W_{J;\kappa }\right\} _{J\in \mathcal{J}}$ such that
for all $J\in \mathcal{J}$,%
\begin{eqnarray*}
\deg W_{J;\kappa } &<&\kappa , \\
\int_{J}W_{J;\kappa }d\omega &=&0, \\
\sup_{J}\left\vert W_{J;\kappa }\right\vert &\leq &C.
\end{eqnarray*}%
More precisely the above arguments prove,%
\begin{equation}
\left\Vert \sum_{J\in \mathcal{J}}W_{J;\kappa }\bigtriangleup _{J;\kappa
}^{\omega }g\right\Vert _{W_{\func{dyad}}^{-s}\left( \omega \right)
}^{2}\lesssim \sum_{J\in \mathcal{J}}\left\Vert \bigtriangleup _{J;\kappa
}^{\omega }g\right\Vert _{W_{\func{dyad}}^{-s}\left( \omega \right) }^{2}.
\label{continues}
\end{equation}

Since the weighted Sobolev wavelets $\left\{ \bigtriangleup _{J;\kappa
}^{\omega }\right\} _{J\in \mathcal{D}}$ are pairwise orthogonal, we have 
\begin{equation*}
\sum_{J\in \mathcal{C}_{F}^{\tau -\limfunc{shift}}}\left\Vert \bigtriangleup
_{J;\kappa }^{\omega }g\right\Vert _{W_{\limfunc{dyad}}^{-s}\left( \omega
\right) }^{2}=\left\Vert \mathsf{P}_{\mathcal{C}_{F}^{\tau -\limfunc{shift}%
}}^{\omega }g\right\Vert _{W_{\limfunc{dyad}}^{-s}\left( \omega \right)
}^{2},
\end{equation*}%
and so we obtain 
\begin{equation*}
\left\vert \mathsf{B}_{\limfunc{paraproduct};\kappa }^{F}\left( f,g\right)
\right\vert \lesssim \mathfrak{T}_{T^{\lambda }}^{s}\ell \left( F\right)
^{-s}\left\Vert \mathbb{E}_{F;\kappa }^{\sigma }f\right\Vert _{L^{\infty
}\left( \sigma \right) }\sqrt{\left\vert F\right\vert _{\sigma }}\
\left\Vert \mathsf{P}_{\mathcal{C}_{F}^{\tau -\limfunc{shift}}}^{\omega
}g\right\Vert _{W_{\limfunc{dyad}}^{-s}\left( \omega \right) }
\end{equation*}%
as required by (\ref{below form bound}).

Finally using orthogonality of $\left\{ \mathsf{P}_{\mathcal{C}_{F}^{\tau -%
\limfunc{shift}}}^{\omega }\right\} _{F\in \mathcal{F}}$ and
quasiorthogonality in Lemma \ref{q lemma}, we obtain%
\begin{equation}
\left\vert \mathsf{B}_{\limfunc{paraproduct};\kappa }\left( f,g\right)
\right\vert \lesssim \sqrt{A_{2}^{\lambda }}\left\Vert f\right\Vert _{W_{%
\limfunc{dyad}}^{s}\left( \sigma \right) }\left\Vert g\right\Vert _{W_{%
\limfunc{dyad}}^{-s}\left( \omega \right) },\ \ \ \ \ \left\vert
s\right\vert <1.  \label{para est}
\end{equation}

We next turn to the commutator inner products $\left\langle \left[ T_{\sigma
}^{\lambda },M_{I^{\prime };\kappa }\right] \mathbf{1}_{I^{\prime
}},\bigtriangleup _{J;\kappa }^{\omega }g\right\rangle _{\omega }$ arising
in $\mathsf{B}_{\limfunc{commutator};\kappa }^{F}\left( f,g\right) $,
followed by the neighbour and stopping inner products.

\begin{remark}
The arguments for the commutator, neighbour and stopping forms follow
closely the analogous arguments in \cite{AlSaUr} where the case $s=0$ is
handled. Nevertheless, there are differences arising when $s\neq 0$, and so
we give complete details for the convenience of the reader.
\end{remark}

\subsection{The commutator form\label{Sub commutator form}}

Fix $\kappa \geq 1$. In this subsection we use $\alpha $ to denote a
multiindex in $\mathbb{R}^{n}$, and so we will instead use $\lambda $ to
denote the fractional order of the Calderon-Zygmund operator. Assume now
that $K^{\lambda }$ is a general standard $\lambda $-fractional kernel in $%
\mathbb{R}^{n}$, and $T^{\lambda }$ is the associated Calder\'{o}n-Zygmund
operator, and that $P_{\alpha ,a,I^{\prime }}\left( x\right) =\left( \frac{%
x-a}{\ell \left( I^{\prime }\right) }\right) ^{\alpha }=\left( \frac{%
x_{1}-a_{1}}{\ell \left( I^{\prime }\right) }\right) ^{\alpha _{1}}...\left( 
\frac{x_{n}-a_{n}}{\ell \left( I^{\prime }\right) }\right) ^{\alpha _{n}}$,
where $\left\vert \alpha \right\vert \leq \kappa -1$ and $I^{\prime }\in 
\mathfrak{C}_{\mathcal{D}}\left( I\right) $, $I\in \mathcal{C}_{F}$. We
recall from \cite{AlSaUr} the formula%
\begin{equation*}
x^{\alpha }-y^{\alpha }=\sum_{k=1}^{n}\left( x_{k}-y_{k}\right)
\dsum\limits_{\left\vert \beta \right\vert +\left\vert \gamma \right\vert
=\left\vert \alpha \right\vert -1}c_{\alpha ,\beta ,\gamma }x^{\beta
}y^{\gamma }.
\end{equation*}%
Continuing to follow \cite{AlSaUr}, we then have%
\begin{eqnarray}
&&\mathbf{1}_{I^{\prime }}\left( x\right) \left[ P_{\alpha ,a,I^{\prime }}\
,T_{\sigma }^{\lambda }\right] \mathbf{1}_{I^{\prime }}\left( x\right) =%
\mathbf{1}_{I^{\prime }}\left( x\right) \int_{\mathbb{R}^{n}}K^{\lambda
}\left( x-y\right) \left\{ P_{\alpha ,a,I^{\prime }}\left( x\right)
-P_{\alpha ,a,I^{\prime }}\left( y\right) \right\} \mathbf{1}_{I^{\prime
}}\left( y\right) d\sigma \left( y\right)  \label{continue} \\
&=&\mathbf{1}_{I^{\prime }}\left( x\right) \int_{\mathbb{R}^{n}}K^{\lambda
}\left( x-y\right) \left\{ \sum_{k=1}^{n}\left( \frac{x_{k}-y_{k}}{\ell
\left( I^{\prime }\right) }\right) \dsum\limits_{\left\vert \beta
\right\vert +\left\vert \gamma \right\vert =\left\vert \alpha \right\vert
-1}c_{\alpha ,\beta ,\gamma }\left( \frac{x-a}{\ell \left( I^{\prime
}\right) }\right) ^{\beta }\left( \frac{y-a}{\ell \left( I^{\prime }\right) }%
\right) ^{\gamma }\right\} \mathbf{1}_{I^{\prime }}\left( y\right) d\sigma
\left( y\right)  \notag \\
&=&\sum_{k=1}^{n}\dsum\limits_{\left\vert \beta \right\vert +\left\vert
\gamma \right\vert =\left\vert \alpha \right\vert -1}c_{\alpha ,\beta
,\gamma }\mathbf{1}_{I^{\prime }}\left( x\right) \left[ \int \Phi
_{k}^{\lambda }\left( x-y\right) \left\{ \left( \frac{y-a}{\ell \left(
I^{\prime }\right) }\right) ^{\gamma }\right\} \mathbf{1}_{I^{\prime
}}\left( y\right) d\sigma \left( y\right) \right] \left( \frac{x-a}{\ell
\left( I^{\prime }\right) }\right) ^{\beta },  \notag
\end{eqnarray}%
where $\Phi _{k}^{\lambda }\left( x-y\right) =K^{\lambda }\left( x-y\right)
\left( \frac{x_{k}-y_{k}}{\ell \left( I^{\prime }\right) }\right) $. So $%
\left[ P_{\alpha ,a,I^{\prime }},T_{\sigma }^{\lambda }\right] \mathbf{1}%
_{I^{\prime }}\left( x\right) $ is a `polynomial' of degree $\left\vert
\alpha \right\vert -1$ with \emph{variable} coefficients. Now we take the
inner product of the commutator with $\bigtriangleup _{J;\kappa }^{\omega }g$
for some $J\subset I^{\prime }$, and split the inner product into two pieces,%
\begin{eqnarray}
&&\left\langle \left[ P_{\alpha ,a,I^{\prime }}\ ,T_{\sigma }^{\lambda }%
\right] \mathbf{1}_{I^{\prime }},\bigtriangleup _{J;\kappa }^{\omega
}g\right\rangle _{\omega }=\int_{\mathbb{R}^{n}}\left[ P_{\alpha
,a,I^{\prime }}\ ,T_{\sigma }^{\lambda }\right] \mathbf{1}_{I^{\prime
}}\left( x\right) \bigtriangleup _{J;\kappa }^{\omega }g\left( x\right)
d\omega \left( x\right)  \label{two pieces} \\
&=&\int_{\mathbb{R}^{n}}\left[ P_{\alpha ,a,I^{\prime }}\ ,T_{\sigma
}^{\lambda }\right] \mathbf{1}_{I^{\prime }\setminus 2J}\left( x\right)
\bigtriangleup _{J;\kappa }^{\omega }g\left( x\right) d\omega \left(
x\right) +\int_{\mathbb{R}^{n}}\left[ P_{\alpha ,a,I^{\prime }}\ ,T_{\sigma
}^{\lambda }\right] \mathbf{1}_{I^{\prime }\cap 2J}\left( x\right)
\bigtriangleup _{J;\kappa }^{\omega }g\left( x\right) d\omega \left( x\right)
\notag \\
&\equiv &\func{Int}^{\lambda ,\natural }\left( J\right) +\func{Int}^{\lambda
,\flat }\left( J\right) ,  \notag
\end{eqnarray}%
where we are suppressing the dependence on $\alpha $ and $I^{\prime }$. For
the first term $\func{Int}^{\lambda ,\natural }\left( J\right) $ we write%
\begin{equation*}
\func{Int}^{\lambda ,\natural }\left( J\right)
=\sum_{k=1}^{n}\dsum\limits_{\left\vert \beta \right\vert +\left\vert \gamma
\right\vert =\left\vert \alpha \right\vert -1}c_{\alpha ,\beta ,\gamma }%
\func{Int}_{k,\beta ,\gamma }^{\lambda ,\natural }\left( J\right) ,
\end{equation*}%
where with the choice $a=c_{J}$ the center of $J$, we define%
\begin{eqnarray}
\func{Int}_{k,\beta ,\gamma }^{\lambda ,\natural }\left( J\right) &\equiv
&\int_{J}\left[ \int_{I^{\prime }\setminus 2J}\Phi _{k}^{\lambda }\left(
x-y\right) \left( \frac{y-c_{J}}{\ell \left( I^{\prime }\right) }\right)
^{\gamma }d\sigma \left( y\right) \right] \left( \frac{x-c_{J}}{\ell \left(
I^{\prime }\right) }\right) ^{\beta }\bigtriangleup _{J;\kappa }^{\omega
}g\left( x\right) d\omega \left( x\right)  \label{coefficients} \\
&=&\int_{I^{\prime }\setminus 2J}\left\{ \int_{J}\Phi _{k}^{\lambda }\left(
x-y\right) \left( \frac{x-c_{J}}{\ell \left( I^{\prime }\right) }\right)
^{\beta }\bigtriangleup _{J;\kappa }^{\omega }g\left( x\right) d\omega
\left( x\right) \right\} \left( \frac{y-c_{J}}{\ell \left( I^{\prime
}\right) }\right) ^{\gamma }d\sigma \left( y\right) .  \notag
\end{eqnarray}%
While these integrals need no longer vanish, we will show they are suitably
small, using that the function $\left( \frac{x-c_{J}}{\ell \left( I^{\prime
}\right) }\right) ^{\beta }\bigtriangleup _{J;\kappa }^{\omega }g\left(
x\right) $ is supported in $J$ and has vanishing $\omega $-means up to order 
$\kappa -\left\vert \beta \right\vert -1$, and that the function $\Phi
_{k}^{\lambda }\left( z\right) $ is appropriately smooth away from $z=0$,%
\begin{equation*}
\left\vert \nabla ^{m}\Phi _{k}^{\lambda }\left( z\right) \right\vert \leq
C_{m,n}\frac{1}{\left\vert z\right\vert ^{m+n-\lambda -1}\ell \left(
I^{\prime }\right) }.
\end{equation*}

Indeed, we have the following estimate for the integral in braces in (\ref%
{coefficients}), keeping in mind that $y\in I^{\prime }\setminus 2J$ and $%
x\in J$, where the term in the second line below vanishes because $h\left(
x\right) =\left( \frac{x-c_{J}}{\ell \left( I^{\prime }\right) }\right)
^{\beta }\bigtriangleup _{J;\kappa }^{\omega }g\left( x\right) $ has
vanishing $\omega $-means up to order $\kappa -1-\left\vert \beta
\right\vert $, and the fourth line uses that $h\left( x\right) $ is
supported in $J$:%
\begin{eqnarray*}
&&\left\vert \int_{\mathbb{R}^{n}}\Phi _{k}^{\lambda }\left( x-y\right)
h\left( x\right) d\omega \left( x\right) \right\vert \\
&=&\left\vert \int_{\mathbb{R}^{n}}\left\{ \sum_{m=0}^{\kappa -\left\vert
\beta \right\vert -1}\frac{1}{m!}\left( \left( x-c_{J}\right) \cdot \nabla
\right) ^{m}\Phi _{k}^{\lambda }\left( c_{J}-y\right) \right\} h\left(
x\right) d\omega \left( x\right) \right. \\
&&+\left. \int_{\mathbb{R}^{n}}\frac{1}{\left( \kappa -\left\vert \beta
\right\vert \right) !}\left( \left( x-c_{J}\right) \cdot \nabla \right)
^{\kappa -\left\vert \beta \right\vert }\Phi _{k}^{\lambda }\left( \eta
_{J}^{\omega }\right) h\left( x\right) d\omega \left( x\right) \right\vert \\
&\lesssim &\left\Vert h\right\Vert _{L^{1}\left( \omega \right) }\frac{\ell
\left( J\right) ^{\kappa -\left\vert \beta \right\vert }}{\left[ \ell \left(
J\right) +\limfunc{dist}\left( y,J\right) \right] ^{\kappa -\left\vert \beta
\right\vert +n-\lambda -1}\ell \left( I^{\prime }\right) } \\
&\lesssim &\left( \frac{\ell \left( J\right) }{\ell \left( I^{\prime
}\right) }\right) ^{\left\vert \beta \right\vert }\frac{\ell \left( J\right)
^{\kappa -\left\vert \beta \right\vert }}{\left[ \ell \left( J\right) +%
\limfunc{dist}\left( y,J\right) \right] ^{\kappa -\left\vert \beta
\right\vert +n-\lambda -1}\ell \left( I^{\prime }\right) }\sqrt{\left\vert
J\right\vert _{\omega }}\left\vert \widehat{g}\left( J\right) \right\vert ,
\end{eqnarray*}%
since 
\begin{eqnarray*}
\left\Vert h\right\Vert _{L^{1}\left( \omega \right) } &=&\int_{J}\left\vert
\left( \frac{x-c_{J}}{\ell \left( I^{\prime }\right) }\right) ^{\beta
}\bigtriangleup _{J;\kappa }^{\omega }g\left( x\right) \right\vert d\omega
\left( x\right) \\
&\leq &\left( \frac{\ell \left( J\right) }{\ell \left( I^{\prime }\right) }%
\right) ^{\left\vert \beta \right\vert }\left\Vert \bigtriangleup _{J;\kappa
}^{\omega }g\right\Vert _{L^{1}\left( \omega \right) }\lesssim \left( \frac{%
\ell \left( J\right) }{\ell \left( I^{\prime }\right) }\right) ^{\left\vert
\beta \right\vert }\sqrt{\left\vert J\right\vert _{\omega }}\left\vert 
\widehat{g}\left( J\right) \right\vert .
\end{eqnarray*}

Now recall the orthonormal basis $\left\{ h_{I;\kappa }^{\sigma ,a}\right\}
_{a\in \Gamma }$ of $L_{I;\kappa }^{2}\left( \sigma \right) $ for any $I\in 
\mathcal{D}$. For a $\mathcal{D}$-child $I^{\prime }$ of a cube $I$, we
consider the polynomial 
\begin{equation*}
Q_{I^{\prime };\kappa }^{\sigma }\equiv \mathbf{1}_{I^{\prime }}\sum_{a\in
\Gamma }c_{a}h_{I;\kappa }^{\sigma ,a}
\end{equation*}%
where $c_{a}=\frac{\left\langle f,h_{I;\kappa }^{\sigma ,a}\right\rangle }{%
\left\vert \widehat{f}\left( I\right) \right\vert }$, so that $Q_{I^{\prime
};\kappa }^{\sigma }$ is a renormalization of the polynomial $M_{I^{\prime
};\kappa }$ introduced earlier. We have%
\begin{equation*}
\left\vert \widehat{f}\left( I\right) \right\vert \mathbf{1}_{I^{\prime
}}\sum_{a\in \Gamma }c_{a}h_{I;\kappa }^{\sigma ,a}=\left\vert \widehat{f}%
\left( I\right) \right\vert \mathbf{1}_{I^{\prime }}Q_{I^{\prime };\kappa
}^{\sigma }=\mathbb{E}_{I^{\prime };\kappa }^{\sigma }f-\mathbf{1}%
_{I^{\prime }}\mathbb{E}_{I;\kappa }^{\sigma }f,
\end{equation*}%
where%
\begin{equation*}
\sum_{a\in \Gamma }\left\vert c_{a}\right\vert ^{2}=\sum_{a\in \Gamma
}\left\vert \frac{\left\langle f,h_{I;\kappa }^{\sigma ,a}\right\rangle }{%
\left\vert \widehat{f}\left( I\right) \right\vert }\right\vert ^{2}=1.
\end{equation*}

Recall also that from (\ref{analogue'}) we have 
\begin{equation*}
\left\vert \widehat{f}\left( I\right) \right\vert \left\Vert Q_{I^{\prime
};\kappa }^{\sigma }\right\Vert _{\infty }=\left\Vert \mathbf{1}_{I^{\prime
}}\sum_{a\in \Gamma }\left\langle f,h_{I;\kappa }^{\sigma ,a}\right\rangle
h_{I;\kappa }^{\sigma ,a}\right\Vert _{\infty }=\left\Vert \mathbf{1}%
_{I^{\prime }}\bigtriangleup _{I;\kappa }^{\sigma }f\right\Vert _{\infty
}\approx \frac{\left\vert \widehat{f}\left( I\right) \right\vert }{\sqrt{%
\left\vert I\right\vert _{\sigma }}}.
\end{equation*}%
Hence for $c_{J}\in J\subset I^{\prime }$, if we write%
\begin{equation}
Q_{I^{\prime };\kappa }^{\sigma }\left( y\right) =\sum_{\left\vert \alpha
\right\vert <\kappa }b_{\alpha }\left( \frac{y-c_{J}}{\ell \left( I^{\prime
}\right) }\right) ^{\alpha }=\sum_{\left\vert \alpha \right\vert <\kappa
}b_{\alpha }P_{\alpha ,c_{J}}\left( y\right) ,  \label{def Q}
\end{equation}%
and then rescale to the unit cube and invoke the fact that any two norms on
a finite dimensional vector space are equivalent, we obtain%
\begin{equation}
\sum_{\left\vert \alpha \right\vert <\kappa }\left\vert b_{\alpha
}\right\vert \approx \left\Vert Q_{I^{\prime };\kappa }^{\sigma }\right\Vert
_{\infty }\approx \frac{1}{\sqrt{\left\vert I\right\vert _{\sigma }}},\ \ \
\ \ I^{\prime }\in \mathfrak{C}_{\mathcal{D}}\left( I\right) .
\label{we obtain}
\end{equation}

We then write 
\begin{equation}
\left\langle \left[ Q_{I^{\prime };\kappa },T_{\sigma }^{\lambda }\right] 
\mathbf{1}_{I^{\prime }\setminus 2J},\bigtriangleup _{J;\kappa }^{\omega
}g\right\rangle _{\omega }=\sum_{\left\vert \alpha \right\vert <\kappa
}b_{\alpha }\left\langle \left[ P_{\alpha ,c_{J}},T_{\sigma }^{\lambda }%
\right] \mathbf{1}_{I^{\prime }\setminus 2J},\bigtriangleup _{J;\kappa
}^{\omega }g\right\rangle _{\omega }\ ,  \label{then write}
\end{equation}%
and note that%
\begin{eqnarray*}
&&\left\vert \left\langle \left[ Q_{I^{\prime };\kappa },T_{\sigma
}^{\lambda }\right] \mathbf{1}_{I^{\prime }\setminus 2J},\bigtriangleup
_{J;\kappa }^{\omega }g\right\rangle _{\omega }\right\vert \leq
\sum_{\left\vert \alpha \right\vert <\kappa }\left\vert b_{\alpha
}\left\langle \left[ P_{\alpha ,c_{J}},T_{\sigma }^{\lambda }\right] \mathbf{%
1}_{I^{\prime }\setminus 2J},\bigtriangleup _{J;\kappa }^{\omega
}g\right\rangle _{\omega }\right\vert \\
&\lesssim &\frac{1}{\sqrt{\left\vert I\right\vert _{\sigma }}}%
\max_{\left\vert \alpha \right\vert <\kappa }\left\vert \left\langle \left[
P_{\alpha ,c_{J}},T_{\sigma }^{\lambda }\right] \mathbf{1}_{I^{\prime
}\setminus 2J},\bigtriangleup _{J;\kappa }^{\omega }g\right\rangle _{\omega
}\right\vert ,
\end{eqnarray*}%
so that it remains to estimate each inner product $\func{Int}^{\lambda
,\natural }\left( J\right) =\left\langle \left[ P_{\alpha ,c_{J}},T_{\sigma
}^{\lambda }\right] \mathbf{1}_{I^{\prime }\setminus 2J},\bigtriangleup
_{J;\kappa }^{\omega }g\right\rangle _{\omega }$ as follows:

\begin{equation*}
\left\vert \func{Int}^{\lambda ,\natural }\left( J\right) \right\vert
=\left\vert \sum_{k=1}^{n}\dsum\limits_{\left\vert \beta \right\vert
+\left\vert \gamma \right\vert =\left\vert \alpha \right\vert -1}c_{\alpha
,\beta ,\gamma }\func{Int}_{k,\beta ,\gamma }^{\lambda ,\natural }\left(
J\right) \right\vert \lesssim \max_{\left\vert \beta \right\vert +\left\vert
\gamma \right\vert =\left\vert \alpha \right\vert -1}\left\vert \func{Int}%
_{k,\beta ,\gamma }^{\lambda ,\natural }\left( J\right) \right\vert ,
\end{equation*}%
where $\left\vert \beta \right\vert +\left\vert \gamma \right\vert
=\left\vert \alpha \right\vert -1$, and the estimates above imply,%
\begin{eqnarray*}
&&\left\vert \func{Int}_{k,\beta ,\gamma }^{\lambda ,\natural }\left(
J\right) \right\vert \leq \int_{I^{\prime }\setminus 2J}\left\vert \int \Phi
_{k}^{\lambda }\left( x-y\right) \left( \frac{x-c_{J}}{\ell \left( I^{\prime
}\right) }\right) ^{\beta }\bigtriangleup _{J;\kappa }^{\omega }g\left(
x\right) d\omega \left( x\right) \right\vert \left\vert \left( \frac{y-c_{J}%
}{\ell \left( I^{\prime }\right) }\right) ^{\gamma }\right\vert d\sigma
\left( y\right) \\
&\lesssim &\int_{I^{\prime }\setminus 2J}\left( \frac{\ell \left( J\right) }{%
\ell \left( I^{\prime }\right) }\right) ^{\left\vert \beta \right\vert }%
\frac{\ell \left( J\right) ^{\kappa -\left\vert \beta \right\vert }}{\left[
\ell \left( J\right) +\limfunc{dist}\left( y,J\right) \right] ^{\kappa
-\left\vert \beta \right\vert +n-\lambda -1}\ell \left( I\right) }\sqrt{%
\left\vert J\right\vert _{\omega }}\left\vert \widehat{g}\left( J\right)
\right\vert \left( \frac{\ell \left( J\right) +\limfunc{dist}\left(
y,J\right) }{\ell \left( I^{\prime }\right) }\right) ^{\left\vert \gamma
\right\vert }d\sigma \left( y\right) \\
&=&\int_{I^{\prime }\setminus 2J}\left( \frac{\ell \left( J\right) }{\ell
\left( I^{\prime }\right) }\right) ^{\left\vert \alpha \right\vert -1}\left( 
\frac{\ell \left( J\right) }{\ell \left( J\right) +\limfunc{dist}\left(
y,J\right) }\right) ^{\kappa -\left\vert \alpha \right\vert +1}\frac{1}{%
\left[ \ell \left( J\right) +\limfunc{dist}\left( y,J\right) \right]
^{n-\lambda -1}\ell \left( I^{\prime }\right) }\sqrt{\left\vert J\right\vert
_{\omega }}\left\vert \widehat{g}\left( J\right) \right\vert d\sigma \left(
y\right) \\
&=&\left( \frac{\ell \left( J\right) }{\ell \left( I^{\prime }\right) }%
\right) ^{\left\vert \alpha \right\vert -1}\sqrt{\left\vert J\right\vert
_{\omega }}\left\vert \widehat{g}\left( J\right) \right\vert \left\{
\int_{I^{\prime }\setminus 2J}\left( \frac{\ell \left( J\right) }{\ell
\left( J\right) +\limfunc{dist}\left( y,J\right) }\right) ^{\kappa
-\left\vert \alpha \right\vert +1}\frac{1}{\left[ \ell \left( J\right) +%
\limfunc{dist}\left( y,J\right) \right] ^{n-\lambda -1}\ell \left( I^{\prime
}\right) }d\sigma \left( y\right) \right\} .
\end{eqnarray*}

Now we fix $t\in \mathbb{N}$, and estimate the sum of $\left\vert \func{Int}%
^{\lambda ,\natural }\left( J\right) \right\vert $ over $J\subset I^{\prime
} $ with $\ell \left( J\right) =2^{-t}\ell \left( I^{\prime }\right) $ by
splitting the integration in $y$ according to the size of $\ell \left(
J\right) +\limfunc{dist}\left( y,J\right) $, to obtain the following bound:%
\begin{eqnarray*}
&&\sum_{J\subset I^{\prime }:\ \ell \left( J\right) =2^{-t}\ell \left(
I^{\prime }\right) }\left\vert \func{Int}^{\lambda ,\natural }\left(
J\right) \right\vert \\
&\lesssim &2^{-t\left( \left\vert \alpha \right\vert -1\right)
}\sum_{J\subset I^{\prime }:\ \ell \left( J\right) =2^{-t}\ell \left(
I^{\prime }\right) }\sqrt{\left\vert J\right\vert _{\omega }}\left\vert 
\widehat{g}\left( J\right) \right\vert \left\{ \int_{I^{\prime }\setminus
2J}\left( \frac{\ell \left( J\right) }{\ell \left( J\right) +\limfunc{dist}%
\left( y,J\right) }\right) ^{\kappa -\left\vert \alpha \right\vert +1}\frac{%
d\sigma \left( y\right) }{\left[ \ell \left( J\right) +\limfunc{dist}\left(
y,J\right) \right] ^{n-\lambda -1}\ell \left( I^{\prime }\right) }\right\} \\
&\lesssim &2^{-t\left( \left\vert \alpha \right\vert -1\right)
}\sum_{J\subset I^{\prime }:\ \ell \left( J\right) =2^{-t}\ell \left(
I^{\prime }\right) }\sqrt{\left\vert J\right\vert _{\omega }}\left\vert 
\widehat{g}\left( J\right) \right\vert \left\{
\sum_{s=1}^{t}\int_{2^{s+1}J\setminus 2^{s}J}\left( 2^{-s}\right) ^{\kappa
-\left\vert \alpha \right\vert +1}\frac{d\sigma \left( y\right) }{\left(
2^{s}\ell \left( J\right) \right) ^{n-\lambda -1}\ell \left( I^{\prime
}\right) }\right\} \\
&\lesssim &2^{-t\left\vert \alpha \right\vert }\sum_{J\subset I^{\prime }:\
\ell \left( J\right) =2^{-t}\ell \left( I^{\prime }\right) }\sqrt{\left\vert
J\right\vert _{\omega }}\left\vert \widehat{g}\left( J\right) \right\vert
\sum_{s=1}^{t}\left( 2^{-s}\right) ^{\kappa -\left\vert \alpha \right\vert
+1}2^{-s\left( n-\lambda -1\right) }\frac{\left\vert 2^{s}J\right\vert
_{\sigma }}{\ell \left( J\right) ^{n-\lambda }},
\end{eqnarray*}%
which gives upon pigeonholing the sum in $J$ according to membership in the
grandchildren of $I^{\prime }$ at depth $t-s$: 
\begin{eqnarray*}
&&\sum_{J\in \mathfrak{C}_{\mathcal{D}}^{\left( t\right) }\left( I^{\prime
}\right) }\left\vert \func{Int}^{\lambda ,\natural }\left( J\right)
\right\vert \\
&\lesssim &2^{-t\left\vert \alpha \right\vert }\sum_{J\in \mathfrak{C}_{%
\mathcal{D}}^{\left( t\right) }\left( I^{\prime }\right) }\sqrt{\left\vert
J\right\vert _{\omega }}\left\vert \widehat{g}\left( J\right) \right\vert
\sum_{s=1}^{t}\left( 2^{-s}\right) ^{\kappa -\left\vert \alpha \right\vert
+n-\lambda }\frac{\left\vert 2^{s}J\right\vert _{\sigma }}{\ell \left(
J\right) ^{n-\lambda }} \\
&=&2^{-t\left\vert \alpha \right\vert }\sum_{s=1}^{t}\left( 2^{-s}\right)
^{\kappa -\left\vert \alpha \right\vert }\sum_{K\in \mathfrak{C}_{\mathcal{D}%
}^{\left( t-s\right) }\left( I^{\prime }\right) }\sum_{J\in \mathfrak{C}_{%
\mathcal{D}}^{\left( s\right) }\left( K\right) }\sqrt{\left\vert
J\right\vert _{\omega }}\left\vert \widehat{g}\left( J\right) \right\vert 
\frac{\left\vert 2^{s}J\right\vert _{\sigma }}{\ell \left( K\right)
^{n-\lambda }} \\
&\lesssim &2^{-t\left\vert \alpha \right\vert }\sum_{s=1}^{t}\left(
2^{-s}\right) ^{\kappa -\left\vert \alpha \right\vert }\sum_{K\in \mathfrak{C%
}_{\mathcal{D}}^{\left( t-s\right) }\left( I^{\prime }\right) }\frac{%
\left\vert 3K\right\vert _{\sigma }}{\ell \left( K\right) ^{n-\lambda }}%
\sum_{J\in \mathfrak{C}_{\mathcal{D}}^{\left( s\right) }\left( K\right) }%
\sqrt{\left\vert J\right\vert _{\omega }}\left\vert \widehat{g}\left(
J\right) \right\vert \\
&\lesssim &2^{-t\left\vert \alpha \right\vert }\sum_{s=1}^{t}\left(
2^{-s}\right) ^{\kappa -\left\vert \alpha \right\vert }\sum_{K\in \mathfrak{C%
}_{\mathcal{D}}^{\left( t-s\right) }\left( I^{\prime }\right) }\frac{%
\left\vert 3K\right\vert _{\sigma }}{\ell \left( K\right) ^{n-\lambda }}%
\sqrt{\left\vert K\right\vert _{\omega }}\sqrt{\sum_{J\in \mathfrak{C}_{%
\mathcal{D}}^{\left( s\right) }\left( K\right) }\left\vert \widehat{g}\left(
J\right) \right\vert ^{2}}.
\end{eqnarray*}%
Now we use the $A_{2}^{\lambda }$ condition and doubling for $\sigma $ to
obtain the bound 
\begin{equation*}
\frac{\left\vert 3K\right\vert _{\sigma }}{\ell \left( K\right) ^{n-\lambda }%
}\sqrt{\left\vert K\right\vert _{\omega }}\lesssim \frac{\sqrt{\left\vert
3K\right\vert _{\sigma }\left\vert 3K\right\vert _{\omega }}}{\ell \left(
3K\right) ^{n-\lambda }}\sqrt{\left\vert 3K\right\vert _{\sigma }}\leq \sqrt{%
A_{2}^{\lambda }}\sqrt{\left\vert 3K\right\vert _{\sigma }}\lesssim \sqrt{%
A_{2}^{\lambda }}\sqrt{\left\vert K\right\vert _{\sigma }}.
\end{equation*}%
Thus we have%
\begin{eqnarray*}
\sum_{J\in \mathfrak{C}_{\mathcal{D}}^{\left( t\right) }\left( I^{\prime
}\right) }\left\vert \func{Int}^{\lambda ,\natural }\left( J\right)
\right\vert &\lesssim &2^{-t\left\vert \alpha \right\vert }\sqrt{%
A_{2}^{\lambda }}\sum_{s=1}^{t}\left( 2^{-s}\right) ^{\kappa -\left\vert
\alpha \right\vert }\sum_{K\in \mathfrak{C}_{\mathcal{D}}^{\left( t-s\right)
}\left( I^{\prime }\right) }\sqrt{\left\vert K\right\vert _{\sigma }}\sqrt{%
\sum_{J\in \mathfrak{C}_{\mathcal{D}}^{\left( s\right) }\left( K\right)
}\left\vert \widehat{g}\left( J\right) \right\vert ^{2}} \\
&\lesssim &2^{-t\left\vert \alpha \right\vert }\sqrt{A_{2}^{\lambda }}%
\sum_{s=1}^{t}\left( 2^{-s}\right) ^{\kappa -\left\vert \alpha \right\vert }%
\sqrt{\left\vert I^{\prime }\right\vert _{\sigma }}\sqrt{\sum_{J\in 
\mathfrak{C}_{\mathcal{D}}^{\left( t\right) }\left( I^{\prime }\right)
}\left\vert \widehat{g}\left( J\right) \right\vert ^{2}} \\
&\lesssim &2^{-t}\sqrt{A_{2}^{\lambda }}\sqrt{\left\vert I^{\prime
}\right\vert _{\sigma }}\sqrt{\sum_{J\in \mathfrak{C}_{\mathcal{D}}^{\left(
t\right) }\left( I^{\prime }\right) }\left\vert \widehat{g}\left( J\right)
\right\vert ^{2}},
\end{eqnarray*}%
since $1\leq \left\vert \alpha \right\vert \leq \kappa -1$ (the commutator
vanishes if $\left\vert \alpha \right\vert =0$) shows that both $%
2^{-t\left\vert \alpha \right\vert }\leq 2^{-t}$ and 
\begin{equation*}
\kappa -\left\vert \alpha \right\vert \geq 1>0.
\end{equation*}

We now claim the same estimate holds for the sum of $\left\vert \func{Int}%
^{\lambda ,\flat }\left( J\right) \right\vert $ over $J\subset I^{\prime }$
with $\ell \left( J\right) =2^{-t}\ell \left( I^{\prime }\right) $, namely%
\begin{eqnarray*}
&&\left\vert \func{Int}_{k,\beta ,\gamma }^{\lambda ,\flat }\left( J\right)
\right\vert \lesssim \left\vert \int_{J}\left( \int_{2J}\Phi _{k}^{\lambda
}\left( x-y\right) \left( \frac{y-c_{J}}{\ell \left( I^{\prime }\right) }%
\right) ^{\gamma }d\sigma \left( y\right) \right) \left( \frac{x-c_{J}}{\ell
\left( I^{\prime }\right) }\right) ^{\beta }\bigtriangleup _{J;\kappa
}^{\omega }g\left( x\right) d\omega \left( x\right) \right\vert \\
&\leq &\int_{J}\left( \int_{2J}\frac{1}{\ell \left( I^{\prime }\right)
\left\vert x-y\right\vert ^{n-\lambda -1}}\left\vert \frac{y-c_{J}}{\ell
\left( I^{\prime }\right) }\right\vert ^{\left\vert \gamma \right\vert
}d\sigma \left( y\right) \right) \left\vert \frac{x-c_{J}}{\ell \left(
I^{\prime }\right) }\right\vert ^{\left\vert \beta \right\vert }\left\vert
\bigtriangleup _{J;\kappa }^{\omega }g\left( x\right) \right\vert d\omega
\left( x\right) \\
&\lesssim &\left( \frac{\ell \left( J\right) }{\ell \left( I^{\prime
}\right) }\right) ^{\left\vert \gamma \right\vert +\left\vert \beta
\right\vert }\frac{1}{\ell \left( I^{\prime }\right) \sqrt{\left\vert
J\right\vert _{\omega }}}\left\vert \widehat{g}\left( J\right) \right\vert
\int_{J}\int_{2J}\frac{d\sigma \left( y\right) d\omega \left( x\right) }{%
\left\vert x-y\right\vert ^{n-\lambda -1}}.
\end{eqnarray*}%
In order to estimate the double integral using the $A_{2}^{\lambda }$
condition we cover the band $\left\vert x-y\right\vert \leq C2^{-m}\ell
\left( J\right) $ by a collection of cubes $Q\left( z_{m},C2^{-m}\ell \left(
J\right) \right) \times Q\left( z_{m},C2^{-m}\ell \left( J\right) \right) $
in $CJ\times CJ$ with centers $\left( z_{m},z_{m}\right) $ and bounded
overlap. Then we have%
\begin{eqnarray*}
&&\int_{J}\int_{2J}\frac{d\sigma \left( y\right) d\omega \left( x\right) }{%
\left\vert x-y\right\vert ^{n-\lambda -1}}\leq \sum_{m=0}^{\infty
}\diint\limits_{\substack{ x,y\in 2J  \\ \left\vert x-y\right\vert \approx
2^{-m}\ell \left( J\right) }}\frac{d\sigma \left( y\right) d\omega \left(
x\right) }{\left( 2^{-m}\ell \left( J\right) \right) ^{n-\lambda -1}} \\
&\approx &\sum_{m=0}^{\infty }\sum_{Q\left( z_{m},C2^{-m}\ell \left(
J\right) \right) \times Q\left( z_{m},C2^{-m}\ell \left( J\right) \right)
}\int_{Q\left( z_{m},C2^{-m}\ell \left( J\right) \right) \times Q\left(
z_{m},C2^{-m}\ell \left( J\right) \right) }\frac{d\sigma \left( y\right)
d\omega \left( x\right) }{\left( 2^{-m}\ell \left( J\right) \right)
^{n-\lambda -1}} \\
&\leq &\frac{1}{\ell \left( J\right) ^{n-\lambda -1}}\sum_{m=0}^{\infty
}2^{m\left( n-\lambda -1\right) }\sum_{z_{m}}\left\vert Q\left(
z_{m},C2^{-m}\ell \left( J\right) \right) \right\vert _{\sigma }\left\vert
Q\left( z_{m},C2^{-m}\ell \left( J\right) \right) \right\vert _{\omega } \\
&\leq &\frac{1}{\ell \left( J\right) ^{n-\lambda -1}}\sum_{m=0}^{\infty
}2^{m\left( n-\lambda -1\right) }\sum_{z_{m}}\sqrt{\left\vert Q\left(
z_{m},C2^{-m}\ell \left( J\right) \right) \right\vert _{\sigma }\left\vert
Q\left( z_{m},C2^{-m}\ell \left( J\right) \right) \right\vert _{\omega }}%
\sqrt{A_{2}^{\lambda }}\left( 2^{-m}\ell \left( J\right) \right) ^{n-\lambda
} \\
&\lesssim &\sqrt{A_{2}^{\lambda }}\ell \left( J\right) \sum_{m=0}^{\infty
}2^{m\left( n-\lambda -1\right) }2^{-m\left( n-\lambda \right) }\sqrt{%
\left\vert CJ\right\vert _{\sigma }\left\vert CJ\right\vert _{\omega }}%
\lesssim \sqrt{A_{2}^{\lambda }}\ell \left( J\right) \sqrt{\left\vert
J\right\vert _{\sigma }\left\vert J\right\vert _{\omega }}.
\end{eqnarray*}%
Thus altogether, we have%
\begin{eqnarray*}
\left\vert \func{Int}_{k,\beta ,\gamma }^{\lambda ,\flat }\left( J\right)
\right\vert &\lesssim &\left( \frac{\ell \left( J\right) }{\ell \left(
I^{\prime }\right) }\right) ^{\left\vert \gamma \right\vert +\left\vert
\beta \right\vert }\frac{1}{\ell \left( I^{\prime }\right) \sqrt{\left\vert
J\right\vert _{\omega }}}\left\vert \widehat{g}\left( J\right) \right\vert 
\sqrt{A_{2}^{\lambda }}\ell \left( J\right) \sqrt{\left\vert J\right\vert
_{\sigma }\left\vert J\right\vert _{\omega }} \\
&\leq &\sqrt{A_{2}^{\lambda }}\left( \frac{\ell \left( J\right) }{\ell
\left( I^{\prime }\right) }\right) ^{\left\vert \alpha \right\vert
}\left\vert \widehat{g}\left( J\right) \right\vert \sqrt{\left\vert
J\right\vert _{\sigma }}\leq \sqrt{A_{2}^{\lambda }}\frac{\ell \left(
J\right) }{\ell \left( I^{\prime }\right) }\left\vert \widehat{g}\left(
J\right) \right\vert \sqrt{\left\vert J\right\vert _{\sigma }},
\end{eqnarray*}%
since $\left\vert \alpha \right\vert \geq 1$ (otherwise the commutator
vanishes). Now%
\begin{eqnarray*}
\sum_{J\in \mathfrak{C}_{\mathcal{D}}^{\left( t\right) }\left( I^{\prime
}\right) }\left\vert \func{Int}^{\lambda ,\flat }\left( J\right) \right\vert
&\lesssim &\sum_{J\in \mathfrak{C}_{\mathcal{D}}^{\left( t\right) }\left(
I^{\prime }\right) }\sqrt{A_{2}^{\lambda }}\frac{\ell \left( J\right) }{\ell
\left( I^{\prime }\right) }\left[ \ell \left( J\right) ^{-s}\ell \left(
J\right) ^{s}\right] \left\vert \widehat{g}\left( J\right) \right\vert \sqrt{%
\left\vert J\right\vert _{\sigma }} \\
&\leq &\ell \left( I^{\prime }\right) ^{-s}2^{-t\left( 1-s\right) }\sqrt{%
A_{2}^{\lambda }}\sqrt{\left\vert I^{\prime }\right\vert _{\sigma }}\sqrt{%
\sum_{J\in \mathfrak{C}_{\mathcal{D}}^{\left( t\right) }\left( I^{\prime
}\right) }\ell \left( J\right) ^{2s}\left\vert \widehat{g}\left( J\right)
\right\vert ^{2}},
\end{eqnarray*}%
and so altogether we have%
\begin{eqnarray*}
\sum_{J\in \mathfrak{C}_{\mathcal{D}}^{\left( t\right) }\left( I^{\prime
}\right) }\left\vert \left\langle \left[ P_{\alpha ,c_{J}},T_{\sigma
}^{\lambda }\right] \mathbf{1}_{I^{\prime }},\bigtriangleup _{J;\kappa
}^{\omega }g\right\rangle _{\omega }\right\vert &=&\sum_{J\in \mathfrak{C}_{%
\mathcal{D}}^{\left( t\right) }\left( I^{\prime }\right) }\left\vert \func{%
Int}^{\lambda }\left( J\right) \right\vert \\
&\leq &\sum_{J\in \mathfrak{C}_{\mathcal{D}}^{\left( t\right) }\left(
I^{\prime }\right) }\left\vert \func{Int}^{\lambda ,\natural }\left(
J\right) \right\vert +\sum_{J\in \mathfrak{C}_{\mathcal{D}}^{\left( t\right)
}\left( I^{\prime }\right) }\left\vert \func{Int}^{\lambda ,\flat }\left(
J\right) \right\vert \\
&\lesssim &2^{-t\left( 1-s\right) }\ell \left( I^{\prime }\right) ^{-s}\sqrt{%
A_{2}^{\lambda }}\sqrt{\left\vert I^{\prime }\right\vert _{\sigma }}\sqrt{%
\sum_{J\in \mathfrak{C}_{\mathcal{D}}^{\left( t\right) }\left( I^{\prime
}\right) }\ell \left( J\right) ^{2s}\left\vert \widehat{g}\left( J\right)
\right\vert ^{2}}.
\end{eqnarray*}%
Finally then we obtain from this and (\ref{we obtain}),%
\begin{eqnarray*}
\sum_{J\in \mathfrak{C}_{\mathcal{D}}^{\left( t\right) }\left( I^{\prime
}\right) }\left\vert \left\langle \left[ Q_{I^{\prime };\kappa },T_{\sigma
}^{\lambda }\right] \mathbf{1}_{I^{\prime }},\bigtriangleup _{J;\kappa
}^{\omega }g\right\rangle _{\omega }\right\vert &=&\sum_{J\in \mathfrak{C}_{%
\mathcal{D}}^{\left( t\right) }\left( I^{\prime }\right) }\left\vert
\sum_{\left\vert \alpha \right\vert \leq \kappa -1}b_{\alpha }\left\langle %
\left[ P_{\alpha ,c_{J}},T_{\sigma }^{\lambda }\right] \mathbf{1}_{I^{\prime
}},\bigtriangleup _{J;\kappa }^{\omega }g\right\rangle _{\omega }\right\vert
\\
&\lesssim &2^{-t\left( 1-s\right) }\ell \left( I^{\prime }\right) ^{-s}\sqrt{%
A_{2}^{\lambda }}\sqrt{\sum_{J\in \mathfrak{C}_{\mathcal{D}}^{\left(
t\right) }\left( I^{\prime }\right) }\ell \left( J\right) ^{2s}\left\vert 
\widehat{g}\left( J\right) \right\vert ^{2}}.
\end{eqnarray*}

Now using $M_{I_{J};\kappa }=\left\vert \widehat{f}\left( I\right)
\right\vert Q_{I_{J};\kappa }$, and applying the above estimates with $%
I^{\prime }=I_{J}$, we can sum over $t$ and $I\in \mathcal{C}_{F}$ to obtain%
\begin{eqnarray*}
&&\left\vert \mathsf{B}_{\limfunc{commutator};\kappa }^{F}\left( f,g\right)
\right\vert \leq \sum_{\substack{ I\in \mathcal{C}_{F}\text{ and }J\in 
\mathcal{C}_{F}^{\tau -\limfunc{shift}}  \\ J\Subset _{\rho ,\varepsilon }I}}%
\left\vert \widehat{f}\left( I\right) \right\vert \left\vert \left\langle %
\left[ T_{\sigma }^{\lambda },Q_{I_{J};\kappa }\right] \mathbf{1}%
_{I_{_{J}}},\bigtriangleup _{J;\kappa }^{\omega }g\right\rangle _{\omega
}\right\vert \\
&\lesssim &\sum_{t=r}^{\infty }\sum_{I\in \mathcal{C}_{F}}2^{-t\left(
1-s\right) }\ell \left( I^{\prime }\right) ^{-s}\sqrt{A_{2}^{\lambda }}%
\left\vert \widehat{f}\left( I\right) \right\vert \sqrt{\sum_{J\in \mathfrak{%
C}_{\mathcal{D}}^{\left( t\right) }\left( I_{J}\right) \text{ and }J\in 
\mathcal{C}_{F}^{\tau -\limfunc{shift}}}\ell \left( J\right) ^{2s}\left\vert 
\widehat{g}\left( J\right) \right\vert ^{2}} \\
&\lesssim &\sqrt{A_{2}^{\lambda }}\sum_{t=r}^{\infty }2^{-t\left( 1-s\right)
}\sqrt{\sum_{I\in \mathcal{C}_{F}}\ell \left( I\right) ^{-2s}\left\vert 
\widehat{f}\left( I\right) \right\vert ^{2}}\sqrt{\sum_{I\in \mathcal{C}%
_{F}}\sum_{J\in \mathfrak{C}_{\mathcal{D}}^{\left( t\right) }\left(
I_{J}\right) \text{ and }J\in \mathcal{C}_{F}^{\tau -\limfunc{shift}}}\ell
\left( J\right) ^{2s}\left\vert \widehat{g}\left( J\right) \right\vert ^{2}}
\\
&\lesssim &\sqrt{A_{2}^{\lambda }}\sum_{t=r}^{\infty }2^{-t}\left\Vert 
\mathsf{P}_{\mathcal{C}_{F}}^{\sigma }f\right\Vert _{W_{\limfunc{dyad}%
}^{s}\left( \sigma \right) }\left\Vert \mathsf{P}_{\mathcal{C}_{F}^{\tau -%
\limfunc{shift}}}^{\omega }g\right\Vert _{W_{\limfunc{dyad}}^{-s}\left(
\omega \right) }\lesssim \sqrt{A_{2}^{\lambda }}\left\Vert \mathsf{P}_{%
\mathcal{C}_{F}}^{\sigma }f\right\Vert _{W_{\limfunc{dyad}}^{s}\left( \sigma
\right) }\left\Vert \mathsf{P}_{\mathcal{C}_{F}^{\mathbf{\tau }-\limfunc{%
shift}}}^{\omega }g\right\Vert _{W_{\limfunc{dyad}}^{-s}\left( \omega
\right) }.
\end{eqnarray*}%
Thus the commutator form $\mathsf{B}_{\limfunc{commutator};\kappa
}^{F}\left( f,g\right) \,$is controlled by $A_{2}^{\lambda }$ alone.

Finally using orthogonality of both $\left\{ \mathsf{P}_{\mathcal{C}%
_{F}}^{\sigma }f\right\} _{F\in \mathcal{F}}$ and $\left\{ \mathsf{P}_{%
\mathcal{C}_{F}^{\tau -\limfunc{shift}}}^{\omega }\right\} _{F\in \mathcal{F}%
}$, we obtain%
\begin{equation}
\left\vert \mathsf{B}_{\limfunc{commutator};\kappa }\left( f,g\right)
\right\vert \lesssim \sqrt{A_{2}^{\lambda }}\left\Vert f\right\Vert _{W_{%
\limfunc{dyad}}^{s}\left( \sigma \right) }\left\Vert g\right\Vert _{W_{%
\limfunc{dyad}}^{-s}\left( \omega \right) },\ \ \ \ \ \left\vert
s\right\vert <1.  \label{comm est}
\end{equation}

\subsection{The neighbour form}

In this form we can obtain the required bound, which uses only the $%
A_{2}^{\lambda }$ constant, by taking absolute values inside the sum, and
following \cite{AlSaUr} again, we argue as in the case of Haar wavelets in 
\cite[end of Subsection 8.4]{SaShUr7}. We begin with $M_{I^{\prime };\kappa
}=\mathbf{1}_{I^{\prime }}\bigtriangleup _{I;\kappa }^{\sigma }f$ as in (\ref%
{def M}) to obtain 
\begin{eqnarray*}
\left\vert \mathsf{B}_{\limfunc{neighbour};\kappa }^{F}\left( f,g\right)
\right\vert &\leq &\sum_{\substack{ I\in \mathcal{C}_{F}\text{ and }J\in 
\mathcal{C}_{F}^{\tau -\limfunc{shift}}  \\ J\Subset _{\rho ,\varepsilon }I}}%
\sum_{\theta \left( I_{J}\right) \in \mathfrak{C}_{\mathcal{D}}\left(
I\right) \setminus \left\{ I_{J}\right\} }\left\vert \left\langle T_{\sigma
}^{\lambda }\left( \mathbf{1}_{\theta \left( I_{J}\right) }\bigtriangleup
_{I;\kappa }^{\sigma }f\right) ,\bigtriangleup _{J;\kappa }^{\omega
}g\right\rangle _{\omega }\right\vert \\
&\leq &\sum_{\substack{ I\in \mathcal{C}_{F}\text{ and }J\in \mathcal{C}%
_{F}^{\tau -\limfunc{shift}}  \\ J\Subset _{\rho ,\varepsilon }I}}%
\sum_{I^{\prime }\equiv \theta \left( I_{J}\right) \in \mathfrak{C}_{%
\mathcal{D}}\left( I\right) \setminus \left\{ I_{J}\right\} }\left\vert
\left\langle T_{\sigma }^{\lambda }\left( M_{I^{\prime };\kappa }\mathbf{1}%
_{I^{\prime }}\right) ,\bigtriangleup _{J;\kappa }^{\omega }g\right\rangle
_{\omega }\right\vert .
\end{eqnarray*}%
We now control this by the pivotal bound (\ref{piv bound}) on the inner
product with $\nu =\left\Vert M_{I^{\prime };\kappa }\right\Vert _{L^{\infty
}\left( \sigma \right) }\mathbf{1}_{I^{\prime }}d\sigma $, and then
estimating by the usual Poisson kernel,%
\begin{eqnarray*}
\left\vert \left\langle T_{\sigma }^{\lambda }\left( M_{I^{\prime };\kappa }%
\mathbf{1}_{I^{\prime }}\right) ,\bigtriangleup _{J;\kappa }^{\omega
}g\right\rangle _{\omega }\right\vert &\lesssim &\mathrm{P}_{\kappa
}^{\lambda }\left( J,\left\Vert M_{I^{\prime };\kappa }\right\Vert
_{L^{\infty }\left( \sigma \right) }\mathbf{1}_{I^{\prime }}\sigma \right)
\ell \left( J\right) ^{-s}\sqrt{\left\vert J\right\vert _{\omega }}%
\left\Vert \bigtriangleup _{J;\kappa }^{\omega }g\right\Vert _{W_{\limfunc{%
dyad}}^{-s}\left( \omega \right) } \\
&\leq &\left\Vert M_{I^{\prime };\kappa }\right\Vert _{L^{\infty }\left(
\sigma \right) }\mathrm{P}^{\lambda }\left( J,\mathbf{1}_{I^{\prime }}\sigma
\right) \ell \left( J\right) ^{-s}\sqrt{\left\vert J\right\vert _{\omega }}%
\left\Vert \bigtriangleup _{J;\kappa }^{\omega }g\right\Vert _{W_{\limfunc{%
dyad}}^{-s}\left( \omega \right) },
\end{eqnarray*}%
and the estimate $\left\Vert M_{I^{\prime };\kappa }\right\Vert _{L^{\infty
}\left( \sigma \right) }\approx \frac{1}{\sqrt{\left\vert I^{\prime
}\right\vert _{\sigma }}}\left\vert \widehat{f}\left( I\right) \right\vert $
from (\ref{analogue'}), along with (\ref{e.Jsimeq}), namely%
\begin{equation*}
\mathrm{P}_{m}^{\lambda }(J,\sigma \mathbf{1}_{K\setminus I})\lesssim \left( 
\frac{\ell \left( J\right) }{\ell \left( I\right) }\right) ^{m-\varepsilon
\left( n+m-\lambda \right) }\mathrm{P}_{m}^{\alpha }(I,\sigma \mathbf{1}%
_{K\setminus I}),
\end{equation*}%
to obtain%
\begin{eqnarray*}
&&\left\vert \mathsf{B}_{\limfunc{neighbour};\kappa }^{F}\left( f,g\right)
\right\vert \lesssim \sum_{\substack{ I\in \mathcal{C}_{F}\text{ and }J\in 
\mathcal{C}_{F}^{\tau -\limfunc{shift}}  \\ J\Subset _{\rho ,\varepsilon }I}}%
\sum_{I^{\prime }\equiv \theta \left( I_{J}\right) \in \mathfrak{C}_{%
\mathcal{D}}\left( I\right) \setminus \left\{ I_{J}\right\} }\frac{%
\left\vert \widehat{f}\left( I\right) \right\vert }{\sqrt{\left\vert
I^{\prime }\right\vert _{\sigma }}}\mathrm{P}^{\lambda }\left( J,\mathbf{1}%
_{I^{\prime }}\sigma \right) \ell \left( J\right) ^{-s}\sqrt{\left\vert
J\right\vert _{\omega }}\left\Vert \bigtriangleup _{J;\kappa }^{\omega
}g\right\Vert _{W_{\limfunc{dyad}}^{-s}\left( \omega \right) } \\
&=&\sum_{I\in \mathcal{C}_{F}}\sum_{\substack{ I_{0},I_{\theta }\in 
\mathfrak{C}_{\mathcal{D}}\left( I\right)  \\ I_{0}\neq I_{\theta }}}\sum 
_{\substack{ J\in \mathcal{C}_{F}^{\tau -\limfunc{shift}}  \\ J\Subset
_{\rho ,\varepsilon }I\text{ and }J\subset I_{0}}}\frac{\left\vert \widehat{f%
}\left( I\right) \right\vert }{\sqrt{\left\vert I_{\theta }\right\vert
_{\sigma }}}\mathrm{P}^{\lambda }\left( J,\mathbf{1}_{I_{\theta }}\sigma
\right) \ell \left( J\right) ^{-s}\sqrt{\left\vert J\right\vert _{\omega }}%
\left\Vert \bigtriangleup _{J;\kappa }^{\omega }g\right\Vert _{W_{\limfunc{%
dyad}}^{-s}\left( \omega \right) } \\
&=&\sum_{t=r}^{\infty }\sum_{I\in \mathcal{C}_{F}}\sum_{\substack{ %
I_{0},I_{\theta }\in \mathfrak{C}_{\mathcal{D}}\left( I\right)  \\ I_{0}\neq
I_{\theta }}}\sum_{\substack{ J\in \mathcal{C}_{F}^{\tau -\limfunc{shift}}%
\text{ and }\ell \left( J\right) =2^{-t}\ell \left( I\right)  \\ J\Subset
_{\rho ,\varepsilon }I\text{ and }J\subset I_{0}}}\frac{\left\vert \widehat{f%
}\left( I\right) \right\vert }{\sqrt{\left\vert I_{\theta }\right\vert
_{\sigma }}}\mathrm{P}^{\lambda }\left( J,\mathbf{1}_{I_{\theta }}\sigma
\right) \ell \left( J\right) ^{-s}\sqrt{\left\vert J\right\vert _{\omega }}%
\left\Vert \bigtriangleup _{J;\kappa }^{\omega }g\right\Vert _{W_{\limfunc{%
dyad}}^{-s}\left( \omega \right) } \\
&\lesssim &\sum_{t=r}^{\infty }\sum_{I\in \mathcal{C}_{F}}\sum_{\substack{ %
I_{0},I_{\theta }\in \mathfrak{C}_{\mathcal{D}}\left( I\right)  \\ I_{0}\neq
I_{\theta }}}\sum_{\substack{ J\in \mathcal{C}_{F}^{\tau -\limfunc{shift}}%
\text{ and }\ell \left( J\right) =2^{-t}\ell \left( I\right)  \\ J\Subset
_{\rho ,\varepsilon }I\text{ and }J\subset I_{0}}}\frac{\left\vert \widehat{f%
}\left( I\right) \right\vert }{\sqrt{\left\vert I_{\theta }\right\vert
_{\sigma }}}\left\{ \left( 2^{-t}\right) ^{1-\varepsilon \left( n+1-\lambda
\right) -s}\ell \left( I\right) ^{s}\mathrm{P}^{\lambda }\left( I_{0},%
\mathbf{1}_{I_{\theta }}\sigma \right) \right\} \\
&&\ \ \ \ \ \ \ \ \ \ \ \ \ \ \ \ \ \ \ \ \ \ \ \ \ \ \ \ \ \ \times \sqrt{%
\left\vert J\right\vert _{\omega }}\left\Vert \bigtriangleup _{J;\kappa
}^{\omega }g\right\Vert _{W_{\limfunc{dyad}}^{-s}\left( \omega \right) } \\
&=&\sum_{I\in \mathcal{C}_{F}}\sum_{\substack{ I_{0},I_{\theta }\in 
\mathfrak{C}_{\mathcal{D}}\left( I\right)  \\ I_{0}\neq I_{\theta }}}%
\sum_{t=r}^{\infty }A\left( I,I_{0},I_{\theta },t\right) ,
\end{eqnarray*}%
where%
\begin{equation*}
A\left( I,I_{0},I_{\theta },t\right) =\left( 2^{-t}\right) ^{1-\varepsilon
\left( n+1-\lambda \right) -s}\frac{\ell \left( I\right) ^{s}\left\vert 
\widehat{f}\left( I\right) \right\vert }{\sqrt{\left\vert I^{\prime
}\right\vert _{\sigma }}}\mathrm{P}^{\lambda }\left( I_{0},\mathbf{1}%
_{I_{\theta }}\sigma \right) \sum_{\substack{ J\in \mathcal{C}_{F}^{\mathbf{%
\tau }-\limfunc{shift}}\text{ and }\ell \left( J\right) =2^{-t}\ell \left(
I\right)  \\ J\Subset _{\mathbf{\rho },\varepsilon }I\text{ and }J\subset
I_{0}}}\sqrt{\left\vert J\right\vert _{\omega }}\ell \left( J\right)
^{s}\left\Vert \bigtriangleup _{J;\kappa }^{\omega }g\right\Vert _{W_{%
\limfunc{dyad}}^{-s}\left( \omega \right) }.
\end{equation*}%
Now recall that the case $s=0$ of the following estimate was proved in \cite[%
see from the bottom of page 120 to the top of page 122]{SaShUr7},%
\begin{equation*}
\left\vert \sum_{I\in \mathcal{C}_{F}}\sum_{\substack{ I_{0},I_{\theta }\in 
\mathfrak{C}_{\mathcal{D}}\left( I\right)  \\ I_{0}\neq I_{\theta }}}%
\sum_{s=r}^{\infty }A\left( I,I_{0},I_{\theta },s\right) \right\vert
\lesssim \sqrt{A_{2}^{\lambda }}\left\Vert \mathsf{P}_{\mathcal{C}%
_{F}}^{\sigma }f\right\Vert _{W_{\limfunc{dyad}}^{s}\left( \sigma \right)
}\left\Vert \mathsf{P}_{\mathcal{C}_{F}^{\tau -\limfunc{shift}}}^{\omega
}g\right\Vert _{W_{\limfunc{dyad}}^{-s}\left( \omega \right) },
\end{equation*}%
where the quantity $A\left( I,I_{0},I_{\theta },t\right) $ was defined there
with $s=0$ (in our notation) by%
\begin{equation*}
\left( 2^{-t}\right) ^{1-\varepsilon \left( n+1-\lambda \right) }\left\vert
E_{I_{\theta }}^{\sigma }\bigtriangleup _{I;1}^{\sigma }f\right\vert \mathrm{%
P}^{\lambda }\left( I_{0},\mathbf{1}_{I_{\theta }}\sigma \right) \sum 
_{\substack{ J\in \mathcal{C}_{F}^{\tau -\limfunc{shift}}\text{ and }\ell
\left( J\right) =2^{-t}\ell \left( I\right)  \\ J\Subset _{\rho ,\varepsilon
}I\text{ and }J\subset I_{0}}}\sqrt{\left\vert J\right\vert _{\omega }}%
\left\Vert \bigtriangleup _{J;\kappa }^{\omega }g\right\Vert _{L^{2}\left(
\omega \right) }.
\end{equation*}%
When $\sigma $ is doubling, the reader can check that $\left\vert
E_{I_{\theta }}^{\sigma }\bigtriangleup _{I;1}^{\sigma }f\right\vert \approx 
\frac{\left\vert \widehat{f}\left( I\right) \right\vert }{\sqrt{\left\vert
I_{\theta }\right\vert _{\sigma }}}$, and then that the proof in \cite[see
from the bottom of page 120 to the top of page 122]{SaShUr7} applies almost
verbatim to our situation when $\left\vert \varepsilon \left( n+1-\lambda
\right) +s\right\vert <1$. This proves the required bound for the neighbour
form,%
\begin{equation*}
\left\vert \mathsf{B}_{\limfunc{neighbour};\kappa }^{F}\left( f,g\right)
\right\vert \lesssim \sqrt{A_{2}^{\lambda }}\left\Vert \mathsf{P}_{\mathcal{C%
}_{F}}^{\sigma }f\right\Vert _{W_{\limfunc{dyad}}^{s}\left( \sigma \right)
}\left\Vert \mathsf{P}_{\mathcal{C}_{F}^{\tau -\limfunc{shift}}}^{\omega
}g\right\Vert _{W_{\limfunc{dyad}}^{-s}\left( \omega \right) },\ \ \ \ \
\left\vert s\right\vert <1,
\end{equation*}%
since we can always take $\varepsilon \left( n+1-\lambda \right) $ as small
as we wish.

Finally using orthogonality of both $\left\{ \mathsf{P}_{\mathcal{C}%
_{F}}^{\sigma }f\right\} _{F\in \mathcal{F}}$ and $\left\{ \mathsf{P}_{%
\mathcal{C}_{F}^{\tau -\limfunc{shift}}}^{\omega }\right\} _{F\in \mathcal{F}%
}$, we obtain%
\begin{equation}
\left\vert \mathsf{B}_{\limfunc{neighbour};\kappa }\left( f,g\right)
\right\vert \lesssim \sqrt{A_{2}^{\lambda }}\left\Vert f\right\Vert _{W_{%
\limfunc{dyad}}^{s}\left( \sigma \right) }\left\Vert g\right\Vert _{W_{%
\limfunc{dyad}}^{-s}\left( \omega \right) },\ \ \ \ \ \left\vert
s\right\vert <1.  \label{neigh est}
\end{equation}

\subsection{The stopping form}

To bound the stopping form, we follow the argument for the Haar stopping
form due to Nazarov, Treil and Volberg as in \cite{AlSaUr}. However, we only
need the classical $1$-pivotal constant $\mathcal{V}_{2}^{\lambda ,1}$
(defined in (\ref{both pivotal k}) for $\kappa =1$) for this, not the $%
\left( \beta ,\varepsilon ^{\prime }\right) $-strong $1$-pivotal constant $%
\mathcal{V}_{2,\beta ,\varepsilon ^{\prime }}^{\lambda ,1}$ (defined in (\ref%
{strong pivotal}) for $\kappa =1$ and $\varepsilon =\varepsilon ^{\prime }$%
), since there is no use of quasiorthogonality in this argument, only
orthogonality. In view of this consideration, we apply the $1$-pivotal
stopping time construction to the Alpert projection $\mathsf{P}_{\mathcal{C}%
_{F};\kappa }^{\sigma }f$ in order to obtain a further corona decomposition $%
\mathcal{C}_{F}=\bigcup_{H\in \mathcal{H}\left( F\right) }\mathcal{C}_{H}$
in which we obtain the stopping control bound%
\begin{equation}
\mathrm{P}_{1}^{\lambda }\left( I,\mathbf{1}_{H}\sigma \right)
^{2}\left\vert I\right\vert _{\omega }\leq \Gamma \left\vert I\right\vert
_{\sigma }\lesssim 2\mathcal{V}_{2}^{\lambda ,1}\left( \sigma ,\omega
\right) \left\vert I\right\vert _{\sigma },\ \ \ \ \ I\in \mathcal{C}_{H}\
,\ H\in \mathcal{H}\left( F\right) ,\ F\in \mathcal{F}.
\label{stop con bound}
\end{equation}

Recall that%
\begin{equation*}
\left\vert \widehat{f}\left( I\right) \right\vert Q_{I^{\prime };\kappa }=%
\mathbb{E}_{I^{\prime };\kappa }^{\sigma }f-\mathbf{1}_{I^{\prime }}\mathbb{E%
}_{I;\kappa }^{\sigma }f.
\end{equation*}%
We begin the proof by pigeonholing the ratio of side lengths of $I$ and $J$
in the stopping form:%
\begin{eqnarray*}
&&\mathsf{B}_{\limfunc{stop};\kappa }^{F}\left( f,g\right) \equiv \sum_{I\in 
\mathcal{C}_{F}}\sum_{I^{\prime }\in \mathfrak{C}_{\mathcal{D}}\left(
I\right) }\sum_{\substack{ J\in \mathcal{C}_{F}^{\tau -\limfunc{shift}}  \\ %
J\subset I^{\prime }\text{ and }J\Subset _{\rho ,\varepsilon }I}}\left\vert 
\widehat{f}\left( I\right) \right\vert \left\langle Q_{I^{\prime };\kappa
}T_{\sigma }^{\lambda }\mathbf{1}_{F\setminus I^{\prime }},\bigtriangleup
_{J;\kappa }^{\omega }g\right\rangle _{\omega } \\
&=&\sum_{H\in \mathcal{H}\left( F\right) }\sum_{I\in \mathcal{C}%
_{H}}\sum_{I^{\prime }\in \mathfrak{C}_{\mathcal{D}}\left( I\right) }\sum 
_{\substack{ J\in \mathcal{C}_{F}^{\tau -\limfunc{shift}}\cap \mathcal{C}%
_{H}  \\ J\subset I^{\prime }\text{ and }J\Subset _{\rho ,\varepsilon }I}}%
\left\vert \widehat{f}\left( I\right) \right\vert \left\langle Q_{I^{\prime
};\kappa }T_{\sigma }^{\lambda }\mathbf{1}_{H\setminus I^{\prime
}},\bigtriangleup _{J;\kappa }^{\omega }g\right\rangle _{\omega } \\
&=&\sum_{t=0}^{\infty }\sum_{H\in \mathcal{H}\left( F\right) }\sum_{I\in 
\mathcal{C}_{H}}\sum_{I^{\prime }\in \mathfrak{C}_{\mathcal{D}}\left(
I\right) }\sum_{\substack{ J\in \mathcal{C}_{F}^{\tau -\limfunc{shift}}\cap 
\mathcal{C}_{H}\ \text{and }\ell \left( J\right) =2^{-t}\ell \left( I\right) 
\\ J\subset I^{\prime }\text{ and }J\Subset _{\rho ,\varepsilon }I}}%
\left\vert \widehat{f}\left( I\right) \right\vert \left\langle Q_{I^{\prime
};\kappa }T_{\sigma }^{\lambda }\mathbf{1}_{H\setminus I^{\prime
}},\bigtriangleup _{J;\kappa }^{\omega }g\right\rangle _{\omega } \\
&\equiv &\sum_{s=0}^{\infty }\mathsf{B}_{\limfunc{stop};\kappa ,t}^{F}\left(
f,g\right) \ ,
\end{eqnarray*}%
where using Lemma \ref{mod Alpert}, we have $\left\Vert \left\vert
\bigtriangleup _{J;\kappa }^{\omega }g\right\vert \right\Vert _{W_{\limfunc{%
dyad}}^{-s}\left( \omega \right) }\lesssim \left\Vert \bigtriangleup
_{J;\kappa }^{\omega }g\right\Vert _{W_{\limfunc{dyad}}^{-s}\left( \omega
\right) }$. Thus using the fact that $Q_{I^{\prime };\kappa }\bigtriangleup
_{J;\kappa }^{\omega }$ has one vanishing moment of order $0$, we have 
\begin{eqnarray*}
&&\left\vert \mathsf{B}_{\limfunc{stop};\kappa ,t}^{F}\left( f,g\right)
\right\vert \leq \sum_{H\in \mathcal{H}\left( F\right) }\sum_{I\in \mathcal{C%
}_{H}}\sum_{I^{\prime }\in \mathfrak{C}_{\mathcal{D}}\left( I\right) }\sum 
_{\substack{ J\in \mathcal{C}_{F}^{\tau -\limfunc{shift}}\cap \mathcal{C}_{H}%
\text{ and }\ell \left( J\right) =2^{-t}\ell \left( I\right)  \\ J\subset
I^{\prime }\text{ and }J\Subset _{\rho ,\varepsilon }I}}\left\vert \widehat{f%
}\left( I\right) \right\vert \ \left\vert \left\langle T_{\sigma }^{\lambda }%
\mathbf{1}_{H\setminus I^{\prime }},Q_{I^{\prime };\kappa }\bigtriangleup
_{J;\kappa }^{\omega }g\right\rangle _{\omega }\right\vert \\
&\lesssim &\sum_{H\in \mathcal{H}\left( F\right) }\sum_{I\in \mathcal{C}%
_{H}}\sum_{I^{\prime }\in \mathfrak{C}_{\mathcal{D}}\left( I\right) }\sum 
_{\substack{ J\in \mathcal{C}_{F}^{\tau -\limfunc{shift}}\cap \mathcal{C}_{H}%
\text{ and }\ell \left( J\right) =2^{-t}\ell \left( I\right)  \\ J\subset
I^{\prime }\text{ and }J\Subset _{\rho ,\varepsilon }I}}\left\vert \widehat{f%
}\left( I\right) \right\vert \frac{1}{\sqrt{\left\vert I^{\prime
}\right\vert _{\sigma }}}\ \mathrm{P}_{1}^{\lambda }\left( J,\mathbf{1}%
_{H\setminus I^{\prime }}\sigma \right) \ell \left( J\right) ^{-s}\sqrt{%
\left\vert J\right\vert _{\omega }}\left\Vert \bigtriangleup _{J;\kappa
}^{\omega }g\right\Vert _{W_{\limfunc{dyad}}^{-s}\left( \omega \right) } \\
&\leq &\sum_{H\in \mathcal{H}\left( F\right) }\sum_{I\in \mathcal{C}%
_{H}}\sum_{I^{\prime }\in \mathfrak{C}_{\mathcal{D}}\left( I\right) }\sum 
_{\substack{ J\in \mathcal{C}_{F}^{\tau -\limfunc{shift}}\cap \mathcal{C}_{H}%
\text{ and }\ell \left( J\right) =2^{-t}\ell \left( I\right)  \\ J\subset
I^{\prime }\text{ and }J\Subset _{\rho ,\varepsilon }I}}\ell \left( I\right)
^{-s}\left\vert \widehat{f}\left( I\right) \right\vert \frac{1}{\sqrt{%
\left\vert I^{\prime }\right\vert _{\sigma }}}\ \left( 2^{-t}\right)
^{1-\varepsilon \left( n+1-\lambda \right) -s} \\
&&\ \ \ \ \ \ \ \ \ \ \ \ \ \ \ \ \ \ \ \ \ \ \ \ \ \ \ \ \ \ \ \ \ \ \
\times \mathrm{P}_{1}^{\lambda }\left( I,\mathbf{1}_{H\setminus I^{\prime
}}\sigma \right) \sqrt{\left\vert J\right\vert _{\omega }}\left\Vert
\bigtriangleup _{J;\kappa }^{\omega }g\right\Vert _{W_{\limfunc{dyad}%
}^{-s}\left( \omega \right) } \\
&\lesssim &\left( 2^{-t}\right) ^{1-\varepsilon \left( n+1-\lambda \right)
-s}\sqrt{\sum_{H\in \mathcal{H}\left( F\right) }\sum_{I\in \mathcal{C}%
_{H}}\sum_{I^{\prime }\in \mathfrak{C}_{\mathcal{D}}\left( I\right) }\sum 
_{\substack{ J\in \mathcal{C}_{F}^{\tau -\limfunc{shift}}\cap \mathcal{C}_{H}%
\text{ and }\ell \left( J\right) =2^{-t}\ell \left( I\right)  \\ J\subset
I^{\prime }\text{ and }J\Subset _{\rho ,\varepsilon }I}}\ell \left( I\right)
^{-2s}\left\vert \widehat{f}\left( I\right) \right\vert ^{2}\frac{1}{%
\left\vert I^{\prime }\right\vert _{\sigma }}\ \mathrm{P}_{1}^{\lambda
}\left( I,\mathbf{1}_{H\setminus I^{\prime }}\sigma \right) ^{2}\left\vert
J\right\vert _{\omega }} \\
&&\ \ \ \ \ \ \ \ \ \ \ \ \ \ \ \ \ \ \ \ \ \ \ \ \ \ \ \ \ \ \ \ \ \ \
\times \sqrt{\sum_{H\in \mathcal{H}\left( F\right) }\sum_{I\in \mathcal{C}%
_{H}}\sum_{I^{\prime }\in \mathfrak{C}_{\mathcal{D}}\left( I\right) }\sum 
_{\substack{ J\in \mathcal{C}_{F}^{\tau -\limfunc{shift}}\cap \mathcal{C}_{H}%
\text{ and }\ell \left( J\right) =2^{-t}\ell \left( I\right)  \\ J\subset
I^{\prime }\text{ and }J\Subset _{\rho ,\varepsilon }I}}\left\Vert
\bigtriangleup _{J}^{\omega }g\right\Vert _{W_{\limfunc{dyad}}^{-s}\left(
\omega \right) }^{2}}\ .
\end{eqnarray*}%
Now we note that%
\begin{equation*}
\sum_{H\in \mathcal{H}\left( F\right) }\sum_{I\in \mathcal{C}%
_{H}}\sum_{I^{\prime }\in \mathfrak{C}_{\mathcal{D}}\left( I\right) }\sum 
_{\substack{ J\in \mathcal{C}_{F}^{\tau -\limfunc{shift}}\text{and }\ell
\left( J\right) =2^{-s}\ell \left( I\right)  \\ J\subset I^{\prime }\text{
and }J\Subset _{\rho ,\varepsilon }I}}\left\Vert \bigtriangleup _{J}^{\omega
}g\right\Vert _{W_{\limfunc{dyad}}^{-s}\left( \omega \right) }^{2}\leq
\sum_{J\in \mathcal{C}_{F}^{\tau -\limfunc{shift}}}\left\Vert \bigtriangleup
_{J}^{\omega }g\right\Vert _{W_{\limfunc{dyad}}^{-s}\left( \omega \right)
}^{2}=\left\Vert \mathsf{P}_{\mathcal{C}_{F}^{\tau -\limfunc{shift}%
}}^{\omega }g\right\Vert _{W_{\limfunc{dyad}}^{-s}\left( \omega \right)
}^{2}\ ,
\end{equation*}%
and use the stopping control bound $\mathrm{P}_{1}^{\lambda }\left( I,%
\mathbf{1}_{H}\sigma \right) ^{2}\left\vert I\right\vert _{\omega }\leq
\Gamma \left\vert I\right\vert _{\sigma }\lesssim 2\mathcal{V}_{2}^{\lambda
,1}\left( \sigma ,\omega \right) \left\vert I^{\prime }\right\vert _{\sigma
} $ in the corona $\mathcal{C}_{H}$, to obtain%
\begin{eqnarray*}
\left\vert \mathsf{B}_{\limfunc{stop};\kappa ,t}^{F}\left( f,g\right)
\right\vert &\lesssim &\left( 2^{-t\left( 1-s\right) }\right)
^{1-\varepsilon \left( n+1-\lambda \right) }\mathcal{V}_{2}^{\lambda
,1}\left( \sigma ,\omega \right) \\
&&\times \sqrt{\sum_{H\in \mathcal{H}\left( F\right) }\sum_{I\in \mathcal{C}%
_{H}}\sum_{I^{\prime }\in \mathfrak{C}_{\mathcal{D}}\left( I\right) }\sum 
_{\substack{ J\in \mathcal{C}_{F}^{\tau -\limfunc{shift}}\text{and }\ell
\left( J\right) =2^{-t}\ell \left( I\right)  \\ J\subset I^{\prime }\text{
and }J\Subset _{\rho ,\varepsilon }I}}\ell \left( I\right) ^{-2s}\left\vert 
\widehat{f}\left( I\right) \right\vert ^{2}\frac{\left\vert J\right\vert
_{\sigma }}{\left\vert I^{\prime }\right\vert _{\sigma }}}\left\Vert \mathsf{%
P}_{\mathcal{C}_{F}^{\tau -\limfunc{shift}}}^{\omega }g\right\Vert _{W_{%
\limfunc{dyad}}^{-s}\left( \omega \right) } \\
&\lesssim &\left( 2^{-t\left( 1-s\right) }\right) ^{1-\varepsilon \left(
n+1-\lambda \right) }\mathcal{V}_{2}^{\lambda ,1}\left( \sigma ,\omega
\right) \sqrt{\sum_{H\in \mathcal{H}\left( F\right) }\sum_{I\in \mathcal{C}%
_{H}}\ell \left( I\right) ^{-2s}\left\vert \widehat{f}\left( I\right)
\right\vert ^{2}}\left\Vert \mathsf{P}_{\mathcal{C}_{F}^{\tau -\limfunc{shift%
}}}^{\omega }g\right\Vert _{W_{\limfunc{dyad}}^{-s}\left( \omega \right) } \\
&\lesssim &\left( 2^{-t}\right) ^{1-\varepsilon \left( n+1-\lambda \right)
-s}\mathcal{V}_{2}^{\lambda ,1}\left( \sigma ,\omega \right) \left\Vert 
\mathsf{P}_{\mathcal{C}_{F}}^{\sigma }f\right\Vert _{W_{\limfunc{dyad}%
}^{s}\left( \sigma \right) }\left\Vert \mathsf{P}_{\mathcal{C}_{F}^{\tau -%
\limfunc{shift}}}^{\omega }g\right\Vert _{W_{\limfunc{dyad}}^{-s}\left(
\omega \right) }.
\end{eqnarray*}%
Finally then we sum in $t$ to obtain%
\begin{eqnarray*}
&&\left\vert \mathsf{B}_{\limfunc{stop};\kappa }^{F}\left( f,g\right)
\right\vert \leq \mathcal{V}_{2,\varepsilon }^{\lambda ,1}\left( \sigma
,\omega \right) \sum_{t=0}^{\infty }\left\vert \mathsf{B}_{\limfunc{stop}%
;\kappa ,t}^{F}\left( f,g\right) \right\vert \\
&\lesssim &\mathcal{V}_{2,\varepsilon }^{\lambda ,1}\left( \sigma ,\omega
\right) \sum_{t=0}^{\infty }\left( 2^{-t}\right) ^{1-\varepsilon \left(
n+1-\lambda \right) -s}\left\Vert \mathsf{P}_{\mathcal{C}_{F}}^{\sigma
}f\right\Vert _{W_{\limfunc{dyad}}^{s}\left( \sigma \right) }\left\Vert 
\mathsf{P}_{\mathcal{C}_{F}^{\tau -\limfunc{shift}}}^{\omega }g\right\Vert
_{W_{\limfunc{dyad}}^{-s}\left( \omega \right) } \\
&\lesssim &\mathcal{V}_{2,\varepsilon }^{\lambda ,1}\left( \sigma ,\omega
\right) \left\Vert \mathsf{P}_{\mathcal{C}_{F}}^{\sigma }f\right\Vert _{W_{%
\limfunc{dyad}}^{s}\left( \sigma \right) }\left\Vert \mathsf{P}_{\mathcal{C}%
_{F}^{\tau -\limfunc{shift}}}^{\omega }g\right\Vert _{W_{\limfunc{dyad}%
}^{-s}\left( \omega \right) },
\end{eqnarray*}%
if we take $0<\varepsilon <\frac{1+s}{n+1-\lambda }$.

Finally using orthogonality of both $\left\{ \mathsf{P}_{\mathcal{C}%
_{F}}^{\sigma }f\right\} _{F\in \mathcal{F}}$ and $\left\{ \mathsf{P}_{%
\mathcal{C}_{F}^{\tau -\limfunc{shift}}}^{\omega }\right\} _{F\in \mathcal{F}%
}$, we obtain%
\begin{equation}
\left\vert \mathsf{B}_{\limfunc{stop};\kappa }\left( f,g\right) \right\vert
\lesssim \sqrt{A_{2}^{\lambda }}\left\Vert f\right\Vert _{W_{\limfunc{dyad}%
}^{s}\left( \sigma \right) }\left\Vert g\right\Vert _{W_{\limfunc{dyad}%
}^{-s}\left( \omega \right) },\ \ \ \ \ \left\vert s\right\vert <1.
\label{stop est}
\end{equation}

\section{Conclusion of the proofs}

Collecting all the estimates proved above, namely (\ref{routine}), (\ref{top
control}), (\ref{unif bound'}), (\ref{far below est}), Lemma \ref{doub piv},
(\ref{diag est}), (\ref{para est}), (\ref{comm est}), (\ref{neigh est}) and (%
\ref{stop est}), we obtain just as in \cite{AlSaUr} that for any dyadic grid 
$\mathcal{D}$, and any admissible truncation of $T^{\alpha }$,

\begin{eqnarray*}
\left\vert \left\langle T_{\sigma }^{\alpha }\mathsf{P}_{\func{good}}^{%
\mathcal{D}}f,\mathsf{P}_{\func{good}}^{\mathcal{D}}g\right\rangle _{\omega
}\right\vert &\leq &C\left( \mathfrak{TR}_{T^{\alpha }}^{\kappa ,s}\left(
\sigma ,\omega \right) +\mathfrak{TR}_{T^{\alpha ,\ast }}^{\kappa ,-s}\left(
\omega ,\sigma \right) +A_{2}^{\alpha }\left( \sigma ,\omega \right)
+\varepsilon _{3}\mathfrak{N}_{T^{\alpha }}\left( \sigma ,\omega \right)
\right) \\
&&\times \left\Vert \mathsf{P}_{\func{good}}^{\mathcal{D}}f\right\Vert _{W_{%
\limfunc{dyad}}^{s}\left( \sigma \right) }\left\Vert \mathsf{P}_{\func{good}%
}^{\mathcal{D}}g\right\Vert _{W_{\limfunc{dyad}}^{-s}\left( \omega \right) }.
\end{eqnarray*}%
Thus for any admissible truncation of $T^{\alpha }$, using the above two
theorems, we obtain 
\begin{eqnarray}
\mathfrak{N}_{T^{\alpha }}\left( \sigma ,\omega \right) &\leq &C\sup_{%
\mathcal{D}}\frac{\left\vert \left\langle T_{\sigma }^{\alpha }\mathsf{P}_{%
\func{good}}^{\mathcal{D}}f,\mathsf{P}_{\func{good}}^{\mathcal{D}%
}g\right\rangle _{\omega }\right\vert }{\left\Vert \mathsf{P}_{\func{good}}^{%
\mathcal{D}}f\right\Vert _{W_{\limfunc{dyad}}^{s}\left( \sigma \right)
}\left\Vert \mathsf{P}_{\func{good}}^{\mathcal{D}}g\right\Vert _{W_{\limfunc{%
dyad}}^{-s}\left( \omega \right) }}  \label{absorb} \\
&\leq &C\left( \mathfrak{TR}_{T^{\alpha }}^{\kappa ,s}\left( \sigma ,\omega
\right) +\mathfrak{TR}_{T^{\alpha ,\ast }}^{\kappa ,-s}\left( \omega ,\sigma
\right) +A_{2}^{\alpha }\left( \sigma ,\omega \right) \right) +C\varepsilon
_{3}\mathfrak{N}_{T^{\alpha }}\left( \sigma ,\omega \right) .  \notag
\end{eqnarray}

Our next task is to use the doubling hypothesis to replace the triple $%
\kappa $-testing constants by the usual cube testing constants, and we
follow almost verbatim the argument in \cite{AlSaUr}\ for the case $s=0$.
Recall that the $\kappa $-cube testing conditions use the $Q$-normalized
monomials $m_{Q}^{\beta }\left( x\right) \equiv \mathbf{1}_{Q}\left(
x\right) \left( \frac{x-c_{Q}}{\ell \left( Q\right) }\right) ^{\beta }$, for
which we have $\left\Vert m_{Q}^{\beta }\right\Vert _{L^{\infty }}\approx 1$.

\begin{theorem}
\label{no tails}Suppose that $\sigma $ and $\omega $ are locally finite
positive Borel measures on $\mathbb{R}^{n}$, with $\sigma $ doubling, and
let $\kappa \in \mathbb{N}$. If $T^{\alpha }$ is a bounded operator from $%
W^{s}\left( \sigma \right) $ to $W^{s}\left( \omega \right) $, then\ for
every $0<\varepsilon _{2}<1$, there is a positive constant $C\left( \kappa
,\varepsilon _{2}\right) $ such that 
\begin{equation*}
\mathfrak{TR}_{T^{\alpha }}^{\kappa ,s}\left( \sigma ,\omega \right) \leq
C\left( \kappa ,\varepsilon _{2}\right) \left[ \mathfrak{T}_{T^{\alpha
}}^{s}\left( \sigma ,\omega \right) +\sqrt{A_{2}^{\alpha }\left( \sigma
,\omega \right) }\right] +\varepsilon _{2}\mathfrak{N}_{T^{\alpha }}\left(
\sigma ,\omega \right) \ ,\ \ \ \ \ \kappa \geq 1,
\end{equation*}%
and where the constants $C\left( \kappa ,\varepsilon _{2}\right) $ depend
only on $\kappa $ and $\varepsilon $, and not on the operator norm $%
\mathfrak{N}_{T^{\alpha }}\left( \sigma ,\omega \right) $.
\end{theorem}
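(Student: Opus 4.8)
The plan is to follow the $s=0$ argument of \cite{AlSaUr}, organized as two reductions: first replace the monomials $m_{Q}^{\beta }$ in the triple $\kappa $-testing functional by indicators of dyadic subcubes, at the cost of $\sqrt{A_{2}^{\alpha }}$, thereby reducing $\mathfrak{TR}_{T^{\alpha }}^{\kappa ,s}$ to the triple $\mathbf{1}$-testing constant $\mathfrak{TR}_{T^{\alpha }}^{1,s}$; then pass from the triple cube $3Q$ to $Q$ itself, which is where the doubling hypothesis on $\sigma $ and the restriction to small $s$ enter and where the term $\varepsilon _{2}\mathfrak{N}_{T^{\alpha }}$ is produced.

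For the first reduction, expand $\mathbf{1}_{Q}m_{Q}^{\beta }=\left( E_{Q}^{\sigma }m_{Q}^{\beta }\right) \mathbf{1}_{Q}+\sum_{Q^{\prime \prime }\in \mathcal{D}:\ Q^{\prime \prime }\subseteq Q}\bigtriangleup _{Q^{\prime \prime };1}^{\sigma }m_{Q}^{\beta }$. Since $m_{Q}^{\beta }$ is smooth at scale $\ell \left( Q\right) $, each Haar difference is a linear combination of indicators of the $\mathcal{D}$-children of $Q^{\prime \prime }$ with $L^{\infty }$-norm $\lesssim _{\kappa }\ell \left( Q^{\prime \prime }\right) /\ell \left( Q\right) $, and the supports at a fixed generation are disjoint. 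Applying $T_{\sigma }^{\alpha }$ and restricting the output to $3Q$, I would split, for each dyadic $Q^{\prime \prime }\subseteq Q$, the output into the piece on $3Q^{\prime \prime }$ — bounded by $\mathfrak{TR}_{T^{\alpha }}^{1,s}$ (or trivially by $\mathfrak{N}_{T^{\alpha }}$) times $\ell \left( Q^{\prime \prime }\right) ^{-s}\sqrt{\left\vert Q^{\prime \prime }\right\vert _{\sigma }}$ — and the piece on $3Q\setminus 3Q^{\prime \prime }$, where $T_{\sigma }^{\alpha }\mathbf{1}_{Q^{\prime \prime }}$ is evaluated at distance $\gtrsim \ell \left( Q^{\prime \prime }\right) $ from its source, hence controlled by the Energy Lemma and the kernel estimate \eqref{sizeandsmoothness'}, whose contribution sums to $\sqrt{A_{2}^{\alpha }}$ exactly as in the long-range and mid-range estimates already carried out for \eqref{routine'}. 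Orthogonality of $\left\{ \bigtriangleup _{J;\kappa }^{\omega }\right\} $ together with the geometric factor $\ell \left( Q^{\prime \prime }\right) /\ell \left( Q\right) $ from the Haar coefficients lets me apply Cauchy--Schwarz in $Q^{\prime \prime }$ and then sum the resulting geometric series over generations, giving $\mathfrak{TR}_{T^{\alpha }}^{\kappa ,s}\lesssim _{\kappa }\mathfrak{TR}_{T^{\alpha }}^{1,s}+\sqrt{A_{2}^{\alpha }}$.

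For the second reduction, write $\mathbf{1}_{3Q}T_{\sigma }^{\alpha }\mathbf{1}_{Q}=\mathbf{1}_{Q}T_{\sigma }^{\alpha }\mathbf{1}_{Q}+\mathbf{1}_{3Q\setminus Q}T_{\sigma }^{\alpha }\mathbf{1}_{Q}$; the first term is $\leq \mathfrak{T}_{T^{\alpha }}^{s}\,\ell \left( Q\right) ^{-s}\sqrt{\left\vert Q\right\vert _{\sigma }}$ by definition. For the second, I would use a Whitney decomposition of $3Q\setminus Q$ into dyadic cubes $K$ with $\ell \left( K\right) \approx \operatorname{dist}\left( K,\partial Q\right) $, and for each $K$ decompose $\mathbf{1}_{Q}=\mathbf{1}_{Q\cap 3K}+\mathbf{1}_{Q\setminus 3K}$. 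The far piece $\mathbf{1}_{K}T_{\sigma }^{\alpha }\mathbf{1}_{Q\setminus 3K}$ is again controlled by the Energy Lemma / the pivotal bound \eqref{piv bound} and summed by $\sqrt{A_{2}^{\alpha }}$; the near piece is estimated crudely by the operator norm, $\left\Vert \mathbf{1}_{K}T_{\sigma }^{\alpha }\mathbf{1}_{Q\cap 3K}\right\Vert _{W^{s}\left( \omega \right) }\leq \mathfrak{N}_{T^{\alpha }}\left\Vert \mathbf{1}_{Q\cap 3K}\right\Vert _{W^{s}\left( \sigma \right) }\lesssim \mathfrak{N}_{T^{\alpha }}\,\ell \left( K\right) ^{-s}\sqrt{\left\vert 3K\right\vert _{\sigma }}$, using the indicator Sobolev norm computed earlier (valid for $s$ small since $\sigma $ is doubling) together with Corollary \ref{zero set doubling} to control the $\sigma $-mass of $Q\cap 3K$ and of its boundary halo. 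Summing the squares of the near pieces over $K$ with $\ell \left( K\right) =2^{-m}\ell \left( Q\right) $, Corollary \ref{zero set doubling} yields $\sum_{\ell \left( K\right) =2^{-m}\ell \left( Q\right) }\left\vert 3K\right\vert _{\sigma }\lesssim 2^{-m\theta }\left\vert Q\right\vert _{\sigma }$ for some $\theta =\theta \left( \sigma \right) >0$, so the collar of width $2^{-M}\ell \left( Q\right) $ contributes $\lesssim \mathfrak{N}_{T^{\alpha }}^{2}\,\ell \left( Q\right) ^{-2s}\left\vert Q\right\vert _{\sigma }\sum_{m\geq M}2^{-m\left( \theta -2s\right) }$, which is finite for $2s<\theta $ and is $\leq \varepsilon _{2}^{2}\,\mathfrak{N}_{T^{\alpha }}^{2}\,\ell \left( Q\right) ^{-2s}\left\vert Q\right\vert _{\sigma }$ once $M=M\left( \varepsilon _{2}\right) $ is large enough; the remaining part of $3Q\setminus Q$, at distance $\gtrsim 2^{-M}\ell \left( Q\right) $ from $\partial Q$, is genuinely separated from the support and handled by $C\left( M\right) \sqrt{A_{2}^{\alpha }}$. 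The finitely-many-per-scale ``seam'' wavelets $J$ straddling several Whitney cubes are handled identically.

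The main obstacle is precisely this near-$\partial Q$ analysis: there is no separation between $K$ and $Q$, so one must spend the operator norm, and the reason the total cost is $\varepsilon _{2}\mathfrak{N}_{T^{\alpha }}$ rather than $C\mathfrak{N}_{T^{\alpha }}$ is the power decay of the doubling measure $\sigma $ near $\partial Q$ from Corollary \ref{zero set doubling}, combined with the smallness of $s$ keeping $\theta -2s>0$ (this is the source of the dependence of $\theta $ on the doubling constants in Theorem \ref{pivotal theorem}). Finally, running the second reduction inside the first — equivalently, chaining them and noting that the intermediate $\mathfrak{TR}_{T^{\alpha }}^{1,s}$ is in any case $\leq \mathfrak{N}_{T^{\alpha }}$, whose coefficient we have arranged to be $\leq \varepsilon _{2}$ — gives the stated bound with $C\left( \kappa ,\varepsilon _{2}\right) $ independent of $\mathfrak{N}_{T^{\alpha }}$.
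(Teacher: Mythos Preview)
Your approach is substantially more elaborate than what the paper actually does, and part of it addresses a reduction that is not needed here.

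The paper's proof is a one-step argument. Fix a cube $I$ and an $I$-normalized polynomial $P$ of degree $<\kappa$. Choose a fixed depth $m=m(\kappa,\varepsilon_{2})$ and approximate $P$ on $I$ by the step function $S=\sum_{I'\in\mathfrak{C}_{\mathcal{D}}^{(m)}(I)}a_{I'}\mathbf{1}_{I'}$ (for instance with $a_{I'}=P(c_{I'})$), so that $\lVert (S-P)\mathbf{1}_{I}\rVert_{L^{\infty}}<\varepsilon_{2}/2$. Then split $T_{\sigma}^{\alpha}(\mathbf{1}_{I}P)=T_{\sigma}^{\alpha}S+T_{\sigma}^{\alpha}((P-S)\mathbf{1}_{I})$. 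The first piece is a finite sum $\sum_{I'}a_{I'}T_{\sigma}^{\alpha}\mathbf{1}_{I'}$ over the $2^{mn}$ grandchildren, and each summand is bounded directly by the indicator testing constant $\mathfrak{T}_{T^{\alpha}}^{s}$; the number of terms and the factor $\ell(I')^{-s}=2^{ms}\ell(I)^{-s}$ are absorbed into $C(\kappa,\varepsilon_{2})$. The second piece is bounded by $\mathfrak{N}_{T^{\alpha}}\lVert (P-S)\mathbf{1}_{I}\rVert_{W^{s}(\sigma)}$, and a direct Alpert/Haar expansion of $(P-S)\mathbf{1}_{I}$ (piecewise polynomial on the depth-$m$ grandchildren with $L^{\infty}$ norm $\lesssim 2^{-m}$) shows this Sobolev norm is $\lesssim 2^{-m(1-s)}\ell(I)^{-s}\sqrt{|I|_{\sigma}}$, which is $\leq\tfrac{\varepsilon_{2}}{2}\ell(I)^{-s}\sqrt{|I|_{\sigma}}$ once $m$ is large enough. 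No infinite Haar expansion, no Whitney decomposition, and in fact no explicit use of $\sqrt{A_{2}^{\alpha}}$ is needed.

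Your second reduction (passing from $3Q$ to $Q$ via a Whitney decomposition of $3Q\setminus Q$) is not required here: the constant $\mathfrak{T}_{T^{\alpha}}^{s}$ appearing in the statement is the \emph{full} indicator testing constant from the main theorem, i.e.\ $\lVert T_{\sigma}^{\alpha}\mathbf{1}_{I}\rVert_{W^{s}(\omega)}\leq\mathfrak{T}_{T^{\alpha}}^{s}\ell(I)^{-s}\sqrt{|I|_{\sigma}}$ with no restriction on the output, so the triple cube is automatically covered. Your first reduction (infinite Haar expansion of $m_{Q}^{\beta}$) could be made to work, but the appeal to ``orthogonality of $\{\bigtriangleup_{J;\kappa}^{\omega}\}$'' to sum the near pieces at a fixed generation is not straightforward in $W^{s}(\omega)$, since the functions $\mathbf{1}_{3Q'}T_{\sigma}^{\alpha}\mathbf{1}_{Q'}$ are not orthogonal there; this would need additional argument. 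The paper sidesteps all of this by using a finite-depth approximation, so that finitely many terms are summed by brute force rather than by convergence.
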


\begin{proof}
Fix a dyadic cube $I$. If $P$ is an $I$-normalized polynomial of degree less
than $\kappa $ on the cube $I$, i.e. $\left\Vert P\right\Vert _{L^{\infty
}}\approx 1$, then we can approximate $P$ by a step function 
\begin{equation*}
S\equiv \sum_{I^{\prime }\in \mathfrak{C}_{\mathcal{D}}^{\left( m\right)
}\left( I\right) }a_{I^{\prime }}\mathbf{1}_{I^{\prime }},
\end{equation*}%
satisfying%
\begin{equation*}
\left\Vert S-\mathbf{1}_{I}P\right\Vert _{L^{\infty }\left( \sigma \right) }<%
\frac{\varepsilon _{2}}{2}\ ,
\end{equation*}%
provided we take $m\geq 1$ sufficiently large depending on $n$ and $\kappa $%
, but independent of the cube $I$. Then using the above lemma with $C2^{%
\frac{m}{2}}\varepsilon _{1}\leq \frac{\varepsilon _{2}}{2}$, and the
estimate $\left\vert a_{I^{\prime }}\right\vert \lesssim \left\Vert
P\right\Vert _{L^{\infty }}\lesssim 1$, we have%
\begin{eqnarray*}
&&\left\Vert T_{\sigma }^{\alpha }\mathbf{1}_{I}P\right\Vert _{W_{\limfunc{%
dyad}}^{s}\left( \sigma \right) }\leq \left\Vert \sum_{I^{\prime }\in 
\mathfrak{C}_{\mathcal{D}}^{\left( m\right) }\left( I\right) }a_{I^{\prime
}}T_{\sigma }^{\alpha }\mathbf{1}_{I^{\prime }}\right\Vert _{W_{\limfunc{dyad%
}}^{s}\left( \sigma \right) }+\left\Vert T_{\sigma }^{\alpha }\left[ \left(
S-P\right) \mathbf{1}_{I}\right] \right\Vert _{W_{\limfunc{dyad}}^{s}\left(
\sigma \right) }\sqrt{\int_{3I}\left\vert T_{\sigma }^{\alpha }\left[ \left(
S-P\right) \mathbf{1}_{I}\right] \right\vert ^{2}d\omega } \\
&\leq &C\sum_{I^{\prime }\in \mathfrak{C}_{\mathcal{D}}^{\left( m\right)
}\left( I\right) }\left\vert a_{I^{\prime }}\right\vert \left\Vert T_{\sigma
}^{\alpha }\mathbf{1}_{I^{\prime }}\right\Vert _{W_{\limfunc{dyad}%
}^{s}\left( \sigma \right) }+\frac{\varepsilon _{2}}{2}\mathfrak{N}%
_{T^{\alpha }}\left( \sigma ,\omega \right) \sqrt{\left\vert I\right\vert
_{\sigma }} \\
&\leq &C\sum_{I^{\prime }\in \mathfrak{C}_{\mathcal{D}}^{\left( m\right)
}\left( Q\right) }\mathfrak{T}_{T^{\alpha }}^{s}\left( \sigma ,\omega
\right) \sqrt{\left\vert I^{\prime }\right\vert _{\sigma }}+\frac{%
\varepsilon _{2}}{2}\mathfrak{N}_{T^{\alpha }}\left( \sigma ,\omega \right) 
\sqrt{\left\vert I\right\vert _{\sigma }}\leq C\left\{ \mathfrak{T}%
_{T^{\alpha }}^{s}\left( \sigma ,\omega \right) +\frac{\varepsilon _{2}}{2}%
\mathfrak{N}_{T^{\alpha }}\left( \sigma ,\omega \right) \right\} \sqrt{%
\left\vert I\right\vert _{\sigma }}.
\end{eqnarray*}
\end{proof}

Combining this with (\ref{absorb}) we obtain%
\begin{equation*}
\mathfrak{N}_{T^{\alpha }}\left( \sigma ,\omega \right) \leq C\left( 
\mathfrak{T}_{T^{\alpha }}^{s}\left( \sigma ,\omega \right) +\mathfrak{T}%
_{T^{\alpha ,\ast }}^{-s}\left( \omega ,\sigma \right) +\sqrt{A_{2}^{\alpha
}\left( \sigma ,\omega \right) }\right) +C\left( \varepsilon
_{2}+\varepsilon _{3}\right) \mathfrak{N}_{T^{\alpha }}\left( \sigma ,\omega
\right) .
\end{equation*}%
Since $\mathfrak{N}_{T^{\alpha }}\left( \sigma ,\omega \right) <\infty $ for
each truncation, we may absorb the final summand on the right into the left
hand side provided $C\left( \varepsilon _{2}+\varepsilon _{3}\right) <\frac{1%
}{2}$, to obtain%
\begin{equation*}
\mathfrak{N}_{T^{\alpha }}\left( \sigma ,\omega \right) \lesssim \mathfrak{T}%
_{T^{\alpha }}^{s}\left( \sigma ,\omega \right) +\mathfrak{T}_{T^{\alpha
,\ast }}^{-s}\left( \omega ,\sigma \right) +\sqrt{A_{2}^{\alpha }\left(
\sigma ,\omega \right) }.
\end{equation*}%
By the definition of boundedness of $T^{\alpha }$ in (\ref{def bounded}),
this completes the proof of Theorem \ref{pivotal theorem}.

\section{Appendix: necessity of the classical pivotal condition}

Here we derive the necessity of the classical $1$-pivotal condition from the
classical $A_{2}$ condition and local testing of the vector fractional Riesz
transform with doubling measures. Recall that 
\begin{equation*}
\mathrm{P}^{\alpha }\left( Q,\mu \right) =\int_{\mathbb{R}^{n}}\frac{\ell
\left( Q\right) ^{1}}{\left( \ell \left( Q\right) +\left\vert
y-c_{Q}\right\vert \right) ^{n+1-\alpha }}d\mu \left( y\right) ,
\end{equation*}%
and%
\begin{eqnarray*}
\left( \mathcal{V}_{2}^{\alpha }\left( \sigma ,\omega \right) \right) ^{2}
&=&\sup_{Q\supset \dot{\cup}Q_{r}}\frac{1}{\left\vert Q\right\vert _{\sigma }%
}\sum_{r=1}^{\infty }\mathrm{P}^{\alpha }\left( Q_{r},\mathbf{1}_{Q}\sigma
\right) ^{2}\left\vert Q_{r}\right\vert _{\omega }\ , \\
\left( \mathcal{V}_{2}^{\alpha ,\ast }\left( \sigma ,\omega \right) \right)
^{2} &=&\sup_{Q\supset \dot{\cup}Q_{r}}\frac{1}{\left\vert Q\right\vert
_{\omega }}\sum_{r=1}^{\infty }\mathrm{P}^{\alpha }\left( Q_{r},\mathbf{1}%
_{Q}\omega \right) ^{2}\left\vert Q_{r}\right\vert _{\sigma }=\left( 
\mathcal{V}_{2}^{\alpha }\left( \omega ,\sigma \right) \right) ^{2}\ .
\end{eqnarray*}%
Recall also the classical fractional Muckenhoupt condition on the measure
pair,%
\begin{equation*}
A_{2}^{\alpha }\left( \sigma ,\omega \right) \equiv \sup_{Q\in \mathcal{Q}%
^{n}}\frac{\left\vert Q\right\vert _{\omega }\left\vert Q\right\vert
_{\sigma }}{\left\vert Q\right\vert ^{2\left( 1-\frac{\alpha }{n}\right) }}%
<\infty ,
\end{equation*}%
as well as the Sobolev $\mathbf{1}$-testing and $\mathbf{1}^{\ast }$-testing
conditions for the operator $\mathbf{R}^{\alpha ,n}$,%
\begin{eqnarray*}
\left\Vert T_{\sigma }^{\alpha }\mathbf{1}_{I}\right\Vert _{W^{s}\left(
\omega \right) } &\leq &\mathfrak{T}_{\mathbf{R}^{\alpha ,n,s}}\left( \sigma
,\omega \right) \sqrt{\left\vert I\right\vert _{\sigma }}\ell \left(
I\right) ^{-s},\ \ \ \ \ I\in \mathcal{Q}^{n}, \\
\left\Vert T_{\omega }^{\alpha ,\ast }\mathbf{1}_{I}\right\Vert
_{W^{-s}\left( \sigma \right) } &\leq &\mathfrak{T}_{\mathbf{R}^{\alpha
,n,-s,\ast }}\left( \omega ,\sigma \right) \sqrt{\left\vert I\right\vert
_{\omega }}\ell \left( I\right) ^{s},\ \ \ \ \ I\in \mathcal{Q}^{n},
\end{eqnarray*}%
taken over the family of indicator test functions $\left\{ \mathbf{1}%
_{I}\right\} _{I\in \mathcal{Q}^{n}}$.

\begin{lemma}
\label{pivotal}For $0\leq \alpha <n$ and $\left\vert s\right\vert $
sufficiently small, we have%
\begin{equation*}
\mathcal{V}_{2,s}^{\alpha }\left( \sigma ,\omega \right) \lesssim \mathfrak{T%
}_{\mathbf{R}^{\alpha ,n,s}}\left( \sigma ,\omega \right) +\sqrt{%
A_{2}^{\alpha }\left( \sigma ,\omega \right) }.
\end{equation*}
\end{lemma}

In order to prove this theorem we will model our argument on some of the
material from \cite{SaShUr9}. We first recall the definition of strong
energy reversal from \cite{SaShUr7}. We say that a vector $\mathbf{T}%
^{\alpha }=\left\{ T_{\ell }^{\alpha }\right\} _{\ell =1}^{2}$ of $\alpha $%
-fractional transforms in the plane has \emph{strong} reversal of $\omega $%
-energy on a cube $J$ if there is a positive constant $C_{0}$ such that for
all $2\leq \gamma \leq 2^{\mathbf{r}\left( 1-\varepsilon \right) }$ and for
all positive measures $\mu $ supported outside $\gamma J$, we have the
inequality%
\begin{equation}
\mathbb{E}_{J}^{\omega }\left[ \left( \mathbf{x}-\mathbb{E}_{J}^{\omega }%
\mathbf{x}\right) ^{2}\right] \left( \frac{\mathrm{P}^{\alpha }\left( J,\mu
\right) }{\left\vert J\right\vert ^{\frac{1}{n}}}\right) ^{2}=\mathsf{E}%
\left( J,\omega \right) ^{2}\mathrm{P}^{\alpha }\left( J,\mu \right)
^{2}\leq C_{0}\ \mathbb{E}_{J}^{\omega }\left\vert \mathbf{T}^{\alpha }\mu -%
\mathbb{E}_{J}^{\omega }\mathbf{T}^{\alpha }\mu \right\vert ^{2}.
\label{will fail}
\end{equation}

We now introduce a \emph{stronger} notion of energy reversal which we call
extreme energy reversal. We say that a vector $\mathbf{T}^{\alpha }=\left\{
T_{\ell }^{\alpha }\right\} _{\ell =1}^{2}$ of $\alpha $-fractional
transforms in the plane has \emph{extreme} reversal of $\omega $-energy on a
cube $J$ if there is a Haar function $h_{J}^{\omega }\left( x\right) $ and a
positive constant $C_{0}$, such that for all $2\leq \gamma \leq 2^{\mathbf{r}%
\left( 1-\varepsilon \right) }$ and for all positive measures $\mu $
supported outside $\gamma J$, we have the inequality,%
\begin{eqnarray}
&&\mathbb{E}_{J}^{\omega }\left[ \left( \mathbf{x}-\mathbb{E}_{J}^{\omega }%
\mathbf{x}\right) ^{2}\right] \left( \frac{\mathrm{P}^{\alpha }\left( J,\mu
\right) }{\left\vert J\right\vert ^{\frac{1}{n}}}\right) ^{2}\left\vert
J\right\vert _{\omega }=\mathsf{E}\left( J,\omega \right) ^{2}\mathrm{P}%
^{\alpha }\left( J,\mu \right) ^{2}\left\vert J\right\vert _{\omega }
\label{will fail extreme} \\
&&\ \ \ \ \ \ \ \ \ \ \leq C\left\vert \int_{J}\int_{\mathbb{R}^{n}\setminus
\gamma J}\left[ \mathbf{K}^{\alpha }\left( x,y\right) -\mathbf{K}^{\alpha
}\left( c_{J},y\right) \right] h_{J}^{\omega }\left( x\right) d\mu \left(
y\right) d\omega \left( x\right) \right\vert ^{2}.  \notag
\end{eqnarray}%
Clearly extreme reversal of energy implies strong reversal of energy.

\subsection{Fractional Riesz transforms}

Now we compute for $\beta $ real that%
\begin{eqnarray*}
\bigtriangleup \left\vert x\right\vert ^{\beta } &=&\nabla \cdot \nabla
\left\vert x\right\vert ^{2\frac{\beta }{2}}=\nabla \cdot \left\{ \frac{%
\beta }{2}\left\vert x\right\vert ^{2\left( \frac{\beta }{2}-1\right)
}2x\right\} =\beta \nabla \cdot \left\{ x\left\vert x\right\vert ^{2\frac{%
\beta -2}{2}}\right\} \\
&=&\beta \left\{ \left( \nabla \cdot x\right) \left\vert x\right\vert ^{2%
\frac{\beta -2}{2}}+x\cdot \nabla \left\vert x\right\vert ^{2\frac{\beta -2}{%
2}}\right\} =\beta \left\{ n\left\vert x\right\vert ^{2\frac{\beta -2}{2}%
}+x\cdot \frac{\beta -2}{2}\left\vert x\right\vert ^{2\left( \frac{\beta -2}{%
2}-1\right) }2x\right\} \\
&=&\beta \left\{ n\left\vert x\right\vert ^{\beta -2}+\left( \beta -2\right)
\left\vert x\right\vert ^{2}\left\vert x\right\vert ^{\beta -4}\right\}
=\beta \left( n+\beta -2\right) \left\vert x\right\vert ^{\beta -2}.
\end{eqnarray*}%
The case of interest for us is when $\beta =\alpha -n+1$, since then 
\begin{equation}
\bigtriangleup \left\vert x\right\vert ^{\beta }=\nabla \cdot \nabla
\left\vert x\right\vert ^{\alpha -n+1}=\nabla \cdot \nabla \left\vert
x\right\vert ^{\alpha -n+1}=c_{\alpha ,n}\nabla \cdot \mathbf{K}^{\alpha
,n}\left( x\right) ,  \label{interest}
\end{equation}%
where $\mathbf{K}^{\alpha ,n}$ is the vector convolution kernel of the $%
\alpha $-fractional Riesz transform $\mathbf{R}^{\alpha ,n}$. We conclude
that $\bigtriangleup \left\vert x\right\vert ^{\beta }$ is of one sign for
all $x$, provided $\beta \neq 0$ and $n+\beta -2\neq 0$, i.e. $\alpha \notin
\left\{ 1,n-1\right\} $. The case $\alpha =1$ is not included since $%
\left\vert x\right\vert ^{\alpha -n+1}=\left\vert x\right\vert ^{2-n}$ is
the fundamental solution of the Laplacian for $n>2$ and constant for $n=2$.
The case $\alpha =n-1$ is not included since $\left\vert x\right\vert
^{\alpha -n+1}=1$ is constant.

Thus $z\in J$, we have from (\ref{interest}) and (\ref{conclude}), and with $%
\mathbf{I}^{\alpha +1,n}\mu \left( z\right) \equiv \int_{\mathbb{R}%
^{n}}\left\vert z-y\right\vert ^{\alpha +1-n}d\mu \left( y\right) $ denoting
the convolution of $\left\vert x\right\vert ^{\alpha +1-n}$ with $\mu $,
that 
\begin{equation}
\left\vert \nabla \mathbf{R}^{\alpha ,n}\mu \left( z\right) \right\vert
\gtrsim \left\vert \limfunc{trace}\nabla \mathbf{R}^{\alpha ,n}\mu \left(
z\right) \right\vert =\left\vert \bigtriangleup \mathbf{I}^{\alpha +1,n}\mu
\left( z\right) \right\vert \approx \int \left\vert y-z\right\vert ^{\alpha
-n-1}d\mu \left( y\right) \approx \frac{\mathrm{P}^{\alpha }\left( J,\mu
\right) }{\ell \left( J\right) },  \label{L control}
\end{equation}%
where we assume that the positive measure $\mu $ is supported outside the
expanded cube $\gamma J$.

Recall that the trace of a matrix is invariant under conjugation by
rotations, and hence is the sum of the eigenvalues of a symmetric matrix. We
now claim that for every $z\in J$, the full matrix gradient $\nabla \mathbf{R%
}^{\alpha ,n}\mu \left( z\right) $ has at least $1$ eigenvalue of size at
least $c\frac{\mathrm{P}^{\alpha }\left( J,\mu \right) }{\ell \left(
J\right) }$. Indeed, if all eigenvalues of the matrix $\nabla \mathbf{R}%
^{\alpha ,n}\mu \left( z\right) $ have size at most $c\frac{\mathrm{P}%
^{\alpha }\left( J,\mu \right) }{\ell \left( J\right) }$, then $\left\vert
\nabla \mathbf{R}^{\alpha ,n}\mu \left( z\right) \right\vert \leq c\frac{%
\mathrm{P}^{\alpha }\left( J,\mu \right) }{\ell \left( J\right) }$, which
contradicts (\ref{L control}) if $c$ is chosen small enough. This proves our
claim, and moreover, it satisfies the quantitative quadratic estimate%
\begin{equation}
\left\vert \xi \cdot \nabla \mathbf{R}^{\alpha ,n}\mu \left( z\right) \xi
\right\vert \geq c\frac{\mathrm{P}^{\alpha }\left( J,\mu \right) }{\ell
\left( J\right) }\left\vert \xi \right\vert ^{2},\ \ \ \ \ \xi \in \mathsf{S}%
_{z},\ \text{for }z\in J.  \label{form est}
\end{equation}%
where $\mathsf{S}_{z}\equiv \limfunc{Span}\mathbf{v}_{z}$, for some $\mathbf{%
v}_{z}\in \mathbb{S}^{n-1}$. Thus to each $z$ in $J$, there corresponds a
unit vector $\mathbf{v}_{z}$ for which%
\begin{equation*}
\left\vert \mathbf{v}_{z}\cdot \nabla \mathbf{R}^{\alpha ,n}\mu \left(
z\right) \mathbf{v}_{z}\right\vert \geq c\frac{\mathrm{P}^{\alpha }\left(
J,\mu \right) }{\ell \left( J\right) }.
\end{equation*}%
However, for $w\in J$ we have%
\begin{eqnarray*}
&&\left\vert \mathbf{v}_{z}\cdot \nabla \mathbf{R}^{\alpha ,n}\mu \left(
z\right) \mathbf{v}_{z}-\mathbf{v}_{z}\cdot \nabla \mathbf{R}^{\alpha ,n}\mu
\left( w\right) \mathbf{v}_{z}\right\vert \leq \left\Vert \nabla \mathbf{R}%
^{\alpha ,n}\mu \left( z\right) -\nabla \mathbf{R}^{\alpha ,n}\mu \left(
w\right) \right\Vert  \\
&\leq &\left\Vert \nabla ^{2}\mathbf{R}^{\alpha ,n}\mu \left( \theta
_{z,w}\right) \right\Vert \left\vert z-w\right\vert \leq \int \left\vert
\nabla ^{2}\mathbf{K}^{\alpha ,n}\left( \theta _{z,w},y\right) \right\vert
d\mu \left( y\right) \ \left\vert z-w\right\vert  \\
&\leq &\int_{\mathbb{R}^{n}\setminus \gamma J}\frac{1}{\left\vert \theta
_{z,w}-y\right\vert ^{n-\alpha +2}}d\mu \left( y\right) \ \left\vert
z-w\right\vert =\int_{\mathbb{R}^{n}\setminus \gamma J}\frac{1}{\left\vert
\theta _{z,w}-y\right\vert ^{n-\alpha +1}}\frac{\ell \left( J\right) }{%
\left\vert \theta _{z,w}-y\right\vert }d\mu \left( y\right) \ \frac{%
\left\vert z-w\right\vert }{\ell \left( J\right) } \\
&\leq &C_{\gamma }\int_{\mathbb{R}^{n}\setminus \gamma J}\frac{1}{\left\vert
\theta _{z,w}-y\right\vert ^{n-\alpha +1}}d\mu \left( y\right) \ \frac{%
\left\vert z-w\right\vert }{\ell \left( J\right) }\lesssim C_{\gamma }\frac{%
\mathrm{P}^{\alpha }\left( J,\mu \right) }{\ell \left( J\right) }\frac{%
\left\vert z-w\right\vert }{\ell \left( J\right) },
\end{eqnarray*}%
since $\frac{\ell \left( J\right) }{\left\vert \theta _{z,w}-y\right\vert }%
\leq C_{\gamma }$. Thus there is a fixed $m$ such that for each $m^{th}$
order grandchild $J^{\prime }\in \mathfrak{C}_{\mathcal{D}}^{\left( m\right)
}\left( J\right) $, we have upon replacing $z$ by $c_{J^{\prime }}$ in the
displays above,%
\begin{equation}
\left\vert \mathbf{v}_{c_{J^{\prime }}}\cdot \nabla \mathbf{R}^{\alpha
,n}\mu \left( w\right) \mathbf{v}_{c_{J^{\prime }}}\right\vert \geq c\frac{%
\mathrm{P}^{\alpha }\left( J,\mu \right) }{\ell \left( J\right) },\ \ \ \ \
w\in J^{\prime }\text{,}  \label{direction}
\end{equation}%
i.e. we can use the same unit vector $\mathbf{v}_{c_{J^{\prime }}}$ in place
of $v_{z}$ for all $z\in J^{\prime }$.

\subsection{Extreme reversal of energy}

\begin{lemma}
\label{extreme rev ener}Let $0\leq \alpha <n$. Then the $\alpha $-fractional
Riesz transform $\mathbf{R}^{\alpha ,n}=\left\{ R_{\ell }^{n,\alpha
}\right\} _{\ell =1}^{n}$ has extreme reversal of $\omega $-energy (\ref%
{will fail extreme}) on \emph{all} cubes $J$ provided $\gamma $ is chosen
large enough depending only on $n$ and $\alpha $.
\end{lemma}

\begin{proof}
It suffices to show that%
\begin{eqnarray*}
&&\mathbb{E}_{J}^{\omega }\left[ \left( \mathbf{x}-\mathbb{E}_{J}^{\omega }%
\mathbf{x}\right) ^{2}\right] \left( \frac{\mathrm{P}^{\alpha }\left( J,\mu
\right) }{\left\vert J\right\vert ^{\frac{1}{n}}}\right) ^{2}\left\vert
J\right\vert _{\omega }=\mathsf{E}\left( J,\omega \right) ^{2}\mathrm{P}%
^{\alpha }\left( J,\mu \right) ^{2}\left\vert J\right\vert _{\omega } \\
&&\ \ \ \ \ \ \ \ \ \ \leq C\left\vert \int_{J}\int_{\mathbb{R}^{n}\setminus
\gamma J}\mathbf{v}_{c_{J}}\cdot \left[ \mathbf{K}^{\alpha }\left(
x,y\right) -\mathbf{K}^{\alpha }\left( c_{J},y\right) \right] h_{J}^{\omega
}\left( x\right) d\mu \left( y\right) d\omega \left( x\right) \right\vert
^{2},
\end{eqnarray*}%
where 
\begin{equation*}
h_{J}^{\omega }\left( x\right) =\sum_{K\in \mathfrak{C}\left( J\right) }a_{K}%
\mathbf{1}_{K}\left( x\right) \text{, where }\left\{ 
\begin{array}{ccc}
a_{K}>0 & \text{ if } & K\text{ lies to the right of center} \\ 
a_{K}<0 & \text{ if } & K\text{ lies to the left of center}%
\end{array}%
\right. ,
\end{equation*}%
and without loss of generality $\mathbf{v}_{c_{J}}=\mathbf{e}_{1}$. To see
this we compute,%
\begin{eqnarray*}
&&\int_{J}\left[ K_{1}^{\alpha ,n}\left( x,y\right) -K_{1}^{\alpha ,n}\left(
c_{J},y\right) \right] h_{J}^{\omega }\left( x\right) d\omega \left( x\right)
\\
&=&\int_{J}\left[ \frac{x_{1}-y_{1}}{\left\vert x-y\right\vert ^{n-\alpha +1}%
}-\frac{\left( c_{J}\right) _{1}-y_{1}}{\left\vert c_{J}-y\right\vert
^{n-\alpha +1}}\right] h_{J}^{\omega }\left( x\right) d\omega \left( x\right)
\\
&=&\int_{J}\left( x_{1}-y_{1}\right) \left\{ \frac{1}{\left\vert
x-y\right\vert ^{n-\alpha +1}}-\frac{1}{\left\vert c_{J}-y\right\vert
^{n-\alpha +1}}\right\} h_{J}^{\omega }\left( x\right) d\omega \left(
x\right) +\int_{J}\left\{ \frac{x_{1}-\left( c_{J}\right) _{1}}{\left\vert
c_{J}-y\right\vert ^{n-\alpha +1}}\right\} h_{J}^{\omega }\left( x\right)
d\omega \left( x\right) \\
&\equiv &A+B.
\end{eqnarray*}%
Now in term $B$ we have $\left( x_{1}-\left( c_{J}\right) _{1}\right)
h_{J}^{\omega }\left( x\right) $ is of one sign and so%
\begin{equation*}
\left\vert B\right\vert =\left\vert \int_{J}\left\{ \frac{x_{1}-\left(
c_{J}\right) _{1}}{\left\vert c_{J}-y\right\vert ^{n-\alpha +1}}\right\}
h_{J}^{\omega }\left( x\right) d\omega \left( x\right) \right\vert =\int_{J}%
\frac{\left\vert x_{1}-\left( c_{J}\right) _{1}\right\vert }{\left\vert
c_{J}-y\right\vert ^{n-\alpha +1}}\left\vert h_{J}^{\omega }\left( x\right)
\right\vert d\omega \left( x\right) \geq c\frac{\ell \left( J\right) }{%
\left\vert c_{J}-y\right\vert ^{n-\alpha +1}}\sqrt{\left\vert J\right\vert },
\end{equation*}%
because $\omega $\ is doubling. On the other hand,%
\begin{eqnarray*}
\left\vert A\right\vert &\leq &\int_{J}\left\vert x_{1}-y_{1}\right\vert
\left\vert \frac{1}{\left\vert x-y\right\vert ^{n-\alpha +1}}-\frac{1}{%
\left\vert c_{J}-y\right\vert ^{n-\alpha +1}}\right\vert \left\vert
h_{J}^{\omega }\left( x\right) \right\vert d\omega \left( x\right) \\
&\lesssim &\frac{\ell \left( J\right) ^{2}}{\left\vert c_{J}-y\right\vert
^{n-\alpha +2}}\sqrt{\left\vert J\right\vert _{\omega }}=\frac{\ell \left(
J\right) }{\left\vert c_{J}-y\right\vert }\frac{\ell \left( J\right) }{%
\left\vert c_{J}-y\right\vert ^{n-\alpha +1}}\sqrt{\left\vert J\right\vert
_{\omega }}\leq C\frac{1}{\gamma }\frac{\ell \left( J\right) }{\left\vert
c_{J}-y\right\vert ^{n-\alpha +1}}\sqrt{\left\vert J\right\vert _{\omega }}
\end{eqnarray*}%
and so for $\gamma >1$ chosen sufficiently large, we obtain%
\begin{eqnarray*}
\left\vert \int_{J}\left[ K_{1}^{\alpha ,n}\left( x,y\right) -K_{1}^{\alpha
,n}\left( c_{J},y\right) \right] h_{J}^{\omega }\left( x\right) d\omega
\left( x\right) \right\vert &\gtrsim &\left\vert B\right\vert -\left\vert
A\right\vert \geq \left( c-C\frac{1}{\gamma }\right) \frac{\ell \left(
J\right) }{\left\vert c_{J}-y\right\vert ^{n-\alpha +1}}\sqrt{\left\vert
J\right\vert _{\omega }} \\
&\geq &\frac{c}{2}\frac{\ell \left( J\right) }{\left\vert c_{J}-y\right\vert
^{n-\alpha +1}}\sqrt{\left\vert J\right\vert _{\omega }}.
\end{eqnarray*}

Since $\int_{J}\left[ K_{1}^{\alpha ,n}\left( x,y\right) -K_{1}^{\alpha
,n}\left( c_{J},y\right) \right] h_{J}^{\omega }\left( x\right) d\omega
\left( x\right) $ is also of one sign, it follows that%
\begin{eqnarray*}
&&\left\vert \int_{J}\int_{\mathbb{R}^{n}\setminus \gamma J}\mathbf{v}%
_{c_{J}}\cdot \left[ \mathbf{K}^{\alpha }\left( x,y\right) -\mathbf{K}%
^{\alpha }\left( c_{J},y\right) \right] h_{J}^{\omega }\left( x\right) d\mu
\left( y\right) d\omega \left( x\right) \right\vert \\
&=&\int_{\mathbb{R}^{n}\setminus \gamma J}\left\vert \int_{J}\left[
K_{1}^{\alpha ,n}\left( x,y\right) -K_{1}^{\alpha ,n}\left( c_{J},y\right) %
\right] h_{J}^{\omega }\left( x\right) d\omega \left( x\right) \right\vert
d\mu \left( y\right) \\
&\geq &\int_{\mathbb{R}^{n}\setminus \gamma J}\frac{c}{2}\frac{\ell \left(
J\right) }{\left\vert c_{J}-y\right\vert ^{n-\alpha +1}}\sqrt{\left\vert
J\right\vert _{\omega }}d\mu \left( y\right) =\frac{c}{2}\sqrt{\left\vert
J\right\vert _{\omega }}\mathrm{P}^{\alpha }\left( J,\mathbf{1}_{\mathbb{R}%
^{n}\setminus \gamma J}\right) ,
\end{eqnarray*}%
which proves the extreme reversal of energy.
\end{proof}

\subsection{Proof of Lemma \protect\ref{pivotal}}

\begin{proof}
For each $r$ we first write $\mathrm{P}^{\alpha }\left( Q_{r},\mathbf{1}%
_{Q}\sigma \right) =\mathrm{P}^{\alpha }\left( Q_{r},\mathbf{1}_{\gamma
Q_{r}}\sigma \right) +\mathrm{P}^{\alpha }\left( Q_{r},\mathbf{1}%
_{Q\setminus \gamma Q_{r}}\sigma \right) $ to obtain,%
\begin{equation*}
\frac{1}{\left\vert Q\right\vert _{\sigma }}\sum_{r=1}^{\infty }\mathrm{P}%
^{\alpha }\left( Q_{r},\mathbf{1}_{Q}\sigma \right) ^{2}\left\vert
Q_{r}\right\vert _{\omega }\leq \frac{1}{\left\vert Q\right\vert _{\sigma }}%
\sum_{r=1}^{\infty }\mathrm{P}^{\alpha }\left( Q_{r},\mathbf{1}_{\gamma
Q_{r}}\sigma \right) ^{2}\left\vert Q_{r}\right\vert _{\omega }+\frac{1}{%
\left\vert Q\right\vert _{\sigma }}\sum_{r=1}^{\infty }\mathrm{P}^{\alpha
}\left( Q_{r},\mathbf{1}_{Q\setminus \gamma Q_{r}}\sigma \right)
^{2}\left\vert Q_{r}\right\vert _{\omega }\ ,
\end{equation*}%
and since $\sigma $ is doubling we have $\mathrm{P}_{\kappa }^{\alpha
}\left( Q_{r},\mathbf{1}_{\gamma Q_{r}}\sigma \right) \approx \frac{%
\left\vert Q_{r}\right\vert _{\sigma }}{\left\vert Q_{r}\right\vert ^{1-%
\frac{\lambda }{n}}}$, and hence the first term on the right hand side is
dominated by%
\begin{equation*}
\frac{1}{\left\vert Q\right\vert _{\sigma }}\sum_{r=1}^{\infty }\left( \frac{%
\left\vert Q_{r}\right\vert _{\sigma }}{\left\vert Q_{r}\right\vert ^{1-%
\frac{\lambda }{n}}}\right) ^{2}\left\vert Q_{r}\right\vert _{\omega
}\lesssim \frac{1}{\left\vert Q\right\vert _{\sigma }}\sum_{r=1}^{\infty
}\left\vert Q_{r}\right\vert _{\sigma }\left( \frac{\left\vert
Q_{r}\right\vert _{\sigma }\left\vert Q_{r}\right\vert _{\omega }}{%
\left\vert Q_{r}\right\vert ^{2\left( 1-\frac{\lambda }{n}\right) }}\right)
\leq A_{2}^{\alpha }\left( \sigma ,\omega \right) .
\end{equation*}%
To handle the second term involving $\mathrm{P}_{1}^{\lambda }\left( Q_{r},%
\mathbf{1}_{\mathbb{R}^{n}\setminus \gamma Q_{r}}\sigma \right) $ we will
use extreme reversal of energy for the vector $\alpha $-fractional Riesz
transform $\mathbf{R}^{\alpha ,n}$ in $\mathbb{R}^{n}$, which uses the
doubling property of $\omega $.

We first illustrate the constuction by assuming that the unit vector $%
\mathbf{e}_{1}$ in the direction of the positive $x_{1}$-axis, is the
direction $\mathbf{v}_{c_{Q^{r}}}$ in which we have the lower bound (\ref%
{direction}). Then we fix a special unit Haar function $h_{Q_{r}}^{\omega }$%
, i.e. $\bigtriangleup _{Q_{r}}^{\omega }h_{Q_{r}}^{\omega
}=h_{Q_{r}}^{\omega }$ and $\left\Vert h_{Q_{r}}^{\omega }\right\Vert
_{L^{2}\left( \omega \right) }=1$, satisfying%
\begin{equation*}
h_{Q_{r}}^{\omega }\left( x\right) =\sum_{K\in \mathfrak{C}\left( I\right)
}a_{K}\mathbf{1}_{K}\left( x\right) \text{, where }\left\{ 
\begin{array}{ccc}
a_{K}>0 & \text{ if } & K\text{ lies to the right of center} \\ 
a_{K}<0 & \text{ if } & K\text{ lies to the left of center}%
\end{array}%
\right. ,
\end{equation*}%
where for a cube $Q$ centered at the origin, we\ say a child $K$ lies to the
right of center if $K$ is contained in the half space where $x_{1}\geq 0$.
In the general case that $\mathbf{v}_{c_{Q^{r}}}$ is not equal to $\mathbf{e}%
_{1}$, we choose a child $Q_{r}^{\prime }\in \mathfrak{C}_{\mathcal{D}%
}\left( Q_{r}\right) $ of $Q_{r}$ that lies to one side of the hyperplane
through $c_{Q^{r}}$ with normal vector $\mathbf{v}_{c_{Q^{r}}}$. Then if $%
Q_{r}^{\prime \prime }$ denotes the reflection of $Q_{r}^{\prime }$ across
the center $c_{Q^{r}}$ of the cube $Q_{r}$, we define%
\begin{equation*}
h_{Q_{r}}^{\omega }\left( x\right) =a_{Q_{r}^{\prime }}\mathbf{1}%
_{Q_{r}^{\prime }}\left( x\right) -a_{Q_{r}^{\prime \prime }}\mathbf{1}%
_{Q_{r}^{\prime \prime }}\left( x\right) .
\end{equation*}

Then we compute,%
\begin{eqnarray*}
&&\left\Vert \mathbf{1}_{Q_{r}}\left( \mathbf{R}^{\alpha ,n}\mathbf{1}%
_{Q\setminus \gamma Q_{r}}\sigma \left( x\right) -E_{Q_{r}}^{\omega }\left[ 
\mathbf{R}_{\sigma }^{\alpha ,n}\mathbf{1}_{Q\setminus \gamma Q_{r}}\right]
\right) \right\Vert _{W_{\func{dyad}}^{s}\left( \omega \right) }^{2} \\
&=&\sum_{J\in \mathcal{D}}\ell \left( J\right) ^{-2s}\left\Vert
\bigtriangleup _{J}^{\omega }\left( \mathbf{1}_{Q_{r}}\left( \mathbf{R}%
_{\sigma }^{\alpha ,n}\mathbf{1}_{Q\setminus \gamma Q_{r}}\left( x\right)
-E_{Q_{r}}^{\omega }\left[ \mathbf{R}_{\sigma }^{\alpha ,n}\mathbf{1}%
_{Q\setminus \gamma Q_{r}}\right] \right) \right) \right\Vert _{L^{2}\left(
\omega \right) }^{2} \\
&\geq &\ell \left( Q_{r}\right) ^{-2s}\left\Vert \bigtriangleup
_{Q_{r}}^{\omega }\left( \mathbf{R}_{\sigma }^{\alpha ,n}\mathbf{1}%
_{Q\setminus \gamma Q_{r}}\left( x\right) -E_{Q_{r}}^{\omega }\left[ \mathbf{%
R}_{\sigma }^{\alpha ,n}\mathbf{1}_{Q\setminus \gamma Q_{r}}\right] \right)
\right\Vert _{L^{2}\left( \omega \right) }^{2} \\
&\geq &\ell \left( Q_{r}\right) ^{-2s}\left\vert \left\langle \mathbf{R}%
_{\sigma }^{\alpha ,n}\mathbf{1}_{Q\setminus \gamma Q_{r}},h_{Q_{r}}^{\omega
}\right\rangle _{\omega }\right\vert ^{2}=\ell \left( Q_{r}\right)
^{-2s}\left\vert \int_{Q_{r}}\left( \int_{Q\setminus \gamma Q_{r}}\mathbf{K}%
_{\sigma }^{\alpha ,n}\left( x,y\right) d\sigma \left( y\right) \right)
h_{Q_{r}}^{\omega }\left( x\right) d\omega \left( x\right) \right\vert ^{2},
\end{eqnarray*}%
which equals,%
\begin{equation*}
\ell \left( Q_{r}\right) ^{-2s}\left\vert \int_{Q_{r}}\int_{Q\setminus
\gamma Q_{r}}\left[ \mathbf{K}_{\sigma }^{\alpha ,n}\left( x,y\right) -%
\mathbf{K}_{\sigma }^{\alpha ,n}\left( c_{Q_{r}},y\right) \right]
h_{Q_{r}}^{\omega }\left( x\right) d\sigma \left( y\right) d\omega \left(
x\right) \right\vert ^{2}
\end{equation*}

Since $\omega $ is doubling, we have $\mathsf{E}\left( Q_{r},\omega \right)
\approx 1$, and from the\emph{\ extreme energy reversal} inequality (\ref%
{will fail extreme}), we have%
\begin{equation}
\left\vert \int_{Q_{r}}\int_{Q\setminus \gamma Q_{r}}\left[ \mathbf{K}%
_{\sigma }^{\alpha ,n}\left( x,y\right) -\mathbf{K}_{\sigma }^{\alpha
,n}\left( c_{Q_{r}},y\right) \right] h_{Q_{r}}^{\omega }\left( x\right)
d\sigma \left( y\right) d\omega \left( x\right) \right\vert ^{2}\gtrsim 
\mathrm{P}^{\alpha }\left( Q_{r},\mathbf{1}_{Q\setminus \gamma Q_{r}}\sigma
\right) ^{2}\left\vert Q_{r}\right\vert _{\omega }\ .  \label{if}
\end{equation}%
With this inequality in hand, we have%
\begin{eqnarray*}
&&\frac{1}{\left\vert Q\right\vert _{\sigma }}\sum_{r=1}^{\infty }\mathrm{P}%
^{\alpha }\left( Q_{r},\mathbf{1}_{Q\setminus \gamma Q_{r}}\sigma \right)
^{2}\left\vert Q_{r}\right\vert _{\omega } \\
&\lesssim &\frac{1}{\left\vert Q\right\vert _{\sigma }}\sum_{r=1}^{\infty
}\left\vert \int_{Q_{r}}\int_{Q\setminus \gamma Q_{r}}\left[ \mathbf{K}%
_{\sigma }^{\alpha ,n}\left( x,y\right) -\mathbf{K}_{\sigma }^{\alpha
,n}\left( c_{Q_{r}},y\right) \right] h_{Q_{r}}^{\omega }\left( x\right)
d\sigma \left( y\right) d\omega \left( x\right) \right\vert ^{2} \\
&\lesssim &\frac{1}{\left\vert Q\right\vert _{\sigma }}\sum_{r=1}^{\infty
}\ell \left( Q_{r}\right) ^{2s}\left\Vert \mathbf{1}_{Q_{r}}\left( \mathbf{R}%
^{\alpha ,n}\mathbf{1}_{\gamma Q_{r}}\sigma \left( x\right)
-E_{Q_{r}}^{\omega }\left[ \mathbf{R}_{\sigma }^{\alpha ,n}\mathbf{1}%
_{\gamma Q_{r}}\right] \right) \right\Vert _{W_{\func{dyad}}^{s}\left(
\omega \right) }^{2} \\
&&+\frac{1}{\left\vert Q\right\vert _{\sigma }}\sum_{r=1}^{\infty }\ell
\left( Q_{r}\right) ^{2s}\left\Vert \mathbf{1}_{Q_{r}}\left( \mathbf{R}%
^{\alpha ,n}\mathbf{1}_{Q}\sigma \left( x\right) -E_{Q_{r}}^{\omega }\left[ 
\mathbf{R}_{\sigma }^{\alpha ,n}\mathbf{1}_{Q}\right] \right) \right\Vert
_{W_{\func{dyad}}^{s}\left( \omega \right) }^{2},
\end{eqnarray*}%
which is dominated by%
\begin{eqnarray*}
&\lesssim &\frac{1}{\left\vert Q\right\vert _{\sigma }}\sum_{r=1}^{\infty
}\ell \left( Q_{r}\right) ^{2s}\left\Vert \mathbf{1}_{Q_{r}}\right\Vert _{W_{%
\func{dyad}}^{s}\left( \sigma \right) }^{2}+\frac{1}{\left\vert Q\right\vert
_{\sigma }}\sum_{r=1}^{\infty }\ell \left( Q\right) ^{2s}\left\Vert \mathbf{1%
}_{Q}\right\Vert _{W_{\func{dyad}}^{s}\left( \sigma \right) }^{2} \\
&\lesssim &\frac{1}{\left\vert Q\right\vert _{\sigma }}\mathfrak{T}_{\mathbf{%
R}^{\alpha ,n,s}}\left( \sigma ,\omega \right) ^{2}\left\vert Q\right\vert
_{\sigma }=\mathfrak{T}_{\mathbf{R}^{\alpha ,n,s}}\left( \sigma ,\omega
\right) ^{2}.
\end{eqnarray*}%
Thus altogether we have 
\begin{equation*}
\mathcal{V}_{2}^{\alpha ,1}\left( \sigma ,\omega \right) ^{2}\lesssim 
\mathfrak{T}_{\mathbf{R}^{\alpha ,n,s}}\left( \sigma ,\omega \right)
^{2}+A_{2}^{\alpha }\left( \sigma ,\omega \right) ,
\end{equation*}%
which completes the proof of Lemma \ref{pivotal}.
\end{proof}


\begin{thebibliography}{DuLiSaVeWiYa}
\bibitem[AlSaUr]{AlSaUr} \textsc{M. Alexis, E. Sawyer and I. Uriarte-Tuero,}%
\textit{\ A weak to strong type }$T1$\textit{\ theorem for general smooth
Calder\'{o}n-Zygmund operators with doubling weights, II} \texttt{%
arXiv:2111.06277}.

\bibitem[AlSaUr2]{AlSaUr2} \textsc{M. Alexis, E. T. Sawyer and I.
Uriarte-Tuero,} \textit{Tops of dyadic grids}, \texttt{arXiv:2201.02897.}

\bibitem[DaJo]{DaJo} \textsc{Guy David, Jean-Lin Journ\'{e},} \textit{A
boundedness criterion for generalized Calder\'{o}n-Zygmund operators,} Ann.
of Math. (2) \textbf{120} (1984), 371--397.

\bibitem[DiWiWI]{DilWiWi} \textsc{Francesco Di Plinio, Brett D. Wick and
Tyler Williams,} \textit{Wavelet Representation of Singular Integral
Operators}, to appear in Math. Ann., \texttt{arXiv:2009.01212}.

\bibitem[DuLiSaVeWiYa]{DuLiSaVeWiYa} \textsc{Xuan Thinh Duong, Ji Li, Eric
Sawyer, Naga Manasa Vempati, Brett Wick, Dongyong Yang.} \textit{A two
weight inequality for Calder\'{o}n--Zygmund operators on spaces of
homogeneous type with applications}. Journal of Functional Analysis. \textbf{%
281} (2021).

\bibitem[HaSa]{HaSa} \textsc{Y.-S. Han and E. Sawyer,} \textit{%
Littlewood-Paley theory on spaces of homogeneous type and the classical
function spaces, }Memoirs A.M.S. Number \textbf{530}, (1994).

\bibitem[Hyt]{Hyt} \textsc{Tuomas Hyt\"{o}nen, }\textit{The two weight
inequality for the Hilbert transform with general measures, \texttt{%
arXiv:1312.0843v2}.}

\bibitem[KaLiPeWa]{KaLiPeWa} \textsc{A. Kareima, J. Li, C. Pereyra and L.
Ward, }\textit{Haar bases on quasi-metric measure spaces, and dyadic
structure theorems for function spaces on product spaces of homogeneous type,%
} \textit{\texttt{arXiv:1509.03761}}.

\bibitem[Lac]{Lac} \textsc{Michael T. Lacey, }\textit{\ Two weight
inequality for the Hilbert transform: A real variable characterization, II},
Duke Math. J. Volume \textbf{163}, Number 15 (2014), 2821-2840.

\bibitem[LaSaShUr3]{LaSaShUr3} \textsc{Michael T. Lacey, Eric T. Sawyer,
Chun-Yen Shen, and Ignacio Uriarte-Tuero,} \textit{Two weight inequality for
the Hilbert transform: A real variable characterization I}, Duke Math. J,
Volume \textbf{163}, Number 15 (2014), 2795-2820.

\bibitem[LaSaShUrWi]{LaSaShUrWi} \textsc{Michael T. Lacey, Eric T. Sawyer,
Chun-Yen Shen, Ignacio Uriarte-Tuero and Brett D. Wick,} \textit{Two weight
inequalities for the Cauchy transform from }$\mathbb{R}$ to $\mathbb{C}_{+}$%
, \textit{\texttt{arXiv:1310.4820v4}}.

\bibitem[LaSaUr1]{LaSaUr1} \textsc{Michael T. Lacey, Eric T. Sawyer and
Ignacio Uriarte-Tuero,} \textit{A characterization of two weight norm
inequalities for maximal singular integrals with one doubling measure,}
Analysis \& PDE, Vol. \textbf{5} (2012), No. 1, 1-60.

\bibitem[LaWi]{LaWi} \textsc{Michael T. Lacey and Brett D. Wick,} \textit{%
Two weight inequalities for Riesz transforms: uniformly full dimension
weights}, \textit{\texttt{arXiv:1312.6163v1,v2,v3}}.

\bibitem[NTV4]{NTV4} \textsc{F. Nazarov, S. Treil and A. Volberg,} \textit{%
Two weight estimate for the Hilbert transform and corona decomposition for
non-doubling measures}, preprint (2004) \texttt{arxiv:1003.1596.}

\bibitem[Pee]{Pee} \textsc{J. Peetre,} \textit{New thoughts on Besov spaces}%
, Mathematics Department, Duke University, 1976.

\bibitem[RaSaWi]{RaSaWi} \textsc{Robert Rahm, Eric T. Sawyer and Brett D.
Wick,} \textit{Weighted Alpert wavelets}, Journal of Fourier Analysis and
Applications (IF1.\textbf{273}), Pub Date : 2020-11-23, DOI:
10.1007/s00041-020-09784-0, \texttt{arXiv:1808.01223v2}.

\bibitem[SaUr]{SaUr} \textsc{E. Sawyer and I. Uriarte-Tuero,} \textit{%
Control of the bilinear indicator cube testing property, }\texttt{%
arXiv:1910.09869}.

\bibitem[Saw6]{Saw6} \textsc{E. Sawyer,} \textit{A }$T1$\textit{\ theorem
for general Calder\'{o}n-Zygmund operators with comparable doubling weights
and optimal cancellation conditions}, \texttt{arXiv:1906.05602v10}, to
appear in Journal d'Analyse.

\bibitem[SaShUr7]{SaShUr7} \textsc{Eric T. Sawyer, Chun-Yen Shen and Ignacio
Uriarte-Tuero,} \textit{A} \textit{two weight theorem for }$\alpha $\textit{%
-fractional singular integrals with an energy side condition}, Revista Mat.
Iberoam. \textbf{32} (2016), no. 1, 79-174.

\bibitem[SaShUr9]{SaShUr9} \textsc{Eric T. Sawyer, Chun-Yen Shen and Ignacio
Uriarte-Tuero,} A \textit{two weight fractional singular integral theorem
with side conditions, energy and }$k$\textit{-energy dispersed,} Harmonic
Analysis, Partial Differential Equations, Complex Analysis, Banach Spaces,
and Operator Theory (Volume 2) (Celebrating Cora Sadosky's life), Springer
2017 (see also \texttt{arXiv:1603.04332v2}).

\bibitem[SaShUr12]{SaShUr12} \textsc{Eric T. Sawyer, Chun-Yen Shen and
Ignacio Uriarte-Tuero,} \textit{A two weight local $Tb$ theorem for the
Hilbert transform}, to appear in Revista Mat. Iberoam 2021.

\bibitem[SeUl]{SeUl} \textsc{Andreas Seeger, Tino Ullrich,} \textit{Haar
projection numbers and failure of unconditional convergence in Sobolev spaces%
}, Math. Z. \textbf{285} (2017), 91 -- 119.

\bibitem[Ste]{Ste} \textsc{Elias M. Stein,} \textit{Singular integrals and
differentiability properties of functions}, Princeton University Press,
Princeton, N.J. 1970.

\bibitem[Ste2]{Ste2} \textsc{E. M. Stein,} \textit{Harmonic Analysis:
real-variable methods, orthogonality, and oscillatory integrals},\textit{\ }%
Princeton University Press, Princeton, N. J., 1993.

\bibitem[Tri]{Tri} \textsc{H. Triebel}, \textit{Theory of function spaces},
Monographs in Math., vol. \textbf{78}, Birkhauser, Verlag, Basel, 1983.

\bibitem[Vol]{Vol} \textsc{A. Volberg,} \textit{Calder\'{o}n-Zygmund
capacities and operators on nonhomogeneous spaces,} CBMS .
\end{thebibliography}
\end{document}